\theoremstyle{plain}\newtheorem{lemma}{\textbf{Lemma}}\newtheorem{theorem}{\textbf{Theorem}}
\theoremstyle{remark}
\newtheorem{definition}{\textbf{Definition}}
\theoremstyle{definition}
\theoremstyle{definition}\newtheorem{remark}{\textbf{Remark}}
\renewenvironment{proof}[1][\proofname] {
	\par\pushQED{\qed}\normalfont
	\topsep6\p@\@plus6\p@\relax
	\trivlist\item[\hskip\labelsep\bfseries#1\@addpunct{:}]
 	\ignorespaces
} {
	\popQED\endtrivlist\@endpefalse
}
\newtheorem{conjecture}{Conjecture}[section]
\DeclareMathOperator*{\argmin}{arg\,min}
\newcommand{\dkl}{\mathrm{D_{KL} }}
\begin{document}

\begin{frontmatter}
\title{The TAP free energy for high-dimensional linear regression}
\runtitle{TAP free energy for linear regression}

\begin{aug}
\author[A]{\fnms{Jiaze} \snm{Qiu}\ead[label=e1]{jiazeqiu@g.harvard.edu}},
\author[B]{\fnms{Subhabrata} \snm{Sen}\ead[label=e2]{subhabratasen@fas.harvard.edu}}
\address[A]{Department of Statistics, Harvard University, \printead{e1}}

\address[B]{Department of Statistics, Harvard University, \printead{e2}}
\end{aug}

\begin{abstract}
We derive a variational representation for the log-normalizing constant of the posterior distribution in Bayesian linear regression with a uniform spherical prior and an i.i.d. Gaussian design. We work under the ``proportional" asymptotic regime, where the number of observations and the number of features grow at a proportional rate. This rigorously establishes the Thouless-Anderson-Palmer (TAP) approximation arising from spin glass theory, and proves a conjecture of \cite{krzakala2014variational} in the special case of the spherical prior. 
\end{abstract}

\begin{keyword}[class=MSC]
\kwd[Primary ]{60F99}
\kwd{62C10}
\kwd[; secondary ]{82B44}
\end{keyword}

\begin{keyword}
\kwd{Linear regression}
\kwd{TAP approximation}
\kwd{spin glasses}
\end{keyword}

\end{frontmatter}


\section{Introduction}
The analysis of high-dimensional probability distributions is a central challenge in modern Statistics and Machine Learning. This is particularly true in the context of Bayesian Statistics, where scientists carry out inference based on the posterior distribution. In modern applications, the posterior distribution is typically high-dimensional, and analytically intractable. Variational Inference (VI) has emerged as an attractive option to approximate these intractable distributions, facilitating fast, parallel computations in state-of-the-art applications \cite{wainwright2008graphical,blei2017variational}. In this approach, the distribution of interest is approximated (in KL divergence) by distributions from a pre-specified, more tractable collection. The simplest version of VI is the Naive Mean-field approximation (NMF), where the distribution of interest is approximated by a product distribution. This strategy has roots in classical approximations for spins-systems in statistical physics \cite{mezard2009information}.

Despite the rapidly growing popularity of variational approximations in Statistics and Machine Learning, the corresponding theoretical guarantees for these approximations are quite limited. In this paper, we study Bayesian linear regression, a standard workhorse of modern Statistics, via the lens of (advanced) Mean-field approximations. Given data $\{(y_i,x_i): 1\leq i \leq n\}$, $y_i \in \mathbb{R}$, $x_i \in \mathbb{R}^p$, the scientist posits a linear model $y_i = x_i^{\top}\beta + \varepsilon_i$, where $\beta \in \mathbb{R}^p$ and $\varepsilon_i \sim N(0, \Delta)$ are iid. Given a prior distribution $\pi_0$, setting $y=(y_1,\cdots,y_n) \in \mathbb{R}^n$ and $X^{\top}=[x_1, \cdots, x_n] \in \mathbb{R}^{p \times n}$,  the scientist constructs the posterior 
\begin{equation}
    \frac{\mathrm{d} \mathbb{P} }{\mathrm{d} \pi_0 } (\beta | y, X) =  \frac{1}{\mathcal{Z}_p} e^{-\frac{1}{2\Delta} \lVert y - X\beta\rVert^2}  \text{,} \label{eq:posterior}
\end{equation}
where $\mathcal{Z}_p$ is the normalizing constant of the posterior distribution. Borrowing terminology from statistical physics, we will refer to $\mathcal{Z}_p$ as the partition function of the model. 

Using the classical Gibbs variational principle \cite{wainwright2008graphical}, one has 
\begin{align}
    \log \mathcal{Z}_p = \sup_{Q \ll \pi_0}\Big( \mathbb{E}_{Q}\Big[-\frac{1}{2\Delta} \|y-X\beta\|_2^2 \Big] - \dkl(Q\| \pi_0) \Big), \nonumber  
\end{align}
with equality if $Q$ is the posterior distribution. Restricting the supremum above to product measures, one obtains the NMF approximation to the log-partition function. Note that the NMF approximation is always a valid lower bound:  
\begin{align}
    \log \mathcal{Z}_p \geq \sup_{Q= \prod_i Q_i} \Big( \mathbb{E}_{Q}\Big[-\frac{1}{2\Delta} \|y-X\beta\|_2^2 \Big] - \dkl(Q\| \pi_0) \Big). \nonumber
\end{align}
We say that the NMF approximation is \emph{correct to leading order} if 
\begin{align}
    \log \mathcal{Z}_p - \sup_{Q= \prod_i Q_i} \Big( \mathbb{E}_{Q}\Big[-\frac{1}{2\Delta} \|y-X\beta\|_2^2 \Big] - \dkl(Q\| \pi_0) \Big) = o(p). \label{eq:nmf_approx} 
\end{align}
In recent joint work with Sumit Mukherjee \cite{mukherjee2021variational}, the second author leveraged recent advances in the theory of non-linear large deviations to derive sufficient conditions on $X$ for the accuracy of NMF approximation when $\pi_0= \prod^{\otimes p} \pi$ is a product distribution. In this setting, it is easy to see that the supremum in \eqref{eq:nmf_approx} is obtained by $Q^*=\prod_i Q_i^*$, with 
$\frac{\mathrm{d}Q_i^*}{\mathrm{d}\pi}(\beta_i) \propto \exp\Big(- \frac{(\beta_i - R_i)^2}{2\Sigma_i^2}\Big)$.
The approximation \eqref{eq:nmf_approx} can thus be equivalently parametrized in terms of $a_i = \mathbb{E}_{Q_i^*}[\beta_i]$ and $c_i = \mathrm{Var}_{Q_i^*}[\beta_i]$, so that 
\begin{align}
    \log \mathcal{Z}_p = \sup_{a, c} \Big( - \frac{1}{2\Delta} \| y - X a\|_2^2 - \dkl (Q^*\| \pi_0)  \Big) + o(p) \label{eq:nmf_prod}  
\end{align}
if the NMF approximation is accurate to leading order.

In the special case where $x_i$'s are iid standard gaussian, \cite{mukherjee2021variational} establishes the accuracy of the NMF approximation as long as $p=o(n)$. This result is expected to be tight---using insights from spin glasses, physicists predict that Bayes linear regression is \emph{not} approximately NMF if the number of samples $n$ and the number of features $p$ grow proportionally \cite{krzakala2014variational}. In fact, using the Thouless-Anderson-Palmer (TAP) formulation, one obtains the following prediction for the log-partition function. 

\begin{conjecture}[\cite{krzakala2014variational}]
\label{conj:TAP_reg} 
Assume $\pi_0 = \prod^{\otimes p} \pi$, where $\pi$ is a probability distribution on $[-1,1]$. Then 
\begin{align}
\log \mathcal{Z}_p = \max_{\|a \|_{\infty} \leq 1} \Big[ -\frac{ \lVert  y - X a \rVert^2}{2\Delta} -\frac{n}{2} \log \Big(1 + \frac{\bar{c}}{\Delta } \Big) + \sum_{i=1}^{p} \log \mathbb{E}_{\pi}\Big[ \exp( - \frac{(\beta_i - R_i)^2}{2 \Sigma_*^2} )\Big] \Big] + o(p),  \label{eq:tapreg_conjecture}
\end{align}
where the parameters $R_i$, $c_i$ and $\Sigma_*$ are specified as follows: define 
$(\mathrm{d} Q_i / \mathrm{d}\pi)(\beta) \propto \exp(- (\beta- R_i)^2 / (2 \Sigma_*^2) )$. $R_i$ is chosen such that $\mathbb{E}_{Q_i}[\beta] = a_i$. Set $c_i = \mathrm{Var}_{Q_i}(\beta)$. Finally, the parameter $\Sigma_*$ depends on $\Delta$ and $\alpha$, and is specified implicitly as the smallest solution to a fixed point equation. 
\end{conjecture}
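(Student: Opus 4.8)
The plan is to show that both $\tfrac1p\log \mathcal Z_p$ and $\tfrac1p$ times the supremum of the TAP functional in \eqref{eq:tapreg_conjecture} converge to one and the same quantity: the replica-symmetric (RS) free energy of the Gaussian linear channel. Throughout I work in the Bayes-optimal setting implicit in \eqref{eq:posterior}, i.e.\ $y = X\beta_0 + \varepsilon$ with $\beta_0 \sim \pi_0$ and $\varepsilon \sim N(0,\Delta I_n)$, so that the Nishimori identities are available; write $\mathcal F_{\mathrm{TAP}}(a)$ for the bracketed functional in \eqref{eq:tapreg_conjecture} and $\rho = \mathbb{E}_\pi[\beta^2]$.

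\emph{Step 1 (the free energy converges to the RS formula).} The normalizing constant $\mathcal Z_p$ is exactly the partition function of a symmetric Gaussian linear estimation problem, so the adaptive-interpolation method (Barbier--Macris, and Barbier--Dia--Macris--Krzakala--Reeves--Pfister for the linear channel) yields $\tfrac1p\,\mathbb{E}\log \mathcal Z_p \to f_{\mathrm{RS}}$, an explicit scalar variational formula obtained by extremizing over $q\in[0,\rho]$ a potential $\phi(q)$ built from a one-dimensional Gaussian denoising problem and a channel term of $\log\det$ type. Concentration of $\tfrac1p\log\mathcal Z_p$ about its mean follows from a Gaussian Poincar\'e / Efron--Stein inequality in $(X,\varepsilon)$. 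I would then identify the extremizer $q^\ast$ of $\phi$ and verify that the associated effective inverse noise equals $\Sigma_\ast^{-2}$ as defined in the statement, so that in the ``good'' phase the smallest solution of the fixed-point equation for $\Sigma_\ast$ is precisely the one that extremizes $\phi$.

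\emph{Step 2 (the TAP supremum converges to the same value).} For the lower bound I would run Approximate Message Passing for the linear model (Bayati--Montanari, via Bolthausen's conditioning technique); its state evolution converges to the fixed point $(\Sigma_\ast^2,q^\ast)$, and the state-evolution description of the joint empirical law of $(R_i,(\beta_0)_i,\hat a_i)$, together with the AMP fixed-point relations, lets one evaluate $\tfrac1p\,\mathcal F_{\mathrm{TAP}}(\hat a)\to f_{\mathrm{RS}}$ for the AMP iterate $\hat a$ after many steps; here the Onsager term $-\tfrac n2\log(1+\bar c/\Delta)$ is exactly what cancels the cross contributions so that the high-dimensional functional collapses onto the scalar value $\phi(q^\ast)$. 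For the upper bound I would show that any sequence of near-maximizers must approximately solve the TAP stationarity equations ($a$ equal, coordinatewise, to the posterior mean $\eta(R;\Sigma_\ast^2)$ with $R$ the Onsager-corrected local field, an explicit function of $X^\top(y-Xa)$ and $a$) and then argue, in the spirit of the TAP analyses of the Sherrington--Kirkpatrick model (El Alaoui--Montanari--Sellke, Auffinger--Jagannath--Chen), that the Onsager-corrected functional carries no spurious high-lying critical points: the energy term $-\|y-Xa\|^2/(2\Delta)$ is concave, a local second-order analysis at the AMP fixed point controls a neighbourhood of it, and an a priori localization of the overlap $\tfrac1p\|a\|^2$ — obtained from a first-moment bound stratified over this order parameter — forces $\tfrac1p\max_{\|a\|_\infty\le1}\mathcal F_{\mathrm{TAP}}(a)\le f_{\mathrm{RS}}+o(1)$. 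Combining the two steps gives \eqref{eq:tapreg_conjecture}.

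\emph{Main obstacle.} The genuinely difficult step is the upper bound in Step 2 — ruling out spurious maximizers of the non-convex TAP functional for a general prior $\pi$ on $[-1,1]$ — which is why this has remained a conjecture: for the spherical prior the single-site integral is explicit and $\mathcal F_{\mathrm{TAP}}$ becomes quadratic up to a low-rank perturbation, whereas here one must control $\sum_i\log\mathbb{E}_\pi[\exp(-(\beta_i-R_i)^2/(2\Sigma_\ast^2))]$ uniformly in $a$. A secondary subtlety is the ``smallest solution'' caveat for $\Sigma_\ast$: one must either establish absence of replica-symmetry breaking in the relevant parameter range, or show directly, via interpolation, that the claimed expression is still a valid upper bound, so that the branch selected in the statement is indeed the extremizer of $\phi$.
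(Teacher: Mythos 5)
The statement you are attempting to prove is Conjecture~\ref{conj:TAP_reg}, which the paper does \emph{not} prove. It is quoted from \cite{krzakala2014variational}, and the paper's Theorem~\ref{main_theorem} establishes only the special case of the uniform spherical prior, where the TAP functional collapses to a closed form in $\|a\|^2/p$. The paper is explicit (see the discussion of future directions, item (ii)) that the product-prior case remains open and that its arguments — which rely on recentering the Hamiltonian, spherical symmetry, and uniform spherical-integral asymptotics after projection onto a subspace — do not carry over to product priors. So there is no ``paper proof'' of this statement to compare against; the honest answer is that this is an open conjecture.

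Your proposal reflects this: you yourself flag as the ``main obstacle'' precisely the step that would make this a theorem rather than a conjecture (ruling out spurious near-maximizers of the nonconvex TAP functional, uniformly over $\|a\|_\infty\le 1$), and you do not resolve it. Step~1 is fine in substance (the RS formula for the mutual information and free energy of the symmetric Gaussian linear channel is known via adaptive interpolation, and Lipschitz/Poincar\'e concentration is standard), and the lower-bound half of Step~2 via AMP state evolution and the Nishimori fixed point is a reasonable and plausible program. But the upper bound as written is a wish list — ``a local second-order analysis controls a neighbourhood'' and ``an a priori localization of the overlap forces the bound'' — without any mechanism for the localization, the control away from the AMP fixed point, or the selection of the smallest-$\Sigma_\ast$ branch. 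It is also worth noting that your proposed route for the upper bound (AMP fixed points plus a landscape analysis of critical points, in the spirit of the $\mathbb{Z}_2$-synchronization TAP papers) is genuinely different from what this paper does in the spherical case: there the upper bound comes from overlap concentration under a vanishing Gaussian perturbation (via the Nishimori identity), which lets one restrict the partition function to a replicated system with nearly equal pairwise overlaps, recenter at the sample mean $a$ of the replicas, and then decouple into $m_p$ copies of a zero-field band model — no landscape argument is used. If you want to engage seriously with the conjecture, the more faithful analogue of the paper's strategy would be to prove an overlap-concentration-based replicated upper bound for product priors, where the missing ingredient is a uniform (in the center $a$) free-energy estimate for the recentered Hamiltonian; for product priors this is precisely the piece that no longer reduces to a spherical integral.
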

The interval $[-1,1]$ is arbitrary, the conjecture generalizes directly to any prior $\pi$ supported on a bounded set.  
The main difference between \eqref{eq:nmf_prod} and \eqref{eq:tapreg_conjecture} is the second term in the RHS of \eqref{eq:tapreg_conjecture}. This term is referred to as the \emph{Onsager Correction} term in statistical physics. The representation \eqref{eq:tapreg_conjecture} is crucial for a number of reasons: 
\begin{itemize}
    \item[(i)] It suggests that the widely popular NMF approximation is actually incorrect in certain regimes, and thus practitioners should exercise caution before using the NMF  approximation. 
    \item[(ii)] In settings where the TAP formalism is conjectured to be correct, it is further conjectured that the variational problem \eqref{eq:tapreg_conjecture} has a unique global optimizer. Further, this optimizer is expected to be an asymptotically optimal estimator (in a Bayesian sense).     
    \item[(iii)] This suggests the following approach---optimize the RHS of \eqref{eq:tapreg_conjecture} directly using an out of the box algorithm such as gradient descent, and then use the resulting estimator for inference. 
\end{itemize} 
Exploring the validity of the TAP framework in this problem thus has vital practical implications. From a theoretical perspective, the posterior distribution \eqref{eq:posterior} can be viewed as a \emph{planted} spin glass model. Due to special symmetries present in these models (e.g. the Nishimori identities), posterior distributions naturally provide examples of \emph{replica symmetric} models in spin glass theory. They provide ideal test beds to study the broader applicability of ideas developed to study mean-field spin glass models such as the Sherrington-Kirkpatrick (SK) and mixed p-spin models. Techniques developed originally in the context of mean-field spin glasses have already been utilized to characterize the limiting mutual information between the data and the underlying signal in the Bayesian linear regression model \cite{barbier2020mutual}. In this paper, we adopt a complementary approach, and initiate a study of Bayes linear regression model through the lens of the TAP approximation.


\subsection{Main Result}
Our main result in this paper is the analogue of Conjecture \ref{conj:TAP_reg} under a uniform spherical prior on $\beta$. Specifically, we consider Bayesian linear regression with a uniform  prior on  $S^{p-1}(\sqrt{p}) \coloneqq \{ \beta \in \mathbb{R}^p: \lVert \beta \rVert = \sqrt{p} \}$. Crucially, we assume throughout that we are in the \emph{well-specified} setting, i.e. the data $\{(y_i,x_i): 1\leq i \leq n\}$ is also generated from the model $y_i = x_i^{\top}\beta_0 + \varepsilon_i$, where $\beta_0 \sim \pi_0$, and $\pi_0$ henceforth denotes the uniform distribution on $S^{p-1}(\sqrt{p})$.

We assume that $x_i \sim \textit{N}_p(0,I/n)$ are iid. We will work under the \emph{proportional asymptotics} setting, i.e., the number of samples $n$ and the number of features $p$ grow proportionally, with $n/p\to \alpha \in (0, \infty)$. For convenience, we will track the asymptotics in terms of $p$, and will think of $n$ as a function of $p$. 
Note that in this setting, the \textit{partition function} is 
\begin{equation}
\label{definition_of_Z_p}
    \mathcal{Z}_p = \int_{S^{p-1}(\sqrt{p})} e^{-\frac{1}{2\Delta} \lVert y - X\beta\rVert^2} \mathrm{d} \pi_0(\beta)\text{.}
\end{equation}
The partition function  $\mathcal{Z}_p$ is a function of both $y$ and $X$, so we use the notation $\mathcal{Z}_p(y,X)$ when we want to highlight its dependence on $y$ and $X$.
We refer to 
\begin{equation}
    F_p(y, X) = \frac{1}{p} \ln \mathcal{Z}_p(y,X) \nonumber 
\end{equation}
as the \textit{free energy} and
\begin{equation}
\label{definition_of_hat_H}
    \mathcal{H}(\beta) = \frac{1}{2\Delta} \lVert y - X\beta\rVert^2
\end{equation}
as the \textit{Hamiltonian}, borrowing terminology from statistical physics. Armed with these notions, we can state our main result.


\begin{theorem}
\label{main_theorem}
    Fix $\alpha \in (0,\infty)$. There exists $\Delta_0>0$ such that for all $\Delta>\Delta_0$,  as $p \to \infty$,
    \begin{equation}
        \left |  F_p - \sup_{a \in \mathbb{R}^p, \lVert a \rVert \le \sqrt{p}} f_{\text{TAP}}(a) \right | \overset{\text{P}}{\longrightarrow}   0 \text{,} \nonumber 
    \end{equation}
    where
    \begin{equation}
        f_{\text{TAP}}(a) \coloneqq -\frac{1}{2\Delta p} \lVert y - X a \rVert^2 -\frac{\alpha}{2} \ln \left (1 + \frac{1-\lVert a\rVert^2 / p}{\Delta \alpha} \right ) + \frac{1}{2} \ln \left ( 1 - \frac{\lVert a \rVert^2}{ p }\right ) \text{.} \label{eq:tap_functional} 
    \end{equation}
\end{theorem}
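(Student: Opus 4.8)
The plan is to use the rotational invariance of the spherical prior to reduce $\mathcal{Z}_p$ to an explicit one-dimensional integral, evaluate it by the Laplace method, and then identify the resulting random-matrix functional with $\sup_a f_{\mathrm{TAP}}(a)$. Writing $\pi_0$ as an isotropic Gaussian conditioned on $\|\beta\|^2 = p$ and encoding that hard constraint by a vertical-contour Fourier integral, one gets
\begin{equation*}
  \mathcal{Z}_p = \frac{C_p}{2\pi i}\int_{s_0-i\infty}^{s_0+i\infty} e^{E(s)}\,\mathrm{d}s,
\end{equation*}
where $E(s) = \frac{sp}{2\Delta} - \frac{1}{2\Delta}\Psi(s) - \frac12\ln\det(X^\top X + sI)$ with $\Psi(s) = \min_{\beta\in\mathbb{R}^p}\big[\|y-X\beta\|^2 + s\|\beta\|^2\big] = \|y\|^2 - y^\top X(X^\top X + sI)^{-1}X^\top y$, the minimizer being the ridge vector $m_s = (X^\top X + sI)^{-1}X^\top y$; here $s_0$ is any real number exceeding $-\lambda_{\min}(X^\top X)$, and $C_p$ is a deterministic constant with $\frac1p\ln C_p = \frac12\ln\Delta - \frac12 + o(1)$ (collecting Gaussian and surface-area factors).

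\textbf{Laplace evaluation and the limiting free energy.} One checks that $E$ is strictly convex on $(-\lambda_{\min}(X^\top X),\infty)$ (indeed $E''(s) = \frac1\Delta y^\top X(X^\top X + sI)^{-3}X^\top y + \frac12\operatorname{Tr}(X^\top X + sI)^{-2}>0$) and that $\mathrm{Re}\,E(s_0+it)\to-\infty$ as $|t|\to\infty$ (the $\ln\det$ term dominates), so the integral is controlled by the unique real critical point $s^\ast$, yielding $F_p = \frac1p E(s^\ast) + \frac1p\ln C_p + o_{\mathrm P}(1)$ as soon as $s^\ast$ stays bounded away from $\lambda_{\min}(X^\top X)$. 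The critical-point equation is $\frac1p\operatorname{Tr}(X^\top X + s^\ast I)^{-1} = \frac1\Delta\big(1 - \frac1p\|m_{s^\ast}\|^2\big)$. By standard random matrix theory the quantities $\frac1p\ln\det(X^\top X + sI)$, $\frac1p\operatorname{Tr}(X^\top X+sI)^{-1}$, $\frac1p\|m_s\|^2$, $\frac1p\|y\|^2 \to 1+\alpha\Delta$ and $\frac1p y^\top X(X^\top X + sI)^{-1}X^\top y$ concentrate, uniformly over $s$ in compacts avoiding the spectral edge, around explicit deterministic limits governed by the Marchenko--Pastur law with aspect ratio $1/\alpha$ and the planted identity $y = X\beta_0 + \varepsilon$ (using $\frac1p\|\beta_0\|^2 = 1$); deterministic-equivalent estimates for sample-covariance resolvents supply the uniformity. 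This pins $s^\ast$ to a deterministic limit, and that limit is bounded away from the edge precisely when $\Delta$ is large enough --- equivalently, when the maximizer of $f_{\mathrm{TAP}}$ is bounded away from the logarithmically singular boundary $\|a\| = \sqrt p$ --- which is the source of $\Delta_0$. One concludes $F_p \overset{\mathrm P}{\longrightarrow} \mathcal F(\alpha,\Delta)$, an explicit deterministic quantity.

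\textbf{The variational side, and matching.} Since $f_{\mathrm{TAP}}(a)$ depends on $a$ only through $\|y-Xa\|^2$ and $\|a\|^2$, minimizing $\|y-Xa\|^2$ with $\|a\|^2$ fixed again produces a ridge vector, so $\sup_{\|a\|\le\sqrt p}f_{\mathrm{TAP}}(a)$ collapses to a scalar optimization over a multiplier $\mu$; feeding in the same Marchenko--Pastur limits gives $\sup_a f_{\mathrm{TAP}}(a)\overset{\mathrm P}{\longrightarrow} G(\alpha,\Delta)$, explicit, with the optimal $a$ converging in the relevant invariants to $m_{s^\ast}$. It then remains to verify the purely deterministic identity $\mathcal F(\alpha,\Delta) = G(\alpha,\Delta)$: using the closed form of the Marchenko--Pastur log-potential $\int\ln(\lambda+s)\,\mathrm{d}\mathrm{MP}_{1/\alpha}(\lambda)$ and its Stieltjes transform, together with the critical-point relation above, one checks that $-\frac1{2p}\ln\det(X^\top X + s^\ast I)$ combines with $\frac12\ln\Delta - \frac12$ to reproduce exactly the Onsager term $-\frac\alpha2\ln\big(1 + \frac{1-\|a\|^2/p}{\Delta\alpha}\big)$ and the entropy term $\frac12\ln(1-\|a\|^2/p)$ of $f_{\mathrm{TAP}}$, under the identification $s^\ast\leftrightarrow\mu$, $a^\ast = m_{s^\ast}$; this is calculus.

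\textbf{Main obstacle.} The delicate point is not the algebra but making the Laplace evaluation rigorous \emph{uniformly in the random design}: one must show that with high probability $s^\ast$ lies at distance bounded below from $\lambda_{\min}(X^\top X)$, ruling out a ``condensation'' (BBP-type) degeneracy in which the spherical integral localizes near the bottom of the spectrum and \eqref{eq:tap_functional} fails. Controlling the location of the saddle requires sharp information on the edge behavior of the limiting Stieltjes transform, and it is exactly the requirement that $s^\ast$ stay away from the edge that fixes the threshold $\Delta_0$. (A Guerra-type interpolation plus Aizenman--Sims--Starr upper bound would be an alternative route to $\mathcal F(\alpha,\Delta)$, but the explicit Gaussian-integral computation is the natural one here and is what makes the TAP representation transparent.)
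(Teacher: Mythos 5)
Your plan is a genuinely different proof from the paper's. The paper splits the argument in two: a lower bound (Theorem \ref{lower_bound}) that re-centers around a fixed $a$, restricts the integral to a band $\operatorname{B}(a,\epsilon)$ inside $S^{p-1}(\sqrt{p})\cap\operatorname{Span}(a,X^{\top}(y-Xa))^{\perp}$, and evaluates the band free energy by a second-moment and Lipschitz-concentration argument, in the spirit of Belius--Kistler; and an upper bound (Theorem \ref{upper_bound}) that relates $\ln\mathcal{Z}_p$ to a replicated, overlap-constrained partition function via overlap concentration under a small Gaussian perturbation channel, and then decouples the replicas around their sample mean. You instead eliminate the hard spherical constraint by a Fourier/Lagrange-multiplier contour integral, evaluate the resulting scalar integral by steepest descent, identify the random functionals that appear with Marchenko--Pastur deterministic equivalents, and observe that $\sup_a f_{\text{TAP}}(a)$ reduces to the same one-parameter ridge optimization, so that $F_p$ and $\sup_a f_{\text{TAP}}(a)$ converge separately to one explicit constant. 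If carried out, this is a legitimate route and gives a slightly stronger conclusion (two laws of large numbers rather than merely vanishing of the difference), together with a closed-form limit. What it costs to make rigorous is primarily the random-matrix input: one needs an anisotropic local law for $(X^{\top}X+sI)^{-1}$ along the planted direction (since $y$ is correlated with $X$), not just trace-level concentration; one needs it uniformly in $s$ on compacts to locate the saddle; and one must show the real critical point $s^{\ast}$ stays at distance bounded below from $-\lambda_{\min}(X^{\top}X)$ with high probability. You correctly flag this last, condensation-type obstruction as the source of $\Delta_0$. The paper instead isolates the high-temperature constraint inside a single second-moment estimate for a null two-spin model on the band (Lemma \ref{lim_calculation_of_log_gamma}), never touches resolvent RMT, and its upper bound holds for all $\Delta>0$. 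Finally, your route leans entirely on the spherical prior's exact scalarization and would not extend toward Conjecture \ref{conj:TAP_reg} for product priors; the paper's overlap-concentration framing is chosen precisely to have a chance of generalizing beyond the sphere.
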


\begin{remark}
Note from \eqref{definition_of_Z_p} that $ 1 / (2\Delta)$ is similar to an inverse temperature parameter in this model. Thus our TAP representation is valid \emph{at high temperature}. The high-temperature requirement comes up when we lower bound the free energy in terms of the TAP formula. We emphasize that the matching upper bound is valid in general, and does not require any additional conditions. Establishing a valid TAP representation at any $\Delta>0$ remains an open problem in this setting. 
\end{remark} 

\begin{remark}
A close look at our proof indicates that the TAP representation is valid for any $\Delta>0$ if $\alpha$ is sufficiently large. In this case, the Signal-to-Noise Ratio is high, and the model is again effectively at high-temperature. We state all our results at fixed $\alpha$, with $\Delta>0$ sufficiently large, but note that all results transfer immediately to the \emph{fixed $\Delta$ large $\alpha$} regime. 
\end{remark}

\subsection{Background and Challenges}
We describe here the broader context of our result. We also take this opportunity to highlight the main differences from the existing literature, and describe the key challenges in our setting.

The TAP formalism was originally introduced for the Sherrington-Kirkpatrick model by Thouless-Anderson-Palmer in \cite{thouless1977solution}, predating Parisi's groundbreaking \emph{replica-symmetry breaking} solution. The TAP approach yields a variational representation for the free energy, as well as a fixed point equation for the global magnetizations, referred to as the TAP equations. The TAP equations were established rigorously using a number of different approaches---Stein's method \cite{chatterjee2010spin}, cavity method \cite{talagrand2010mean}, dynamical methods \cite{adhikari2021dynamical} etc. At high-temperature, the Gibbs measure is in one \emph{pure-state}, and the magnetizations satisfy the TAP equations. The behavior at low temperatures is significantly more complicated---the Gibbs measure decomposes into exponentially many \emph{pure states}, and a version of the TAP equations are expected to hold within each pure state. A rigorous version of this conjecture was verified in \cite{auffinger2019thouless} using the approximate ultrametric decomposition from \cite{jagannath2017approximate}. 

The progress on the TAP formula for the free energy is more recent. \cite{chen2018tap} verified the TAP representation by evoking the Parisi formula. More recently, direct proofs of the TAP representation have been derived in the literature. In this regard, \cite{belius2019tap} utilizes Random Matrix Theory to establish the variational formula for the spherical SK model, while subsequent developments establish these results for mixed p-spin models \cite{chen2018generalized,chen2021generalized}. In this regard, the notion of \emph{multi-samplability} plays a crucial role. We will also crucially exploit a related idea in our analysis. We emphasize this connection in our discussion of the proof strategy below. We note that in sharp contrast with the model studied in this paper, the models studied in the prior literature are \emph{null} models, i.e. do not have a latent signal. The planted setting is arguably more natural in the context of Statistics and Machine Learning. To the best of our knowledge, this is the very first proof of a TAP formula for the free energy for a planted spin glass model. We note that the random hamiltonians in the study of the Sherrington-Kirkpatrick and mixed p-spin models are centered Gaussian processes---this directly facilitates the use of tools tailored for Gaussian variables e.g. interpolation  and comparison techniques \cite{chen2018generalized,chen2021generalized}. This is no longer true in our setting, and thus necessitates new ideas. In this setting, the hamiltonian roughly looks like a two-spin model, with the sample covariance matrix as the couplings among the spins. We directly utilize this structure in our proofs, exploiting spectral properties of this matrix. We expect our proof ideas to be applicable for more general random matrix ensembles, e.g., the orthogonally invariant matrix ensembles \cite{parisi1995mean,barbier2018mutual}.

The behavior of the TAP free energy has been examined in the setting of finite rank spiked matrix models in \cite{ghorbani2019instability}. \cite{ghorbani2019instability} establishes that the Naive Mean Field approximation exhibits an undesirable instability property below the Information Theoretic threshold, thus rendering variational methods based on this approximation untrustworthy in practice. In turn, it suggests using the TAP framework to carry out variational inference in this setting. In follow up work \cite{fan2021tap,celentano2021local}, the accuracy of TAP-approximation based variational inference has been established in the context of the $\mathbb{Z}_2$ synchronization problem, a special case of the general setting examined in \cite{ghorbani2019instability}.

\subsection{Proof strategy}

Our proof has two separate arguments---first, we lower bound the log-partition function $\log \mathcal{Z}_p$ in terms of the TAP free energy $f_{\text{TAP}}(a)$, as defined in \eqref{eq:tap_functional}. To this end, fix $a \in \mathbb{R}^p$, $\|a\| \leq \sqrt{p}$. We re-center the hamiltonian $\mathcal{H}(\beta)$ around the vector $a$; this yields the term $- \|y - Xa\|^2 / (2\Delta p)$ in $f_{\text{TAP}}(a)$. To obtain a valid lower bound, we restrict the integral in $\mathcal{Z}_p$ to the intersection of $S^{p-1}(\sqrt{p})$ and a subspace that is orthogonal to $a$ and the resulting external magnetization $X^{\top}(y-Xa)$ from the previous re-centering step. The volume of this \emph{band} is $ [ \ln (1- \|a\|^2/p) ] / 2 $---this contributes the third term in $f_{\text{TAP}}(a)$. Finally, the model restricted to the \emph{band} is a spherical two-spin model with hamiltonian $H(\beta) = - \|X(\beta-a)\|^2 / (2 \Delta)$. The free energy of this model is calculated using the second moment method, and contributes  $-(\alpha / 2) \ln [1 + ( 1- \|a\|^2/p) / (\Delta \alpha ) ] $ in $f_{\text{TAP}}(a)$. This yields a natural interpretation of the Onsager correction term in this problem. We note that this step explicitly uses the ``high-temperature" condition stated in the Theorem statement. This shows that $( \ln \mathcal{Z}_p ) / p \geq f_{\text{TAP}}(a)$ \emph{uniformly} over all $a \in \mathbb{R}^p$ such that $\|a\|/\sqrt{p}$ is bounded away from one. We argue separately that the end-point can be neglected. Our argument is similar to the one of \cite{belius2019tap} in the context of the spherical two-spin Sherrington-Kirkpatrick model. 

Our main contribution is the corresponding upper bound on the free energy in terms of the TAP free energy functional. To this end, we crucially leverage \emph{overlap concentration} properties under the well-specified posterior distributions \cite{barbier2019adaptive}. Roughly, this implies that given a vanishing amount of additional side information, with high-probability (over the law of the disorder $(y,X)$), we can sample many iid \emph{replicas} $\beta^1, \cdots, \beta^{m_p}$ from the posterior distribution such that all the pairwise overlaps are concentrated around a fixed value. Such overlap values are also termed as \emph{multi-samplable} in \cite{subag2018free} in the study of the TAP formalism for mixed p-spin spherical spin glasses. This overlap concentration property allows us to derive an upper bound to the log-partition function $\ln \mathcal{Z}_p$ in terms of a restricted partition function for a \emph{replicated} system, where all the pairwise overlaps are restricted to approximately the same value. We now set $a$ as the sample average of these replicas, and re-center the hamiltonian around this vector. The concentration of all pairwise overlaps around a fixed value implies that the $\beta^{i}-a$'s are approximately orthogonal, and this allows us to cancel the external magnetic field $X^{\top}(y-Xa)$ created from the re-centering step. Finally, due to the approximate orthogonality of the $\beta^i-a$ vectors, the integral in the replicated system approximately de-couples, and provides $m_p$ copies of the Onsager correction and volume contributions. In particular, this establishes that 
$ ( \ln \mathcal{Z}_p ) / p \leq f_{\text{TAP}}(a) + o(1)$ for some (random) $a \in \mathbb{R}^p$, and thus yields the desired upper bound.

We note that the overlap concentration property plays a very crucial role in our proof. This concentration property has been established recently for well-specified Bayesian posteriors (see e.g. \cite{barbier2020strong}). It should be possible to combine the ideas in this paper with overlap concentration to establish similar TAP formulae in other Bayesian inference problems. We leave this for future work.

\subsection{Directions for future research} 
We collect here some open questions for future research. 

\begin{itemize}
    \item[(i)] An immediate open question is to extend our TAP representation to any $\Delta>0$. We note that the high-temperature condition is explicitly used to calculate the free energy on the band---this is the only step in our proof which uses this condition. 

    \item[(ii)] The main question for future research concerns a proof of Conjecture \ref{conj:TAP_reg}. We believe that the ideas introduced in this paper will be crucial in this effort. The spherical prior crucially allows us to use ideas from random matrix theory; specifically, after the re-centering step described above, one has to compute the free energy of a two spin model on the sphere. Further, one requires this estimate to be \emph{uniform} over the choice of the center. Exploiting the spherical symmetry, we use asymptotics of spherical integrals \cite{guionnet2005fourier} to derive these uniform estimates. This symmetry is crucially absent for other natural priors, e.g., product priors. Thus our arguments do not generalize directly to these settings.

    \item[(iii)] In Statistics and Machine Learning, one is often interested in Variational Inference based on the TAP free energy. Specifically, given Theorem \ref{main_theorem}, a natural recipe is to start at some appropriate initialization, and subsequently optimize $f_{\text{TAP}}$ directly by some out-of-the-box optimization algorithm such as gradient descent. Of course, $f_{\text{TAP}}$ is non-convex in general, and thus the algorithm will usually reach a local optimum, rather than the global maximum. This yields a natural estimator, and one naturally wishes to study the statistical performance of this estimator. It would be of interest to characterize this performance. Such results have been achieved recently in the context of the $\mathbb{Z}_2$ synchronization problem \cite{fan2021replica,celentano2021local}. We expect these insights for $\mathbb{Z}_2$ synchronization to also be useful in the context of Bayesian linear regression. 
\end{itemize}

\noindent
\textbf{Notations:} 
We use the usual Bachmann-Landau notation $O(\cdot)$, $o(\cdot)$, $\Theta(\cdot)$ for sequences. For a sequence of random variables $\{X_p : p \geq 1\}$, we say that $X_p = o(1)$ if $X_p \stackrel{P}{\to} 0$ as $p \to \infty$.

We use $\mathbb{P}[\cdot]$ and $\mathbb{E}[\cdot]$ to denote the probability and expectation under the joint distribution of the data $(y, X)$. For other measures, we add a suitable subscript to the probability and expectation operators to emphasize the distribution under consideration. 
In our computations, we will often restrict the calculations to a \emph{good} set. We collect this set here. Let $\kappa = 100$. Define a set
\begin{equation}
\label{eq:def_Omega}
    \Omega \coloneqq \left  \{ (X, \epsilon, \beta_0): \sigma_{\text{max}}(X) < 1 + \alpha^{-1/2} + \kappa, \lVert \varepsilon \rVert < \kappa  \sqrt{p \Delta} \right  \} \text{,}
\end{equation}
where $\sigma_{\max}(X)$ denotes the largest singular value of $X$. 
By Lemma \ref{large_deviation_characterization_for_the_largest_eigenvalue_of_Weshart_matrices}, one has
\begin{equation}
    \lim_{p \to \infty} \mathbb{P}((X, \epsilon, \beta_0) \in \Omega)  =  0 \text{.} \nonumber 
\end{equation}
\noindent 
Note that with a slight abuse of notations, we will use $(X,y) \in \Omega$ and $(X, \epsilon, \beta_0) \in \Omega$ exchangeably. Throughout this paper, unless otherwise specified, a sequence of events is said to occur \emph{with high probability} if it has probability $1-o(1)$ under the joint distribution $(X,y)$.

Throughout, we use $C,c, C_1, c_1, C_2,c_2 \cdots $ to denote positive constants independent of $n,p$, possibly depending on the parameters $\Delta$ and $\alpha$. Further, these constants can change from line to line. For any square symmetric matrix $A$, $\|A\|_2$ and $\|A\|_F$ denote the matrix operator norm and the Frobenius norm respectively. Finally, $\langle \cdot \rangle$ will denote the expectation operator under the posterior distribution \eqref{eq:posterior}.

\noindent
\textbf{Outline:} The rest of the article is structured as follows: We derive a lower bound on the log-partition function via the TAP free energy in Section \ref{sec:lower_bound}, while Section \ref{sec:upper_bound} establishes a matching upper bound. Section \ref{sec:combination} combines these two bounds, and establishes Theorem \ref{main_theorem}. We crucially utilize the notion of overlap concentration in our upper bound proof. The proof of overlap concentration is a direct adaptation of existing arguments in the literature. For the sake of completeness, we include a proof in our setting in Appendix \ref{sec:overlap_conc}. 


\section{Lower bound on the free energy}
\label{sec:lower_bound} 

To get a lower bound of the free energy $F_p = \frac{1}{p} \ln \mathcal{Z}_p$, we restrict the integral in (\ref{definition_of_Z_p}) to specific subsets of $S^{p-1}(\sqrt{p})$, which yields a natural lower bound. Following this strategy, we will first state Theorem \ref{band_theorem} in Section \ref{sec:lower_bound_proof_from_band_theorem}; this specifies the subsets that we look at. We then prove Theorem \ref{lower_bound} assuming Theorem \ref{band_theorem}. The proof of Theorem \ref{band_theorem} itself will be deferred to Section \ref{sec:proof_of_band_theorem}. Finally, in Section \ref{sec:lemmas_for_proof_of_lower_bound}, we provide proofs of some intermediate lemmas that are used to prove Theorem \ref{band_theorem}.

\begin{theorem}[Lower bound]
\label{lower_bound}
Fix $\alpha \in (0,\infty)$. There exists $\Delta_0>0$ such that for all $\Delta> \Delta_0$, 
    any $\eta >0$, $c \in (0,1)$,
    \begin{equation}
        \mathbb{P} \Big (  \frac{1}{p} \ln \mathcal{Z}_p \le \sup_{a \in \mathbb{R}^p,  \lVert a \rVert \le (1 - c) \sqrt{p}} f_{\text{TAP}}(a) - \eta \Big) \to 0 \text{.} \nonumber 
    \end{equation}
\end{theorem}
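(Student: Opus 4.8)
The plan (mirroring the structure sketched in the introduction) is to prove Theorem~\ref{lower_bound} by the \emph{band restriction} argument, which reduces it to one uniform estimate on a family of restricted partition functions; I would isolate that estimate as a separate statement, the \emph{band estimate} (Theorem~\ref{band_theorem}), and grant it for now. Throughout I work on the good event $\Omega$ of \eqref{eq:def_Omega}, which has probability $1-o(1)$ and supplies the uniform control $\sigma_{\max}(X)=O(1)$ and $\|y\|=O(\sqrt p)$ used repeatedly. The starting point is trivial: since the integrand in \eqref{definition_of_Z_p} is positive, for \emph{every} fixed $a$ with $\|a\|\le(1-c)\sqrt p$ and any subset $S\subseteq S^{p-1}(\sqrt p)$ one has the deterministic bound $\mathcal{Z}_p\ge\int_S e^{-\mathcal{H}}\,\mathrm{d}\pi_0$; the whole game is to pick $S$ so that the right-hand side evaluates to $f_{\text{TAP}}(a)+o(1)$, and then to optimize over $a$.

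\emph{The band and the re-centering.} Fix $a$ as above, set $q=\|a\|^2/p\in[0,(1-c)^2]$, let $V=\mathrm{span}\{a,\,X^\top(y-Xa)\}$ (which contains $a$ and is generically two-dimensional), let $W=V^\perp$, and let $P_W$ be the orthogonal projection onto $W$. Define the band $B_a=(a+W)\cap S^{p-1}(\sqrt p)$. Since $a\perp W$, every $\beta=a+\gamma\in B_a$ has $\gamma\in W$ and $\|\gamma\|^2=\|\beta\|^2-\|a\|^2=p(1-q)$, so $B_a$ is a sphere of radius $\sqrt{p(1-q)}$ inside the $(p-2)$-dimensional space $W$. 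Disintegrating $\pi_0$ along $V$ and keeping a small ball around $a$ in $V$ (whose radius is sent to $0$ slowly with $p$, killing all discretization errors via $\sigma_{\max}(X)=O(1)$) gives
\[
  \tfrac1p\ln\mathcal{Z}_p\;\ge\;\tfrac1p\ln\rho_V(a)\;+\;\tfrac1p\ln\!\int_{B_a}e^{-\mathcal{H}(\beta)}\,\mathrm{d}\nu_a(\beta)\;-\;o(1),
\]
where $\rho_V$ is the density of the $V$-marginal of $\pi_0$ and $\nu_a$ the uniform probability measure on $B_a$. Expanding the Hamiltonian \eqref{definition_of_hat_H} around $a$: for $\beta=a+\gamma$ with $\gamma\in W$,
\[
  2\Delta\,\mathcal{H}(a+\gamma)\;=\;\|y-Xa\|^2-2\langle X^\top(y-Xa),\gamma\rangle+\|X\gamma\|^2\;=\;\|y-Xa\|^2+\|X\gamma\|^2,
\]
the cross term vanishing precisely because $\gamma\perp X^\top(y-Xa)$ — this is why $X^\top(y-Xa)$ was put into $V$ — so $\tfrac1p\ln\int_{B_a}e^{-\mathcal{H}}\,\mathrm{d}\nu_a=-\tfrac1{2\Delta p}\|y-Xa\|^2+\tfrac1p\ln\int_{B_a}e^{-\|X\gamma\|^2/(2\Delta)}\,\mathrm{d}\nu_a(\gamma)$.

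\emph{Evaluating the three pieces and optimizing over $a$.} The first summand above is exactly the first term of $f_{\text{TAP}}(a)$ in \eqref{eq:tap_functional}. The $V$-marginal density of a uniform point on $S^{p-1}(\sqrt p)$ at a point of squared norm $pq$ is proportional to $(p-pq)^{(p-4)/2}$, so $\tfrac1p\ln\rho_V(a)\to\tfrac12\ln(1-q)=\tfrac12\ln(1-\|a\|^2/p)$, the third term of $f_{\text{TAP}}$. The remaining quantity $\tfrac1p\ln\int_{B_a}e^{-\|X\gamma\|^2/(2\Delta)}\,\mathrm{d}\nu_a(\gamma)$ is the free energy of a spherical two-spin model on $W$ with coupling matrix $P_WX^\top XP_W$; the band estimate (Theorem~\ref{band_theorem}) asserts that on $\Omega$, uniformly over $\|a\|\le(1-c)\sqrt p$, this equals $-\tfrac{\alpha}{2}\ln(1+\tfrac{1-q}{\Delta\alpha})+o(1)$ whenever $\Delta>\Delta_0$, which is the Onsager term of $f_{\text{TAP}}$. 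Adding the three pieces gives $\tfrac1p\ln\mathcal{Z}_p\ge f_{\text{TAP}}(a)-o(1)$ simultaneously for all $a$ in the ball of radius $(1-c)\sqrt p$; taking the supremum over $a$ yields $\mathbb{P}\big(\tfrac1p\ln\mathcal{Z}_p\le\sup_{\|a\|\le(1-c)\sqrt p}f_{\text{TAP}}(a)-\eta\big)\to0$, i.e.\ the theorem. (If Theorem~\ref{band_theorem} is proved only pointwise, one runs the argument on a $p$-independent $\epsilon$-net of the ball: there are $e^{O(p)}$ net points and the per-point failure probability is $e^{-cp}$ by Lipschitz concentration of the free energy in $X$, so a union bound applies, and the $\epsilon$-gap is closed using that $a\mapsto f_{\text{TAP}}(a)$ and $a\mapsto\tfrac1p\ln\int_{B_a}e^{-\mathcal{H}}\,\mathrm{d}\nu_a$ are Lipschitz there, uniformly on $\Omega$.) The endpoint $\|a\|\to\sqrt p$ is deliberately excluded — the band degenerates there — and is treated separately when assembling Theorem~\ref{main_theorem}.

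\emph{The main obstacle.} Everything above is routine; the genuine content is Theorem~\ref{band_theorem}, the \emph{uniform} evaluation of the two-spin band free energy $\tfrac1p\ln\int_{S(W,\sqrt{p(1-q)})}e^{-\|X\gamma\|^2/(2\Delta)}\,\mathrm{d}\nu_a$. The annealed computation is easy — for fixed $\gamma$, $X\gamma\sim N(0,\tfrac{p(1-q)}{n}I_n)$, so its first moment is $(1+\tfrac{p(1-q)}{\Delta n})^{-n/2}$ and $\tfrac1p\ln$ of it converges to exactly the Onsager term — and the scheme is the second-moment method: show that the second moment of the band partition function is a bounded multiple of the first moment squared, then, together with concentration of the free energy, deduce the matching quenched lower bound. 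The second-moment ratio reduces to an integral over the overlap $R\in[-1,1]$ of two replicas of $\exp\{p[\tfrac12\ln(1-R^2)-\tfrac{\alpha}{2}\ln(1-\tfrac{b^2R^2}{(1+b)^2})+o(1)]\}$ with $b=\tfrac{1-q}{\Delta\alpha}$, and this integrand is maximized at $R=0$ exactly when $\sqrt\alpha\,b<1+b$; this is the origin of the high-temperature threshold $\Delta_0$, and of the remark that large $\alpha$ alone suffices (the condition is automatic for $\alpha\le1$). Two wrinkles make even this delicate: $W$ depends on $X$ and $y$ through $X^\top(y-Xa)$, so the restricted design is not a fresh Gaussian matrix — one circumvents this by writing the band partition function as a function of the spectrum of the compression of $X^\top X$ to $W$, which interlaces that of $X^\top X$ and hence has the same limiting Marchenko--Pastur bulk, and by invoking eigenvalue rigidity for the concentration; and uniformity over $a$ requires these spectral estimates to hold simultaneously over the ball, which is where the operator-norm bound built into $\Omega$ enters. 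Extending the band computation (hence Theorem~\ref{lower_bound}) below $\Delta_0$ is the outstanding gap: outside the replica-symmetric regime the simple closed form for the band free energy is expected to fail.
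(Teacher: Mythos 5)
Your argument matches the paper's proof of Theorem~\ref{lower_bound}: restrict $\mathcal{Z}_p$ to a thin band around $a$ orthogonal to $\operatorname{Span}(a, X^{\top}(y-Xa))$, re-center the Hamiltonian so the linear term cancels, and combine the band-volume factor $\tfrac12\ln(1-\lVert a\rVert^2/p)$ with the two-spin free energy on the band supplied uniformly by Theorem~\ref{band_theorem}. Your disintegration phrasing is a cosmetic variant of the paper's thickened band $\operatorname{B}(a,\epsilon)$ with tolerance $\epsilon\to0$, and your sketch of how Theorem~\ref{band_theorem} is proved (annealed first moment, second-moment ratio controlled by the overlap integrand, eigenvalue interlacing to pass from the random subspace $W$ to the full sample covariance spectrum) is precisely the route the paper follows.
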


\subsection{Proof of Theorem \ref{lower_bound} using Theorem \ref{band_theorem}}
\label{sec:lower_bound_proof_from_band_theorem}

\begin{theorem}
\label{band_theorem}
    Let $\{v_1, v_2\}$ be an orthogonal basis of length $\sqrt{p}$ of the 2-dim linear subspace  $\mathrm{Span}(a, X^{\top} (y - X a)) \subset \mathbb{R}^p$. For small positive $\epsilon$ and $a \in \mathbb{R}^p$, define
    \begin{equation}
    \label{eq:def_B}
        \operatorname{B}(a,\epsilon) = \{ \beta: v_i^{\top} (\beta-a)  \in (- p \epsilon, p \epsilon), \, \forall i \in \{1,2\} \} \text{.} 
    \end{equation}
    There exists $\Delta_0>0$ such that for all $\Delta>\Delta_0$, for any $c\in[0,1]$, there exists $C>0$ such that as $p \to \infty$, 
    \begin{equation}
    \label{eq:band_theorem}
        \begin{aligned}
        \mathbb{P} \Bigg ( \sup_{\lVert a \rVert \le (1-c)\sqrt{p}} \Bigg | \frac{1}{p} & \ln \Big ( \int_{\operatorname{B}(a,\epsilon) } e^{-\frac{1}{2 \Delta} (\beta- a )^{\top} X^{\top} X (\beta - a) } \mathrm{d} \pi_0(\beta) \Big )\\
        - \Big [ -\frac{\alpha}{2} & \ln \Big ( 1 + \frac{(1-\lVert a\rVert^2 / p)}{\Delta \alpha} \Big ) + \frac{1}{p} \ln \operatorname{Vol}(a, \epsilon)  \Big ] \Bigg  | \le C \epsilon \Bigg ) \to 1, \nonumber 
        \end{aligned}
    \end{equation} 
where $\operatorname{Vol}(a, \epsilon) \coloneqq \operatorname{Vol}(\operatorname{B}(a,\epsilon) ) = \mathbb{E}_{\beta \sim \pi_0}[\mathbf{1}_{\beta \in \operatorname{B}(a,\epsilon)}]$. 
\end{theorem}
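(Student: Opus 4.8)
The plan is to localize the band integral to an $(p-3)$-dimensional sphere, evaluate the resulting spherical integral by a Laplace/Lagrange-multiplier saddle-point analysis (the \emph{second-moment} step), and identify the saddle value through the Marchenko--Pastur law.

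\emph{Step 1 (localization).} Fix $a$ with $q:=\lVert a\rVert^2/p<1$, write $\gamma=\beta-a$ and $V_a:=\mathrm{Span}(a,X^{\top}(y-Xa))^{\perp}$. On $\operatorname{B}(a,\epsilon)\cap S^{p-1}(\sqrt p)$ one has $\operatorname{dist}(\gamma,V_a)=O(\sqrt p\,\epsilon)$ and $\lVert\gamma\rVert^2=p(1-q)+O(p\epsilon)$; on $\Omega$, replacing $\gamma$ by its $V_a$-projection changes $(\beta-a)^{\top}X^{\top}X(\beta-a)=\lVert X\gamma\rVert^2$ by only $O(p\epsilon)$ because $\sigma_{\max}(X)\le C$ there. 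Fibering the band over $S_{V_a}(\sqrt{p(1-q)})$ with two-dimensional transverse fibers then gives, uniformly over $\lVert a\rVert\le(1-c)\sqrt p$,
\begin{align*}
\tfrac1p\ln\int_{\operatorname{B}(a,\epsilon)}e^{-\frac1{2\Delta}\lVert X(\beta-a)\rVert^2}\,\mathrm d\pi_0(\beta)&=\tfrac1p\ln\operatorname{Vol}(a,\epsilon)+\tfrac1p\ln J(a)+O(\epsilon),\\
J(a)&:=\int_{S_{V_a}(\sqrt{p(1-q)})}e^{-\frac1{2\Delta}\gamma^{\top}X^{\top}X\gamma}\,\mathrm d\sigma(\gamma),
\end{align*}
with $\sigma$ the uniform probability measure on the displayed sphere; the endpoint $q=1$ is trivial since there $J(a)=1$. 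It remains to show $\tfrac1p\ln J(a)=-\tfrac\alpha2\ln(1+\tfrac{1-q}{\Delta\alpha})+O(\epsilon)$, uniformly in $a$.

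\emph{Step 2 (saddle point).} Put $M_a:=P_{V_a}X^{\top}XP_{V_a}$, viewed as an operator on $V_a$, and $\theta:=(1-q)/\Delta$. Rescaling $\gamma$ to the unit sphere and inserting the Fourier representation of the constraint $\delta(\lVert\cdot\rVert^2-1)$ turns $J(a)$ into a one-dimensional contour integral of $\exp\!\big(-\tfrac p2\big[\hat\lambda+\tfrac1p\sum_j\ln(\theta\lambda_j(M_a)-\hat\lambda)\big]\big)$ against a subexponential prefactor --- the asymptotics of this rank-one spherical integral being exactly the subject of \cite{guionnet2005fourier}. On $\Omega$ the integrand is maximized on the vertical line through the real saddle $\hat\lambda^{*}$, the unique root of $\tfrac1p\sum_j(\theta\lambda_j(M_a)-\hat\lambda)^{-1}=1$ with $\hat\lambda<\theta\lambda_{\min}(M_a)$, and Laplace's method gives
\[
\tfrac1p\ln J(a)=-\tfrac12\Big[1+\hat\lambda^{*}+\tfrac1p\textstyle\sum_j\ln(\theta\lambda_j(M_a)-\hat\lambda^{*})\Big]+O\!\big(\tfrac{\ln p}{p}\big).
\]
This is the rigorous form of the second-moment estimate $Z\asymp\mathbb E Z$ for the spherical two-spin model with coupling $-X^{\top}X/\Delta$ (cf.\ \cite{belius2019tap}), and non-degeneracy of the saddle --- that $\theta\lambda_{\min}(M_a)-\hat\lambda^{*}$ stays bounded below, i.e.\ no replica condensation --- is precisely where $\Delta>\Delta_0$ is used: since $\theta\le1/\Delta$ it suffices that $1/\Delta<\theta_c:=\lim_{z\uparrow\lambda_{\min}}\int(x-z)^{-1}\,\mathrm d\mu_{\mathrm{MP}}(x)$, which equals $\alpha/(\sqrt\alpha-1)$ when $\alpha>1$ and $+\infty$ otherwise, so $\Delta_0=(\sqrt\alpha-1)_+/\alpha$ works (and at any $\Delta$ once $\alpha$ is large, where $\theta_c\to\infty$). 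The $X$-dependence of the band direction $v_2$ only changes which two-dimensional subspace $V_a^{\perp}$ is removed and is invisible at leading order by Step 3.

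\emph{Step 3 (spectrum and identification).} Since $\dim V_a^{\perp}=2$, Cauchy interlacing gives $\lambda_{k+2}(X^{\top}X)\le\lambda_k(M_a)\le\lambda_k(X^{\top}X)$, so the empirical spectral distribution of $M_a$ agrees with that of $X^{\top}X$ up to $O(1/p)$ in Kolmogorov distance \emph{for every} $a$; by Marchenko--Pastur the latter converges to $\mu_{\mathrm{MP}}$ with ratio $p/n\to1/\alpha$, with extreme eigenvalues controlled with high probability. As $\theta=(1-q)/\Delta$ ranges over the compact set $[0,1/\Delta]$ and, away from condensation, the saddle value is jointly continuous in the spectral distribution (weakly) and in $\theta$, the estimate of Step 2 holds uniformly over all admissible $a$ without any net argument. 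Passing to $\mu_{\mathrm{MP}}$ then reduces the claim to the identity
\[
-\tfrac12\Big[1+\theta z^{*}+\ln\theta+\textstyle\int\ln(x-z^{*})\,\mathrm d\mu_{\mathrm{MP}}(x)\Big]=-\tfrac\alpha2\ln\!\big(1+\tfrac\theta\alpha\big)\quad\text{when}\quad\textstyle\int\frac{\mathrm d\mu_{\mathrm{MP}}(x)}{x-z^{*}}=\theta,
\]
proved by differentiating both sides in $\theta$ (the $\mathrm dz^{*}/\mathrm d\theta$ terms cancel via the defining equation for $z^{*}$), matching at $\theta=0$, and verifying that $z^{*}=(1+\theta/\alpha)^{-1}-\theta^{-1}$ solves the Marchenko--Pastur self-consistency equation with Stieltjes transform equal to $\theta$. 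Combining Steps 1--3 gives Theorem \ref{band_theorem}.

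\emph{Main obstacle.} The crux is Step 2: making the saddle-point (equivalently, second-moment) analysis rigorous with an error that is $o(1)$ \emph{uniformly in $a$}, and in particular locating the high-temperature threshold by ruling out condensation of the saddle at the lower spectral edge --- including the case $\alpha<1$, where $X^{\top}X$ is singular, $\lambda_{\min}(M_a)=0$ and $\hat\lambda^{*}$ is negative. Controlling the contour tails, treating the $X$-dependent band direction cleanly, and carrying out the Marchenko--Pastur computation of the saddle value are the remaining, more routine, technical points.
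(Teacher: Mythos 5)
Your decomposition into localization, spectral estimate, and uniformity-via-interlacing mirrors the paper's structure, and Steps 1 and 3 track the paper's proof closely (the paper's Lemma \ref{limit_of_free_energy_without_external_field_on_a_linear_subspace} is exactly your interlacing argument). The genuine departure is Step 2: the paper evaluates the free energy of the band model by the \emph{second moment method} (Lemmas \ref{concentration_ineq_for_second_moment_method}, \ref{lim_calculation_of_log_gamma}, \ref{second_moment_method_lemma}) --- showing $\mathbb{E}Z_p^2 \le e^{o(p)}(\mathbb{E}Z_p)^2$ via integration by parts against the overlap density and the MGF of $\chi^2$ --- whereas you propose to compute the same quantity via a Fourier/contour representation of the spherical constraint and a Laplace saddle-point, in the spirit of Guionnet--Ma\"ida. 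Both roads lead to $-\tfrac{\alpha}{2}\ln(1+\tfrac{\theta}{\alpha})$; the second-moment route is more elementary and requires no spectral-edge control, while your route, if carried out rigorously, would yield the sharp condensation threshold $\Delta_0 = (\sqrt\alpha-1)_+/\alpha$ rather than the non-explicit $\Delta_0$ the paper's argument produces. A few cautions: the integral $J(a)$ is not a \emph{rank-one} spherical integral --- it has a full-rank quadratic form in the exponent, and the relevant results in \cite{guionnet2005fourier} are the full-spectrum asymptotics, which are harder to make uniform; your claim that the saddle never condenses for $\alpha\le 1$ (so $\Delta_0=0$) rests on the point mass at $0$ in $\mu_{\mathrm{MP}}$, but for finite $p$ one must also control $\lambda_{\min}(M_a)$ and its fluctuations, and verify that the saddle $\hat\lambda^*$ stays uniformly separated from $\theta\lambda_{\min}(M_a)$ over all admissible $a$; and the assertion that uniformity over $a$ follows ``without any net argument'' from weak continuity of the saddle value is exactly the delicate point the paper sidesteps by its second-moment formulation, where the $a$-dependence is reduced to a scalar temperature rescaling $\tilde\Delta = p\Delta/(p-\|a\|^2)$ and uniformity follows from a Lipschitz bound in $1/\Delta$ on a compact grid (Lemma \ref{limit_of_free_energy_without_external_field_uniform}). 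In short: sound alternative route, correct identification of the hard step, but the sharp-$\Delta_0$ claim and the uniformity-near-the-edge control would need substantially more work to match the rigor of the paper's (coarser but simpler) argument.
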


\begin{proof}[Proof of Theorem \ref{lower_bound}]
Note that by the definition of $\Omega$, i.e. (\ref{eq:def_Omega}), for any $(X,y) \in \Omega$,
\begin{align}
 &  \lVert X^{\top} (y - Xa)\rVert^2 \le \sigma^2_{\text{max}}(X) \lVert X(\beta_0 - a) + \varepsilon \rVert^2  \nonumber \\
   & \le \sigma^2_{\text{max}}(X) \left [ 4 p \sigma^2_{\text{max}}(X) + \lVert \varepsilon \rVert^2 + 4 \sqrt{p} \sigma_{\text{max}}(X) \lVert \varepsilon \rVert \right ] = O(p) \text{.} \label{eq:X_transpose_y_minus_Xa_order_p}
\end{align}
From now on, we only consider $(X,y) \in \Omega$. Since the integrand in (\ref{definition_of_Z_p}) is always strictly positive and $\operatorname{B}(a,\epsilon)$ is a subset of $S^{p-1}(\sqrt{p})$, naturally we have the following lower bound
\begin{equation}
    \mathcal{Z}_p = \int_{S^{p-1}(\sqrt{p})} e^{-\frac{1}{2\Delta} \lVert y - X\beta\rVert^2}  \mathrm{d} \pi_0(\beta) \ge \int_{\operatorname{B}(a,\epsilon)} e^{-\frac{1}{2\Delta} \lVert y - X\beta\rVert^2} \mathrm{d} \pi_0(\beta) \text{.} \nonumber 
\end{equation}
Upon recentering the exponent around $a$, noting that $X^{\top}(y - X a) \in \operatorname{Span}(v_1,v_2)$, where $\operatorname{Span}(v_1, v_2)$ denotes the linear subspace of $\mathbb{R}^p$ generated by $v_1$ and $v_2$, by the definition of $\operatorname{B}(a,\epsilon)$, together with (\ref{eq:X_transpose_y_minus_Xa_order_p}), we have
\begin{equation}
    \begin{aligned}
        \lVert y - X\beta\rVert^2 &= \lVert y - X a\rVert^2 + \lVert X(\beta - a)\rVert^2 - 2 ( X^{\top} ( y  - X a ))^{\top}( \beta - a ) \\
        & = \lVert y - X a\rVert^2 + \lVert X(\beta - a)\rVert^2 + 2  \epsilon O(p)
    \end{aligned} \nonumber 
\end{equation}
for any $\beta \in \operatorname{B}(a,\epsilon)$. Thus we have
\begin{equation}
    \begin{aligned}
        \mathcal{Z}_p 
        &\ge e^{-\frac{1}{2 \Delta} \lVert y - X a\rVert^2 + O(p\epsilon)} \int_{\operatorname{B}(a,\epsilon) } e^{-\frac{1}{2 \Delta} \lVert X(\beta - a)\rVert^2} \mathrm{d} \pi_0(\beta) \text{.}
    \end{aligned} \nonumber 
\end{equation}
Equivalently, there is a positive constant $C'$, such that
\begin{equation}
    \ln \mathcal{Z}_p \ge -\frac{1}{2 \Delta} \lVert y - X a\rVert^2 - C' \epsilon + \ln \left ( \int_{\operatorname{B}(a,\epsilon)} e^{-\frac{1}{2 \Delta} \lVert X(\beta - a)\rVert^2} \mathrm{d} \pi_0(\beta) \right ) \text{.} \nonumber 
\end{equation}
    Note that for any 2-dimensional linear subspace $\mathcal{L} \subset \mathbb{R}^{p}$, length of the projection of $\beta$ on to $\mathcal{L}$ divided by $\sqrt{p}$ has density
    \begin{equation}
        f_{\text{Proj}}(x)  = \frac{1}{\sqrt{\pi}} \frac{\Gamma\left(\frac{p}{2}\right)}{\Gamma\left(\frac{p-2}{2}\right)}\left(1-x^{2}\right)^{\frac{p-4}{2}} \nonumber 
    \end{equation}
    (see for instance of \cite[(2.8)]{belius2019tap}). Without loss of generality, we only consider the non-degenerate case, namely when $a$ and $X^{\top}( y - Xa)$ are not linearly dependent.
    Thus when $\epsilon$ is small enough, 
    \begin{equation}
        \frac{1}{p} \ln \operatorname{Vol}(a,\epsilon) \ge \frac{1}{p} \ln \left [ \sqrt{2}  \epsilon \cdot f_{\beta^{\perp}}\Big( \frac{\lVert a \rVert}{\sqrt{p}} -  \frac{\epsilon}{\sqrt{2}} \Big) \cdot \min \left \{ \frac{\epsilon}{\sqrt{2} \pi \lVert a \rVert} , 1 \right \} \right ] = \frac{1}{2} \ln \left ( 1 - \frac{\lVert a \rVert^2}{ p }\right ) + o(1) \text{.} \nonumber 
    \end{equation}
    Note that the $o(1)$ term is uniform over different choices of $a$ such that $\lVert a \rVert \le (1 - c) \sqrt{p}$. By Theorem \ref{band_theorem}, denoting the event on the LHS of (\ref{eq:band_theorem}) as $\Omega_p$, 
    \begin{equation}
        \begin{aligned}
            & \mathbb{P} \left ( \frac{1}{p} \ln \mathcal{Z}_p \ge \sup_{a \in \mathbb{R}^p, \lVert a \rVert \le (1-c) \sqrt{p}} f_{\text{TAP}}(a) - 2 (C + C') \epsilon + o(1)\right ) \\
            \ge & \mathbb{P} \left ( \left \{\frac{1}{p} \ln \mathcal{Z}_p \ge \sup_{a \in \mathbb{R}^p, \lVert a \rVert \le (1-c) \sqrt{p}} f_{\text{TAP}}(a) - 2 (C + C') \epsilon + o(1) \right  \}  \cap \Omega \right ) \\ 
            \ge & \mathbb{P} \Bigg (  \Bigg \{\frac{1}{p} \ln \mathcal{Z}_p \ge -\frac{1}{2 \Delta} \lVert y - X a\rVert^2 - C' \epsilon + \ln \left ( \int_{\operatorname{B}(a,\epsilon)} e^{-\frac{1}{2 \Delta} \lVert X(\beta - a)\rVert^2} \mathrm{d} \pi_0(\beta)   \right), \ \\
            &  \forall a \in \mathbb{R}^p: \lVert a \rVert \le (1 - c)\sqrt{p} \Bigg \}  \cap \Omega \cap \Omega_p  \Bigg ) \\ 
            = & \mathbb{P} ( \Omega \cap \Omega_p) \to 1 
            \text{,}
        \end{aligned} \nonumber 
    \end{equation}
    where $C$ is the one in statement of Theorem \ref{band_theorem}. In other words, since $\epsilon$ can be chosen to be arbitrarily small, we have for any $\eta > 0$,
    \begin{equation}
        \lim_{p \to \infty} \mathbb{P} \left ( \frac{1}{p} \ln \mathcal{Z}_p \le   \sup_{a \in \mathbb{R}^p, \lVert a \rVert \le (1-c) \sqrt{p}} f_{\text{TAP}}(a) - \eta \right ) = 0. \nonumber 
    \end{equation}
\end{proof}

\subsection{Proof of Theorem \ref{band_theorem}}
\label{sec:proof_of_band_theorem}

Building towards the proof of Theorem \ref{band_theorem}, we first introduce the following lemma, which gives an explicit formula for the high-dimensional limit of the free energy in a system without external field. We establish this using a second moment method argument 
(see Section \ref{sec:lemmas_for_proof_of_lower_bound} for 
its proof). Note that we use the notation $\beta^{\top} X^{\top} X \beta $ instead of $\lVert X \beta \rVert^2$ in this subsection since we would like to emphasize the fact that our proofs depend  strongly on the properties of the sample covariance matrix $X^{\top} X$.

\begin{lemma}[High-dimensional limit of free energy: a model without external field]
\label{limit_of_free_energy_without_external_field}
    Fix $\alpha \in (0,\infty)$. There exists $\Delta_0>0$ such that for all $\Delta > \Delta_0$,  as $p \to \infty$
    \begin{equation}
          \frac{1}{p} \ln \left ( \int_{S^{p-1}(\sqrt{p})} e^{-\frac{1}{2 \Delta}\beta^{\top} X^{\top} X \beta } \mathrm{d} \pi_0(\beta) \right ) \overset{\text{P}}{\longrightarrow} -\frac{\alpha}{2} \ln \left ( 1 + \frac{1}{\Delta \alpha} \right ) \text{.} \nonumber 
    \end{equation} 
\end{lemma}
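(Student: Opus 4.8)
The plan is to pin down $\frac{1}{p}\ln I_p$, where $I_p \coloneqq \int_{S^{p-1}(\sqrt p)} e^{-\frac{1}{2\Delta}\beta^{\top} X^{\top} X\beta}\,\mathrm d\pi_0(\beta)$, by a first/second moment argument, and then upgrade the (a priori only constant-probability) second-moment lower bound to a high-probability bound via Gaussian concentration. Write $\phi \coloneqq -\frac{\alpha}{2}\ln(1+\frac{1}{\Delta\alpha})$. For the upper half: for fixed $\beta$ on the sphere, $X\beta \sim N(0,\tfrac pn I_n)$, so $\mathbb E_X[e^{-\frac{1}{2\Delta}\lVert X\beta\rVert^2}] = (1+\tfrac{p}{\Delta n})^{-n/2}$ and hence $\frac1p\ln\mathbb E[I_p] = -\tfrac{n}{2p}\ln(1+\tfrac{p}{\Delta n}) \to \phi$. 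Markov's inequality gives $\mathbb P(I_p \ge e^{\eta p}\mathbb E[I_p]) \le e^{-\eta p}$, so $\frac1p\ln I_p \le \phi+\eta$ with high probability for any $\eta>0$; this half needs no assumption on $\Delta$.

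For the second moment: for independent replicas $\beta,\beta'\sim\pi_0$ the rows of $(X\beta,X\beta')$ are i.i.d.\ bivariate Gaussians with variances $p/n$ and covariance $q/n$, $q \coloneqq \langle\beta,\beta'\rangle$, so a $2\times 2$ determinant computation yields $\mathbb E[I_p^2] = \mathbb E_{\beta,\beta'}[((1+\tfrac{p}{\Delta n})^2 - \tfrac{q^2}{\Delta^2 n^2})^{-n/2}]$, equivalently
\[
  \frac{\mathbb E[I_p^2]}{(\mathbb E[I_p])^2} \;=\; \mathbb E_\rho\bigl[(1-\rho^2 t_p^2)^{-n/2}\bigr], \qquad t_p \coloneqq \frac{p}{\Delta n + p},
\]
where $\rho = q/p$ is the normalized overlap, with density proportional to $(1-\rho^2)^{(p-3)/2}$ on $[-1,1]$. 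The exponent $\psi_p(\rho) \coloneqq -\tfrac n2\ln(1-\rho^2 t_p^2) + \tfrac{p-3}2\ln(1-\rho^2)$ vanishes at $\rho=0$ with $\psi_p''(0) = n t_p^2 - (p-3)$; when $n t_p^2 < p-3$, equivalently $(\Delta\alpha+1)^2 > \alpha$ asymptotically, a short monotonicity check shows $\psi_p$ is strictly unimodal with maximizer $\rho=0$, and Laplace's method gives $\mathbb E_\rho[(1-\rho^2 t_p^2)^{-n/2}] \to (1-\tfrac{\alpha}{(\Delta\alpha+1)^2})^{-1/2} < \infty$. The inequality $(\Delta\alpha+1)^2 > \alpha$ holds for all $\Delta$ beyond some explicit $\Delta_0=\Delta_0(\alpha)$, and this is the only place the high-temperature hypothesis is used. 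Consequently $\mathbb E[I_p^2] \le C(\mathbb E[I_p])^2$ for a constant $C$ and all large $p$, and Paley--Zygmund gives $\mathbb P(\frac1p\ln I_p \ge \phi-\eta) \ge \mathbb P(I_p \ge \tfrac12\mathbb E[I_p]) \ge c > 0$ for all large $p$ and every $\eta>0$.

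To upgrade this to a high-probability statement I would establish concentration of $\frac1p\ln I_p$. Viewing $I_p$ as a function of the i.i.d.\ Gaussian matrix $G \coloneqq \sqrt n\,X$, one has $\partial_{G_{ik}}\ln I_p = -\tfrac1{\Delta n}\langle (G\beta)_i\beta_k\rangle$, where $\langle\cdot\rangle$ is the Gibbs average of the measure $\propto e^{-\frac1{2\Delta}\beta^{\top} X^{\top} X\beta}\,\mathrm d\pi_0$; squaring, summing over $(i,k)$, and using Cauchy--Schwarz across two replicas gives $\lVert\nabla_G\ln I_p\rVert_F^2 \le \sigma_{\max}(G)^2 p^2/(\Delta^2 n^2)$. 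On the \emph{convex} event $\{\sigma_{\max}(X) \le 1+\alpha^{-1/2}+\kappa\}$, which contains $\Omega$ in \eqref{eq:def_Omega} and has probability $\to 1$ by Lemma~\ref{large_deviation_characterization_for_the_largest_eigenvalue_of_Weshart_matrices}, this bounds the Euclidean gradient of $\frac1p\ln I_p$ by $O(p^{-1/2})$, hence $\frac1p\ln I_p$ is $O(p^{-1/2})$-Lipschitz on that convex set; extending it off the set to a globally $O(p^{-1/2})$-Lipschitz function (McShane extension) and invoking the Gaussian concentration inequality shows $\frac1p\ln I_p$ concentrates, at scale $o(1)$, around a deterministic sequence $v_p$. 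Intersecting the concentration event with the upper bound forces $\limsup v_p \le \phi$, and intersecting it with the constant-probability lower bound forces $\liminf v_p \ge \phi$; hence $v_p \to \phi$ and $\frac1p\ln I_p \overset{\mathrm P}{\longrightarrow} \phi$.

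The main obstacle is the second-moment step: one must show the overlap integral $\mathbb E_\rho[(1-\rho^2 t_p^2)^{-n/2}]$ stays bounded as $p\to\infty$ — which is precisely where $\Delta_0$ is born — and this requires verifying that $\rho=0$ is the \emph{global}, not merely a local, maximizer of $\psi_p$ and controlling the Gaussian fluctuations of the overlap near it; everything else (the first-moment identity, Markov, Paley--Zygmund, the Lipschitz/concentration argument) is routine.
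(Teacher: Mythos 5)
Your proof follows essentially the same three-step strategy as the paper: compute the annealed free energy explicitly (first moment), show the second moment matches to leading exponential order, and upgrade the resulting constant-probability lower bound to a high-probability statement via Gaussian concentration. The paper's version of your second-moment step (Lemma~\ref{lim_calculation_of_log_gamma}) bounds $\mathbb{E}[Z_p^2]/(\mathbb{E}Z_p)^2$ by $O(\sqrt{n})$ using L\'evy's lemma plus an integration-by-parts estimate on $g(t)=f(t)-t^2/4$; you instead work with the exact overlap density $\propto(1-\rho^2)^{(p-3)/2}$ and Laplace's method to get the sharper $O(1)$ bound $(1-\alpha t^2)^{-1/2}$, with the unimodality check you sketch (no interior critical point of $\psi_p$ on $(0,1)$ when $nt_p^2<p-3$) being exactly what is needed and correct. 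For concentration, the paper bounds the Lipschitz constant of $\Phi_p$ on the good set by crude term-by-term estimates on the Hamiltonian difference $|H(X_1,\beta)-H(X_2,\beta)|$, whereas you differentiate $\ln I_p$ under the Gibbs measure and apply Cauchy--Schwarz to get $\lVert\nabla_G\ln I_p\rVert_F = O(\sqrt{p})$ on $\{\sigma_{\max}(X)=O(1)\}$; both yield the same $O(p^{-1/2})$ Lipschitz scale, and your McShane-extension plus Gaussian concentration route plays the role of the paper's cited almost-Lipschitz concentration lemma (Lemma~\ref{concentration_of_almost_Lipshitz_functions}). The final splice (intersect concentration with Markov for $\limsup$, with Paley--Zygmund for $\liminf$) is the same logic as the paper's combination of Lemmas~\ref{concentration_ineq_for_second_moment_method}, \ref{second_moment_method_lemma}, and \ref{lim_calculation_of_log_gamma}. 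So this is the same proof up to execution details, and your two refinements (exact overlap density, intrinsic gradient bound) are clean simplifications; both correctly identify that the only use of the high-temperature hypothesis is to make the second-moment overlap integral subexponential, with your explicit threshold $(\Delta\alpha+1)^2>\alpha$ matching in spirit the paper's requirement $C_Q>0$.
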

\noindent 
Note the left hand side is Lipschitz in $T = \frac{1}{\Delta}$ with high probability. Thus for any $\Delta_0>0$, we can divide the interval $[0,\frac{1}{\Delta_0}]$ into a sufficiently fine grid. Lemma \ref{limit_of_free_energy_without_external_field} holds simultaneously for  each grid-point; combining this with the Lipschitz continuity of the free energy in $\frac{1}{\Delta}$, we immediately obtain the following uniform convergence statement. 
\begin{lemma}
\label{limit_of_free_energy_without_external_field_uniform}
Fix $\alpha \in (0,\infty)$. There exists $\Delta_0>0$ such that  as $p \to \infty$
\begin{equation}
    \sup_{\Delta \ge \Delta_0} \left | \frac{1}{p} \ln \left ( \int_{S^{p-1}(\sqrt{p})} e^{-\frac{1}{2 \Delta}\beta^{\top} X^{\top} X \beta } \mathrm{d} \pi_0(\beta) \right )   - \left ( - \frac{\alpha}{2} \ln \left ( 1 + \frac{1}{\Delta \alpha} \right ) \right ) \right | \overset{\text{P}}{\longrightarrow}  0 \text{.} \nonumber 
\end{equation}
\end{lemma}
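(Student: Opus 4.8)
The plan is to obtain the uniform statement from the pointwise Lemma~\ref{limit_of_free_energy_without_external_field} together with an equicontinuity (Lipschitz) estimate in the variable $T = 1/\Delta$ that holds with high probability, via a standard grid-plus-union-bound argument. Write
\[
\Phi_p(T) := \frac{1}{p}\ln\Big(\int_{S^{p-1}(\sqrt{p})} e^{-\frac{T}{2}\,\beta^{\top} X^{\top} X \beta}\,\mathrm{d}\pi_0(\beta)\Big),
\qquad
\varphi(T) := -\frac{\alpha}{2}\ln\!\Big(1 + \frac{T}{\alpha}\Big),
\]
so that the claim is $\sup_{T \in [0, 1/\Delta_0]} |\Phi_p(T) - \varphi(T)| \overset{\text{P}}{\longrightarrow} 0$, and the threshold $\Delta_0$ may be taken, say, one larger than the threshold supplied by Lemma~\ref{limit_of_free_energy_without_external_field} (so that every point of a grid of $[0, 1/\Delta_0]$ corresponds to a $\Delta = 1/T$ strictly exceeding the latter threshold).

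\emph{Step 1: Lipschitz control of $\Phi_p$, uniform in $T$ and valid w.h.p.} The map $T \mapsto \Phi_p(T)$ is smooth (it is $1/p$ times the cumulant generating function of the bounded random variable $-\tfrac12\,\beta^{\top}X^{\top}X\beta$ under $\pi_0$), with
\[
\Phi_p'(T) = -\frac{1}{2p}\,\big\langle \beta^{\top} X^{\top} X \beta \big\rangle_T,
\]
where $\langle\cdot\rangle_T$ denotes expectation under the tilted measure $\propto e^{-\frac{T}{2}\beta^{\top}X^{\top}X\beta}\,\mathrm{d}\pi_0$. Since $\|\beta\|^2 = p$ on $S^{p-1}(\sqrt{p})$, we get $0 \le -\Phi_p'(T) \le \tfrac12\,\sigma_{\max}(X)^2$ for \emph{every} $T \ge 0$. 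On the good event $\Omega$ of \eqref{eq:def_Omega} one has $\sigma_{\max}(X) \le 1 + \alpha^{-1/2} + \kappa$, hence on $\Omega$ the function $\Phi_p$ is $L$-Lipschitz on $[0,\infty)$ with a deterministic constant $L = L(\alpha)$; and $\mathbb{P}(\Omega) \to 1$ by Lemma~\ref{large_deviation_characterization_for_the_largest_eigenvalue_of_Weshart_matrices}. The limit $\varphi$ is $C^1$ with $|\varphi'(T)| = \tfrac12(1+T/\alpha)^{-1} \le \tfrac12$, so it is $L'$-Lipschitz on $[0,\infty)$ with $L' = \tfrac12$.

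\emph{Step 2: grid and union bound.} Fix $\eta > 0$ and set $\delta := \eta/(3(L+L'))$. Choose a finite grid $0 = T_0 < T_1 < \dots < T_K = 1/\Delta_0$ of mesh at most $\delta$; here $K = K(\eta,\alpha,\Delta_0)$ is fixed. At $T_0 = 0$ we have $\Phi_p(0) = 0 = \varphi(0)$ deterministically, and for each $k \ge 1$ we apply Lemma~\ref{limit_of_free_energy_without_external_field} at $\Delta = 1/T_k$ (which exceeds the Lemma's threshold by the choice of $\Delta_0$ above) to obtain $\Phi_p(T_k) \overset{\text{P}}{\longrightarrow} \varphi(T_k)$. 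As this is a finite collection, $\mathbb{P}\big(\max_{0 \le k \le K} |\Phi_p(T_k) - \varphi(T_k)| > \eta/3\big) \to 0$.

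\emph{Step 3: conclusion.} On the intersection of $\Omega$ with $\{\max_k |\Phi_p(T_k) - \varphi(T_k)| \le \eta/3\}$ — an event of probability $1 - o(1)$ — for any $T \in [0, 1/\Delta_0]$ choose the nearest grid point $T_k$, so $|T - T_k| \le \delta$, and bound
\[
|\Phi_p(T) - \varphi(T)| \le |\Phi_p(T) - \Phi_p(T_k)| + |\Phi_p(T_k) - \varphi(T_k)| + |\varphi(T_k) - \varphi(T)| \le L\delta + \tfrac{\eta}{3} + L'\delta \le \eta.
\]
Taking the supremum over $T$ and letting $p \to \infty$ gives $\mathbb{P}\big(\sup_{T \in [0,1/\Delta_0]} |\Phi_p(T) - \varphi(T)| > \eta\big) \to 0$; as $\eta > 0$ is arbitrary, this is the assertion. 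I do not expect any genuine obstacle here once Lemma~\ref{limit_of_free_energy_without_external_field} is granted — the single point worth care is that the derivative bound $-\Phi_p'(T) \le \tfrac12\sigma_{\max}(X)^2$ is free of $T$, which is precisely what lets one Lipschitz estimate cover the whole interval simultaneously with high probability. (The substantive content lies in Lemma~\ref{limit_of_free_energy_without_external_field} itself, via a first/second moment computation for the spherical two-spin partition function controlled through the spectrum of $X^{\top}X$ in the high-temperature regime $\Delta > \Delta_0$; but that is a separate statement which we invoke as given.)
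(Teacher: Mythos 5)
Your proof is correct and takes essentially the same approach as the paper, which sketches the argument in one sentence as ``Lipschitz in $T=1/\Delta$ with high probability, plus a fine grid of $[0,1/\Delta_0]$''; you have simply spelled out the details the paper leaves implicit (the derivative bound $|\Phi_p'(T)| \le \tfrac12\sigma_{\max}(X)^2$, restriction to $\Omega$ for a deterministic Lipschitz constant, and the triangle inequality over the grid).
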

As the definition of $\operatorname{B}(a,\epsilon)$ in \eqref{eq:def_B} suggests, we want to consider integral over the intersection of the sphere and a $p-2$ dimensional linear space, which can be formalized into the following uniform convergence statement. 
\begin{lemma}[High-dimensional limit of free energy without external field: on a linear subspace]
\label{limit_of_free_energy_without_external_field_on_a_linear_subspace}
    Fix $\alpha \in (0,\infty)$. There exists $\Delta_0>0$ such that for all $\Delta>\Delta_0$, as $p \to \infty$
    \begin{equation}
    \sup_{\Delta \ge \Delta_0,\text{  } u, v \in \mathbb{R}^p } \left | \frac{1}{p} \ln \left ( \int_{S^{p-1}(\sqrt{p}) \cap \operatorname{Span}(u, v)^{\perp}} e^{-\frac{1}{2 \Delta}\beta^{\top} X^{\top} X \beta } \mathrm{d} \pi^{\operatorname{Span}(u, v)^{\perp}}(\beta) \right ) - \left (-\frac{\alpha}{2} \ln \left ( 1 + \frac{1}{\Delta \alpha} \right) \right ) \right | \overset{\text{P}}{\longrightarrow} 0 \text{,} \nonumber 
    \end{equation}
    where $\operatorname{Span}(u, v)$ denotes the linear subspace of $\mathbb{R}^p$ generated by $u$ and $v$. 
    Note that $\pi^{\operatorname{Span}(u, v)^{\perp}}$ refers to the uniform measure on $S^{p-1}(\sqrt{p}) \cap \operatorname{Span}(u, v)^{\perp}$.
\end{lemma}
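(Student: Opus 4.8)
The plan is to reduce the restricted integral to a spherical quadratic model of dimension $p-2$ whose coupling matrix is the compression of $X^{\top}X$ to $\operatorname{Span}(u,v)^{\perp}$, to control that model by a \emph{deterministic} spectral asymptotic, and then to transfer the known full-sphere limit using eigenvalue interlacing. First, fix $u,v\in\mathbb{R}^{p}$; we may assume $\operatorname{Span}(u,v)$ is two-dimensional, the remaining cases being strictly easier (with $u=v=0$ reducing to Lemma \ref{limit_of_free_energy_without_external_field_uniform}). Let $W=\operatorname{Span}(u,v)^{\perp}$ and let $O\in\mathbb{R}^{p\times(p-2)}$ have orthonormal columns spanning $W$, so that $\gamma\mapsto O\gamma$ is an isometry $\mathbb{R}^{p-2}\to W$ carrying the uniform measure on $S^{p-3}(\sqrt p)$ to $\pi^{\operatorname{Span}(u,v)^{\perp}}$ and satisfying $\beta^{\top}X^{\top}X\beta=\gamma^{\top}\tilde A\gamma$, where $\tilde A:=O^{\top}X^{\top}X\,O$ is the compression of $X^{\top}X$ to $W$. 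Rescaling the radius of the sphere from $\sqrt p$ to $\sqrt{p-2}$ replaces $\Delta$ by $\Delta'=\Delta(1-2/p)$ and multiplies the normalizing factor $\tfrac1p$ by $\tfrac{p-2}{p}$; since on $\Omega$ the two-spin free energy and its derivative in $1/\Delta$ are bounded uniformly in $u,v$, this shows that the quantity inside the absolute value in the Lemma equals $\tfrac1{p-2}\ln\int_{S^{(p-2)-1}(\sqrt{p-2})}e^{-\frac1{2\Delta'}\gamma^{\top}\tilde A\gamma}\,\mathrm{d}\pi+o(1)$, uniformly over $u,v$ and over $\Delta\ge\Delta_{0}$.

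Next, by rotational invariance of the uniform sphere measure this last quantity depends on $\tilde A$ only through its eigenvalues, and one establishes a deterministic asymptotic: for every symmetric positive semidefinite $B$ of size $d$ with $\|B\|_{2}\le M$,
\[
\tfrac1d\ln\int_{S^{d-1}(\sqrt d)}e^{-\frac1{2\Delta}\gamma^{\top}B\gamma}\,\mathrm{d}\pi=\Psi(\mu_{B};\Delta)+o_{d}(1),
\]
where $\mu_{B}$ is the empirical spectral distribution of $B$, the error is uniform over such $B$ and over $\Delta\ge\Delta_{0}$, and $\Psi$ is an explicit functional of Legendre-transform type, $\Psi(\mu;\Delta)=\tfrac12(s^{*}-1)-\tfrac12\int\ln(\Delta^{-1}\lambda+s^{*})\,\mathrm{d}\mu(\lambda)$ with $s^{*}=s^{*}(\mu;\Delta)>0$ the solution of $\int(\Delta^{-1}\lambda+s^{*})^{-1}\,\mathrm{d}\mu(\lambda)=1$. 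This is obtained by writing the spherical integral via a Gaussian representation (equivalently, a Hubbard--Stratonovich transform), so that $\tfrac1d\ln$ of the associated Gaussian integral equals $-\tfrac12\int\ln(\Delta^{-1}\lambda+s)\,\mathrm{d}\mu_{B}(\lambda)$ up to an additive constant, and then applying Laplace's method in the radial variable, the saddle $s^{*}$ being fixed by matching the squared radius. One checks directly that $\Psi(\cdot;\Delta)$ is continuous in $\mu$ for the Kolmogorov distance, with modulus uniform over probability measures supported on $[0,M]$ and over $\Delta\ge\Delta_{0}$; the only delicate term is $\int\ln(\Delta^{-1}\lambda+s^{*})\,\mathrm{d}\mu$ near $\lambda=0$, which is harmless once $s^{*}$ is bounded below, and $1=\int(\Delta^{-1}\lambda+s^{*})^{-1}\,\mathrm{d}\mu\ge\mu([0,\varepsilon])/(\Delta^{-1}\varepsilon+s^{*})$ gives $s^{*}\ge\mu(\{0\})$ in the limit $\varepsilon\downarrow0$.

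Finally, by the interlacing theorem for compressions, $\lambda_{j+2}(X^{\top}X)\le\lambda_{j}(\tilde A)\le\lambda_{j}(X^{\top}X)$, so the Kolmogorov distance between $\mu_{\tilde A}$ and $\mu_{X^{\top}X}$ is at most $2/p$, \emph{deterministically and uniformly in $u,v$}; moreover on $\Omega$ we have $\|\tilde A\|_{2}\le\sigma_{\max}(X)^{2}\le M$ for all $u,v$. Combining these with the asymptotic and the uniform continuity of $\Psi$ (and with $\Delta'\to\Delta$, $\Psi$ being continuous in $\Delta$ on $[\Delta_{0},\infty)$), we get on $\Omega$, uniformly over $u,v$ and $\Delta\ge\Delta_{0}$,
\[
\tfrac1{p-2}\ln\int_{S^{(p-2)-1}(\sqrt{p-2})}e^{-\frac1{2\Delta'}\gamma^{\top}\tilde A\gamma}\,\mathrm{d}\pi=\Psi(\mu_{X^{\top}X};\Delta)+o(1)=\tfrac1p\ln\int_{S^{p-1}(\sqrt p)}e^{-\frac1{2\Delta}\beta^{\top}X^{\top}X\beta}\,\mathrm{d}\pi_{0}(\beta)+o(1),
\]
the last equality being the same asymptotic applied to $B=X^{\top}X$. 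The right-hand side converges to $-\tfrac{\alpha}{2}\ln(1+\tfrac1{\Delta\alpha})$ uniformly over $\Delta\ge\Delta_{0}$ by Lemma \ref{limit_of_free_energy_without_external_field_uniform}, and since $\mathbb{P}(\Omega)\to1$ this proves the claim. (As in the passage from Lemma \ref{limit_of_free_energy_without_external_field} to Lemma \ref{limit_of_free_energy_without_external_field_uniform}, one may first prove the statement at fixed $\Delta$ and then upgrade to uniformity in $\Delta$ via a grid in $1/\Delta$ together with the Lipschitz bound on $1/\Delta\mapsto\tfrac1p\ln(\cdot)$, which holds on $\Omega$ uniformly in $u,v$.)

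The main difficulty is the deterministic spectral asymptotic above. The proof of Lemma \ref{limit_of_free_energy_without_external_field} uses the Gaussianity of $X$ in an essential way---it integrates $X$ out in the first- and second-moment computations---but here $\operatorname{Span}(u,v)^{\perp}$ is allowed to depend on $X$, so that route is unavailable and one must instead control the spherical integral through its spectrum alone, uniformly over all bounded positive semidefinite matrices and over $\Delta\ge\Delta_{0}$. The accompanying point requiring care is the uniform continuity of $\Psi$ in the spectral measure; the crux there is a uniform lower bound $s^{*}(\mu;\Delta)\ge c_{0}>0$ for all $\mu$ within $o(1)$ Kolmogorov distance of the Marchenko--Pastur law with parameter $1/\alpha$. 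This holds robustly: for $\alpha<1$ the atom of mass $1-\alpha$ at the origin forces $s^{*}\ge1-\alpha$, while for $\alpha\ge1$ the high-temperature hypothesis $\Delta>\Delta_{0}$ keeps $s^{*}$ near $1$; interlacing preserves enough of the relevant spectral features---the near-origin mass and the operator-norm bound---for the $2/p$ perturbation to be harmless. A softer alternative for the continuity is an interpolation $B_{\theta}=(1-\theta)B_{0}+\theta B_{1}$, which shows that $B\mapsto\tfrac1d\ln\int_{S^{d-1}(\sqrt d)}e^{-\frac1{2\Delta}\gamma^{\top}B\gamma}\,\mathrm{d}\pi$ is $\tfrac1{2\Delta}$-Lipschitz for $\|\cdot\|_{2}$; combined with interlacing this handles all but $O(1)$ ``straddling'' eigenvalues, whose contribution to the free energy is $O(1/p)$.
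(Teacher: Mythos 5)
Your argument is correct in outline but follows a genuinely different route from the paper. The paper's proof changes basis, identifies the eigenvalues $\lambda_i$ of the compression $Q=O^{\top}X^{\top}XO$ of $X^{\top}X$ to $\operatorname{Span}(u,v)^{\perp}$, introduces the auxiliary compression $S=(X^{\top}X)_{1:p-2,1:p-2}$ in the standard basis---which is itself a Wishart-type matrix so that Lemma \ref{limit_of_free_energy_without_external_field} applies to it directly in dimension $p-2$---and then transfers the free energy from $S$ to $Q$ via the pointwise squeeze $\sup_i|\lambda_i-s_i|\to 0$ obtained from Cauchy interlacing combined with Marchenko--Pastur rigidity for the eigenvalues of $X^{\top}X$. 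You instead control the restricted spherical integral through a deterministic Laplace asymptotic $\Psi(\mu_B;\Delta)$, uniform over all bounded PSD $B$ and $\Delta\ge\Delta_0$, and transfer via the $2/p$ Kolmogorov-distance bound between $\mu_{\tilde A}$ and $\mu_{X^{\top}X}$ furnished by the same interlacing. What your route buys: it is more robust near the left spectral edge when $\alpha<1$, where the Marchenko--Pastur law has an atom at the origin and hence a jump in its quantile function, so that $|\lambda_i-s_i|\le\theta_{i+2}(X^{\top}X)-\theta_i(X^{\top}X)$ is \emph{not} $o(1)$ for the $O(1)$ indices straddling the atom; the paper's blanket assertion $\sup_i|\lambda_i-s_i|\to 0$ and the subsequent crude bound $\sup_\beta p^{-1}|\sum(\lambda_i-s_i)\beta_i^2|\le\sup_i|\lambda_i-s_i|$ are therefore delicate there, while the Kolmogorov formulation sidesteps this entirely, and your ``softer alternative'' (the $\tfrac1{2\Delta}$-Lipschitz dependence in $\|\cdot\|_2$ combined with the observation that $O(1)$ straddling eigenvalues contribute only $O(1/p)$ to the free energy) is exactly the fix the paper would need in that regime. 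What it costs: you must establish the deterministic uniform asymptotic, which the paper avoids entirely by recycling Lemma \ref{limit_of_free_energy_without_external_field} on the random Wishart submatrix $S$. Your sketch via the integral representation of the sphere constraint, radial Laplace, and the saddle lower bound $s^{*}\ge\mu(\{0\})$ (or $s^{*}\ge 1-M/\Delta_0>0$ at high temperature) has all the right ingredients, but the uniformity of the $o_d(1)$ over $B$ with $\|B\|_2\le M$, and the uniform modulus of continuity of $\Psi$ in $\mu$ for the Kolmogorov distance, are left as assertions and would need to be carried out carefully (in particular verifying that the saddle remains uniformly non-degenerate and bounded away from the branch point $-\Delta^{-1}\min_i b_i$) to make the argument airtight.
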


Now we proceed to prove Theorem \ref{band_theorem}, assuming the convergence results mentioned above. The proofs of these results are included in Section \ref{sec:lemmas_for_proof_of_lower_bound}.

\begin{proof}[Proof of Theorem \ref{band_theorem}]
    Recall the definition of $\Omega$ in (\ref{eq:def_Omega}). From now on, we only consider $(X,y) \in \Omega$ for the rest of this proof. Fix any $a \in \mathbb{R}^{p}$ with $ \lVert a \rVert \le (1 - c) \sqrt{p}$. Let $\hat{\beta} = \beta - a$. Define
    \begin{equation}
    \label{def:hamiltonian_without_external_field}
        H(\hat{\beta}) \coloneqq \frac{1}{2 \Delta} \hat{\beta}^{\top} X^{\top} X \hat{\beta} = \frac{1}{2 \Delta} (\beta- a )^{\top} X^{\top} X (\beta - a)  \text{.}
    \end{equation}
    Note that $H$ is different from the Hamiltonian $\mathcal{H}$  defined in (\ref{definition_of_hat_H}). By (\ref{eq:X_transpose_y_minus_Xa_order_p}), $\lVert X^{\top} (y-Xa) \rVert = O(\sqrt{p})$. By the definition of $\operatorname{B}(a,\epsilon)$ in (\ref{eq:def_B}), for any $\beta \in \operatorname{B}(a,\epsilon)$, one has 
    \begin{equation}
       | a^{\top} \hat{\beta} | = | a^{\top}( \beta - a)| < c' p\epsilon \nonumber 
    \end{equation}
    and
    \begin{equation}
        | (X^{\top}(y - X a))^{\top} \hat{\beta} \rangle | = | (y - X a)^{\top} X(\beta - a)  |  < c' p\epsilon \text{,} \nonumber 
    \end{equation}
    for some positive constant $c'$. Therefore
    \begin{equation}
        \lVert \hat{\beta}\rVert^2 = \lVert \beta \rVert^2 - \lVert a\rVert^2 - 2 (\beta - a)^{\top} a = p - \lVert a\rVert^2 + O(p\epsilon) \nonumber 
    \end{equation}
    Let $\hat{\beta}^{\perp}$ be the projection of $\hat{\beta}$ onto the hyper-plane $\operatorname{Span}(v_1, v_2)^{\perp}$. For the sake of notation clarity, let $ \beta^{\perp} = \sqrt{p} \hat{\beta}^{\perp} / \lVert \hat{\beta}^{\perp} \rVert $ and $\gamma = \lVert \hat{\beta}^{\perp} \rVert / \sqrt{p}$. 
    Note that, again by the definition of $\operatorname{B}(a,\epsilon)$
    \begin{equation}
        \lVert \hat{\beta} - \gamma \beta^{\perp} \rVert = \lVert \hat{\beta} - \hat{\beta}^{\perp} \rVert < c' \sqrt{p} \epsilon \text{,} \nonumber 
    \end{equation}
    and
    \begin{equation}
        \gamma = \gamma_0 + O(\epsilon) \text{,} \nonumber 
    \end{equation}
    where $\gamma_0 \coloneqq \sqrt{ p - \lVert a \rVert^2} / \sqrt{p}$. Therefore, 
    \begin{equation}
        H(\hat{\beta}) = H(\gamma \beta^{\perp}) + O(p\epsilon) = H(\gamma_0 \beta^{\perp}) + O(p\epsilon) \text{.} \nonumber 
    \end{equation}
    which is due to the fact that $ \lVert \nabla H(\beta) \rVert =  \lVert - X^{\top} X \beta  / \Delta \rVert  \le (\sqrt{p} / \Delta) \lVert X^{\top} X \rVert_2  = O(\sqrt{p})$ for $(X,y) \in \Omega$. Now we come back to the targeted integral
    \begin{equation}
        \int_{\operatorname{B}(a,\epsilon)} e^{-\frac{1}{2 \Delta} (\beta- a )^{\top} X^{\top} X (\beta - a) } \mathrm{d} \pi_0(\beta) =  \mathbb{E}_{\pi_0}[\mathbf{1}_{\operatorname{B}(a,\epsilon) } e^{-H(\gamma_0 \beta^{\perp}) + O(p\epsilon)}] \text{.} \nonumber 
    \end{equation}
    Since $\beta^{\perp}$ is uniformly distributed on the unit sphere intersected with $\operatorname{Span}(a, X^{\top} (y - X a))^{\perp}$ under $\pi_0$ and $\gamma$ is independent of $\mathbf{1}_{\beta \in \operatorname{B}(a, \epsilon)}$, one has 
    \begin{equation}
    \begin{aligned}
        & \mathbb{E}_{\pi_0} [\mathbf{1}_{\operatorname{B}(a,\epsilon) } e^{-H(\gamma_0 \beta^{\perp}) + O(p\epsilon)}]
        \\
        =& e^{O(p\epsilon)} \mathbb{E}_{\pi_0}[\mathbf{1}_{\operatorname{B}(a,\epsilon) }] \int_{S^{p-1}(\sqrt{p - \lVert a\rVert^2 }) \cap \operatorname{Span}(v_1, v_2)^{\perp}} e^{-\frac{1}{2 \Delta}\beta^{\top} X^{\top} X \beta } \mathrm{d} \pi^{\operatorname{Span}(v_1, v_2)^{\perp}}(\beta)\\
        =& e^{O(p\epsilon)} \operatorname{Vol}(a, \epsilon) \int_{S^{p-1}(\sqrt{p}) \cap \operatorname{Span}(v_1, v_2)^{\perp}} e^{-\frac{1}{2 \tilde{\Delta}}\beta^{\top} X^{\top} X \beta } \mathrm{d} \pi^{\operatorname{Span}(v_1, v_2)^{\perp}}(\beta) \\
        =&  e^{O(p\epsilon)} \operatorname{Vol}(a, \epsilon) \mathbb{E}^{\operatorname{Span}(v_1, v_2)^{\perp}} \left [  \exp \left ( - \frac{1}{2 \tilde{\Delta}} \beta^{\top} X^{\top} X \beta   \right )  \right ] \text{,}
    \end{aligned}  \nonumber   
    \end{equation}
    where 
    $$
    \tilde{\Delta} = \frac{p}{p - \lVert a\rVert^2}\Delta \text{.}
    $$
    Note that for any $a$ such that 
    $\lVert a \rVert \le (1 - c) \sqrt{p}$, we have $\tilde{\Delta} > \Delta/2$. By Lemma \ref{limit_of_free_energy_without_external_field_on_a_linear_subspace} and the fact that $\mathbb{P}(\Omega) \to 1$ as $p \to \infty$, we finally get with high probability
    \begin{align}
        &\frac{1}{p} \ln \int_{\operatorname{B}(a,\epsilon)} e^{-\frac{1}{2 \Delta} (\beta- a )^{\top} X^{\top} X (\beta - a) } d \pi_0(\beta)  \nonumber \\
        =& O(\epsilon) + \frac{1}{p} \ln \operatorname{Vol}(a, \epsilon) -\frac{\alpha}{2} \ln (1 + \frac{(1-\lVert a\rVert^2 / p)}{\Delta \alpha}) + o(1) \text{,} \nonumber 
    \end{align}
    in which the $o(1)$ terms are uniform with respect to the choice of $\tilde{\Delta}$, equivalently, with respect to the choice of $a$.
\end{proof}

\subsection{Concentration of free energy in the model without external field}
\label{sec:lemmas_for_proof_of_lower_bound}

In this subsection, we will prove Lemma \ref{limit_of_free_energy_without_external_field} and Lemma \ref{limit_of_free_energy_without_external_field_on_a_linear_subspace}. 

\subsubsection{Proof of Lemma \ref{limit_of_free_energy_without_external_field}}

Recall that $H(\beta) = H(X, \beta) = (1/ {2 \Delta} ) \lVert X \beta \rVert^2$, which was first defined in (\ref{def:hamiltonian_without_external_field}).
We define the partition function $Z_p(\Delta, X)$ as
\begin{equation}
    Z_p(\Delta, X) \coloneqq \int_{S^{p-1}(\sqrt{p})} e^{-\frac{1}{2 \Delta}\beta^{\top} X^{\top} X \beta } \mathrm{d} \pi_0(\beta) \text{.} \nonumber 
\end{equation}
which is different from $\mathcal{Z}_p$ defined in (\ref{definition_of_Z_p}). Let
\begin{equation}
    \Phi_p(\Delta, X) \coloneqq \frac{1}{p} \ln \int_{S^{p-1}(\sqrt{p})} e^{-\frac{1}{2 \Delta} \beta^{\top} X^{\top} X \beta } \mathrm{d} \pi_0(\beta) = \frac{1}{p} \log \mathbb{E}_{\pi_0} [\exp (H(\beta)) ] = \frac{1}{p} \ln Z_p(\Delta,X) \nonumber 
\end{equation}
be the \textit{quenched} free energy of the model without external field and
\begin{equation}
    \phi_p(\Delta) \coloneqq \frac{1}{p}  \ln  \mathbb{E} \int_{S^{p-1}(\sqrt{p})} e^{-\frac{1}{2 \Delta}\beta^{\top} X^{\top} X \beta } \mathrm{d} \pi_0(\beta) = \frac{1}{p}  \ln \mathbb{E}[ Z_p(\Delta,X)]  \nonumber 
\end{equation}
be the \textit{annealed} free energy.

To prove Lemma \ref{limit_of_free_energy_without_external_field}, we need the following concentration result about $\Phi_p(\Delta, X)$.
\begin{lemma}
\label{concentration_ineq_for_second_moment_method}
    For any $\Delta > 0$, $\alpha \in (0,\infty)$ and $\delta >0 $, there exists a positive constant $C$ and positive integer $p_0$ (depending only on $\Delta, \alpha, \delta$) such that for any $p > p_0$
    \begin{equation}
        \mathbb{P}(\left |  \Phi_p(\Delta, X) - \mathbb{E}\Phi_p(\Delta, X)   \right | > \delta ) \le e^{-Cp}. \nonumber 
    \end{equation}
\end{lemma}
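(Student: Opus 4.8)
The plan is to view $\Phi_p(\Delta,X)$ as a Lipschitz function of the i.i.d.\ standard Gaussian entries underlying $X$ and invoke the Gaussian concentration inequality; the only wrinkle is that the relevant Lipschitz constant is controlled only when $\sigma_{\max}(X)$ is bounded, which forces a localization/truncation step. Write $X = Z/\sqrt n$ with $Z$ having i.i.d.\ $N(0,1)$ entries, and let $\langle\,\cdot\,\rangle_H$ denote expectation under the probability measure on $S^{p-1}(\sqrt p)$ proportional to $e^{-\frac{1}{2\Delta}\beta^{\top}X^{\top}X\beta}\,\mathrm d\pi_0(\beta)$. A direct differentiation gives $\partial_{X_{ij}}\Phi_p = -\frac{1}{p\Delta}\langle (X\beta)_i\beta_j\rangle_H$, and since $\lVert\beta\rVert^2 = p$ on the sphere, Jensen's inequality yields
\[
\lVert\nabla_X\Phi_p\rVert_F^2 \;=\; \frac{1}{p^2\Delta^2}\sum_{i,j}\langle (X\beta)_i\beta_j\rangle_H^2 \;\le\; \frac{1}{p^2\Delta^2}\big\langle \lVert X\beta\rVert^2\,\lVert\beta\rVert^2\big\rangle_H \;\le\; \frac{\sigma_{\max}(X)^2}{\Delta^2},
\]
and hence, passing to the variable $Z$, $\lVert\nabla_Z\Phi_p\rVert_F \le \sigma_{\max}(X)/(\Delta\sqrt n)$.

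Next I would localize to a convex good event. Fix $M = 1+\alpha^{-1/2}+\kappa$ and set $E := \{\sigma_{\max}(X)\le M\}$, a convex subset of $Z$-space. The bound above together with convexity of $E$ shows that $\Phi_p$ is $L$-Lipschitz on $E$ with $L = M/(\Delta\sqrt n)$; let $\widetilde\Phi_p(Z) := \inf_{W\in E}\{\Phi_p(W) + L\lVert Z-W\rVert_F\}$ be its McShane extension, which is globally $L$-Lipschitz on $\mathbb R^{np}$ and coincides with $\Phi_p$ on $E$. The Gaussian concentration inequality for Lipschitz functions, applied to $\widetilde\Phi_p$, gives for every $t>0$
\[
\mathbb P\big(|\widetilde\Phi_p - \mathbb E\widetilde\Phi_p| > t\big) \;\le\; 2\exp\!\Big(-\frac{t^2}{2L^2}\Big) \;=\; 2\exp\!\Big(-\frac{\Delta^2 n\, t^2}{2M^2}\Big) \;\le\; 2 e^{-c\, t^2 p}
\]
for $p$ large, using $n \ge \alpha p/2$ eventually.

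It remains to transfer this to $\Phi_p$. Since $\mathbb P(E^c) \le e^{-cp}$ by the large-deviation estimate for the top eigenvalue of a Wishart matrix (Lemma~\ref{large_deviation_characterization_for_the_largest_eigenvalue_of_Weshart_matrices}), and $\Phi_p = \widetilde\Phi_p$ on $E$, we get $\mathbb P(|\Phi_p - \mathbb E\Phi_p|>\delta) \le \mathbb P(E^c) + \mathbb P(|\widetilde\Phi_p - \mathbb E\Phi_p|>\delta)$, so it suffices to bound $|\mathbb E\widetilde\Phi_p - \mathbb E\Phi_p|$. Using the crude a priori bounds $-\sigma_{\max}(X)^2/(2\Delta) \le \Phi_p \le 0$ (hence $|\Phi_p|\le \lVert X\rVert_F^2/(2\Delta)$) and $|\widetilde\Phi_p(Z)|\le L\lVert Z\rVert_F$ (since $0\in E$ and $\widetilde\Phi_p(0)=\Phi_p(0)=0$), we have $|\mathbb E\widetilde\Phi_p - \mathbb E\Phi_p| = |\mathbb E[(\widetilde\Phi_p-\Phi_p)\mathbf 1_{E^c}]| \le \mathbb E\big[(\lVert X\rVert_F^2/(2\Delta) + L\lVert Z\rVert_F)\mathbf 1_{E^c}\big]$, which by Cauchy--Schwarz (the second moment of $\lVert X\rVert_F^2$ is $O(p^2)$) and $\mathbb P(E^c)\le e^{-cp}$ is at most $e^{-c'p}$. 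Thus for $p$ large $|\mathbb E\widetilde\Phi_p - \mathbb E\Phi_p| \le \delta/2$, and combining the displays gives $\mathbb P(|\Phi_p - \mathbb E\Phi_p|>\delta) \le e^{-cp} + 2e^{-c\delta^2 p/4} \le e^{-Cp}$ for a suitable $C = C(\Delta,\alpha,\delta)$ and $p$ large.

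The main obstacle is exactly this lack of a global Lipschitz bound: $\nabla_X\Phi_p$ is controlled only through $\sigma_{\max}(X)$, so one must cut to $\{\sigma_{\max}(X)\le M\}$, extend, and then verify that neither the concentration rate nor the centering constant is disturbed by more than $o(1)$; the latter is what forces the exponential large-deviation bound on $\sigma_{\max}(X)$ and the elementary estimate $|\Phi_p|\le \lVert X\rVert_F^2/(2\Delta)$. The remaining ingredients --- the gradient identity, Jensen, the McShane extension, and Gaussian concentration --- are routine.
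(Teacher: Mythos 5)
Your proof is correct and follows essentially the same strategy as the paper's: localize to the event $\{\sigma_{\max}(X)\le M\}$, establish a Lipschitz bound there with constant $O(1/\sqrt p)$, and apply Gaussian concentration for Lipschitz functions, handling the bad event via the Wishart large-deviation estimate. The paper packages the truncation step into a black-box ``concentration of almost Lipschitz functions'' lemma (Lemma~\ref{concentration_of_almost_Lipshitz_functions}, from \cite{barbier2020mutual}) and obtains the Lipschitz constant by bounding differences of the Hamiltonian $H(X,\beta)$ via triangle and operator-norm inequalities; you instead differentiate $\Phi_p$ exactly, control $\lVert\nabla_X\Phi_p\rVert_F$ with Jensen, and carry out the McShane extension by hand. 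The two are interchangeable, and your version is a bit more self-contained; the only small bookkeeping you add — bounding $|\mathbb E\widetilde\Phi_p-\mathbb E\Phi_p|$ on the bad event — is precisely what the cited lemma absorbs in the paper's proof.
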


\begin{proof}
    Let $\sigma_{\text{max}}(X)$ be the largest singular value of $X$. Define 
    \begin{equation}
         \mathcal{G} \coloneqq \left \{ X : \sigma_{\text{max}}(X) <  \frac{1}{\sqrt{n}} (\sqrt{p} + \sqrt{n} + t)         \right \}. \nonumber  
    \end{equation}
    First,  we prove that $\Phi_p(\Delta, X) $ is Lipschitz as a function of $X$ on $\mathcal{G}$, with Lipschitz constant $K_p = 2(1 + \alpha^{-1/2} + t/\sqrt{n}) / \sqrt{p} $. By triangle inequality, for any $X_1, X_2 \in \mathcal{G}$,
    \begin{equation}
        \begin{aligned}
            |H(X_1, \beta) - H(X_2, \beta)| &\le |\beta^{\top} (X_1 - X_2)^{\top} X_2 \beta| + |  \beta^{\top} X_1^{\top} (X_1 - X_2) \beta  | \\
            &\le \lVert \beta \rVert^2 \left [ \lVert (X_1 - X_2)^{\top} X_2 \rVert_2 + \lVert X_1^{\top} (X_1 - X_2) \rVert_2 \right ] \\
            &\le \sqrt{p}\left [ (\sigma_{\text{max}}(X_1) + \sigma_{\text{max}}(X_2))    \lVert \sqrt{p} X_1 - \sqrt{p} X_2 \rVert_2 \right ]\\
            &\le 2 \alpha^{-1/2} (\sqrt{p} + \sqrt{n} + t)\lVert \sqrt{p} X_1 - \sqrt{p} X_2 \rVert_F \text{,}
        \end{aligned} \nonumber 
    \end{equation}
    which implies
    \begin{equation}
        \begin{aligned}
            | \Phi_p(\Delta, X_1) - \Phi_p(\Delta, X_2) | &= \frac{1}{p} \left | \ln \frac{\int_{S^{p-1}(\sqrt{p})} e^{- H(X_1,\beta)} d\pi_0(\beta)}{\int_{S^{p-1}(\sqrt{p})} e^{- H(X_2,\beta )} d\pi_0(\beta)} \right | \\
            &\le \frac{1}{p} \left [ 2 \alpha^{-1/2} (\sqrt{p} + \sqrt{n} + t)\lVert \sqrt{p} X_1 - \sqrt{p} X_2 \rVert_F \right ] \\
            &= \frac{2\alpha^{-1/2} (1 + \sqrt{\alpha} + t/\sqrt{p})}{\sqrt{p}}  \lVert \sqrt{p} X_1 - \sqrt{p} X_2 \rVert_F \text{.}
        \end{aligned} \nonumber 
    \end{equation}
    On the other hand, by Lemma \ref{large_deviation_characterization_for_the_largest_eigenvalue_of_Weshart_matrices},
        $\mathbb{P} (\mathcal{G}^c) \le 2 e^{ - t^2}$. 
    Note that $\Phi(\Delta, 0) = 0$ and 
    \begin{equation}
        \mathbb{E} \left [ \Phi_p^2(\Delta, X)\right ] \le \mathbb{E} \left [ \frac{1}{2 \Delta}  \sigma^2_{\text{max}}(X)\right ] < \infty \text{.} \nonumber 
    \end{equation}
    Therefore, by Lemma \ref{concentration_of_almost_Lipshitz_functions}, there exists a positive constant $C$ such that for any $r > 6 (C + \sqrt{np K_p})  \sqrt{\mathbb{P}(\mathcal{G}^c)}$ we have
    \begin{equation}
        \mathbb{P} (\left |  \Phi_p(\Delta, X) - \mathbb{E}\Phi_p(\Delta, X)   \right | > r ) \le 2 e^{-\frac{r^2}{16 K_p^2}} + \mathbb{P}(\mathcal{G}^c) \text{.} \nonumber 
    \end{equation}
    Now let $\delta>0$ be any fixed constant. Let $t = \sqrt{p}$. Then there exists $ p_0 \in \mathbb{N}$ such that for any $p > p_0$, $\delta > 6 (C + \sqrt{np K_p})  \sqrt{\mathbb{P}(\mathcal{G}^c)}$ and
    \begin{equation}
        \mathbb{P} (\left |  \Phi_p(\Delta, X) - \mathbb{E}\Phi_p(\Delta, X)   \right | > \delta ) \le 2 e^{-C_1 \delta ^2 p} + 2 e^{- p} \le  e^{-C p} \text{,} \nonumber 
    \end{equation}
    which concludes the proof of lemma \ref{concentration_ineq_for_second_moment_method}.
\end{proof}



\noindent 
Concerning large-p behaviour of $\phi_p(\Delta)$, we have the following characterization. 
\begin{lemma}
    Fix $\alpha \in (0,\infty)$ and $\Delta \in (0, \infty)$. One has
    \begin{equation}
        \lim_{p \to \infty} \phi_p(\Delta, X) = -\frac{\alpha}{2} \ln (1 + \frac{1}{\Delta \alpha}). \nonumber 
    \end{equation}
\end{lemma}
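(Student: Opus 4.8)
The plan is to evaluate $\mathbb{E}[Z_p(\Delta,X)]$ in closed form and then read off the limit. Since the integrand is nonnegative, Tonelli's theorem lets me interchange the expectation over $X$ with the integral over the sphere:
\begin{equation}
\mathbb{E}[Z_p(\Delta,X)] = \int_{S^{p-1}(\sqrt p)} \mathbb{E}\Big[ e^{-\frac{1}{2\Delta}\,\beta^{\top} X^{\top} X \beta}\Big]\, \mathrm{d}\pi_0(\beta). \nonumber
\end{equation}
Because the rows of $X$ are i.i.d.\ $N_p(0,I/n)$, the law of $X$ is invariant under $X \mapsto XO$ for every orthogonal $O$; hence $\mathbb{E}[e^{-\frac{1}{2\Delta}\|X\beta\|^2}]$ depends on $\beta$ only through $\|\beta\| = \sqrt p$, and the inner expectation equals its value at $\beta = \sqrt p\, e_1$. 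Thus $\mathbb{E}[Z_p(\Delta,X)] = \mathbb{E}\big[e^{-\frac{p}{2\Delta}\sum_{i=1}^n X_{i1}^2}\big]$, where $X_{i1}$ are i.i.d.\ $N(0,1/n)$.

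Next I would use the Gaussian Laplace transform $\mathbb{E}[e^{-t g^2}] = (1+2t\sigma^2)^{-1/2}$ for $g \sim N(0,\sigma^2)$, valid for every $t \ge 0$. Taking $t = p/(2\Delta)$ and $\sigma^2 = 1/n$, and using independence across $i$, this gives
\begin{equation}
\mathbb{E}[Z_p(\Delta,X)] = \Big(1 + \frac{p}{n\Delta}\Big)^{-n/2}. \nonumber
\end{equation}
Consequently $\phi_p(\Delta) = \frac{1}{p}\ln \mathbb{E}[Z_p(\Delta,X)] = -\frac{n}{2p}\ln\big(1 + \frac{p}{n\Delta}\big)$, and letting $p \to \infty$ with $n/p \to \alpha$ (so $p/n \to 1/\alpha$) yields $\phi_p(\Delta) \to -\frac{\alpha}{2}\ln\big(1 + \frac{1}{\Delta\alpha}\big)$, as claimed.

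There is no substantive obstacle in this lemma: the only points needing a word of justification are the rotational-invariance reduction to a single fixed $\beta$ and the elementary Gaussian integral. I would note in passing that the computation is exact and valid for every $\Delta > 0$ (the quantity $1 + p/(n\Delta)$ is always positive), which is consistent with the lemma being stated without any high-temperature restriction; the $\Delta_0$ threshold in Lemma~\ref{limit_of_free_energy_without_external_field} enters only later, when the annealed bound is matched by a second-moment lower bound.
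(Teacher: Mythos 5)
Your proof is correct and follows essentially the same route as the paper: interchange the expectation over $X$ with the spherical integral, use rotational invariance of the Gaussian design to reduce to $\beta = \sqrt{p}\,e_1$, and evaluate the resulting moment generating function (the paper states it as the MGF of $\frac1n\chi^2(n)$, which is just the $n$-fold product of your one-dimensional Gaussian Laplace transforms). Your closing remark that the identity holds for every $\Delta>0$ also matches the paper's observation that no temperature restriction is needed for the annealed calculation.
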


\begin{proof}
We compute the LHS explicitly, namely
    \begin{equation}
        \begin{aligned}
            \phi_p(\Delta, X) &= \frac{1}{p} \ln \mathbb{E}_X \mathbb{E}_{\beta} e^{-\frac{1}{2\Delta} \beta^{\top} X^{\top} X \beta} \\
            &= \frac{1}{p} \ln  \mathbb{E}_{\beta} \mathbb{E}_X e^{-\frac{1}{2\Delta} \operatorname{tr}( X^{\top} X \beta \beta^{\top})} \\
            &= \frac{1}{p} \ln \mathbb{E}_{\beta} \mathbb{E}_X  e^{-\frac{1}{2\Delta} \lVert \beta \rVert^2 (X^{\top} X)_{11}}\\
            &= \frac{1}{p} \ln \mathbb{E}_X e^{-\frac{p}{2\Delta} (X^{\top} X)_{11}} \\
            &= \frac{1}{p} \ln \mathbb{E}_{V\sim \chi^2(n)} e^{-\frac{p}{2\Delta n} V}\text{.}
        \end{aligned} \nonumber 
    \end{equation}
    Note that moment generating function of $\chi^2$ exists for any $t < \frac{1}{2}$, for any $\Delta > 0$ we have
    \begin{equation}
        \phi_p(\Delta, X) = \frac{1}{p} \ln ((1 + \frac{p}{\Delta n})^{-n/2}) \to - \frac{\alpha}{2} \ln(1 + \frac{1}{\Delta \alpha}) \text{.} \nonumber 
    \end{equation}
\end{proof}
A point worth noting is that there is no non-trivial restriction on the value of $\Delta$. Now we proceed to use second moment method to show $\Phi(\Delta, X)$ also converges to the same quantity. Let $\sigma$ and $\beta$ be independent samples from the prior distribution $\pi_0$. Then
\begin{equation}
    \begin{aligned}
    \mathbb{E}(Z_{p}(\Delta, X)^{2})
    &=\mathbb{E}_{X} \mathbb{E}_{\sigma}e^{-\frac{1}{2\Delta}H(\sigma)}\mathbb{E}_{\beta}e^{-\frac{1}{2\Delta}H(\beta)}\\
    &=\mathbb{E}_{X} \mathbb{E}_{\beta,\sigma}e^{-\frac{1}{2\Delta}(H(\sigma)+H(\beta))}\\
    &= \mathbb{E}_{X} \mathbb{E}_{\beta,\sigma} \exp \left(-\frac{1}{2\Delta} \operatorname{tr}\left\{ X^{\top} X\left(\sigma \sigma^{\top} +\beta \beta^{\top} \right)\right\}\right) \\
    &= \mathbb{E}_{X} \mathbb{E}_{\beta,\sigma} \exp\left(-\frac{1}{2\Delta} \left\{ (p + \beta^{\top} \sigma)(X^{\top}X)_{11} + (p - \beta^{\top} \sigma)(X^{\top}X)_{22}    \right\}\right) \text{,}
    \end{aligned} \nonumber 
\end{equation}
where the last line follows from $\operatorname{rank}(\sigma \sigma^{\top} +\beta \beta^{\top}) = 2$ and in fact its non-zero eigenvalues are $\lambda_1 = p + \beta^{\top} \sigma$ and $\lambda_2 = p - \beta^{\top} \sigma$. For $i = 1,2$, let $V_i = (X^{\top} X)_{ii}$. Note that $V_1$ and $V_2$ are iid with $V_i \sim \frac{1}{n} \chi^{2}(n)$. 
Therefore,
\begin{equation}
    \begin{aligned}
    \mathbb{E}(Z_{p}(\Delta, X)^{2}) &= \mathbb{E}_{V} \mathbb{E}_{\beta,\sigma} \exp\left(-\frac{1}{2\Delta} \left\{ (p + \beta^{\top} \sigma)V_1 + (p - \beta^{\top} \sigma)V_2    \right\}\right) \\
    &= \mathbb{E}_{\beta,\sigma} \mathbb{E}_{V} \exp \left( -\frac{1}{2\Delta}(\lambda_1 V_1 + \lambda_2 V_2)  \right) \text{.}
\end{aligned} \nonumber 
\end{equation}
Again, using MGF of $\chi^2$ distribution, we further have
\begin{equation}
    \label{eq:annealed_second_moment_calculation}
    \begin{aligned}
    \mathbb{E}(Z_{p}(\Delta, X)^{2}) &= \mathbb{E}_{\beta,\sigma} \mathbb{E}_{V} \exp \left( -\frac{1}{2\Delta}(\lambda_1 V_1 + \lambda_2 V_2)  \right) \\
    &= \mathbb{E}_{\beta,\sigma}\left \{ (1 + \frac{\lambda_1}{\Delta n})^{-n/2} (1 + \frac{\lambda_2}{\Delta n})^{-n/2} \right \}\\
    & = \mathbb{E}_{\beta,\sigma}\left \{ \left [ (1 + \frac{p}{\Delta n})^2  - (\frac{\beta^{\top} \sigma}{ \Delta n})^2 \right ]^{-n/2}\right \} \text{.}
\end{aligned}
\end{equation}
Let 
\begin{equation}
    \gamma_0 = \frac{4 \mathbb{E}(Z_{p}(\Delta, X)^{2})}{(\mathbb{E}(Z_{p}(\Delta, X)))^{2}} \text{.} \nonumber 
\end{equation}

In order to apply the second moment method, we first establish the following characterization of $\gamma_0$.

\begin{lemma}
\label{lim_calculation_of_log_gamma}
Fix $\alpha \in (0,\infty)$. There exists $\Delta_0>0$ such that for all $\Delta>\Delta_0$,
    \begin{equation}
        \lim_{p \to \infty} \frac{1}{p} \ln \gamma_0 = 0 \text{.} \nonumber
    \end{equation} 
\end{lemma}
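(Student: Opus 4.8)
The plan is to work entirely from the exact formulas already in hand. Combining \eqref{eq:annealed_second_moment_calculation} with the identity $\mathbb{E}[Z_p(\Delta,X)] = (1+\tfrac{p}{\Delta n})^{-n/2}$ (from the computation of $\phi_p$ above), we get
\begin{equation}
\gamma_0 \;=\; 4\,\mathbb{E}_{\beta,\sigma}\Bigl[\bigl(1 - \tfrac{(\beta^\top\sigma)^2}{(\Delta n + p)^2}\bigr)^{-n/2}\Bigr]. \nonumber
\end{equation}
Since $\pi_0$ is rotationally invariant, the normalized overlap $R := \beta^\top\sigma/p$ has density $c_p(1-r^2)^{(p-3)/2}\mathbf{1}_{\{|r|\le 1\}}$, with $c_p = \Gamma(p/2)/\bigl(\sqrt{\pi}\,\Gamma(\tfrac{p-1}{2})\bigr) = \Theta(\sqrt{p})$. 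Writing $\alpha_p := n/p$ and $b_p := (1+\Delta\alpha_p)^2>1$, this reads
\begin{equation}
\gamma_0 \;=\; 4 c_p \int_{-1}^{1} (1-r^2)^{(p-3)/2}\,\bigl(1-\tfrac{r^2}{b_p}\bigr)^{-n/2}\,\mathrm{d}r. \nonumber
\end{equation}
The lower bound is then immediate: $b_p>1$ gives $(1-r^2/b_p)^{-n/2}\ge 1$, and since $c_p\int_{-1}^1(1-r^2)^{(p-3)/2}\,\mathrm{d}r = 1$ we obtain $\gamma_0\ge 4$, hence $\liminf_{p\to\infty}\tfrac1p\ln\gamma_0\ge 0$.

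For the matching upper bound I would carry out a Laplace-type estimate of the integral, the key analytic input being that
\begin{equation}
h(r) \;:=\; \tfrac12\ln(1-r^2) - \tfrac{\alpha}{2}\ln\!\bigl(1-\tfrac{r^2}{b}\bigr), \qquad b := (1+\Delta\alpha)^2, \nonumber
\end{equation}
satisfies $\sup_{|r|\le 1}h(r) = h(0) = 0$. Indeed, with $u = r^2$ one computes
\begin{equation}
\tfrac{\mathrm{d}}{\mathrm{d}u}\Bigl[\tfrac12\ln(1-u) - \tfrac{\alpha}{2}\ln(1-u/b)\Bigr] \;=\; \tfrac12\cdot\frac{(\alpha-b) - (\alpha-1)u}{(b-u)(1-u)}, \nonumber
\end{equation}
whose denominator is positive on $[0,1)$ and whose affine numerator is $\le 0$ throughout $[0,1]$ precisely when $b>\max(\alpha,1)$ --- automatic for $\alpha\le 1$, and equivalent to $(1+\Delta\alpha)^2>\alpha$ otherwise. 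This is exactly where the hypothesis $\Delta>\Delta_0$ enters (one may take $\Delta_0 = \max(\sqrt{\alpha}-1,0)/\alpha$). Hence the bracket is nonincreasing in $u$, so $\sup h = 0$; and by continuity ($\alpha_p\to\alpha$, $b_p\to b$) the same holds for $h_p(r) := \tfrac12\ln(1-r^2) - \tfrac{\alpha_p}{2}\ln(1-r^2/b_p)$ for all large $p$, i.e. $\sup_{|r|\le 1}h_p = 0$.

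Finally I would split the integral at $|r| = \sqrt{1-\delta}$ for a small fixed $\delta$, using the exact identity $(1-r^2)^{p/2}(1-r^2/b_p)^{-n/2} = e^{p\,h_p(r)}$. On $\{|r|\le\sqrt{1-\delta}\}$ one bounds $(1-r^2)^{-3/2}\le\delta^{-3/2}$ and $e^{p h_p(r)}\le 1$, contributing at most $2\delta^{-3/2}$. On $\{|r|>\sqrt{1-\delta}\}$ one retains the full power and uses $r^2\le 1<b_p$ to get $(1-r^2)^{(p-3)/2}(1-r^2/b_p)^{-n/2}\le \delta^{(p-3)/2}(1-1/b_p)^{-n/2}\le \exp\!\bigl(\tfrac{p-3}{2}\ln\delta + Cp\bigr)$ for a constant $C$ (since $b_p\to b>1$), and this $\to 0$ once $\delta<e^{-2C}$. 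Therefore $\gamma_0\le 4c_p\bigl(2\delta^{-3/2}+o(1)\bigr) = O(\sqrt{p})$, so $\limsup_{p\to\infty}\tfrac1p\ln\gamma_0\le 0$, and with the lower bound this gives $\tfrac1p\ln\gamma_0\to 0$. I expect the main obstacle to be the analytic fact $\sup_r h(r) = 0$: it requires the explicit derivative computation and is precisely the point at which the high-temperature condition enters and $\Delta_0$ is fixed; a secondary nuisance is the endpoint region $r\to\pm1$, where one cannot factor out the singular weight $(1-r^2)^{-3/2}$ and must keep $(1-r^2)^{(p-3)/2}$ intact.
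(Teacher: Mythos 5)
Your proof is correct and reaches the same conclusion, but via a technically distinct and somewhat cleaner route. The paper, after deriving the same formula for $\mathbb{E}(Z_p^2)$ by symmetry, controls the overlap $Q = \beta^\top\sigma/p$ with L\'{e}vy's concentration inequality ($\mathbb{P}(|Q|>t) < e^{\pi - pt^2/4}$), then integrates by parts against that tail bound and closes via a \emph{second}-derivative analysis of $g(t) = -\tfrac{\alpha}{2}\ln[(1+r)^2 - (tr)^2] - t^2/4$, showing $g'' < g''(1) =: -C_Q < 0$, with $C_Q>0$ obtained from the observation that $r = 1/(\Delta\alpha_p)\to 0$ as $\Delta\to\infty$. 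You instead invoke the exact density $c_p(1-r^2)^{(p-3)/2}$ of the normalized overlap (the same spherical-projection formula the paper uses elsewhere, e.g.~in the lower-bound section), turning $\gamma_0/4$ into an explicit one-dimensional integral, and then run a bare-hands Laplace estimate backed by a \emph{first}-derivative analysis of the normalized exponent $h(r)$. This buys two things: (i) the trivial lower bound $\gamma_0\ge 4$ falls out of the formula immediately rather than relying on the implicit Jensen/Cauchy--Schwarz step; and (ii) the high-temperature threshold is sharper and fully explicit, $\Delta_0 = \max(\sqrt{\alpha}-1,0)/\alpha$, equivalently $(1+\Delta\alpha)^2>\alpha$, whereas the paper's condition $g''(1)<0$ is strictly more restrictive (e.g.~at $\alpha=2$, $\Delta=1/2$ your condition holds but the paper's does not). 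The paper's route avoids handling the singular weight $(1-r^2)^{-3/2}$ near the endpoints, but you deal with that correctly by keeping the power $(1-r^2)^{(p-3)/2}$ intact on $\{|r|>\sqrt{1-\delta}\}$. Both arguments are valid; yours is more self-contained once the density formula is granted.
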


\begin{proof}
    Due to symmetry, $ \beta^{\top} \sigma  \stackrel{d}{=} \Sigma_{i = 1}^{p}\beta_i$, equivalently, we can assume $\sigma = (1,1,\dots,1)$ without loss of generality. By Lemma \ref{levys_lemma}, letting $Q =  \beta^{\top} \sigma / p \in (-1,1)$, for $t \in (0,1)$,
\begin{equation}
    \mathbb{P}_{\beta, \sigma}(|Q| > t ) < e^{\pi - p t^2 /4} \text{.} \nonumber 
\end{equation}
Armed with this deviation bound, we derive an upper bound on $\mathbb{E}_{X}(Z_{p}(\Delta, X)^{2})$.  
Setting $r = p / (\Delta n)$, one has
\begin{equation}
    \begin{aligned}
        \mathbb{E}(Z_{p}(\Delta, X)^{2}) &= \mathbb{E}_{\beta,\sigma}\left \{ \left [ (1 + \frac{p}{\Delta n})^2  - (\frac{ \beta^{\top} \sigma }{ \Delta n})^2 \right ]^{-n/2}\right \} \\
        & = \mathbb{E}_Q \left \{ \left [ (1 + r)^2 - (Q r)^2\right ]^{-n/2} \right \}  \coloneqq \mathbb{E}_Q \left [ e^{p f(Q)}\right ] \text{,}
    \end{aligned}
\end{equation}
where $f(t) \coloneqq -( \alpha / 2) \ln \left [ (1+r)^2 - (tr)^2 \right ]$. Let $g(t) = f(t) - t^2 / 4$. 
Note that ${g}^\prime(0) = 0$
and
\begin{equation}
    \begin{aligned}
        {g}^{\prime \prime}(t) = \frac{\alpha r^2 \left [ (1 + r)^2 + t^2 r^2\right ]}{\left [ (1 + r)^2 - t^2 r^2 \right ]^2} - \frac{1}{2} < {g}^{\prime \prime}(1)
        =: - C_Q \text{,}
    \end{aligned}
\end{equation}
for any $t \in (0,1)$. Note that $r \to 0$ for any fixed $\alpha$ as $\Delta \to \infty$, and thus $C_Q >0$ for $\Delta>0$ sufficiently large. 
We have $g(t) \le - C_Q t^2 / 2 + g(0) = - C_Q t^2 /2 - \alpha \ln (1 + r) $. 
By \eqref{eq:annealed_second_moment_calculation}, integration by parts gives 
\begin{equation}
    \begin{aligned}
        \mathbb{E}(Z_{p}(\Delta, X)^{2}) 
        & = - \int_{0}^{1} e^{p f(t)} \mathrm{d} \mathbb{P}_{\beta, \sigma}(| Q | >t)  \\
        & \le - \left [ e^{pf(t)}\mathbb{P}_{\beta,\sigma}( | Q | >t) \Big|_{t=0}^{t = 1}- \int_{0}^{1} \left [ e^{p f(t)} \right ]^\prime e^{\pi -t^2/4}\right ]\mathrm{d} t \\
        & \le - e^{pf(0)} +e^\pi  \int_0^1  \frac{p \alpha r^2 t}{(1 + r)^2 - t^2 r^2} e^{ p g(t)} \mathrm{d} t \\
        & \le - e^{pf(0)} +e^\pi \frac{p\alpha r^2}{(1+r)^2 -r^2} \int_0^1 e^{ p g(t)} \mathrm{d} t \\
        & \le e^{\pi - p \alpha \ln (1+r)} \left [ 1 + C p \int_{0}^{1} e^{- p C_Q t^2 / 2} \mathrm{d} t \right ] \\
        & \le  e^{\pi -p \alpha \ln (1+r) } ( C \sqrt{n} + 1) \text{.}
    \end{aligned} \nonumber 
\end{equation}
Thus
\begin{equation}
    \begin{aligned}
        \frac{1}{p}\ln \gamma_0 
        & \le \frac{1}{p} \ln \left \{ \frac{4e^\pi (1+r)^{-n} (C \sqrt{n}+ 1)}{(1+r)^{-n}}  \right \} \to 0 \text{,}
    \end{aligned}
\end{equation}
as $p \to \infty$.
\end{proof}
For the readers' convenience, we re-state \cite[Lemma 4.1.1]{montanari2013statistical} here.
\begin{lemma}
\label{second_moment_method_lemma}
    \begin{equation}
         \mathbb{P}\left(\left|\Phi_{p}(\Delta, X)- \phi_{p}(\Delta)\right|<\frac{1}{p} \ln \gamma_0 \right) \ge \frac{1}{\gamma_0} \text{.} \nonumber 
    \end{equation}
\end{lemma}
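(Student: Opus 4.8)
The plan is to recognize Lemma \ref{second_moment_method_lemma} as the classical second--moment (Paley--Zygmund) bound and to prove it by normalizing the partition function and controlling its upper and lower tails separately. First I would introduce the normalized random variable $W \coloneqq Z_p(\Delta, X) / \mathbb{E}[Z_p(\Delta, X)]$. Since the quadratic form $\frac{1}{2\Delta}\beta^{\top} X^{\top} X \beta$ is nonnegative, we have $Z_p(\Delta,X) \le 1$ deterministically, so $\mathbb{E}[Z_p(\Delta,X)] \in (0,1]$ and $W$ is a well-defined nonnegative random variable with $\mathbb{E}[W] = 1$ and $\mathbb{E}[W^2] = \mathbb{E}[Z_p(\Delta,X)^2] / (\mathbb{E}[Z_p(\Delta,X)])^2 = \gamma_0 / 4 < \infty$. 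Exponentiating, and using $\Phi_p(\Delta,X) - \phi_p(\Delta) = \frac{1}{p}\ln W$, the event appearing in the lemma is exactly $\{\gamma_0^{-1} < W < \gamma_0\}$. I would also record the elementary bound $\gamma_0 \ge 4$, which is just Jensen's inequality $\mathbb{E}[Z_p^2] \ge (\mathbb{E}[Z_p])^2$; this is precisely why the factor $4$ is built into the definition of $\gamma_0$.

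Next, for the upper tail, Markov's inequality gives $\mathbb{P}(W \ge \gamma_0) \le \mathbb{E}[W]/\gamma_0 = 1/\gamma_0$. For the lower tail I would apply the Paley--Zygmund inequality, which itself follows from Cauchy--Schwarz applied to $\mathbb{E}[W \, \mathbf{1}\{W > \theta\}]$: for $\theta \in [0,1]$,
\begin{equation}
\mathbb{P}(W > \theta) \ge (1-\theta)^2 \frac{(\mathbb{E}[W])^2}{\mathbb{E}[W^2]} = \frac{4(1-\theta)^2}{\gamma_0}. \nonumber
\end{equation}
Taking $\theta = \gamma_0^{-1} \le 1/4$ and using $(1-\gamma_0^{-1})^2 \ge 9/16$ yields $\mathbb{P}(W > \gamma_0^{-1}) \ge 9/(4\gamma_0)$. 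Finally, since $\{W > \gamma_0^{-1}\}$ is the disjoint union of $\{\gamma_0^{-1} < W < \gamma_0\}$ and $\{W \ge \gamma_0\}$, I subtract to obtain
\begin{equation}
\mathbb{P}\big(\gamma_0^{-1} < W < \gamma_0\big) \ge \frac{9}{4\gamma_0} - \frac{1}{\gamma_0} = \frac{5}{4\gamma_0} \ge \frac{1}{\gamma_0}, \nonumber
\end{equation}
which is the assertion of the lemma.

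There is no serious obstacle here---this is a textbook statement (cf.\ \cite[Lemma 4.1.1]{montanari2013statistical}), and the computation of $\gamma_0$ in Lemma \ref{lim_calculation_of_log_gamma} is the substantive input. The only points requiring a little care are verifying that $\mathbb{E}[W^2] < \infty$, which is immediate from $Z_p(\Delta,X) \le 1$, so that Paley--Zygmund applies, and checking that the numerical constants close, which is exactly what the factor $4$ in the definition of $\gamma_0$ guarantees; if one preferred, the upper tail could instead be handled by Markov applied to $W^2$ (using $\mathbb{E}[W^2] = \gamma_0/4$), giving $\mathbb{P}(W \ge \gamma_0) \le 1/(4\gamma_0)$ and an even larger margin.
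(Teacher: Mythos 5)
Your proof is correct, and it is the standard second-moment argument (Paley--Zygmund for the lower tail, Markov for the upper tail); the observation that $\gamma_0 \ge 4$ by Jensen, so that $\theta = \gamma_0^{-1} \le 1/4$ makes the constants close, is exactly what the factor $4$ in the definition of $\gamma_0$ is for. Note, however, that the paper itself does not prove this lemma---it states it as a restatement of \cite[Lemma~4.1.1]{montanari2013statistical} for the reader's convenience and moves on---so you are not matching or diverging from an argument in the paper but rather supplying the textbook proof that the paper defers to its reference.
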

Now we are finally ready to prove Lemma \ref{limit_of_free_energy_without_external_field}.
\begin{proof}[Proof of Lemma \ref{limit_of_free_energy_without_external_field}]
Putting Lemma \ref{concentration_ineq_for_second_moment_method}, Lemma \ref{second_moment_method_lemma}, and Lemma \ref{lim_calculation_of_log_gamma} together, we have for any $\delta \in (0,1)$, there exists a large $p_0 \in \mathbb{N}$ such that for any $p > p_0$
\begin{equation}
    \mathbb{P}\left( \left |  \Phi_p(\Delta, X) - \mathbb{E}\Phi_p(\Delta, X)   \right | \le \delta  \right) + \mathbb{P}\left(\left|\Phi_{p}(\Delta, X)- \phi_{p}(\Delta)\right|< q_p \right) \ge 1 - e^{Cp} + e^{q_p p} > 1 \text{,} \nonumber 
\end{equation}
where $q_p \coloneqq \frac{1}{p}\ln \gamma_0 \to 0$ as $p \to \infty$. It implies
\begin{equation}
    \left |\mathbb{E}\Phi_p(\Delta, X)  - \phi_{p}(\Delta) \right | \le \delta \text{,} \nonumber 
\end{equation}
which further implies
\begin{equation}
    \limsup_{p \to \infty} \left | \mathbb{E}\Phi_p(\Delta, X)  - \phi_{p}(\Delta) \right | \le \delta \text{.} \nonumber 
\end{equation}
Since it holds for any $\delta \in (0,1)$, proof of Lemma \ref{limit_of_free_energy_without_external_field} is therefore complete.
\end{proof}

\noindent 
Now we establish Lemma \ref{limit_of_free_energy_without_external_field_on_a_linear_subspace} based on Lemma \ref{limit_of_free_energy_without_external_field_uniform}.

\begin{proof}[Proof of Lemma \ref{limit_of_free_energy_without_external_field_on_a_linear_subspace}]
    For any $u,v \in \mathbb{R}^p$, let $w_1,w_2,\dots,w_p$ be an orthonormal basis such that $\operatorname{Span}( u, v ) = \operatorname{Span}( w_{p-1}, w_{p})$. Let $\{ \lambda_1 \ge \lambda_2 \ge \dots \ge \lambda_{p-2} \}$ be the eigenvalues of $Q$, where Q is the top left $(p-2)$ by $ (p-2)$ minor of $X^{\top} X$ under this basis. 
    Namely, let $W = (w_1, w_2, \dots, w_p) \in \mathbb{R}^{p \times p}$, $X = W \tilde{X}$ and $\beta = W \tilde{\beta}$, where $\tilde{\beta} = (\tilde{\beta}_1, \tilde{\beta}_2, \dots, \tilde{\beta}_p)$, with $\tilde{\beta}_p = \tilde{\beta}_{p-1} = 0$ for $\beta \in \operatorname{Span}( u, v )^{\perp}$. Equivalently, $\tilde{X}$ and $\tilde{\beta}$ are nothing but $X$ and $\beta$ under this new basis and $Q = (\tilde{X}^{\top} \tilde{X})_{1:(p-2),1:(p-2)}$.

    \begin{equation}
    \label{proof_of_limit_of_free_energy_without_external_field_on_a_linear_subspace_1}
        \begin{aligned}
        \mathbb{E}_{\beta}^{\operatorname{Span}( u, v )^{\perp}} [ \exp (-H(\beta))] 
        &= \mathbb{E}_{\beta}^{\operatorname{Span}( u, v )^{\perp}}[ \exp \left (-\frac{1}{2\Delta} \beta^{\top} X^{\top} X \beta \right )] \\
        &=  \mathbb{E}_{\beta}^{\operatorname{Span}( u, v )^{\perp}} \left [ \exp \left ( -\frac{1}{2 \Delta} \tilde{\beta}^{\top} \tilde{X}^{\top} \tilde{X} \tilde{\beta} \right ) \right ] \\
        &= \mathbb{E}_{\beta}^{\operatorname{Span}( u, v )^{\perp}} \left [ \exp \left ( -\frac{1}{2 \Delta} \tilde{\beta}_{1:(p-2)}^{\top} Q \tilde{\beta}_{1:(p-2)} \right ) \right ] \\
        &= \mathbb{E}^{p-2}_{\tilde{\beta}_{1:(p-2)}} \left [ \exp \left ( -\frac{1}{2 \Delta}   \sum_{i=1}^{p-2} \lambda_i \tilde{\beta}_i^2 \right ) \right ] \\
        &= \mathbb{E}^{p-2}_{\beta} \left [ \exp \left ( -\frac{1}{2 \Delta}   \sum_{i=1}^{p-2} \lambda_i \beta_i^2 \right ) \right ]
        \text{,}
        \end{aligned}
    \end{equation}
    where $\mathbb{E}^{p-2} $ denotes expectation under $\operatorname{Unif}(S^{(p-3)}(\sqrt{p}))$. Note that the second last line is due to the distributional invariance of $\tilde{\beta}$ under rotations, which itself is inherited from that of $\beta \sim \operatorname{Unif}(S^{p-1}(\sqrt{p}))$ and the last line above is nothing but a change of notation. 
    Similarly, let $S$ be the top left $(p-2)$ by $(p-2)$ minor of $X^{\top} X $ when written in the standard basis $\{e_1,e_2, \dots, e_p\}$. let $s_1 \ge s_2 \ge \dots \ge s_{p - 2}$ be the corresponding eigenvalues of $S$. For $\beta \in \mathbb{R}^{p-2}$, by Lemma \ref{limit_of_free_energy_without_external_field} we have
    \begin{equation}
    \label{proof_of_limit_of_free_energy_without_external_field_on_a_linear_subspace_2}
        \mathbb{E}^{p-2}_{\beta} e^{- \frac{1}{2\Delta} \beta^{\top} S \beta} = e^{p(-\frac{\alpha}{2} \ln (1+\frac{1}{\Delta}) + o(1))} \text{,}
    \end{equation}
    in which the $o(1)$ term is uniform over any $\Delta \ge \Delta_0$. At the same time
    \begin{equation}
    \label{proof_of_limit_of_free_energy_without_external_field_on_a_linear_subspace_3}
        \mathbb{E}^{p-2}_{\beta} e^{- \frac{1}{2\Delta} \beta^{\top} S \beta} = \mathbb{E}^{p-2}_{\beta} e^{-\frac{1}{2 \Delta} \sum_{i=1}^{p-2} s_i \beta_i^2}\text{.}
    \end{equation}
    By the eigenvalue interlacing inequality \cite{horn2012matrix}, we have almost surely
    \begin{equation}
        \sup_{1 = 1,2,..,p} |\lambda_i - s_i| \to 0 \text{.} \nonumber 
    \end{equation}
    Therefore almost surely
    \begin{equation}
    \label{proof_of_limit_of_free_energy_without_external_field_on_a_linear_subspace_4}
        \sup_{\beta \in S^{p-2}(\sqrt{p})} \frac{1}{p} |\sum_{i=1}^{p-2} \lambda_i \beta_i^2 - \sum_{i=1}^{p-2} s_i \beta_i^2| \le \frac{ \lVert \beta \rVert^2}{p} \sup_{1 = 1,2,..,p} |\lambda_i - s_i| \to 0 \text{.}
    \end{equation}
    Combining (\ref{proof_of_limit_of_free_energy_without_external_field_on_a_linear_subspace_1},\ref{proof_of_limit_of_free_energy_without_external_field_on_a_linear_subspace_2},\ref{proof_of_limit_of_free_energy_without_external_field_on_a_linear_subspace_3},\ref{proof_of_limit_of_free_energy_without_external_field_on_a_linear_subspace_4}), we obtain
    \begin{equation}
    \label{eigen_values_uniform_convergence}
        \mathbb{E}^{\operatorname{Span}( u, v )^{\perp}} e^{-\frac{1}{2 \Delta}\beta^{\top} X^{\top} X \beta} =  e^{p(-\frac{\alpha}{2} \ln (1+\frac{1}{\Delta}) + o(1))} \text{.}
    \end{equation}
    Note the $o(1)$ term is uniform with respect to not only $\Delta \ge \Delta_0$ but also the choice of $u$ and $v$, since the convergence in (\ref{eigen_values_uniform_convergence}) is independent of $u$ and $v$, which concludes the proof.
\end{proof}


\section{A matching upper bound on the free energy}
\label{sec:upper_bound}

We derive an upper bound to the TAP free energy in terms of the log-partition function in this section. We collect this bound in Theorem \ref{upper_bound} below. 
To establish this result, we first introduce a model perturbation and discuss overlap concentration under the perturbed model in Section \ref{sec:perturbation}. These results are subsequently used to establish Theorem \ref{upper_bound} in Section \ref{sec:proof_upperbound}.  

\begin{theorem}[Upper bound]
\label{upper_bound}
Fix $\Delta>0$ and $\alpha \in (0,\infty)$. 
    For any $\eta>0$ as $p \to \infty$,  
    \begin{equation}
        \mathbb{P} \left (  \frac{1}{p} \ln \mathcal{Z}_p \ge \sup_{a \in \mathbb{R}^p, \lVert a \rVert \le \sqrt{p}} f_{\text{TAP}}(a) + \eta  \right ) \to 0 \text{.} \nonumber 
    \end{equation}
\end{theorem}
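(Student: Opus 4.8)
The plan is to establish the reverse of the lower bound by a multi-replica argument driven by overlap concentration, following the sketch in the introduction; throughout I would condition on the good event $\Omega$. \emph{Step 1: reduction to an overlap-restricted replicated integral.} First I would perturb the posterior \eqref{eq:posterior} by a vanishing amount of side information about $\beta_0$ (an auxiliary Gaussian channel of strength $\lambda\downarrow 0$); this alters $\frac1p\ln\mathcal Z_p$ and every subsequent estimate by $O(\sqrt\lambda)=o(1)$, so I suppress it from the notation. By the overlap-concentration statement recorded in Appendix~\ref{sec:overlap_conc}, there is a deterministic $q=q(\Delta,\alpha)\in[0,1)$ such that, with high probability over $(y,X)$, the two-replica Gibbs measure concentrates $\frac1p\langle\beta^1,\beta^2\rangle$ within $\delta$ of $q$, and by the Nishimori identity so does $\frac1p\langle\beta,\beta_0\rangle$. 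Taking $m=m_p\to\infty$ slowly and union-bounding over the $\binom m2$ pairwise overlaps and the $m$ overlaps with $\beta_0$, the $m$-fold Gibbs mass of the overlap-concentrated set $R_\delta=\{|\frac1p\langle\beta^j,\beta^k\rangle-q|\le\delta,\ |\frac1p\langle\beta^k,\beta_0\rangle-q|\le\delta\ \forall j\ne k\}$ is $\ge\frac12$ with high probability. Since $\frac1p\ln\mathcal Z_p=\frac1{pm}\ln\mathcal Z_p^{\,m}$, this gives with high probability
\[
\frac1p\ln\mathcal Z_p\le\frac1{pm}\ln Z^{(m)}_\delta+o(1),\qquad Z^{(m)}_\delta:=\int_{R_\delta}\prod_{k=1}^m e^{-\mathcal H(\beta^k)}\,\mathrm d\pi_0^{\otimes m}.
\]

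\emph{Step 2: re-centering about the empirical mean.} On $R_\delta$ set $a:=\bar\beta=\frac1m\sum_k\beta^k$. From $\|\beta^k\|^2=p$ and the overlap bounds one reads off $\|a\|^2/p=q+O(\delta+1/m)$, $\frac1p\langle a,\beta_0\rangle=q+O(\delta)$, $\|\beta^k-a\|^2=p(1-q)+O(p\delta+p/m)$, $\langle\beta^j-a,\beta^k-a\rangle=O(p\delta+p/m)$ for $j\ne k$, and $\sum_k(\beta^k-a)=0$ exactly. Expanding each Hamiltonian about $a$,
\[
\sum_{k=1}^m\mathcal H(\beta^k)=\frac m{2\Delta}\|y-Xa\|^2+\frac1{2\Delta}\sum_{k=1}^m\|X(\beta^k-a)\|^2-\frac1\Delta\Big\langle X^\top(y-Xa),\ \sum_{k=1}^m(\beta^k-a)\Big\rangle,
\]
and the last term vanishes because $a$ is the empirical mean: this is the cancellation of the external field produced by re-centering. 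Hence $Z^{(m)}_\delta=\int_{R_\delta}e^{-\frac m{2\Delta}\|y-X\bar\beta\|^2}\prod_k e^{-\frac1{2\Delta}\|X(\beta^k-\bar\beta)\|^2}\,\mathrm d\pi_0^{\otimes m}$.

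\emph{Step 3: decoupling and conclusion.} Since $\|\bar\beta\|^2/p$ lies in $(q-\delta',q+\delta')$ on $R_\delta$, I would bound $e^{-\frac m{2\Delta}\|y-X\bar\beta\|^2}$ by $\sup_{\|a'\|^2/p\in(q-\delta',q+\delta')}e^{-\frac m{2\Delta}\|y-Xa'\|^2}$ and factor it out. For the remaining integral I would reveal the replicas one at a time: conditioned on $\beta^1,\dots,\beta^{k-1}$, the constraints confine $\beta^k$ to a thin band around a sphere of radius $\sqrt{p(1-q)}$ inside a subspace of codimension $O(k)$, the relevant center (the partial empirical mean) coincides with the orthogonal projection of $\beta^k$ onto that subspace up to an error of squared norm $O(p/m)$, so the external field it generates is $o(\sqrt p)$ and contributes $o(p)$; the conditional integral of $e^{-\frac1{2\Delta}\|X(\cdot)\|^2}$ is then at most $\exp\big(p[-\frac\alpha2\ln(1+\frac{1-q}{\Delta\alpha})+\frac12\ln(1-q)]+o(p)+O(p\delta)\big)$. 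The free-energy estimate used here is only the \emph{one-sided} bound ``quenched $\le$ annealed'', obtained by combining the concentration inequality of Lemma~\ref{concentration_ineq_for_second_moment_method} with the exact annealed computation (rescaled so that $\Delta$ becomes $\Delta/(1-q)$, and restricted to a subspace as in the proof of Lemma~\ref{limit_of_free_energy_without_external_field_on_a_linear_subspace}); this direction is valid for \emph{every} $\Delta>0$, which is precisely why Theorem~\ref{upper_bound} carries no high-temperature hypothesis. Multiplying the $m$ conditional bounds and the per-replica band-volume factor $\exp(\frac12 p\ln(1-q))$,
\[
\frac1{pm}\ln Z^{(m)}_\delta\le\sup_{\|a'\|^2/p\in(q-\delta',q+\delta')}\Big(-\frac1{2\Delta p}\|y-Xa'\|^2\Big)-\frac\alpha2\ln\Big(1+\frac{1-q}{\Delta\alpha}\Big)+\frac12\ln(1-q)+o(1)+O(\delta),
\]
and the right-hand side is $\le\sup_{\|a'\|\le\sqrt p}f_{\text{TAP}}(a')+o(1)+O(\delta)$ because the last two summands match the corresponding terms of $f_{\text{TAP}}(a')$ up to $O(\delta')$ when $\|a'\|^2/p\approx q$. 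Letting $\delta\downarrow0$ and combining with Step~1 yields $\frac1p\ln\mathcal Z_p\le\sup_{\|a\|\le\sqrt p}f_{\text{TAP}}(a)+\eta$ with high probability.

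\emph{Main obstacle.} I expect the crux to be the decoupling in Step~3: the center $\bar\beta$ couples all $m$ replicas, and the vectors $\beta^k-\bar\beta$ are only approximately orthogonal---since $\sum_k(\beta^k-\bar\beta)=0$ their pairwise inner products are in fact $\approx-p(1-q)/(m-1)$ rather than $0$. The work is to show that these $o(p)$-scale correlations, together with the $O(\delta)$ slack in $R_\delta$, perturb the logarithm of the replicated integral by only $o(pm)+O(pm\delta)$, uniformly; this is where the slow growth of $m_p$, the near-coincidence of each partial empirical mean with the corresponding projection (so that every external field created en route is $o(\sqrt p)$), and the uniformity over low-dimensional subspaces and over the effective temperature in the estimates of Section~\ref{sec:lower_bound} all enter. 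A secondary, routine point is exchanging the supremum over $a'$ with the integral and discarding the endpoint $\|a'\|=\sqrt p$, handled via a net over the scalar statistics as in the lower bound.
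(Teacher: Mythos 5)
Your Steps~1 and~2 match the paper's proof closely: the reduction to a replicated, overlap-restricted partition function via a vanishing side-channel and overlap concentration is exactly Lemma~\ref{upperbound_intermediate_lemma_1}, the re-centering around $\bar\beta$ with exact cancellation of the cross term is exactly the display preceding~\eqref{eq:r_p}, and your observation that only the direction ``quenched $\le$ annealed $+o(1)$'' of the free-energy estimate is needed (hence no high-temperature hypothesis) is also how the paper avoids the $\Delta_0$ restriction here. Two peripheral points you glossed over but the paper handles explicitly: the overlap limit need not exist, so the paper works along subsequences; and it needs (Lemma~\ref{lower_and_upper_bounds_on_overlap}) both that $\limsup\mathbb{E}\langle R_{1,2}\rangle\le 1-C<1$ (otherwise the $\ln(1-\|a\|^2/p)$ terms blow up) and a separate, easier argument when $\lim\mathbb{E}\langle R_{1,2}\rangle=0$.

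The genuine gap is in Step~3, and you flagged it yourself. Your peeling idea does not obviously close because the center $\bar\beta$ appearing inside $\|X(\beta^k-\bar\beta)\|^2$ depends on \emph{all} $m$ replicas, so conditioning on $\beta^1,\dots,\beta^{k-1}$ does not leave a clean one-replica integral, and the residual correlations $\langle\beta^j-\bar\beta,\beta^k-\bar\beta\rangle\approx -p(1-q)/(m-1)$, while small, accumulate over $\binom m2$ pairs in a way that your sketch does not control. The paper sidesteps this entirely with a Gaussian orthogonalization trick: write $\beta^i=\sqrt p\, g^i/\|g^i\|$ with $g^i$ i.i.d.\ $\mathcal N(0,I_p)$, set $b=\tfrac1{m}\sum_i g^i$, and then add an \emph{independent} auxiliary noise $w\sim\mathcal N(0,I_p/m)$. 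With $h^i=g^i+w$ and $q^i=h^i-b$, the vectors $b,q^2,\dots,q^m$ are \emph{exactly} mutually independent (the covariances $-\tfrac1m I_p$ among the $g^i-b$ are cancelled by $\mathrm{Cov}(w,w)=\tfrac1m I_p$), so conditioning on $b$ makes $h^2,\dots,h^m$ conditionally i.i.d.\ and the replicated integral literally factors as $\kappa_p(b)^{m-1}$. The cost of passing from $\beta^i$ to $g^i$ and of adding $w$ is $O(\epsilon)+O(m^{-1/2})$ per replica, uniformly over the good event, which is why the paper also drops the $i=1$ term in~\eqref{eq:r_p} and why $m_p\to\infty$ slowly enough suffices. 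This exact-independence mechanism, rather than an approximate peeling with accumulated $o(p)$ errors, is the missing idea; the rest of your outline is sound.
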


\subsection{Perturbed model and overlap concentration}
\label{sec:perturbation}

Assume that apart from $X$ and $y$, we observe the following additional side information
    \begin{equation}
        y^{\text{Pert}} = \sqrt{\lambda_0 \epsilon_p} \beta_0 + Z \text{,} \nonumber 
    \end{equation}
with $Z \sim \mathcal{N}(0,I_p)$, $\lambda_0 \in [1/2, 1]$, $\epsilon_p \to 0$, and $p \epsilon_p \to +\infty$. We refer to $(y^{\text{Pert}}, y, X)$ as the perturbed model.
Having observed $(y^{\text{Pert}}, y, X)$, one constructs the posterior distribution 
\begin{equation}
\label{eq:perturbed_posterior} 
    \frac{d \mathbb{P}}{d \pi_0}(\beta | y^{\text{Pert}}, y, X) = \frac{1}{\mathcal{Z}_p^{\text{Pert}}} e^{- \mathcal{H}_p^{\text{Pert}}(\beta)}, 
\end{equation}
where $\mathcal{H}_p^{\text{Pert}} := \mathcal{H}_p(\beta) + \mathcal{H}_p^{\text{Gauss}}(\beta)$ is the  \textit{Hamiltonian} of the perturbed model, 
and $\mathcal{H}_p^{\text{Gauss}}(\beta) \coloneqq - \lambda_0 \epsilon_p \beta_0^{\top} \beta - \sqrt{\lambda_0 \epsilon_p} Z^{\top} \beta +  \lambda_0 \epsilon_p \lVert \beta \rVert^2 / 2$ is the  Hamiltonian of the Gaussian side channel. 
The normalizing constant of this posterior distribution is 
\begin{equation}
\label{eq:normalizing_constant} 
    \mathcal{Z}^{\text{Pert}}_p \coloneqq \int_{S^{p-1}(\sqrt{p})} e^{-\mathcal{H}^{\text{Pert}}(\beta)} d\pi_0(\beta) \text{.}
\end{equation}
Finally, the free energy of the perturbed model is
\begin{equation}
\label{eq:perturbed_free_energy} 
    F^{\text{Pert}}_p(\lambda_0) \coloneqq  \frac{1}{p} \ln \mathcal{Z}^{\text{Pert}}_p(y^{\text{Pert}},y,X) \text{.}
\end{equation}
In the subsequent discussion, we abuse notation slightly, and use $\langle \cdot \rangle$ to also denote the expectation under the perturbed posterior \eqref{eq:perturbed_posterior}. 
The notion of overlap will play a crucial role in our subsequent discussion. 
\begin{definition}
\label{defn:overlap} 
Let $\beta^1$, $\beta^2$ be two iid samples from the posterior distribution \eqref{eq:perturbed_posterior}. We refer to $R_{12}:=  (\beta^1)^{\top} \beta^2 / p $ as the \emph{overlap} between two replicas. 
\end{definition}

\begin{theorem}
\label{overlap_concentration}
    For any $\Delta > 0$ and $\alpha \in (0,\infty)$, as $p \to \infty$,
    \begin{equation}
        \frac{1}{p} \left | \mathbb{E} \ln \mathcal{Z}_p^{\textrm{Pert}} -   \mathbb{E} \ln \mathcal{Z}_p     \right | \le O(\epsilon_p) \text{.} \nonumber 
    \end{equation}
    Furthermore, assuming $\lambda_0 \sim \operatorname{Unif}(\frac{1}{2},1)$, there exist $C_1, C_2>0$ such that
    \begin{equation}
    \label{overlap_concentration_main_equation}
        \mathbb{E}_{\lambda_0} \mathbb{E} \langle (R_{1,2} - \mathbb{E}\langle R_{1,2} \rangle )^2 \rangle \le \frac{C_1}{\epsilon_p} \Big(\frac{v_p}{p\epsilon_p} + \frac{1}{p} \Big)^{1/3} \text{,}
    \end{equation}
    where $v_p \coloneqq p \sup_{\lambda_0 \in [\frac{1}{2},1]} \{ \mathbb{E} (F_p^{\text{Pert}}(\lambda_0) -  \mathbb{E}F_p^{\text{Pert}}(\lambda_0))^2  \}$ and $v_p < C_2$ for any $p\geq 1$. 
\end{theorem}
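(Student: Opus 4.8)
The plan is to transplant the overlap-concentration argument for Bayes-optimal inference (the Ghirlanda--Guerra / Aizenman--Sims--Starr perturbation method, in the systematic form developed by Barbier and collaborators \cite{barbier2019adaptive,barbier2020strong}) to the present model. Two structural features make this possible: the side-information channel $y^{\text{Pert}}=\sqrt{\lambda_0\epsilon_p}\,\beta_0+Z$ is \emph{Gaussian}, so Gaussian integration by parts applies to it regardless of the (non-Gaussian) shape of $\mathcal{H}_p$; and the model is \emph{well-specified}, so $\langle\cdot\rangle$ obeys the Nishimori identities $\mathbb{E}\langle f(\beta^1,\dots,\beta^k)\rangle=\mathbb{E}\langle f(\beta_0,\beta^2,\dots,\beta^k)\rangle$. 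The whole argument then reduces to supplying two model-specific analytic inputs --- a bound on the SNR-derivative of the free energy, and concentration of the free energy --- after which the stated inequalities follow from the standard perturbation lemma.

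For the first assertion and the derivative identity, write $\mathcal{Z}_p(t)$ for the partition function in \eqref{eq:normalizing_constant} with the channel SNR $\lambda_0\epsilon_p$ replaced by $t$, so that $\mathcal{Z}_p(0)=\mathcal{Z}_p$ and $\mathcal{Z}_p(\lambda_0\epsilon_p)=\mathcal{Z}_p^{\text{Pert}}$. Differentiating $\tfrac1p\mathbb{E}\ln\mathcal{Z}_p(t)$ in $t$, then performing one Gaussian integration by parts in the channel noise $Z$, a Nishimori substitution, and using $\|\beta\|^2=p$ on $S^{p-1}(\sqrt p)$, one obtains $\tfrac{d}{dt}\tfrac1p\mathbb{E}\ln\mathcal{Z}_p(t)=\tfrac12\mathbb{E}\langle R_{1,2}\rangle_t\in[0,\tfrac12]$. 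Integrating over $t\in[0,\lambda_0\epsilon_p]$ and using $\lambda_0\le1$ gives $0\le\tfrac1p(\mathbb{E}\ln\mathcal{Z}_p^{\text{Pert}}-\mathbb{E}\ln\mathcal{Z}_p)\le\tfrac{\epsilon_p}{2}$, which is the first claim. The same identity, read with $\lambda_0$ as the variable, shows that $\lambda_0\mapsto\mathbb{E}F_p^{\text{Pert}}(\lambda_0)$ is convex with derivative $\tfrac{\epsilon_p}{2}\mathbb{E}\langle R_{1,2}\rangle$; differentiating once more, a second Gaussian integration by parts yields the key fluctuation bound $\tfrac{d^2}{d\lambda_0^2}\mathbb{E}F_p^{\text{Pert}}(\lambda_0)\ge c\,\epsilon_p^2\,p\,\mathbb{E}\langle(R_{1,2}-\langle R_{1,2}\rangle)^2\rangle$ for some $c>0$.

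The concentration $v_p\le C_2$ is the only genuinely non-routine step, precisely because $\mathcal{H}_p(\beta)=\|y-X\beta\|^2/(2\Delta)$ is quadratic in the Gaussian matrix $X$ (so, unlike in the classical Gaussian-disorder setting, $F_p^{\text{Pert}}$ is not globally Lipschitz in the disorder). I would treat this exactly as in the proof of Lemma \ref{concentration_ineq_for_second_moment_method}: on the good set $\Omega$ of \eqref{eq:def_Omega}, where $\sigma_{\text{max}}(X)$ is bounded, $F_p^{\text{Pert}}(\lambda_0)$ is Lipschitz in $(X,\varepsilon,Z)$ with constant $O(p^{-1/2})$, uniformly over $\lambda_0\in[\tfrac12,1]$; combining the Gaussian concentration inequality restricted to $\Omega$ with the large-deviation bound $\mathbb{P}(\Omega^{c})=o(1)$ (Lemma \ref{large_deviation_characterization_for_the_largest_eigenvalue_of_Weshart_matrices}) and the crude bound $\mathbb{E}[(F_p^{\text{Pert}})^2]\le C\,\mathbb{E}\,\sigma_{\text{max}}^2(X)+C<\infty$ to absorb the contribution off $\Omega$ yields $\operatorname{Var}(F_p^{\text{Pert}}(\lambda_0))\le C/p$ uniformly in $\lambda_0$, i.e. $v_p<C_2$.

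It remains to assemble \eqref{overlap_concentration_main_equation}. Split $\mathbb{E}\langle(R_{1,2}-\mathbb{E}\langle R_{1,2}\rangle)^2\rangle\le2\,\mathbb{E}\langle(R_{1,2}-\langle R_{1,2}\rangle)^2\rangle+2\,\mathbb{E}[(\langle R_{1,2}\rangle-\mathbb{E}\langle R_{1,2}\rangle)^2]$. For the thermal term, the fluctuation bound above integrated over $\lambda_0\in[\tfrac12,1]$ telescopes against the $O(1)$ total variation of the bounded mean overlap, giving $\mathbb{E}_{\lambda_0}\mathbb{E}\langle(R_{1,2}-\langle R_{1,2}\rangle)^2\rangle=O(1/(p\epsilon_p))$. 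For the disorder term, one compares the random derivative $\tfrac{d}{d\lambda_0}F_p^{\text{Pert}}(\lambda_0)$ --- which equals $\tfrac{\epsilon_p}{2}\langle R_{1,2}\rangle$ up to mean-zero channel terms whose variance is controlled by integration by parts --- with $\tfrac{d}{d\lambda_0}\mathbb{E}F_p^{\text{Pert}}(\lambda_0)=\tfrac{\epsilon_p}{2}\mathbb{E}\langle R_{1,2}\rangle$, using convexity in $\lambda_0$ together with the elementary fact that derivatives of two convex functions that are $L^2$-close at distance $\delta$ differ, after averaging the argument over a window of width $h$, by $O(\delta/h)+O(h)$-type quantities; since $\delta\asymp\sqrt{v_p/p}$ by the previous step and the overlap enters with the factor $\epsilon_p$, optimizing $h$ produces exactly the exponent $1/3$ and the prefactor $\epsilon_p^{-1}$, i.e. $\mathbb{E}_{\lambda_0}[(\langle R_{1,2}\rangle-\mathbb{E}\langle R_{1,2}\rangle)^2]=O\big(\epsilon_p^{-1}(v_p/(p\epsilon_p))^{1/3}\big)$. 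Adding the two contributions and recalling $\epsilon_p\le1$, $p\epsilon_p\to\infty$ gives \eqref{overlap_concentration_main_equation}. I expect essentially all the difficulty to sit in the free-energy concentration step; the SNR-derivative computation and the overlap-concentration machinery touch only the Gaussian side channel and the Nishimori identities, and so carry over with no substantive change from \cite{barbier2019adaptive,barbier2020strong}.
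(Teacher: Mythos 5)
Your high-level plan matches the paper's: Gaussian integration by parts on the side channel, Nishimori substitution, a thermal-vs-disorder decomposition of the overlap variance, the convexity inequality to control the disorder term, and free-energy concentration on a good set to supply $v_p<C_2$. The first assertion ($\tfrac1p|\mathbb{E}\ln\mathcal{Z}_p^{\text{Pert}}-\mathbb{E}\ln\mathcal{Z}_p|=O(\epsilon_p)$) and the free-energy concentration step ($v_p<C_2$ via Lipschitzness on $\Omega$ plus a large-deviation bound for $\sigma_{\max}(X)$, combined with L\'evy concentration over $\beta_0$) are exactly as in the paper.

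Where the proposal has a genuine gap is in the assembly of \eqref{overlap_concentration_main_equation}. You decompose $\mathbb{E}\langle(R_{1,2}-\mathbb{E}\langle R_{1,2}\rangle)^2\rangle$ directly into a thermal and a disorder piece and then assert that the second $\lambda_0$-derivative of $\mathbb{E}F_p^{\text{Pert}}$ lower-bounds $c\,\epsilon_p^2\,p\,\mathbb{E}\langle(R_{1,2}-\langle R_{1,2}\rangle)^2\rangle$, and that the random derivative $\tfrac{d}{d\lambda_0}F_p^{\text{Pert}}$ equals $\tfrac{\epsilon_p}{2}\langle R_{1,2}\rangle$ up to mean-zero terms. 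Neither statement is correct as written. Differentiating twice in $\lambda_{0,p}=\lambda_0\epsilon_p$ gives $\tfrac{d^2}{d\lambda_{0,p}^2}\mathbb{E}F_p^{\text{Pert}}=\tfrac1p\{\mathbb{E}\langle(\mathcal{H}'-\langle\mathcal{H}'\rangle)^2\rangle-\tfrac{1}{4\lambda_{0,p}}\mathbb{E}\langle\|\beta-\langle\beta\rangle\|^2\rangle\}$, a statement about the thermal variance of $\mathcal{H}'=\tfrac{d\mathcal{H}^{\text{Gauss}}}{d\lambda_{0,p}}$, not about the thermal variance of $R_{1,2}$; and $\tfrac{d}{d\lambda_{0,p}}F_p^{\text{Pert}}=-\tfrac1p\langle\mathcal{H}'\rangle$, which involves $\langle R_{1,*}\rangle$ and the random $Z^{\top}\langle\beta\rangle$ term, not $\langle R_{1,2}\rangle$. (Equality with $\tfrac12\mathbb{E}\langle R_{1,2}\rangle$ holds only after taking $\mathbb{E}$.) The paper closes exactly this gap with Lemma~\ref{control_fluctuations_of_R_12_by_that_of_L}: it sets $\mathcal{L}=\mathcal{H}'/p$, shows $2\mathbb{E}\langle R_{1,*}(\mathcal{L}-\mathbb{E}\langle\mathcal{L}\rangle)\rangle\le-\mathbb{E}\langle(R_{1,2}-\mathbb{E}\langle R_{1,2}\rangle)^2\rangle$ via Gaussian IBP and Nishimori, and then applies Cauchy--Schwarz to conclude $\mathbb{E}\langle(R_{1,2}-\mathbb{E}\langle R_{1,2}\rangle)^2\rangle\le4\mathbb{E}\langle(\mathcal{L}-\mathbb{E}\langle\mathcal{L}\rangle)^2\rangle$. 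Only \emph{then} does it split $\mathcal{L}$ into a thermal piece (controlled by the second derivative) and a disorder piece (controlled by the convexity Lemma~\ref{lemma_bound_for_convex_functions} applied to $\widetilde F=F_p^{\text{Pert}}-\sqrt{\lambda_{0,p}/p}\,\|Z\|$, which makes the function genuinely convex in $\lambda_{0,p}$). You need this transfer step: it is part of the machinery you cite, but it cannot be skipped, and the $R_{1,2}$-level decomposition does not come with the derivative identities you would need to bound its pieces.
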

In our application of Theorem \ref{overlap_concentration} we will choose $\epsilon_p \to 0$ slow enough such that the RHS in (\ref{overlap_concentration_main_equation}) converges to zero as $p \to \infty$. Our next result characterizes the limiting behavior of $\mathbb{E}[\langle R_{1,2} \rangle ]$. 
\begin{lemma}
\label{lower_and_upper_bounds_on_overlap}
    For any $\Delta, \alpha > 0$, as $p \to \infty$, 
    there exists $C>0$ depending on $\alpha$ and $\Delta$ such that 
    \begin{equation}
        1 -C \ge \limsup_{p \to \infty}  \mathbb{E}[\langle R_{1,2} \rangle] \ge  \liminf_{p \to \infty} \mathbb{E}[\langle R_{1,2} \rangle] \ge 0 \text{.} \nonumber 
    \end{equation}
\end{lemma}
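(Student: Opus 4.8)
For the lower bound there is nothing to do: since $\beta^1,\beta^2$ are conditionally i.i.d.\ given the disorder, $\langle R_{1,2}\rangle=\langle\beta\rangle^{\top}\langle\beta\rangle/p=\lVert\langle\beta\rangle\rVert^2/p\ge 0$ pointwise, so $\liminf_p\mathbb{E}[\langle R_{1,2}\rangle]\ge 0$. The content is the upper bound, and the plan is to pass to the minimum mean square error. Writing $\langle\beta\rangle=\mathbb{E}[\beta_0\mid y^{\mathrm{Pert}},y,X]$ for the perturbed posterior mean and using that $\lVert\beta_0\rVert^2=p$ is deterministic, a bias--variance decomposition gives $\mathbb{E}\lVert\beta_0-\langle\beta\rangle\rVert^2=p-\mathbb{E}\lVert\langle\beta\rangle\rVert^2=p\,(1-\mathbb{E}[\langle R_{1,2}\rangle])$. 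Hence it suffices to produce a constant $C=C(\alpha,\Delta)>0$ with $\liminf_p p^{-1}\mathbb{E}\lVert\beta_0-\langle\beta\rangle\rVert^2\ge C$.

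To this end, introduce the family of auxiliary Gaussian side channels $\tilde y_t=\sqrt t\,\beta_0+Z$ with $Z\sim\mathcal N(0,I_p)$, so that the perturbed model is the instance $t=\lambda_0\epsilon_p$, and set $\mathrm M(t):=\mathbb{E}\lVert\beta_0-\mathbb{E}[\beta_0\mid\tilde y_t,y,X]\rVert^2$. The function $t\mapsto\mathrm M(t)$ is non-increasing, and by the conditional $I$--MMSE identity (legitimate since $\beta_0$ is bounded and $Z$ is independent of $(\beta_0,y,X)$) one has $\tfrac{d}{dt}I(\beta_0;(y,\tilde y_t)\mid X)=\tfrac12\mathrm M(t)$. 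Therefore, for any fixed $T>\lambda_0\epsilon_p$,
\[
\mathrm M(\lambda_0\epsilon_p)\ \ge\ \frac{1}{T-\lambda_0\epsilon_p}\int_{\lambda_0\epsilon_p}^{T}\mathrm M(t)\,dt\ =\ \frac{2}{T-\lambda_0\epsilon_p}\Bigl[\,I(\beta_0;(y,\tilde y_T)\mid X)-I(\beta_0;(y,\tilde y_{\lambda_0\epsilon_p})\mid X)\,\Bigr].
\]
For the subtracted term I would use $I(\beta_0;(y,\tilde y_{\lambda_0\epsilon_p})\mid X)\le I(\beta_0;y\mid X)+I(\beta_0;\tilde y_{\lambda_0\epsilon_p}\mid y,X)$; the first piece is at most $\tfrac12\mathbb{E}\ln\det(I_n+\Delta^{-1}XX^{\top})$, because a Gaussian input maximizes the mutual information of an AWGN channel at fixed input covariance and here $\mathrm{Cov}(X\beta_0\mid X)=XX^{\top}$ (as $\mathrm{Cov}(\beta_0)=I_p$), and two applications of Jensen's inequality bound this by $\tfrac p2\psi+o(p)$ with $\psi:=\alpha\ln(1+(\alpha\Delta)^{-1})$; the second piece is $\le\tfrac p2\ln(1+\lambda_0\epsilon_p)=o(p)$ by the same argument with $\mathbb{E}[\mathrm{Cov}(\beta_0\mid y,X)]\preceq I_p$. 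For the other term, $I(\beta_0;(y,\tilde y_T)\mid X)\ge I(\beta_0;\tilde y_T\mid X)=I(\beta_0;\tilde y_T)$, and I claim $\liminf_p p^{-1}I(\beta_0;\tilde y_T)\ge\tfrac12\ln T-\tfrac14$ for every $T\ge1$.

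This last claim is the crux: it amounts to lower bounding the differential entropy of the Gaussian smoothing of the uniform law on $S^{p-1}(\sqrt p)$, i.e.\ saying it has essentially the entropy of a Gaussian with the same covariance (equivalently, that the spherical source has the Gaussian rate--distortion behavior at $\Theta(p)$ distortion). I would prove it by the elementary second-moment (Rényi-$2$) bound $h(\tilde y_T)\ge-\ln\lVert p_{\tilde y_T}\rVert_{L^2}^2$, together with
\[
\lVert p_{\tilde y_T}\rVert_{L^2}^2=\mathbb{E}_{\beta_0,\beta_0'}\Bigl[(4\pi)^{-p/2}\exp\bigl(-\tfrac T4\lVert\beta_0-\beta_0'\rVert^2\bigr)\Bigr]=(4\pi)^{-p/2}e^{-Tp/2}\,\mathbb{E}\bigl[e^{T\beta_0^{\top}\beta_0'/2}\bigr],
\]
$\beta_0,\beta_0'$ i.i.d.\ uniform on $S^{p-1}(\sqrt p)$, and evaluating the exponential moment of the overlap $\beta_0^{\top}\beta_0'/p$ by Laplace's method (its density being $\propto(1-t^2)^{(p-3)/2}$). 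Granting the claim, choose $T=T^{*}(\alpha,\Delta)$ with $\tfrac12\ln T^{*}-\tfrac14>\tfrac\psi2+1$; substituting into the displayed inequality and letting $p\to\infty$ gives $\liminf_p p^{-1}\mathrm M(\lambda_0\epsilon_p)\ge\tfrac{2}{T^{*}}\bigl[(\tfrac\psi2+1)-\tfrac\psi2\bigr]=\tfrac{2}{T^{*}}$, uniformly over $\lambda_0\in[\tfrac12,1]$, and hence $\limsup_p\mathbb{E}[\langle R_{1,2}\rangle]\le 1-C$ with $C:=2/T^{*}$.

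The main obstacle is precisely the spherical entropy/mutual-information lower bound of the previous paragraph: one must show, with an additive $O(p)$ slack (which is all that is needed here), that Gaussian smoothing of the uniform measure on $S^{p-1}(\sqrt p)$ recovers the entropy of the matching Gaussian. The $L^2$/Laplace route sketched above is self-contained but requires some care with the Laplace asymptotics and with making the bounds uniform in $\lambda_0$; the differentiation step in the $I$--MMSE identity also needs a standard (and here harmless) regularity check. Everything else — the bias--variance identity, the monotonicity of $\mathrm M$, and the two Jensen-type upper bounds on the mutual informations — is routine.
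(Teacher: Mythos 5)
Your proposal is correct but follows a genuinely different route from the paper. For the lower bound your observation is cleaner than the paper's: the paper deduces $\liminf\mathbb{E}[\langle R_{1,2}\rangle]\ge 0$ indirectly via the nonemptiness of the multi-replica set $\mathcal{S}$ from Lemma \ref{upperbound_intermediate_lemma_1}, whereas you just use that $\langle R_{1,2}\rangle=\lVert\langle\beta\rangle\rVert^2/p\ge 0$ pointwise, which is all one really needs. For the upper bound the paper works directly with the posterior: fixing $\beta_0=\sqrt p\,e_1$ and using the Nishimori identity $\mathbb{E}\langle R_{1,2}\rangle=\mathbb{E}\langle\beta_1/\sqrt p\rangle$, it studies the restricted free energy $f_p(\delta)$ on the slice $\{\beta_1=\sqrt p\,\delta\}$, shows $f_p(\delta)$ is bounded below on compact subsets of $(-1,1)$ while the surface-volume term $\tfrac12\ln(1-\delta^2)$ drives it to $-\infty$ as $\delta\to 1$, and concludes by a Laplace comparison that the posterior mass of $\{\beta_1>\delta_1\sqrt p\}$ is exponentially small. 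Your argument instead runs through the bias--variance identity $\mathbb{E}\lVert\beta_0-\langle\beta\rangle\rVert^2=p(1-\mathbb{E}\langle R_{1,2}\rangle)$, conditional $I$--MMSE, a Gaussian max-entropy/Jensen upper bound on the mutual information of the linear channel, and a R\'enyi-$2$/Laplace lower bound on $I(\beta_0;\tilde y_T)$. Interestingly the two proofs are anchored on the same geometric fact -- that the spherical surface measure degenerates near the poles, encoded in the $\tfrac12\ln(1-\delta^2)$ factor -- but they exploit it through different formalisms: the paper through the conditional free energy along a fixed direction, you through the Laplace asymptotics of the overlap moment generating function $\mathbb{E}[e^{Tp Q/2}]$ with $Q$-density $\propto(1-t^2)^{(p-3)/2}$. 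Your approach buys a more modular, information-theoretic argument that would transfer to other priors with known R\'enyi-$2$ smoothing asymptotics, at the cost of importing $I$--MMSE machinery and having to pin down the differential-entropy lower bound for Gaussian-smoothed spherical measure; the paper's approach is more bespoke but stays entirely inside the free-energy toolbox already developed elsewhere in the argument. One technical point to be explicit about when you flesh this out: the conditional $I$--MMSE differentiation and the interchange of limits that makes the R\'enyi/Laplace bound uniform over $\lambda_0\in[\tfrac12,1]$ both need the (routine but non-vacuous) regularity checks you flagged, and in particular the upper bound on the sub-extensive piece should be stated as $I(\beta_0;\tilde y_{\lambda_0\epsilon_p}\mid y,X)\le\tfrac12\mathbb{E}\ln\det\bigl(I_p+\lambda_0\epsilon_p\,\mathrm{Cov}(\beta_0\mid y,X)\bigr)\le\tfrac p2\ln(1+\lambda_0\epsilon_p)$ using concavity of $\ln\det$ together with $\mathbb{E}[\mathrm{Cov}(\beta_0\mid y,X)]\preceq\mathrm{Cov}(\beta_0)=I_p$, rather than a pointwise covariance bound (the conditional covariance need not be $\preceq I_p$ almost surely, only on average).
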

Note that $\mathbb{E}[\langle R_{1,2} \rangle] \in [-1,1]$---thus any subsequence has a further subsequence such that $\mathbb{E}[\langle R_{1,2} \rangle]$ converges to a limit along the subsequence. Further, Lemma \ref{lower_and_upper_bounds_on_overlap} implies that this subsequential limit lies in $[0,1-C]$. 
Armed with these results, we relate the log-partition function to that of a\emph{replicated} system, where the overlaps have been restricted to have a specific value. 
\begin{lemma}
\label{upperbound_intermediate_lemma_1}
        For $m \geq 1$ and $c>0$, define 
        \begin{equation}
            \label{definition_of_S}
            \mathcal{S}_p(m,c) = \left \{(\beta^1, \beta^2,\dots,\beta^{m}): \left | \frac{1}{p}  (\beta^i)^{\top} \beta^j  - \mathbb{E} \langle R_{1,2} \rangle \right | \le c, \forall i \ne j \right \} \text{.}
        \end{equation}
        There exists $c_p \to 0$ and $m_p \to \infty $, such that 
            \begin{equation}
                \frac{1}{p m_p} \ln \left ( \int e^{- \sum_{i} \mathcal{H}(\beta^i)} \mathbf{1}_{\{ (\beta^1, \beta^2,\dots,\beta^{m_p}) \in \mathcal{S} \}} \prod_{i = 1}^{m_p} \mathrm{d} \pi_0(\beta^i) \right )  = \frac{1}{p} \ln \mathcal{Z}_p + o(1). \nonumber 
            \end{equation}
        In the equation above and henceforth, we suppress the dependence of $\mathcal{S}$ on $p$, $c_p$ and $m_p$ for the sake of notational simplicity. 
\end{lemma}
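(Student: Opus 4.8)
The plan is to use the overlap concentration result from Theorem \ref{overlap_concentration}, combined with the perturbation-free-energy comparison, to show that a positive fraction of replica-tuples land in $\mathcal{S}_p(m,c)$, and that this fraction does not cost us anything on the exponential scale. First I would fix the diverging sequence: by Theorem \ref{overlap_concentration}, choose $\epsilon_p \to 0$ slowly enough that $\frac{C_1}{\epsilon_p}\big(\frac{v_p}{p\epsilon_p}+\frac{1}{p}\big)^{1/3} =: \delta_p \to 0$; this is possible since $v_p < C_2$ uniformly. Then $\mathbb{E}_{\lambda_0}\mathbb{E}\langle (R_{1,2} - \mathbb{E}\langle R_{1,2}\rangle)^2\rangle \le \delta_p \to 0$, so for a suitable choice of $\lambda_0$ (or, after averaging, for the annealed perturbed posterior) the overlap of two replicas concentrates around $\mathbb{E}\langle R_{1,2}\rangle$. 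Using Markov's inequality on the square-overlap deviation, for any threshold $c>0$ the probability (under two independent draws from the perturbed posterior) that $|R_{1,2} - \mathbb{E}\langle R_{1,2}\rangle| > c$ is at most $\delta_p/c^2$. One then picks $c_p \to 0$ slowly enough that $\delta_p/c_p^2 \to 0$ still holds; I would set, say, $c_p = \delta_p^{1/4}$.

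Next I would pass from pairwise concentration to simultaneous concentration over all $\binom{m}{2}$ pairs by a union bound: under the perturbed posterior, for $m = m_p$ replicas $(\beta^1,\dots,\beta^{m_p})$ drawn i.i.d., the probability that some pair violates the overlap constraint is at most $\binom{m_p}{2}\delta_p/c_p^2$. Choosing $m_p \to \infty$ slowly enough that $m_p^2 \delta_p / c_p^2 \to 0$ — e.g. $m_p = \lfloor \delta_p^{-1/8}\rfloor$ with $c_p = \delta_p^{1/4}$ gives $m_p^2\delta_p/c_p^2 = \delta_p^{1/4} \to 0$ — ensures that with probability $\ge 1/2$ (eventually), a random $m_p$-tuple of replicas from the perturbed posterior lands in $\mathcal{S}_p(m_p, c_p)$. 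Equivalently,
\begin{equation}
 \langle \mathbf{1}_{\{(\beta^1,\dots,\beta^{m_p}) \in \mathcal{S}\}} \rangle^{\otimes m_p}_{\text{Pert}} \ge \tfrac12, \nonumber
\end{equation}
where the bracket is the $m_p$-fold product perturbed-posterior expectation. This is the step I expect to be the main obstacle: the overlap concentration bound in Theorem \ref{overlap_concentration} is \emph{in expectation} over the disorder $(y^{\text{Pert}}, y, X)$ and over $\lambda_0$, so I must carefully transfer it to a high-probability (over the disorder) statement for a fixed, good choice of $\lambda_0$, and keep track of the fact that the constraint set $\mathcal{S}$ is defined with $\mathbb{E}\langle R_{1,2}\rangle$ (a deterministic quantity) rather than the random $\langle R_{1,2}\rangle$; handling the gap between these two, and ensuring the rates $c_p, m_p, \epsilon_p$ can be chosen \emph{consistently} to make every error term vanish, is the delicate bookkeeping.

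Finally I would remove the perturbation and conclude. Since $\mathcal{H}^{\text{Pert}} = \mathcal{H} + \mathcal{H}^{\text{Gauss}}$ and $\mathcal{H}^{\text{Gauss}}$ is an $O(\epsilon_p)$-order perturbation (in the sense that $|\mathcal{H}^{\text{Gauss}}(\beta)| = O(p\epsilon_p)$ uniformly on $S^{p-1}(\sqrt p)$ on a high-probability event for $Z$), we have
\begin{equation}
 \Big| \frac{1}{pm_p}\ln \int e^{-\sum_i \mathcal{H}^{\text{Pert}}(\beta^i)}\mathbf{1}_{\mathcal{S}}\prod_i d\pi_0(\beta^i) - \frac{1}{pm_p}\ln \int e^{-\sum_i \mathcal{H}(\beta^i)}\mathbf{1}_{\mathcal{S}}\prod_i d\pi_0(\beta^i)\Big| = O(\epsilon_p) = o(1). \nonumber
\end{equation}
On the other hand, the lower bound on the indicator mass above gives
\begin{equation}
 \frac{1}{pm_p}\ln \int e^{-\sum_i \mathcal{H}^{\text{Pert}}(\beta^i)}\mathbf{1}_{\mathcal{S}}\prod_i d\pi_0(\beta^i) \ge \frac{1}{p}\ln \mathcal{Z}_p^{\text{Pert}} + \frac{1}{pm_p}\ln\tfrac12 = \frac{1}{p}\ln\mathcal{Z}_p^{\text{Pert}} + o(1), \nonumber
\end{equation}
while the trivial upper bound (dropping the indicator) gives $\le \frac{1}{p}\ln\mathcal{Z}_p^{\text{Pert}}$. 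Combining with the first display of Theorem \ref{overlap_concentration}, which yields $\frac1p\ln\mathcal{Z}_p^{\text{Pert}} = \frac1p\ln\mathcal{Z}_p + o(1)$ (in probability, after upgrading the expectation bound via concentration of the free energy — the same Lipschitz-in-disorder argument used for Lemma \ref{concentration_ineq_for_second_moment_method}), we obtain the claimed identity with $o(1)$ error, uniformly, completing the proof.
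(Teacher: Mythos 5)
Your proposal follows essentially the same route as the paper's proof: pick a good $\lambda_0$ via Theorem \ref{overlap_concentration}, push the expected-overlap bound to a high-probability-over-disorder statement by Markov, then to a Chebyshev bound under the posterior, union-bound over all pairs, and balance the rates of $c_p$ and $m_p$ so that $m_p^2$ times the per-pair error still vanishes; this is exactly how the paper constructs the set $\mathcal{C}_p$ and establishes $\langle \mathbf{1}_{\mathcal{S}} \rangle = 1 - o(1)$ on its complement. The only substantive difference is at the perturbation-removal step: the paper uses the uniform pointwise bound $\sup_\beta|\mathcal{H}^{\text{Gauss}}(\beta)| = O(\lambda_0 p\sqrt{\epsilon_p})$ on the high-probability event $\{\lVert Z\rVert \le 2\sqrt{p}\}$ to control both the ratio $\mathcal{Z}_p^{\text{Pert}}/\mathcal{Z}_p$ and the ratio of the restricted integrals directly, whereas you propose the more roundabout combination of the expected-free-energy difference from Theorem \ref{overlap_concentration} plus a separate free-energy concentration argument — both work, but the pointwise Hamiltonian comparison (which you in fact already state) makes the extra concentration step unnecessary. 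Note also the minor inaccuracy: because of the $\sqrt{\lambda_0\epsilon_p}\,Z^{\top}\beta$ term, the correct bound is $O(p\sqrt{\epsilon_p})$, not $O(p\epsilon_p)$, though this does not affect the conclusion since both are $o(p)$.
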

\begin{proof}
\label{proof_of_upperbound_intermediate_lemma_1}
    Using Theorem \ref{overlap_concentration},  we can choose a sequence $\{\lambda_0^p: p \ge 1\}$ such that for any $p\ge1$ we have
    \begin{equation}
        \mathbb{E} \langle (R_{1,2} - \mathbb{E}\langle R_{1,2} \rangle )^2 \rangle \le a_p \text{,} \nonumber 
    \end{equation}
    where $a_p \to 0$ as $p \to \infty$. For any positive sequence $\{ b_p: p\ge 1\}$, Markov inequality yields
    \begin{equation}
        \mathbb{P}\left (  \langle (R_{1,2} - \mathbb{E}\langle R_{1,2} \rangle )^2 \rangle > b_p      \right ) \le \frac{a_p}{b_p} \text{.} \nonumber 
    \end{equation}
    We choose $\{ b_p: p\ge 1\}$ such that $b_p \to 0$, $a_p / b_p \to 0$. Let $m_p \ge 1$ (to be chosen later) and let $\beta^1, \beta^2,\dots,\beta^{m_p}$ be iid samples from the posterior distribution \eqref{eq:perturbed_posterior}.  Define $R_{i,j} =  (\beta^i)^{\top} \beta^j / p $ for $i,j \in \{ 1, 2, \dots, m_p\}$ to be the overlaps among the samples. By symmetry, we have $\langle R_{1,2} \rangle = \langle R_{i,j} \rangle$ and $\mathbb{E}\langle R_{1,2} \rangle = \mathbb{E}\langle R_{i,j} \rangle$ for any $i \ne j$. Define the event $\mathcal{C}_p \coloneqq \{ (y^{\text{Pert}}, y , X): \langle (R_{1,2} - \mathbb{E}\langle R_{1,2} \rangle )^2 \rangle  > b_p\}$. Note that $\mathbb{P}(\mathcal{C}_p) \le a_p / b_p \to 0$. On $\mathcal{C}_p^c$ we have
    \begin{equation}
        \langle (R_{1,2} - \mathbb{E}\langle R_{1,2} \rangle )^2 \rangle \le b_p \text{.} \nonumber 
    \end{equation}
    Using Chebychev inequality, we have, on $\mathcal{C}_p^c$
    \begin{equation}
        \langle \mathbf{1}_{\{ \left | R_{1,2} - \mathbb{E}\langle R_{1,2} \rangle     \right | > c_p \}} \rangle \le \frac{b_p}{c_p^2} \text{.} \nonumber 
    \end{equation}
By union bound, on $\mathcal{C}_p^c$
    \begin{equation}
        \langle \mathbf{1}_{\mathcal{S}^c} \rangle \le \sum_{i \ne j} \langle \mathbf{1}_{\{ \left | R_{i,j} - \mathbb{E}\langle R_{i,j} \rangle \right | > c_p \}} \rangle = \sum_{i \ne j} \langle \mathbf{1}_{\{ \left | R_{1,2} - \mathbb{E}\langle R_{1,2} \rangle \right | > c_p \}} \rangle \le \frac{m_p^2 b_p}{c_p^2} \text{.} \nonumber 
    \end{equation}
    We choose $m_p \to \infty$ and $c_p \to 0$ such that the RHS above will converge to $0$ as $p \to \infty$. This implies, on $\mathcal{C}_p^c$,
    \begin{equation}
    \label{eq:proof_of_upperbound_intermediate_lemma_1_1}
        1 \ge \langle \mathbf{1}_S \rangle = \frac{1}{(\mathcal{Z}_p^{\text{Pert}})^{m_p}} \int e^{- \sum_{i} \mathcal{H}^{\text{Pert}}(\beta^i)} \mathbf{1}_{\{ (\beta^1, \beta^2,\dots,\beta^{m_p}) \in \mathcal{S} \}} \prod_{i = 1}^{m_p} d \pi_0(\beta^i) \ge 1 - \frac{m_p^2 b_p}{c_p^2} = 1 - o(1)\text{.}
    \end{equation}
    Since $\lim_{p\to \infty} \mathbb{P}_{Z}( \lVert Z \rVert < 2\sqrt{p}) = 1 $, we have with high probability with respect to $Z$
    \begin{equation}
        \begin{aligned}
            \left | \mathcal{H}(\beta) - \mathcal{H}^{\text{Pert}}(\beta) \right | &= \left | \mathcal{H}^{\text{Gauss}}(\beta) \right |\\
            &\le \left | \lambda_0 \epsilon_p \beta_0^{\top} \beta \right | + \left |\sqrt{\lambda_0 \epsilon_p} Z^{\top} \beta \right | + \left | \frac{1}{2} \lambda_0 \epsilon_p \lVert \beta \rVert^2 \right |\\
            &\le O( \sqrt{\epsilon_p} ) \lambda_0 p \text{.}    \quad \quad \quad \quad \quad 
        \end{aligned} \nonumber 
    \end{equation}
    This implies that with high probability with respect to $Z$,
    \begin{equation}
    \label{eq:proof_of_upperbound_intermediate_lemma_1_2}
        \left | \ln \mathcal{Z}_p^{\text{Pert}} - \ln \mathcal{Z}_p\right | \le O( \sqrt{\epsilon_p} ) \lambda_0 p \text{,}
    \end{equation}   
    and 
    \begin{equation}
    \label{eq:proof_of_upperbound_intermediate_lemma_1_3}
        \left | \frac{1}{p m_p} \ln \left ( \frac{\int e^{- \sum_{i} \mathcal{H}^{\text{Pert}}(\beta^i)} \mathbf{1}_{\{ (\beta^1,\dots,\beta^{m_p}) \in \mathcal{S} \}} \prod_{i = 1}^{m_p} \mathrm{d} \pi_0(\beta^i) }{  \int e^{- \sum_{i} \mathcal{H}(\beta^i)} \mathbf{1}_{\{ (\beta^1,\dots,\beta^{m_p}) \in \mathcal{S} \}} \prod_{i = 1}^{m_p} \mathrm{d} \pi_0(\beta^i) }  \right ) \right | \le O( \sqrt{\epsilon_p} ) \text{.}
    \end{equation}
    Putting  (\ref{eq:proof_of_upperbound_intermediate_lemma_1_1}, \ref{eq:proof_of_upperbound_intermediate_lemma_1_2}, \ref{eq:proof_of_upperbound_intermediate_lemma_1_3}) together, with high probability  we have
    \begin{equation}
        \frac{1}{p m_p} \ln \left ( \int e^{- \sum_{i} \mathcal{H}(\beta^i)} \mathbf{1}_{\{ (\beta^1, \beta^2,\dots,\beta^{m_p}) \in \mathcal{S} \}} \prod_{i = 1}^{m_p} \mathrm{d} \pi_0(\beta^i) \right )  = \frac{1}{p} \ln \mathcal{Z}_p + O( \sqrt{\epsilon_p} ) \text{.} \nonumber 
    \end{equation}
    Recalling $\epsilon_p \to 0$ as $p \to \infty$, the claim follows. 
\end{proof}

\subsection{Proof of Theorem \ref{upper_bound}}
\label{sec:proof_upperbound} 
We prove Theorem \ref{upper_bound} in this section. To this end, recall that $\mathcal{H}(\beta) = -\frac{1}{2\Delta} \lVert y - X\beta \rVert^2$. Let $\beta^1,\cdots, \beta^{m_p}$ be iid samples from the posterior distribution, and set $a =  \sum_{i = 1}^{m_p} \beta^i / m_p$ to be the sample average. This implies
\begin{equation}
    \begin{aligned}
    \frac{1}{m_p} \sum_{i = 1}^{m_p} \lVert y - X\beta^i \rVert^2 &= \lVert y - X a\rVert^2 + \frac{1}{m_p}\sum_{i = 1}^{m_p} \lVert X (\beta^i - a)\rVert^2 -  \frac{2}{m_p}   (X^{\top}(y - Xa))^{\top}\sum_{i = 1}^{m_p} (\beta^i - a)  \\
    & = \lVert y - X a\rVert^2 + \frac{1}{m_p}\sum_{i = 1}^{m_p} \lVert X (\beta^i - a)\rVert^2
    \text{,}
    \end{aligned} \nonumber 
\end{equation}
since $\sum_{i = 1}^{m_p} (\beta^i - a) = 0$. Thus with high probability,

    \begin{align}
        \frac{1}{p} \ln \mathcal{Z}_p &=  \frac{1}{ p m_p} \ln \left ( \int e^{-\frac{m_p}{2\Delta} \lVert y - Xa\rVert^2  -\frac{1}{2\Delta} \sum_{i=1}^{m_p} \lVert X(\beta^i - a)\rVert^2} \mathbf{1}_{\{ (\beta^1, \beta^2,\dots,\beta^{m_p}) \in \mathcal{S} \}} \prod_{i = 1}^{m_p} \mathrm{d} \pi_0(\beta^i) \right ) + o(1) \nonumber \\
        & \le \frac{1}{ p m_p} \ln \left ( \int e^{-\frac{m_p}{2\Delta} \lVert y - Xa\rVert^2  -\frac{1}{2\Delta} \sum_{i=2}^{m_p} \lVert X(\beta^i - a)\rVert^2} \mathbf{1}_{\{ (\beta^1, \beta^2,\dots,\beta^{m_p}) \in \mathcal{S} \}} \prod_{i = 1}^{m_p} \mathrm{d} \pi_0(\beta^i) \right )+ o(1) \nonumber \\
        &:= R_p + o(1). \label{eq:r_p} 
    \end{align}
Our next result, Lemma \ref{upperbound_intermediate_lemma_2} will be critical for the proof of Theorem \ref{upper_bound}.  
We defer its proof to the next subsection. 
\begin{lemma}
\label{upperbound_intermediate_lemma_2}
    Fix $\Delta >0$, $\alpha \in (0,\infty)$ and any $\eta > 0$. For any subsequence $\{p_k: k \geq 1\}$ such that $\mathbb{E}[\langle R_{1,2} \rangle]$ converges to a positive constant, there exist  $c' >0$  and  $c>0$, $c+c'<1$,  such that  as $k \to \infty$,
    \begin{equation}
        \mathbb{P} \left (   R_{p_k}  \le \sup_{a \in \mathbb{R}^{p_k} , c'\sqrt{p_k} \le \lVert a \rVert \leq  (1 - c)\sqrt{p_k}} f_{\text{TAP}}(a) + \eta \right ) \to 1 \text{.} \nonumber 
    \end{equation}
\end{lemma}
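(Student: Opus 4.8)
The plan is to combine a geometric description of the constraint set $\mathcal{S}$ from Lemma~\ref{upperbound_intermediate_lemma_1} with a discretization of the sample average, so that the coupled replicated integral defining $R_{p_k}$ in \eqref{eq:r_p} factorizes into $m_{p_k}-1$ essentially independent single-replica ``band'' integrals, each controlled --- crucially, \emph{uniformly over its center} --- by the annealed (first-moment) side of the estimates of Section~\ref{sec:lemmas_for_proof_of_lower_bound}. Write $p$ for $p_k$ along the subsequence, set $q:=\lim_k\mathbb{E}[\langle R_{1,2}\rangle]$ (so $q>0$ by hypothesis, and $q\le 1-C<1$ by Lemma~\ref{lower_and_upper_bounds_on_overlap}), and fix $c':=\sqrt q/2$, $c:=(1-\sqrt q)/2$, so $c+c'<1$. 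First I would record the geometry: on $\{(\beta^1,\dots,\beta^{m_p})\in\mathcal{S}\}$ all pairwise overlaps lie within $c_p$ of $\mathbb{E}\langle R_{1,2}\rangle$, so, writing $a=m_p^{-1}\sum_i\beta^i$ and using $\lVert\beta^i\rVert^2=p$, summing the Gram-matrix entries gives (uniformly in $i$, with errors of order $p(c_p+m_p^{-1}+|\mathbb{E}\langle R_{1,2}\rangle-q|)=o(p)$ along the subsequence)
\[
\beta^i\cdot a = pq+o(p),\qquad \lVert a\rVert^2 = pq+o(p),\qquad \lVert\beta^i-a\rVert^2 = p(1-q)+o(p),\qquad (\beta^i-a)\cdot a = o(p);
\]
in particular $\lVert a\rVert^2/p\to q\in\big((c')^2,(1-c)^2\big)$.

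Next I would discretize and decouple. Fix $\delta>0$ and let $\mathcal{N}_\delta\subset\{a_0:\lVert a_0\rVert\le\sqrt p\}$ be a $\delta\sqrt p$-net with $|\mathcal{N}_\delta|\le e^{Cp\log(1/\delta)}$, and split the integral in \eqref{eq:r_p} according to the net point $a_0$ nearest to $a$. On $\Omega$ (see \eqref{eq:def_Omega}) one has $\lVert y-Xa_0\rVert=O(\sqrt p)$, $\lVert X(\beta^i-a_0)\rVert=O(\sqrt p)$ and $\lVert X(a-a_0)\rVert=O(\delta\sqrt p)$, so replacing $a$ by $a_0$ throughout the exponent of \eqref{eq:r_p} costs at most $O(\delta p m_p)$; in particular $-\tfrac{m_p}{2\Delta}\lVert y-Xa\rVert^2$ becomes $-\tfrac{m_p}{2\Delta}\lVert y-Xa_0\rVert^2+O(\delta p m_p)$, which factors out. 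On the $a_0$-piece one has $\lVert a-a_0\rVert\le\delta\sqrt p$, which combined with the first paragraph forces $|(\beta^i-a_0)^{\top}a_0|\le C\delta p$ for every $i$ (for $p$ large along the subsequence), so that
\[
\mathbf{1}_{\{(\beta^1,\dots,\beta^{m_p})\in\mathcal{S}\}}\cdot\mathbf{1}_{\{\lVert a-a_0\rVert\le\delta\sqrt p\}}\ \le\ \prod_{i=1}^{m_p}\mathbf{1}_{\{|(\beta^i-a_0)^{\top}a_0|\le C\delta p\}}.
\]
Since the $\beta^i$ are independent draws from $\pi_0$, the integrand now factorizes over $i$, so the $a_0$-piece is at most $e^{-\frac{m_p}{2\Delta}\lVert y-Xa_0\rVert^2+O(\delta p m_p)}\,I(a_0)^{m_p-1}$, where $I(a_0):=\int_{\{|(\beta-a_0)^{\top}a_0|\le C\delta p\}}e^{-\frac{1}{2\Delta}\lVert X(\beta-a_0)\rVert^2}\,\mathrm{d}\pi_0(\beta)$ and the $i=1$ factor is $\le 1$. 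Note that \emph{no} restriction in the direction $X^{\top}(y-Xa_0)$ enters here: that external field was cancelled \emph{exactly} by $\sum_i(\beta^i-a)=0$ when $R_{p_k}$ was formed in \eqref{eq:r_p}, which is the sole place that structure is used.

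It then remains to bound $I(a_0)$ uniformly in $a_0$ (this is the heart). Projecting $\beta-a_0$ onto $a_0^{\perp}$, using the density of the length of this projection under $\pi_0$ (as in the proof of Theorem~\ref{lower_bound}) and rescaling to $\tilde\Delta:=\Delta p/(p-\lVert a_0\rVert^2)\in[\Delta,\Delta/c']$, one reduces, up to $O(\delta)+o(1)$ errors that are deterministic on $\Omega$, to $\tfrac1p\ln I(a_0)\le\tfrac12\ln(1-\lVert a_0\rVert^2/p)+\Phi^{a_0^{\perp}}_p(\tilde\Delta)+O(\delta)+o(1)$, where $\Phi^{a_0^{\perp}}_p(\tilde\Delta):=\tfrac1p\ln\int_{S^{p-1}(\sqrt p)\cap a_0^{\perp}}e^{-\frac1{2\tilde\Delta}\beta^{\top}X^{\top}X\beta}\,\mathrm{d}\pi^{a_0^{\perp}}(\beta)$. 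By Cauchy interlacing, exactly as in the proof of Lemma~\ref{limit_of_free_energy_without_external_field_on_a_linear_subspace}, $\sup_{a_0}|\Phi^{a_0^{\perp}}_p(\tilde\Delta)-\Phi^{\mathrm{std}}_p(\tilde\Delta)|\le(2\Delta)^{-1}\sup_i|\lambda_i-s_i|\to0$ almost surely, where $\Phi^{\mathrm{std}}_p(\tilde\Delta)$ is the analogue with $a_0^{\perp}$ replaced by the span of the first $p-1$ standard basis vectors; and for this single quantity Jensen gives $\mathbb{E}[\Phi^{\mathrm{std}}_p(\tilde\Delta)]\le\tfrac1p\ln\mathbb{E}[\,\cdot\,]=-\tfrac{\alpha}{2}\ln(1+\tfrac1{\tilde\Delta\alpha})+o(1)$ (the same annealed computation as in Section~\ref{sec:lemmas_for_proof_of_lower_bound}), while the concentration of Lemma~\ref{concentration_ineq_for_second_moment_method} together with a fine grid of $\tilde\Delta$'s in $[\Delta,\Delta/c']$ yields $\Phi^{\mathrm{std}}_p(\tilde\Delta)\le\mathbb{E}[\Phi^{\mathrm{std}}_p(\tilde\Delta)]+o(1)$ with high probability, uniformly in $\tilde\Delta$. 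Only this (upper) direction is used --- it requires \emph{no} high-temperature hypothesis, consistently with $\Delta>0$ arbitrary here. Putting the pieces together: taking $\tfrac1{pm_p}\ln$ of the sum over $\mathcal{N}_\delta$, the union bound costs $\tfrac{\ln|\mathcal{N}_\delta|}{pm_p}=O(m_p^{-1}\log(1/\delta))=o(1)$, the weight $(m_p-1)/m_p$ is harmless since $f_{\text{TAP}}=O(1)$ on $\lVert a_0\rVert\le(1-c)\sqrt p$, and since $\tfrac1{\tilde\Delta}=(1-\lVert a_0\rVert^2/p)/\Delta$ the per-$a_0$ bound equals $f_{\text{TAP}}(a_0)+O(\delta)+o(1)$; empty pieces contribute nothing and non-empty ones have $\lVert a_0\rVert^2/p\in\big((c')^2,(1-c)^2\big)$, so with high probability $R_{p_k}\le\sup_{c'\sqrt{p_k}\le\lVert a\rVert\le(1-c)\sqrt{p_k}}f_{\text{TAP}}(a)+O(\delta)+o(1)$, and $\delta\downarrow0$ concludes.

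The step I expect to be the main obstacle is the uniform-in-$a_0$ control of $I(a_0)$ just described: one must re-run the concentration and interlacing arguments of Section~\ref{sec:lemmas_for_proof_of_lower_bound} in the codimension-one setting and, above all, exploit the rotation invariance of the annealed value so that interlacing collapses the exponentially large net $\mathcal{N}_\delta$ of possible centers onto a single full-sphere statement --- precisely the point at which the spherical prior is indispensable, and the reason a naive union bound of the concentration estimate over $\mathcal{N}_\delta$ (which would be too lossy) can be avoided. The geometric bookkeeping of the first paragraph (all estimates uniform in the replica index, errors killed by $c_p\to0$ and $m_p\to\infty$) is routine, but it is what licenses the decoupling inequality above.
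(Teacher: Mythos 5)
Your proposal is correct and takes a genuinely different route from the paper. The paper avoids discretization altogether: it represents each $\beta^i$ as $g^i\sqrt p/\lVert g^i\rVert$ with $g^i\sim N(0,I_p)$, adds an artificial Gaussian perturbation $w\sim N(0,I_p/m_p)$ so that $b=m_p^{-1}\sum_i g^i$ and the residuals $q^i=g^i+w-b$ become \emph{exactly} independent, conditions on $b$ to make the replicated integral factor exactly into $(m_p-1)$ i.i.d.\ single-replica band integrals, and then applies Laplace's method in $b$. You instead discretize the center over a $\delta\sqrt p$-net $\mathcal N_\delta$, pay $O(\delta)$ Lipschitz cost to freeze $a\mapsto a_0$, observe that the indicator from $\mathcal S$ \emph{relaxes} (it need not be equivalent) to a product over replicas of one-sided band indicators around $a_0$, and kill the $\exp(O(p\log(1/\delta)))$ entropy of the net by dividing by $pm_p$ with $m_p\to\infty$ --- this use of $m_p\to\infty$ to absorb the union bound is the genuinely new idea relative to the paper's argument. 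Both approaches then hinge on the same uniform-in-center control of the band free energy via Cauchy interlacing plus the annealed/Jensen bound plus concentration. Two further remarks. (a) Your observation that only the annealed (upper) direction is needed, and hence no high-temperature restriction, is correct and in fact a useful clarification: the paper's proof invokes Lemma~\ref{limit_of_free_energy_without_external_field_on_a_linear_subspace} --- a two-sided statement nominally requiring $\Delta>\Delta_0$ --- while the lemma being proved is stated for all $\Delta>0$; the fix is exactly as you note, that only Jensen (valid for every $\Delta>0$) enters here. (b) One small presentational difference: your band for $I(a_0)$ is codimension one (only the $a_0$-direction is constrained), whereas the paper's restricted integral ends up codimension one as well after the radius constraint is peeled off as a volume factor; the interlacing argument works identically for a $(p-1)\times(p-1)$ minor, so no issue. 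Your choices $c'=\sqrt q/2$, $c=(1-\sqrt q)/2$ are compatible with the geometry on $\mathcal S$ (one checks $q/4<q<((1+\sqrt q)/2)^2$ for $0<q<1$, the latter secured by Lemma~\ref{lower_and_upper_bounds_on_overlap}), matching the role of the constants $c,c'$ that the paper extracts from Lemmas~\ref{inner_product_between_beta_centered_and_a} and~\ref{inner_product_between_g_and_b}.
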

\noindent 
Before proceeding further, we prove Theorem \ref{upper_bound}
assuming Lemma \ref{upperbound_intermediate_lemma_2}. 

\begin{proof}[Proof of Theorem \ref{upper_bound}]
Assume, if possible, that there exists a subsequence $p_k$ and $\epsilon, \eta>0$ such that for all $k \geq 1$, 
\begin{align}
     \mathbb{P} \left (  \frac{1}{p_k} \ln \mathcal{Z}_{p_k} \ge \sup_{a \in \mathbb{R}^{p_k}, \lVert a \rVert \le \sqrt{p_k}} f_{\text{TAP}}(a) + \eta  \right ) \geq \epsilon. \nonumber 
\end{align}
\noindent 
We can extract a further subsequence such that $\mathbb{E}[\langle R_{1,2} \rangle]$ converges along the subsequence. For notational simplicity, we refer to this subsequence as $\{p_k : k \geq 1\}$ as well. 

Consider first the case $\lim \mathbb{E}[\langle R_{1,2} \rangle]>0$. Combining \eqref{eq:r_p} and Lemma \ref{upperbound_intermediate_lemma_2}, we obtain that along this subsequence, 
\begin{align}
    \mathbb{P}\Big( \frac{1}{p_k} \ln \mathcal{Z}_{p_k} \ge \sup_{a \in \mathbb{R}^{p_k}, \lVert a \rVert \le \sqrt{p_k}} f_{\text{TAP}}(a) + \eta  \Big) \to 0 . \nonumber 
\end{align}
This is a contradiction, and completes the proof in this case.

We now turn to the case where $\lim \mathbb{E}[\langle R_{1,2}\rangle] =0$. In this case, 
\begin{align}
   \sup_{a \in \mathbb{R}^{p_k}, \|a\|\leq \sqrt{p_k}}  f_{\text{TAP}}(a) &\geq f_{\text{TAP}}(0)= - \frac{1}{2\Delta p_k} \| y \|^2 - \frac{\alpha}{2} \ln \Big( 1 + \frac{1}{\Delta \alpha}  \Big). \label{eq:zero_bound}  
\end{align}
On the other hand, setting $a =  \sum_i \beta^i / m_p$, we note that for $(\beta^1, \cdots, \beta^{m_p}) \in \mathcal{S}$, 
   $  \| a \|^2 / p \leq  1 / ( m_p^2 ) + c_p$, 
where we use that $\lim \mathbb{E}[\langle R_{1,2} \rangle] =0$. Further, using \eqref{eq:r_p}, we have 
\begin{align}
    \frac{1}{p_k} \ln \mathcal{Z}_{p_k} &\leq  \frac{1}{ p_k m_{p_k}} \ln \left ( \int e^{-\frac{m_{p_k}}{2\Delta} \lVert y - Xa\rVert^2  -\frac{1}{2\Delta} \sum_{i=2}^{m_{p_k}} \lVert X(\beta^i - a)\rVert^2} \mathbf{1}_{\{ (\beta^1, \beta^2,\dots,\beta^{m_{p_k}}) \in \mathcal{S} \}} \prod_{i = 1}^{m_{p_k}} \mathrm{d} \pi_0(\beta^i) \right )+ o(1) \nonumber \\
    &\leq \frac{1}{ p_k m_{p_k}} \ln \left ( \int e^{-\frac{m_{p_k}}{2\Delta} \lVert y\rVert^2  -\frac{1}{2\Delta} \sum_{i=2}^{m_{p_k}} \lVert X\beta^i \rVert^2} \mathbf{1}_{\{ (\beta^1, \beta^2,\dots,\beta^{m_{p_k}}) \in \mathcal{S} \}} \prod_{i = 1}^{m_{p_k}} \mathrm{d} \pi_0(\beta^i) \right )+ o(1), \nonumber  \nonumber
\end{align}
where we have used the fact that $\|a\| = o(\sqrt{p})$. In turn, this implies 
\begin{align}
    \frac{1}{p_{k}} \ln \mathcal{Z}_{p_k} &\leq - \frac{\|y\|^2}{2\Delta p} + \frac{m_{p_k}-1}{p_k m_{p_k}} \ln \Big( \int \exp(-\frac{1}{2\Delta} \|X \beta\|^2) \mathrm{d} \pi_0(\beta) \Big) + o(1) \nonumber  \\
    &\leq - \frac{\|y \|^2}{2\Delta p_k} - \frac{\alpha}{2} \ln \Big( 1 + \frac{1}{\Delta \alpha}\Big) +o(1),  \nonumber 
\end{align}
where the last step uses Lemma \ref{limit_of_free_energy_without_external_field}. Combining this bound with \eqref{eq:zero_bound} we immediately obtain that along this subsequence, 
\begin{align}
    \mathbb{P}\Big( \frac{1}{p_k} \ln \mathcal{Z}_{p_k} \ge \sup_{a \in \mathbb{R}^{p_k}, \lVert a \rVert \le \sqrt{p_k}} f_{\text{TAP}}(a) + \eta  \Big) \to 0 \nonumber
\end{align}
which is a contradiction. This completes the proof. 
\end{proof}

It remains to prove Lemma \ref{upperbound_intermediate_lemma_2}. To this end, we will assume without loss of generality that as $p \to \infty$
\begin{equation}
\label{eq:wlog_convergence_of_E_R_12}
    \mathbb{E}[\langle R_{1,2} \rangle] \to C_0 > 0 \text{.}
\end{equation} 
It will be useful to express the uniform distribution on $S^{p-1}(\sqrt{p})$ in terms of the gaussian measure. Specifically, let $g^1, g^2, \dots, g^{m_p} \sim \mathcal{N}(0, I_p)$ be iid Gaussian vectors and set $\beta^{i} = g^i \sqrt{p}/\|g^{i}\|_2$. The rotational symmetry of the gaussian measure ensures that the $\beta^{i}$ vectors are iid uniform on $S^{p-1}(\sqrt{p})$. 
Set $b =  \sum_{i = 1}^{m_p} g^i / m_p$ to  be sample mean of the $g^{i}$ vectors, and let  $\tilde{b} \coloneqq b \sqrt{p} \lVert b \rVert $ be the scaled version of $b$ with radius $\sqrt{p}$.  
We introduce a counterpart of $\mathcal{S}$ for the $g^i$-vectors: 
\begin{align}
\label{definition_of_tilde_S}
    \tilde{\mathcal{S}}_{\epsilon} \coloneqq \left \{ (g^1, g^2, \dots, g^{m_p}): (\sqrt{p} \frac{g^1}{ \lVert g^1 \rVert} , \sqrt{p} \frac{g^2}{ \lVert g^2 \rVert}, \dots, \sqrt{p} \frac{g^{m_p}}{ \lVert g^{m_p} \rVert} ) \in \mathcal{S}, \lVert g^i \rVert \in (\sqrt{p}(1- \epsilon), \sqrt{p} (1 + \epsilon)), \forall i \right \} \text{.}
\end{align}
We first collect some geometric properties of the configurations in $\mathcal{S}$ and $\tilde{\mathcal{S}}_{\epsilon}$. 
The following lemma shows that, $\beta^i - a$ and $a$ are approximately orthogonal on the set $\mathcal{S}$. Its proof follows directly from  the definition of $\mathcal{S}$ and Lemma \ref{lower_and_upper_bounds_on_overlap}.
\begin{lemma}
\label{inner_product_between_beta_centered_and_a}
There exists $C>0$ depending only on $\Delta$ and $\alpha$ such that for any $(\beta^1, \beta^2,\dots,\beta^{m_p}) \in \mathcal{S}$, we have for any $i \in \{1,2,...,m_p \}$,
    \begin{equation}
        \left |  (\beta^i - a)^{\top} \tilde{a}  \right | \le C p c_p \text{,} \nonumber 
\end{equation}
where $a =  \sum_{i = 1}^{m_p} \beta^i / m_p $ and $\tilde{a} \coloneqq a\sqrt{p} / \lVert a \rVert$ is a normalized version of $a$ with radius $\sqrt{p}$.
\end{lemma}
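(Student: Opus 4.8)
The plan is to reduce everything to the pairwise inner products $(\beta^i)^\top\beta^j$, which are pinned down on $\mathcal{S}$ by \eqref{definition_of_S}, and then to convert a bound on $(\beta^i-a)^\top a$ into the desired bound on $(\beta^i-a)^\top\tilde a$ by dividing by $\|a\|/\sqrt p$, which is bounded away from $0$ under the standing assumption \eqref{eq:wlog_convergence_of_E_R_12}.

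First I would record that $\|\beta^i\|^2 = p$ for every $i$ (since $\beta^i\in S^{p-1}(\sqrt p)$), and that, on $\mathcal{S}$, one may write $(\beta^i)^\top\beta^j = p\,\mathbb{E}\langle R_{1,2}\rangle + \epsilon_{ij}$ with $\epsilon_{ij}=\epsilon_{ji}$ and $|\epsilon_{ij}|\le p c_p$ for all $i\ne j$. Writing $a = m_p^{-1}\sum_j \beta^j$ and expanding $(\beta^i-a)^\top a = (\beta^i)^\top a - \|a\|^2$, the terms carrying $p\,\mathbb{E}\langle R_{1,2}\rangle$ and $p/m_p$ cancel identically, leaving
\[
(\beta^i - a)^\top a \;=\; \frac1{m_p}\sum_{j\ne i}\epsilon_{ij} \;-\; \frac1{m_p^2}\sum_{j\ne k}\epsilon_{jk},
\]
so that $\lvert (\beta^i-a)^\top a\rvert \le 2 p c_p$, uniformly over $i$ and over configurations in $\mathcal{S}$.

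Next I would lower bound $\|a\|$ from the same expansion: $\|a\|^2 = \tfrac p{m_p} + \tfrac{(m_p-1)p}{m_p}\,\mathbb{E}\langle R_{1,2}\rangle + O(p c_p)$. By \eqref{eq:wlog_convergence_of_E_R_12} (available here because Lemma \ref{lower_and_upper_bounds_on_overlap} forces the relevant subsequential limit $C_0$ of $\mathbb{E}\langle R_{1,2}\rangle$ to be positive), together with $m_p\to\infty$ and $c_p\to 0$, this yields $\|a\|^2 \ge (C_0/4)\, p$ for all large $p$, hence $\sqrt p/\|a\|\le 2/\sqrt{C_0}$. Combining the two displays,
\[
\bigl\lvert (\beta^i - a)^\top \tilde a\bigr\rvert \;=\; \frac{\sqrt p}{\|a\|}\,\bigl\lvert (\beta^i-a)^\top a\bigr\rvert \;\le\; \frac{4}{\sqrt{C_0}}\, p c_p,
\]
which is the claim with $C = 4/\sqrt{C_0}$.

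The computation itself is routine; the only genuinely load-bearing point is the lower bound $\|a\|\gtrsim \sqrt p$, which is exactly where positivity of the overlap limit enters. If one only knew $\mathbb{E}\langle R_{1,2}\rangle\ge 0$, then $\|a\|$ could be as small as $\sqrt{p/m_p}$, the factor $\sqrt p/\|a\|$ would grow like $\sqrt{m_p}$, and the estimate would degrade; this is consistent with the $C_0=0$ regime being handled by a separate argument in the proof of Theorem \ref{upper_bound}.
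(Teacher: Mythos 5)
Your computation is correct and is exactly the ``direct'' argument the paper has in mind when it says the lemma ``follows directly from the definition of $\mathcal{S}$ and Lemma~\ref{lower_and_upper_bounds_on_overlap}''; in particular the cancellation $(\beta^i-a)^\top a = m_p^{-1}\sum_{j\ne i}\epsilon_{ij} - m_p^{-2}\sum_{j\ne k}\epsilon_{jk}$ and the resulting bound $|(\beta^i-a)^\top a|\le 2pc_p$ are right, and you correctly identify that one must then divide by $\|a\|/\sqrt p$, which is the only nontrivial step.

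One small inaccuracy in the parenthetical: Lemma~\ref{lower_and_upper_bounds_on_overlap} does \emph{not} force $C_0>0$ --- it only gives $\liminf \mathbb{E}\langle R_{1,2}\rangle\ge 0$ together with $\limsup\le 1-C$. The positivity $C_0>0$ is a standing assumption introduced in \eqref{eq:wlog_convergence_of_E_R_12} because Lemma~\ref{upperbound_intermediate_lemma_2} is stated only along subsequences where $\mathbb{E}\langle R_{1,2}\rangle$ converges to a positive limit, with the $C_0=0$ regime treated separately in the proof of Theorem~\ref{upper_bound}. You in fact say this correctly in your closing sentence, so this is a phrasing slip rather than a gap; but it means the constant $C$ in the lemma really depends on the positive lower bound for $\|a\|/\sqrt p$ (equivalently on $C_0$, which along the relevant subsequence is a function of $\Delta,\alpha$), exactly as the paper later uses in the proof of Lemma~\ref{inner_product_between_g_and_b} where it asserts $C\sqrt p\le \|a\|\le(1-C)\sqrt p$ on $\mathcal{S}$.
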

\noindent 
We also establish an analogous property for the configurations in $\tilde{\mathcal{S}}_{\epsilon}$ for small $\epsilon>0$. 
\begin{lemma}
\label{inner_product_between_g_and_b}
    There exist $C_1,C_2,C_3>0$ such that if $\epsilon \in (0, C_1)$ and $g = (g^1, g^2, \dots, g^{m_p}) \in \tilde{\mathcal{S}}_{\epsilon}$, then
    \begin{equation}
        \left |  (g^i - b)^{\top} \tilde{b}  \right | \le ( C_2 c_p + C_3 \epsilon)  p  \text{.} \nonumber 
    \end{equation}
\end{lemma}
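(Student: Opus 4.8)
The plan is to reduce $(g^i-b)^{\top}\tilde b$ to the scalar difference $(g^i)^{\top}b-\lVert b\rVert^2$ and then control the latter via the pairwise inner products $(g^j)^{\top}g^k$, which on $\tilde{\mathcal{S}}_\epsilon$ are pinned down by membership in $\mathcal{S}$ (cf.\ \eqref{definition_of_S},\eqref{definition_of_tilde_S}) together with the norm constraints $\lVert g^j\rVert\in(\sqrt p(1-\epsilon),\sqrt p(1+\epsilon))$. First, since $\tilde b=b\sqrt p/\lVert b\rVert$, we have $(g^i-b)^{\top}\tilde b=\frac{\sqrt p}{\lVert b\rVert}\big((g^i)^{\top}b-\lVert b\rVert^2\big)$, and we expand $(g^i)^{\top}b=\frac{1}{m_p}\big(\lVert g^i\rVert^2+\sum_{k\ne i}(g^i)^{\top}g^k\big)$ and $\lVert b\rVert^2=\frac{1}{m_p^2}\big(\sum_j\lVert g^j\rVert^2+\sum_{j\ne k}(g^j)^{\top}g^k\big)$.

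Next, on $\tilde{\mathcal{S}}_\epsilon$ the norm constraint yields $\lvert\lVert g^j\rVert^2-p\rvert\le 3p\epsilon$ for all $j$ (for $\epsilon<1$), while the identity $(g^j)^{\top}g^k=\lVert g^j\rVert\lVert g^k\rVert R_{jk}$ (immediate from $\beta^j=\sqrt p\,g^j/\lVert g^j\rVert$) combined with membership in $\mathcal{S}$ gives, writing $q\coloneqq\mathbb{E}\langle R_{1,2}\rangle$ and using $\lvert R_{jk}\rvert\le 2$, $\lvert R_{jk}-q\rvert\le c_p$, that $\lvert(g^j)^{\top}g^k-pq\rvert\le C(p\epsilon+pc_p)$ for $j\ne k$. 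Substituting into the two expansions, the leading terms $\frac{p}{m_p}+\frac{m_p-1}{m_p}pq$ appearing in both $(g^i)^{\top}b$ and $\lVert b\rVert^2$ cancel \emph{exactly} in the difference; what survives is a sum of at most $O(m_p^2)$ error terms, each of size $O(p\epsilon+pc_p)$, with weight $m_p^{-1}$ or $m_p^{-2}$, so that $\lvert(g^i)^{\top}b-\lVert b\rVert^2\rvert\le C'(p\epsilon+pc_p)$. This exact cancellation is the point of the argument: it removes the $p/m_p$ contribution that a cruder triangle-inequality bound would leave behind.

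Finally, I would lower-bound $\lVert b\rVert$. From the same expansion, $\lVert b\rVert^2\ge\frac{m_p-1}{m_p}pq-C(p\epsilon+pc_p)$; invoking the standing reduction \eqref{eq:wlog_convergence_of_E_R_12} that $q\to C_0>0$ (which is precisely where positivity of the limiting overlap, guaranteed in this subsection, is used), for $p$ large and $\epsilon<C_1$ sufficiently small this is $\ge pC_0/8>0$, whence $\sqrt p/\lVert b\rVert\le\sqrt{8/C_0}=O(1)$. Combining with the previous step, $\lvert(g^i-b)^{\top}\tilde b\rvert\le\sqrt{8/C_0}\,C'(p\epsilon+pc_p)=(C_2c_p+C_3\epsilon)p$, as claimed. (Lemma \ref{lower_and_upper_bounds_on_overlap} is used only to know $\lvert q\rvert\le 2$ in the error bounds; the structure of the proof parallels that of Lemma \ref{inner_product_between_beta_centered_and_a}, the difference being that here one must also absorb the $O(\epsilon)$ fluctuations of the Gaussian norms.)

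The main obstacle I anticipate is the bookkeeping in the middle step — verifying that the $m_p^{-1}$- and $m_p^{-2}$-weighted leading terms in $(g^i)^{\top}b$ and $\lVert b\rVert^2$ agree so that only genuine $O(\epsilon+c_p)$ errors remain — and, relatedly, making sure the lower bound on $\lVert b\rVert$ (which legitimizes $\tilde b$ in the first place) is where the hypothesis $q\to C_0>0$ is invoked.
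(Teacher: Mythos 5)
Your proposal is correct, and it takes a genuinely different route from the paper. The paper's proof is a \emph{transfer} argument: it first observes that on $\tilde{\mathcal{S}}_\epsilon$ one has $\lVert b-a\rVert = O(\epsilon\sqrt{p})$ and $\lVert g^i - \beta^i\rVert = O(\epsilon\sqrt{p})$, then bounds $|(g^i-b)^\top b|$ by a chain of triangle inequalities
\[
|(g^i-b)^\top b| \le |(g^i - \beta^i)^\top a| + |(\beta^i-a)^\top a| + |(a-b)^\top a| + \lVert g^i - b\rVert\lVert b - a\rVert,
\]
so that the $c_p$-part of the bound is inherited directly from Lemma~\ref{inner_product_between_beta_centered_and_a}, and only the $O(\epsilon)$-part needs to be controlled by hand; the lower bound $\lVert b\rVert \gtrsim \sqrt{p}$ is then imported from the corresponding bound on $\lVert a\rVert$ (Lemma~\ref{lower_and_upper_bounds_on_overlap} plus \eqref{eq:wlog_convergence_of_E_R_12}) together with $\lVert b-a\rVert=O(\epsilon\sqrt{p})$. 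Your approach dispenses with Lemma~\ref{inner_product_between_beta_centered_and_a} entirely and computes directly on the Gaussians, expanding both $(g^i)^\top b$ and $\lVert b\rVert^2$ in the pairwise inner products $(g^j)^\top g^k$ and exploiting the exact cancellation of the leading terms $\tfrac{p}{m_p} + \tfrac{m_p-1}{m_p}pq$. This is a bit more bookkeeping (you effectively re-derive the content of Lemma~\ref{inner_product_between_beta_centered_and_a} for the $g$'s), but it is self-contained, makes the cancellation that drives the estimate completely explicit, and locates cleanly where the hypothesis $q \to C_0 > 0$ enters (lower-bounding $\lVert b\rVert$ so that $\sqrt{p}/\lVert b\rVert = O(1)$). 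Either route is sound; the paper's is shorter given that Lemma~\ref{inner_product_between_beta_centered_and_a} is already in hand, while yours avoids propagating errors through the $g\leftrightarrow\beta$, $b\leftrightarrow a$ replacements.
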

\begin{proof}
    Note that, for any $ g \in \mathcal{\tilde{S}}_{\epsilon}$
    \begin{equation}
    \label{norm_of_b_minus_a}
        \begin{aligned}
            \lVert b - a \rVert 
            &= \frac{1}{m_p} \left \lVert \sum_{i = 1}^{m_p} (1 - \sqrt{p}/\lVert g^i \rVert) g^i \right  \rVert \le \frac{1}{m_p} \sum_{i = 1}^{m_p} \left \lVert  (1 - \sqrt{p}/\lVert g^i \rVert) g^i \right  \rVert = O(\epsilon \sqrt{p}) \text{.}
        \end{aligned}
    \end{equation}
   Similarly we also have $\lVert g^i - \beta^i \rVert = O(\epsilon \sqrt{p})$ on $\tilde{\mathcal{S}}_{\epsilon}$.
    By Lemma \ref{inner_product_between_beta_centered_and_a} and triangle inequality,
    \begin{equation}
        \begin{aligned}
            |  (g^i - b)^{\top} b | 
            & \le | (g^i - b)^{\top} a| + | (g^i - b)^{\top}( b - a) |\\
            & \le | (g^i - \beta^i)^{\top} a | + | (\beta^i - a)^{\top} a| + | (a - b)^{\top} a| + \lVert g^i - b \rVert \lVert b - a \rVert \\
            & \le \lVert a \rVert (\lVert g^i - \beta^i \rVert + \lVert a - b \rVert + \lVert \beta^i - a \rVert) + O(\epsilon p) \\
            &\le  O(\epsilon p) + 2 p c_p\\
            &= (2 c_p + O(\epsilon)) p \text{.} \nonumber 
        \end{aligned}
    \end{equation}
    Moreover, note that Lemma \ref{lower_and_upper_bounds_on_overlap} together with \eqref{eq:wlog_convergence_of_E_R_12} imply there is some positive constant $C$ such that if $(\beta^1, \dots, \beta^{m_p}) \in \mathcal{S}$ then $(1 - C)\sqrt{p} \ge \lVert a \rVert \ge C \sqrt{p}$. Thus, for small enough $\epsilon$, 
    there always exists positive constant $C'$ such that $(1 - C') \ge \sqrt{p}\lVert b \rVert \ge C' \sqrt{p}$, which finishes the proof.
\end{proof}
Let $w \sim \mathcal{N}(0, I_p / m_p)$, independent of $\{g^i: 1 \leq i \leq m_p\}$. Let $h^i = g^i + w$ and $q^i = h^i - b$.  Observe that $b, q^2, q^3  \dots, q^{m_p}$ are independent. Moreover, conditioning on $b$, $h_2, h_3, \dots, h_{m_p} $ are conditionally independent.
The following lemma shows that adding a small perturbation $w$ has a negligible effect.
\begin{lemma} There exists $C>0$ such that with high probability (with respect to $X$), as $p \to \infty$
    \begin{equation}
        \mathbb{P}_{g,w} \left ( \left | \sum_{i = 2}^{m_p} (\lVert X (g^i - b) \rVert^2  - \lVert X (g^ i - b + w) \rVert^2) \right |  \le C {m_p}^{1/2} p    \right ) \to 1 \text{.} \nonumber 
    \end{equation}
\end{lemma}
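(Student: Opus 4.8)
The plan is to expand the two squared norms, exploit an exact cancellation coming from the definition of $b$, and then reduce the estimate to elementary Gaussian concentration on the good event $\Omega$ of \eqref{eq:def_Omega}. First I would fix a realization of $(X,y)\in\Omega$, so that $\sigma_{\text{max}}(X)\le M:=1+\alpha^{-1/2}+\kappa$ and hence $\lVert X^{\top}X\rVert_2\le M^2$; recall $\mathbb{P}(\Omega)\to 1$. For each fixed $i$, expanding the square gives
\[
    \lVert X(g^i-b+w)\rVert^2-\lVert X(g^i-b)\rVert^2 = 2\,w^{\top}X^{\top}X(g^i-b) + w^{\top}X^{\top}Xw .
\]
Summing over $i\in\{2,\dots,m_p\}$ and using $b=m_p^{-1}\sum_{i=1}^{m_p}g^i$, which forces $\sum_{i=1}^{m_p}(g^i-b)=0$ and hence $\sum_{i=2}^{m_p}(g^i-b)=-(g^1-b)=b-g^1$, the many cross terms collapse into a single one:
\[
    \sum_{i=2}^{m_p}\Big(\lVert X(g^i-b+w)\rVert^2-\lVert X(g^i-b)\rVert^2\Big) = 2\,w^{\top}X^{\top}X(b-g^1) + (m_p-1)\,w^{\top}X^{\top}Xw .
\]

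Next I would bound the two resulting terms. For the quadratic term, $\lvert (m_p-1)\,w^{\top}X^{\top}Xw\rvert\le m_p M^2\lVert w\rVert^2$; since $m_p\lVert w\rVert^2\sim\chi^2(p)$, a standard $\chi^2$ tail bound gives $\lVert w\rVert^2\le 2p/m_p$ with probability $1-e^{-cp}$, so this term is at most $2M^2 p$. For the bilinear term, Cauchy--Schwarz and $\lVert X^{\top}X\rVert_2\le M^2$ give $\lvert 2\,w^{\top}X^{\top}X(b-g^1)\rvert\le 2M^2\lVert w\rVert(\lVert b\rVert+\lVert g^1\rVert)$; using $m_p\lVert b\rVert^2\sim\chi^2(p)$ and $\lVert g^1\rVert^2\sim\chi^2(p)$, with probability $1-2e^{-cp}$ one has $\lVert w\rVert\le\sqrt{2p}$, $\lVert b\rVert\le\sqrt{2p/m_p}$ and $\lVert g^1\rVert\le 2\sqrt{p}$, whence this term is $O(p/\sqrt{m_p})$. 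A union bound over these finitely many concentration events --- all of which hold uniformly over $X\in\Omega$, since their probabilities do not involve $X$ --- shows that with $\mathbb{P}_{g,w}$-probability $1-o(1)$ the sum in question is $O(p)$ in absolute value, hence at most $C{m_p}^{1/2}p$ for $p$ large because $m_p\to\infty$. Combined with $\mathbb{P}(\Omega)\to 1$, this is the assertion of the lemma.

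I do not anticipate a genuine obstacle: the only non-elementary ingredient is the operator-norm control of $X$ on $\Omega$, already recorded via Lemma~\ref{large_deviation_characterization_for_the_largest_eigenvalue_of_Weshart_matrices}, and everything else follows from $\chi^2$ tail bounds. The one point requiring mild care is bookkeeping --- noting that $b$ is the average of the $g^i$ (not of the $h^i$), so that the cancellation $\sum_{i=2}^{m_p}(g^i-b)=b-g^1$ is exact; this is precisely what keeps the bilinear term as small as $O(p/\sqrt{m_p})$. The bound $C{m_p}^{1/2}p$ in the statement is deliberately generous and is all that is needed in the sequel.
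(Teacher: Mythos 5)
Your proof is correct, and it takes a genuinely different — in fact slightly sharper — route than the paper's.

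The paper bounds
\begin{equation}
\Big|\sum_{i=2}^{m_p}(\lVert X(g^i-b)\rVert^2-\lVert X(g^i-b+w)\rVert^2)\Big|
\le (m_p-1)\lVert Xw\rVert^2 + 2\sum_{i=2}^{m_p}\big|w^{\top}X^{\top}X(g^i-b)\big|, \nonumber
\end{equation}
i.e.\ it pushes the absolute value inside the sum of cross terms. Each $\lVert g^i-b\rVert=O(\sqrt{p})$, there are $m_p-1$ of them, and $\lVert w\rVert=O(\sqrt{p/m_p})$, so the cross-term contribution is of order $m_p\cdot\sqrt{p/m_p}\cdot\sqrt{p}=m_p^{1/2}p$; this is exactly why the lemma is stated with the factor $m_p^{1/2}$. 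You instead keep the cross terms together, invoke the exact identity $\sum_{i=2}^{m_p}(g^i-b)=b-g^1$ (a direct consequence of $b$ being the sample mean of the $g^i$), and thereby collapse $m_p-1$ cross terms into a single bilinear form $2\,w^{\top}X^{\top}X(b-g^1)$, which is $O(p/\sqrt{m_p})$ (using $\lVert w\rVert\lesssim\sqrt{p/m_p}$). Together with the quadratic term $(m_p-1)\,w^{\top}X^{\top}Xw=O(p)$, your total bound is $O(p)$, which is an improvement over the stated $O(m_p^{1/2}p)$; it trivially implies the lemma since $m_p\to\infty$. So your approach buys a tighter estimate by exploiting the algebraic cancellation, at the cost of having to notice it; the paper's approach is coarser but a touch more mechanical. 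One cosmetic inconsistency in your write-up: you first (correctly) derive $\lVert w\rVert^2\le 2p/m_p$ with high probability, but then restate the bound as $\lVert w\rVert\le\sqrt{2p}$ when controlling the bilinear term, while nevertheless quoting the conclusion $O(p/\sqrt{m_p})$ that requires the sharper $\lVert w\rVert\le\sqrt{2p/m_p}$. This is a slip in bookkeeping only; with the weaker bound on $\lVert w\rVert$ the bilinear term is still $O(p)$ and the lemma still follows.
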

\begin{proof}
We start with
    \begin{equation}
        \begin{aligned}
           & \left | \sum_{i = 2}^{m_p} \left ( \lVert X (g^i - b) \rVert^2   - \lVert X (g^i - b + w) \rVert^2 \right ) \right | \\
            & \le (m_p - 1) \lVert X w \rVert^2 + 2 \sum_{i = 2}^{m_p} \left | w^{\top} X^{\top} X (g^i - b) \right | \\
            & \le (m_p - 1) \lVert X^{\top} X \rVert_2 \lVert w \rVert^2 + 2 \sum_{i = 2}^{m_p} \lVert X^{\top} X \rVert_2 \lVert w \rVert \lVert g^i - b \rVert.
        \end{aligned} \nonumber 
    \end{equation}
    Since $\lVert w \rVert^2 = O(p/m_p)$, $\lVert g^i - b  \rVert = O(\sqrt{p})$, and $\lVert X^{\top} X \rVert_2 = O(1)$ with high probability, we have as $p \to \infty$
    \begin{equation}
        \mathbb{P}_{g,w} \left (
    (m_p - 1) \lVert X^{\top} X \rVert^2 \lVert w \rVert^2 \le C p \right ) \to 1 \text{,} \nonumber 
    \end{equation}
    and
    \begin{equation}
        \mathbb{P}_{g,w} \left ( \left | 2 \sum_{i = 2}^{m_p} \lVert X^{\top} X \rVert_2 \lVert w \rVert \lVert g^i - b \rVert \right | \le C m_p^{1/2} p  \right ) \to 1 \text{.} \nonumber 
    \end{equation}
    The lemma follows upon combining these ingredients.
\end{proof}

\subsection{Proof of Lemma \ref{upperbound_intermediate_lemma_2}}
\begin{proof}[Proof of Lemma \ref{upperbound_intermediate_lemma_2}]
Throughout this proof, we assume without loss of generality that $\mathbb{E}[\langle R_{1,2} \rangle]$ converges to a positive constant. 
Fix any $\epsilon \in (0, C_1)$, where $C_1$ is defined in the statement of Lemma \ref{inner_product_between_g_and_b}. Recall the sets $\mathcal{S}$ and $\tilde{\mathcal{S}}_{\epsilon}$ from (\ref{definition_of_S}) and (\ref{definition_of_tilde_S}) respectively, and $R_p$ from \eqref{eq:r_p} . Since $ \{\sqrt{p} g^i / \lVert g^i \rVert: 1\leq i \leq m_p \}$ and $\{\lVert g^i \rVert : 1\leq i \leq m_p\}$ are independent, we have
\begin{equation}
\label{G_g}
    \begin{aligned}
        R_p
        =& \frac{1}{p m_p} \ln \left ( \mathbb{E}_{g^1,g^2,\dots,g^{m_p}} \left [ e^{- G(g)} \mathbf{1}_{\{ (g^1 / \lVert g^1 \rVert ,g^2 / \lVert g^2 \rVert, \dots, g^{m_p} / \lVert g^{m_p} \rVert ) \in \mathcal{S}\}} \right ] \right ) \\
        =& \frac{1}{p m_p} \ln \bigg \{ \mathbb{E}_g \left [ e^{- G(g)} \mathbf{1}_{\{ (g^1 / \lVert g^1 \rVert ,g^2 / \lVert g^2 \rVert, \dots, g^{m_p} / \lVert g^{m_p} \rVert ) \in \mathcal{S}\}} \mathbf{1}_{\{ \lVert g^i \rVert \in (\sqrt{p}(1- \epsilon), \sqrt{p} (1 + \epsilon)), \forall i\} }  \right ] \\
        & \quad  \quad  \quad  \quad  \quad  \quad  \quad  \quad  \quad \cdot \mathbb{P}^{-1}(\lVert g^i \rVert \in (\sqrt{p}(1- \epsilon), \sqrt{p} (1 + \epsilon)), \forall i) \bigg \} \\
        =& \frac{1}{p m_p} \ln \left \{ \mathbb{E}_g \left [ e^{- G(g)} \mathbf{1}_{\{ g \in \tilde{\mathcal{S}}_{\epsilon}\}} \right ]     \right \} + O(\epsilon^2)
        \text{,}
    \end{aligned}
\end{equation}
where
\begin{align*}
    G(g) &= G(g^1, g^2,\dots, g^{m_p})  \nonumber \\
    &\coloneqq \frac{m_p}{2\Delta} \left \lVert y - X \left ( \frac{1}{m_p} \sum_{i = 1}^{m_p} \frac{g^i \sqrt{p}}{ \lVert g^i  \rVert}\right ) \right \rVert^2 - \frac{1}{2 \Delta} \sum_{i = 2}^{m_p} \left \lVert X \left ( \frac{g^i \sqrt{p}}{\lVert g^i \rVert} - \frac{1}{m_p} \sum_{i = 1}^{m_p} \frac{g^i \sqrt{p}}{ \lVert g^i  \rVert} \right ) \right \rVert^2 \text{.} \nonumber 
\end{align*}
Define
\begin{equation}
    \tilde{G}(g) = \tilde{G}(g^1, g^2,\dots, g^{m_p})\coloneqq  \frac{m_p}{2\Delta}  \lVert y - X b  \rVert^2 - \frac{1}{2 \Delta} \sum_{i = 2}^{m_p} \left \lVert X(g^i - b)\right \rVert^2 \text{.} \nonumber 
\end{equation}
Note that, for any $ g \in \mathcal{\tilde{S}}_{\epsilon}$, by (\ref{norm_of_b_minus_a}), we have $\lVert b - a \rVert = O(\epsilon \sqrt{p})$ and $\lVert g^i - \beta^i \rVert = O(\epsilon \sqrt{p})$ for any $i$.
Since $\mathbb{P}(\Omega) \to 1$ as $p \to \infty$, with high probability
\begin{equation}
    \begin{aligned}
       & \left | G(g) - \tilde{G}(g) \right |  \\
        \le & \frac{m_p}{2 \Delta} \left | \lVert y - X b  \rVert^2 - \lVert y - X a \rVert^2 \right | + \frac{1}{2 \Delta} \sum_{i = 2}^{m_p} \left |\left \lVert X(g^i - b)\right \rVert^2 - \left \lVert X( \beta^i - a)\right \rVert^2 \right |\\
        \le & \frac{m_p}{2 \Delta} \left | y^{\top} X (b-a)  \right | + \frac{m_p}{2 \Delta} \left | \lVert X b\rVert^2 - \lVert X a \rVert^2 \right | + \frac{1}{2 \Delta} \sum_{i = 2}^{m_p} \bigg \{   \left | (g^i - b)^{\top} X^{\top} X (g^i - b - \beta^i + a)  \right | \\
        & +  \left | (\beta^i - a)^{\top} X^{\top} X (g^i - b - \beta^i + a) \right | \bigg \} \\
        \stackrel{(\mathrm{i})}{\le} & O(\sqrt{p} m_p) \lVert b - a \rVert + \frac{1}{2 \Delta} \sum_{i = 2}^{m_p} \big \{ \lVert X^{\top} X\rVert_2 \big [  \lVert g^i - b \rVert  (\lVert g^i - \beta^i \rVert + \lVert b - a \rVert ) \\
        & + \lVert \beta^i - a \rVert (\lVert g^i - \beta^i \rVert  + \lVert b - a \rVert ) \big ] \big \} \\
        =  & O(\sqrt{p} m_p) \lVert b - a \rVert + O(\sqrt{p}m_p) \lVert g^i - \beta^i \rVert \\
        =  & O(m_p p \epsilon) \text{,} 
    \end{aligned} \nonumber 
\end{equation}
where (i) follows from the fact that $\lVert y \rVert = O(1)$ and $\lVert X \rVert_2 = O(1)$ on $\Omega$. This allows us to replace $G(g)$ in \eqref{G_g} by $\tilde{G}(g)$ with an additional cost $O(\epsilon)$.  Thus with high probability, 
\begin{equation}
\label{R_p_simplified}
    R_p = \frac{1}{p m_p} \ln  \left (   \mathbb{E}_g \left [ e^{- \tilde{G}(g)} \mathbf{1}_{\{ g \in \tilde{\mathcal{S}}_{\epsilon}\}} \right ]  \right ) + O(\epsilon) \text{.}
\end{equation}

Let $\delta_p \coloneqq c_p / C_2 \to 0$, where $C_2$ is a positive constant defined in the statement of Lemma \ref{inner_product_between_g_and_b}. Recalling the RHS of the last equation, by Lemma \ref{inner_product_between_g_and_b}, the definition of $\tilde{\mathcal{S}}_{\epsilon}$ and \eqref{norm_of_b_minus_a}, we know that there exist $c',c>0$ such that  $\{ g = (g^1, g^2, \dots, g^{m_p}): \left | \langle g^i - b, \tilde{b} \rangle \right | \le ( C_2 c_p + C_3 \epsilon)  p, \forall i \} \cap \{ \lVert g^i \rVert \in ((1-\epsilon)\sqrt{p}, (1+ \epsilon)\sqrt{p}), \forall i \} \cap \{ \|b\|/\sqrt{p} : (c', 1-c)\}  $ contains $\tilde{\mathcal{S}}_{\epsilon}$, thus
\begin{align}
 &\mathbb{E}_g \left [ e^{- \tilde{G}(g)} \mathbf{1}_{\{ g \in \tilde{\mathcal{S}}_{\epsilon}\}} \right ] \nonumber \\
        \le & \mathbb{E}_{b} \left [ \mathbf{1}_{\{ \frac{\|b\|}{\sqrt{p}} \in (c', 1-c) \}} \mathbb{E}_{g^2,g^3,\dots,g^{m_p} | b} \left (  e^{- \tilde{G}(g)} \prod_{i = 2}^{m_p}\mathbf{1}_{ \left \{\left | \langle g^i - b, \tilde{b} \rangle \right | \le (C \delta_p + O(\epsilon)) p,  \lVert g^i \rVert \in ((1-\epsilon)\sqrt{p}, (1+ \epsilon)\sqrt{p}) \right \} } \right ) \right ]  \nonumber \\
        = & \mathbb{E}_{b} \left [ 
        \begin{matrix} 
        \mathbf{1}_{\{\frac{\lVert b \rVert}{  \sqrt{p}} \in (c', 1-c) \}}  \times \\ 
         \mathbb{E}_{g^2,g^3,\dots,g^{m_p},w | b} \left (  e^{- \tilde{G}(g)} \prod_{i = 2}^{m_p}\mathbf{1}_{ \left \{\left | \langle g^i - b, \tilde{b} \rangle \right | \le (C \delta_p + O(\epsilon)) p , \lVert g^i \rVert \in ((1-\epsilon)\sqrt{p}, (1+ \epsilon)\sqrt{p}) \right\}} \mathbf{1}_{ \left\{  \lVert w \rVert \le 2\sqrt{ \frac{p}{ m_p}}\right \}  }\right )
         \end{matrix} 
         \right ] \nonumber \\
        & \cdot \mathbb{P}\left (\lVert w \rVert \le 2 \sqrt{\frac{p}{ m_p}}\right )^{-1} \text{,}\label{proof_of_upperbound_intermediate_lemma_2_intermediate_eq_1}
\end{align}
where the last equation follows from the independence of $w$ and $g$. Recall that $h^i = g^i + w$ and $q^i = h^i - b$. We thus have
\begin{equation}
\label{proof_of_upperbound_intermediate_lemma_2_intermediate_eq_2}
    \begin{aligned}
        & \mathbb{E}_{b} \left [ 
         \begin{matrix}
        \mathbf{1}_{\{\frac{\lVert b \rVert}{\sqrt{p}} \in (c', 1-c) \}}  \times \\
        \mathbb{E}_{g^2,g^3,\dots,g^{m_p},w |b } \left (  e^{- \tilde{G}(g)} \prod_{i = 2}^{m_p}\mathbf{1}_{ \left \{\left | \langle g^i - b, \tilde{b} \rangle \right | \le (C \delta_p + O(\epsilon)) p , \lVert g^i \rVert \in ((1-\epsilon)\sqrt{p}, (1+ \epsilon)\sqrt{p}) \right \} } \mathbf{1}_{ \{ \lVert w \rVert \le 2 \sqrt{\frac{p}{m_p}}\}  } \right ) 
        \end{matrix}
         \right ] \\
        \le & \mathbb{E}_{b} \left [ 
        \begin{matrix} 
        \mathbf{1}_{\{\frac{\lVert b \rVert }{\sqrt{p}} \in (c', 1-c) \}} \times \\
         \mathbb{E}_{h^2,h^3,\dots,h^{m_p} | b} \left (  e^{- \tilde{G}(g)} \prod_{i = 2}^{m_p}\mathbf{1}_{ \left \{\left | \langle h^i - b, \tilde{b} \rangle \right | \le (C \delta_p + O(\epsilon) + \frac{2}{\sqrt{m_p}} ) p , \lVert h^i \rVert \in ((1-\epsilon - \frac{2}{\sqrt{m_p}} )\sqrt{p}, (1+ \epsilon + \frac{ 2}{\sqrt{ m_p}})\sqrt{p}) \right \} } \right ) 
         \end{matrix} 
         \right ]\\
        = & \mathbb{E}_{b} \left [ 
        \begin{matrix}
        \mathbf{1}_{\{ \frac{\lVert b \rVert}{ \sqrt{p}} \in (c', 1-c) \}} e^{-\frac{m_p}{2\Delta} \lVert y - X b\rVert^2} \times  \\ 
        \mathbb{E}_{h^2,h^3,\dots,h^{m_p} | b} \left ( e^{  -\frac{1}{2\Delta} \sum_{i=2}^{m_p} \lVert X (h^i - b)\rVert^2 } \prod_{i = 2}^{m_p}\mathbf{1}_{ \left \{\left | \langle h^i - b, \tilde{b} \rangle \right | \le (C \delta_p + O(\epsilon) + \frac{2}{ \sqrt{m_p}} ) p ,  | \lVert h^i \rVert - \sqrt{p}| \le (\epsilon + \frac{2}{\sqrt{ m_p}})\sqrt{p} \right \} } \right ) 
        \end{matrix}
        \right ] \\
        \stackrel{(\mathrm{i})}{=} & \mathbb{E}_{b} \left \{
        \begin{matrix}
        \mathbf{1}_{\{\frac{\lVert b \rVert}{\sqrt{p}} \in (c', 1-c) \}} e^{-\frac{m_p}{2\Delta} \lVert y - X b\rVert^2}\times\\
        \left [ \mathbb{E}_{h^2 | b} \left ( e^{  -\frac{1}{2\Delta}  \lVert X (h^2 - b)\rVert^2 } \mathbf{1}_{ \left \{\left | \langle h^2 - b, \tilde{b} \rangle \right | \le (C \delta_p + O(\epsilon) + \frac{2}{\sqrt{m_p}} ) p , | \lVert h^2 \rVert - \sqrt{p}| \le (\epsilon + \frac{2}{ \sqrt{m_p}})\sqrt{p} \right \} } \right ) \right ]^{m_p -1} 
        \end{matrix}
        \right \} \\
        \le & \mathbb{E}_{b} \left \{
        \begin{matrix} 
        \mathbf{1}_{\{\frac{\lVert b \rVert}{\sqrt{p}} \in (c', 1-c) \}} e^{-\frac{m_p}{2\Delta} \lVert y - X b\rVert^2}\times \\ \left [ \mathbb{E}_{q^2} \left ( e^{  -\frac{1}{2\Delta}  \lVert X q^2 \rVert^2 } \mathbf{1}_{ \left \{\left | \langle q^2, \tilde{b} \rangle \right | \le (C \delta_p + O(\epsilon) + \frac{2}{\sqrt{m_p}} ) p , |\lVert q^2 \rVert^2 - (p - \lVert b \rVert^2) | \le (C \delta_p + O(\epsilon) + \frac{2}{\sqrt{m_p}}) p \right \} } \right ) \right ]^{m_p -1} 
        \end{matrix} 
        \right \}
        \text{.}
    \end{aligned}
\end{equation}
where (i) follows from the fact that $h^i$'s are identically distributed and independent conditioned on $b$. Note that $\epsilon >0 $ is a fixed positive constant. Since $\delta_p \to 0$ and $m_p \to \infty $ as $p \to \infty$, for $p$ large enough we can rewrite the RHS above as
\begin{equation}
\label{proof_of_upperbound_intermediate_lemma_2_intermediate_eq_3}
    \begin{aligned}
         &\mathbb{E}_{b} \left \{ \mathbf{1}_{\{\lVert b \rVert / \sqrt{p} \in (c', 1-c) \}} e^{-\frac{m_p}{2\Delta} \lVert y - X b\rVert^2} \left [ \mathbb{E}_{q^2} \left ( e^{  -\frac{1}{2\Delta} \lVert X q^2 \rVert^2 } \mathbf{1}_{ \left \{\left | \langle q^2, \tilde{b} \rangle \right | \le  O(\epsilon)) p  ,  \lVert q^2 \rVert^2 \in (p - \lVert b \rVert^2  \pm  O(\epsilon)p  ) \right \} }  \right ) \right ]^{m_p -1} \right \} \\
        \le & \sup_{b:\lVert b \rVert / \sqrt{p} \in (c', 1-c)} \left \{ e^{-\frac{m_p}{2\Delta} \lVert y - X b\rVert^2} \left [ \mathbb{E}_{q^2} \left ( e^{  -\frac{1}{2\Delta} \lVert X q^2 \rVert^2 } \mathbf{1}_{ \left \{\left | \langle q^2, \tilde{b} \rangle \right | \le  O(\epsilon)) p  ,  \lVert q^2 \rVert^2 \in (p - \lVert b \rVert^2  \pm  O(\epsilon)p  ) \right \} }  \right ) \right ]^{m_p -1} \right \}
        \text{,}
    \end{aligned}
\end{equation}
where the last line is obtained by Laplace Method. Note that $ \lVert q^2 \rVert$ and $ q^2 / \lVert q^2 \rVert$ are independent. Defining $\chi = \lVert q^2 \rVert$, we further simplify the quantity inside the square bracket above
\begin{equation}
\label{proof_of_upperbound_intermediate_lemma_2_intermediate_eq_4}
\begin{aligned}
        & \mathbb{E}_{q^2} \left ( e^{  -\frac{1}{2\Delta} \lVert X q^2 \rVert^2 } \mathbf{1}_{ \left \{\left | \langle q^2, \tilde{b} \rangle \right | \le  O(\epsilon)) p  , \lVert q^2 \rVert^2 \in (p - \lVert b \rVert^2  \pm  O(\epsilon)p  ) \right \} }  \right) \\
        =& \mathbb{E}_{\beta \sim \operatorname{Unif}(S^{p-1}(\sqrt{p})), \chi \sim \chi(p)} \left (     e^{  -\frac{\chi^2 / p}{2\Delta} \lVert X \beta \rVert^2 } \mathbf{1}_{ \left \{\left | \frac{\chi}{\sqrt{p}} \langle \beta, \tilde{b} \rangle \right | \le  O(\epsilon) p \right \} }  \mathbf{1}_{ \left \{ \chi^2 \in (p - \lVert b \rVert^2  \pm  O(\epsilon)p  ) \right \} }    \right )\\
        \le &  \mathbb{E}_{\beta \sim \operatorname{Unif}(S^{p-1}(\sqrt{p})), \chi \sim \chi(p)} \left (     e^{  -\frac{\chi^2 / p}{2\Delta}  \lVert X \beta \rVert^2 } \mathbf{1}_{ \left \{\left |  \langle \beta, \tilde{b} \rangle \right | \le  O(\epsilon) p \right \} }  \mathbf{1}_{ \left \{ \chi^2 \in (p - \lVert b \rVert^2  \pm  O(\epsilon)p  ) \right \} }    \right ) \\
        \le & \mathbb{E}_{\chi \sim \chi(p)} \left [ \mathbb{E}_{\beta \sim \operatorname{Unif}(S^{p-1}(\sqrt{p}))} \left ( e^{  -\frac{\chi^2 / p}{2\Delta}  \lVert X \beta \rVert^2 } \mathbf{1}_{ \left \{\left |  \langle \beta, \tilde{b} \rangle \right | \le  O(\epsilon) p \right \} }  \right )  \mathbf{1}_{ \left \{ \chi^2 \in (p - \lVert b \rVert^2  \pm  O(\epsilon)p  ) \right \} } \right ]:= \kappa_p(b) \text{.}
    \end{aligned} 
\end{equation}
Finally, combining (\ref{proof_of_upperbound_intermediate_lemma_2_intermediate_eq_1},  \ref{proof_of_upperbound_intermediate_lemma_2_intermediate_eq_2}, \ref{proof_of_upperbound_intermediate_lemma_2_intermediate_eq_3}, \ref{proof_of_upperbound_intermediate_lemma_2_intermediate_eq_4}), we have
\begin{equation}
    \begin{aligned}
   & \frac{1}{p m_p} \ln  \left (   \mathbb{E}_g \left [ e^{- \tilde{G}(g)} \mathbf{1}_{\{ g \in \tilde{\mathcal{S}}_{\epsilon}\}} \right ]  \right ) \nonumber \\ 
    \le &    \frac{1}{p m_p} \ln \left ( \sup_{b \in \mathbb{R}^p, c' \sqrt{p} \le \lVert b \rVert \le (1-c)\sqrt{p}} \left \{ e^{-\frac{m_p}{2\Delta} \lVert y - Xb \rVert^2} \kappa^{m_p - 1}_p(b) \right \} \right )
    -  \frac{1}{p m_p} \ln  \mathbb{P}\Big(\lVert w \rVert \le 2 \sqrt{\frac{p}{m_p}} \Big)^{-1} \\
    = & \sup_{b \in \mathbb{R}^p, c' \sqrt{p} \le \lVert b \rVert \le (1-c)\sqrt{p}} \left \{ \frac{-1}{ 2 p \Delta} \lVert y - Xb \rVert^2 + \frac{m_p-1}{p m_p} \ln \kappa_p(b) \right \}  + o(1) \\
    \le & \sup_{b \in \mathbb{R}^p, c' \sqrt{p} \le \lVert b \rVert \le (1-c)\sqrt{p}} \bigg \{ \frac{-1}{2 p \Delta} \lVert y - Xb \rVert^2 + \frac{1}{p } \ln  \bigg [ \mathbb{E}_{\beta } \bigg ( e^{  -\frac{ (p - \lVert b \rVert^2 - C\epsilon p )/ p}{2\Delta}  \lVert X \beta \rVert^2 } \mathbf{1}_{ \left \{\left |  \langle \beta, \tilde{b} \rangle \right | \le C \epsilon p \right \} }   \bigg )  \bigg ] \\
    & + \frac{1}{p} \ln \mathbb{P}(\chi^2 \in (p - \lVert b \rVert^2  \pm  C \epsilon p  )) \bigg \}  + o(1) \\
    \end{aligned}
\end{equation}
By Lemma \ref{limit_of_free_energy_without_external_field_on_a_linear_subspace} and noting that as $p \to \infty$,
\begin{equation}
     \left | \frac{1}{p} \ln \mathbb{P}(\chi^2 \in (p - \lVert b \rVert^2  \pm  C \epsilon p  )) - \frac{1}{2} \ln \left ( 1 - \frac{\lVert b \rVert^2}{p} \right ) \right | \le   O(\epsilon) + o(1) \nonumber 
\end{equation}
uniformly in $b \in \{b \in \mathbb{R}^p:  c' \sqrt{p} \le \lVert b \rVert \le (1-c)\sqrt{p} \}$, 
we get with high probability
\begin{equation}
    \frac{1}{p m_p} \ln  \left (   \mathbb{E}_g \left [ e^{- \tilde{G}(g)} \mathbf{1}_{\{ g \in \tilde{\mathcal{S}}_{\epsilon}\}} \right ]  \right ) \le \sup_{b \in \mathbb{R}^p , c' \sqrt{p} \le \lVert b \rVert \leq  (1 - c)\sqrt{p}} \{ f_{\text{TAP}}(b) + O(\epsilon) \} \text{.} \nonumber 
\end{equation}
Lemma \ref{upperbound_intermediate_lemma_2} is then established by (\ref{R_p_simplified}).
\end{proof}

\subsection{Restricted free energy and proof of Lemma \ref{lower_and_upper_bounds_on_overlap}}

We turn to the proof of Lemma \ref{lower_and_upper_bounds_on_overlap}.

\begin{proof}[Proof of Lemma \ref{lower_and_upper_bounds_on_overlap}]
First, note that using Lemma \ref{upperbound_intermediate_lemma_1}, there exists $c_p \to 0$ and $m_p \to \infty$ such that for any constant $C>0$, with high probability, for large enough $p$, 
\begin{align}
    \langle \mathbf{1}_{(\beta^1, \cdots, \beta^{m_p}) \in \mathcal{S}} \rangle \geq  \exp(-Cp m_p) >0. \nonumber 
\end{align}
In particular, this implies that $\mathcal{S}$ is nonempty with high-probability. For any $(\beta^1, \cdots, \beta^{m_p}) \in \mathcal{S}$, 
\begin{align}
    0\leq \| \frac{1}{m_p} \sum_i \beta^{i} \|^2 = \frac{p}{m_p^2} + \frac{1}{m_p^2} \sum_{i \neq j} (\beta^i)^{\top} \beta^j \leq \frac{p}{m_p^2} + p(\mathbb{E}[\langle R_{1,2}\rangle] + c_p) \nonumber 
\end{align}
which implies $\mathbb{E}[\langle R_{1,2} \rangle]> - c_p - 1 / (m_p^2)$. This establishes the lower bound. 

Next, we turn to the upper bound. Without loss of generality, we assume
\begin{equation}
    \beta_0 = \sqrt{p} e_1 \text{.} \nonumber 
\end{equation}
Define the \emph{restricted free energy} 
\begin{equation}
\label{eq:def_restricted_free_energy}
    f_p(\delta) = \lim_{\epsilon \to 0^+} \frac{1}{p}  \ln \left \{   \frac{1}{2\epsilon} \int_{ |\beta_1 - \sqrt{p}\delta| < \epsilon}e^{-\frac{1}{2\Delta} \lVert y - X\beta \rVert^2+\mathcal{H}_p^{\text{Gauss}}(\beta)} \mathrm{d} \pi_0(\beta) \right \}.  \nonumber 
\end{equation}
By the Nishimori identity  (see e.g. \cite[(2.25)]{barbier2020strong})
\begin{equation}
\label{eq:R_12_Nishimori}
    \mathbb{E}\langle R_{1,2}\rangle = \frac{1}{p} \mathbb{E} \langle \beta^{\top} \beta_0 \rangle \text{,}
\end{equation}
so it suffices to show there exists $C>0$ such that 
\begin{equation}
\label{eq:R_12_inner_product_wlog}
    (1 - C) > \frac{1}{p} \mathbb{E} \langle \beta^{\top} \beta_0 \rangle = \mathbb{E} \langle \frac{\beta_1}{\sqrt{p}} \rangle, \quad  \text{.}
\end{equation}
\noindent 
The restricted free energy, as defined in \eqref{eq:def_restricted_free_energy}, can be simplified to
\begin{align}
    f_p(\delta) 
    = & \frac{1}{p} \ln \left \{         \frac{1}{\sqrt{\pi}} \frac{\Gamma\left(\frac{p}{2}\right)}{\Gamma\left(\frac{p-1}{2}\right)}\left(1- \delta^{2}\right)^{\frac{p-3}{2}}  \int_{ \beta_1 =\sqrt{p}\delta}e^{-\frac{1}{2\Delta} \lVert y - X\beta \rVert^2+\mathcal{H}_p^{\text{Gauss}}(\beta)} \mathrm{d} \pi_{\beta_1 = \sqrt{p}\delta}(\beta) \right \} \text{,} \nonumber 
\end{align}
where $\pi_{\beta_1 = \sqrt{p}\delta}$ refers to the uniform distribution on the $p-2$ dimensional sphere $\{\beta \in S^{p-1}(\sqrt{p}): \beta_1 = \sqrt{p}\delta \}$. 
Denote $X = [X_1, X_2, \dots, X_p]$. Define $X_{(-1)} \coloneqq [X_2, \dots, X_p]$, $h = \sqrt{p} X_1 + z - \beta_1 X_1$, and $\beta_{(-1)} \coloneqq (\beta_2, \beta_3, \dots, \beta_p)$. Then
\begin{equation}
    y - X \beta = \sqrt{p} X_1 + z - \beta_1 X_1 - X_{(-1)}\beta_{(-1)} = h - X_{(-1)}\beta_{(-1)} \text{.} \nonumber 
\end{equation}
Setting $u \coloneqq \beta_{(-1)} / {\sqrt{p(1-\delta^2)}} \in S^{p-2}(1)$, we have
\begin{equation}
    \begin{aligned}
        \lVert y - X \beta \rVert^2 
        &= \lVert h \rVert^2 + \beta^{\top}_{(-1)} X^{\top}_{(-1)} X_{(-1)} \beta_{(-1)} - 2 h^{\top} X_{(-1)} \beta_{(-1)} \\
        &= \lVert h \rVert^2 + p (1 - \delta^2) u^{\top} X^{\top}_{(-1)} X_{(-1)} u - 2 \sqrt{p (1 - \delta^2)} h^{\top} X_{(-1)} u \text{.}
    \end{aligned} \nonumber 
\end{equation}
With high probability with respect to $Z$, $\sup_{\beta} |\mathcal{H}_p^{\text{Gauss}}(\beta)| =   O(\epsilon_p) \lambda_0 p = o(p)$.
Denoting the uniform measure on $S^{p-2}(1)$ as $\mu_{p-2}$, we have, 
\begin{equation}
    \begin{aligned}
        &f_p(\delta)  - \frac{1}{2} \ln (1 - \delta^2)  \\
      &   =  \frac{1}{p}  \ln \left (  \int_{S^{p-2}(1)} e^{-\frac{1}{2\Delta}\left ( \lVert h \rVert^2 + p (1 - \delta^2) u^{\top} X^{\top}_{(-1)} X_{(-1)} u - 2 \sqrt{p (1 - \delta^2)} h^{\top} X_{(-1)} u  \right )} \mathrm{d} \mu_{p-2}(u)\right ) + o(1) \\
        & = - \frac{ \lVert h \rVert^2}{2p\Delta} + \frac{1}{p} \ln \int_{S^{p-2}(1)} e^{-\frac{1}{2\Delta}\left (  p (1 - \delta^2) u^{\top} X^{\top}_{(-1)} X_{(-1)} u - 2 \sqrt{p (1 - \delta^2)} h^{\top} X_{(-1)} u  \right )} \mathrm{d} \mu_{p-2}(u) + o(1). \label{eq:f_p_exp}  
    \end{aligned}
\end{equation}
By Law of Large Numbers, almost surely
\begin{equation}
    \frac{1}{p} \lVert h \rVert^2 \to \alpha \Delta + (1 - \delta )^2\text{.} \label{eq:h_lim} 
\end{equation}
As $\log (1-\delta^2) \to -\infty$ as $\delta \to 1$, for any $C_1>0$, there exists $\delta <1$ such that with high probability, $\sup_{\delta'\in[\delta,1]} f_p(\delta') < -C_1$. On the other hand, for any fixed $\delta<1$, there exists $C'>0$ (depending on $\alpha$, $\Delta$ and $\delta$) such that with high probability, $\sup_{\delta' \in [-1,\delta]} f_p(\delta') > -C'$. We now proceed as follows: fix any $\delta_0<1$, and assume $\sup_{\delta' \in [-1,\delta_0]} f_p(\delta') > -C_{\delta_0}$. On the other hand, let $\delta_1<1$ be such that with high probability, $\sup_{\delta' \in [\delta_1,1]} f_p(\delta')<-2C_{\delta_0}$. Thus with high-probability, 
\begin{align}
    &\langle \mathbf{1}(\beta_1>\delta_1 \sqrt{p}) \rangle  = \frac{\int_{\delta_1}^1 \exp(pf_p(\delta')) d\delta'}{\int_{-1}^{1} \exp(pf_p(\delta')) d\delta'}
    \leq \frac{\int_{\delta_1}^1 \exp(pf_p(\delta')) d\delta'}{\int_{-1}^{\delta_0} \exp(pf_p(\delta')) d\delta'}  \nonumber \\
    &\leq \frac{(1-\delta_1)\exp(-2C_{\delta_0}p)}{(\delta_0+1) \exp(-p C_{\delta_0}) } \leq \frac{1-\delta_1}{1+\delta_0} \exp(-p C_{\delta_0}). \nonumber 
\end{align}

\noindent 
Thus with probability $1-o(1)$, 
\begin{align}
    \langle \mathbf{1}(\beta_1 <  \delta_1 \sqrt{p} )\rangle > 1- \frac{1-\delta_1}{1+\delta_0} \exp(-p C_{\delta_0}) . \label{eq:req_2} 
\end{align}
This implies, with probability at least $1-o(1)$,  
\begin{align}
    \langle \beta_1 \rangle \leq \delta_1 \sqrt{p} + \sqrt{p} \cdot \frac{1-\delta_1}{1+\delta_0} \exp(-p C_{\delta_0}). \nonumber 
\end{align}
Finally, this implies 
\begin{align}
    \mathbb{E}[\langle \beta_1 \rangle] \leq \delta_1 \sqrt{p} + \sqrt{p} \cdot \frac{1-\delta_1}{1+\delta_0} \exp(-p C_{\delta_0}) + \sqrt{p} o(1) = (\delta_1 +o(1)) \sqrt{p} .  \nonumber 
\end{align}
The required upper bound follows upon setting $C:= (1- \delta_1)/2$. 
\end{proof}

\section{Proof of Theorem \ref{main_theorem}}
\label{sec:combination} 

In this section we prove Theorem \ref{main_theorem} from Theorem \ref{lower_bound} and Theorem \ref{upper_bound} by arguing the small constant $c$  within Theorem \ref{lower_bound} has negligible impact.

\begin{proof}[Proof of Theorem \ref{main_theorem}]
First, the upper bound, namely 
\begin{equation}
    \mathbb{P} \left (  \frac{1}{p} \ln \mathcal{Z}_p \ge \sup_{a \in \mathbb{R}^p, \lVert a \rVert \le \sqrt{p}} f_{\text{TAP}}(a) + \eta  \right ) \to 0 \text{,} \nonumber 
\end{equation}
follows immediately from Theorem \ref{upper_bound}. 
For the matching lower bound, for any $a \in \mathbb{R}^p$, $\|a\| \leq \sqrt{p}$, define 
$$a^{\star} \coloneqq \argmin_{x \in \mathbb{R}^p,  \lVert x \rVert \le (1-c)\sqrt{p} }\lVert a - x \rVert,$$ then
\begin{equation}
\label{eq:main_theorem_proof_eq1}
    \begin{aligned}
        &  \sup_{a \in \mathbb{R}^p, \lVert a \rVert \le  \sqrt{p}} f_{\text{TAP}}(a) - \sup_{a \in \mathbb{R}^p,  \lVert a \rVert \le (1-c)\sqrt{p}} f_{\text{TAP}}(a)  \\
        \le &  \max \left \{0,  \sup_{a \in \mathbb{R}^p, (1 - c) \sqrt{p} \le \lVert a \rVert \le   \sqrt{p}} \left ( f_{\text{TAP}}(a) - f_{\text{TAP}}(a^\star)    \right )  \right \}\text{,}
    \end{aligned}
\end{equation}
the second term of which can be further controlled by
    \begin{align*}
        & \sup_{a \in \mathbb{R}^p, (1 - c) \sqrt{p} \le \lVert a \rVert \le   \sqrt{p}} \left ( f_{\text{TAP}}(a) - f_{\text{TAP}}(a^\star)    \right ) \nonumber \\
        \le & \sup_{a \in \mathbb{R}^p, (1 - c) \sqrt{p} \le \lVert a \rVert \le   \sqrt{p}}  \Big \{ \frac{1}{2 \Delta p}  \left | \lVert y - Xa \rVert^2 - \lVert y -  X a^\star \rVert^2  \right |  \nonumber \\
        & - \frac{\alpha}{2} \left [ \ln \left (1 + \frac{1 - \lVert a \rVert^2 / p}{\Delta \alpha} \right ) - \ln \left (1 + \frac{1 - \lVert a^\star \rVert^2 / p}{\Delta \alpha} \right )\right ] + \frac{1}{2} \left [ \ln \left  (1 - \frac{\lVert a \rVert^2}{p} \right) - \ln \left  (1 - \frac{\lVert a^\star \rVert^2}{p} \right) \right ] \Big \} \nonumber \\
        \le &\sup_{a \in \mathbb{R}^p, (1 - c) \sqrt{p} \le \lVert a \rVert \le   \sqrt{p}} \Big \{ \frac{1}{2 \Delta p}  \left [ \left | 2 y^{\top} X (a - a^\star) \right | + \left |a^{\top} X^{\top} X a - (a^{\star})^{\top} X^{\top} X a^{\star} \right | \right ] \Big \} \nonumber \\
        & + \sup_{a \in \mathbb{R}^p, (1 - c) \sqrt{p} \le \lVert a \rVert \le   \sqrt{p}} \Big \{ \frac{\alpha}{2} \left [ \ln \left (1 + \frac{1 - \lVert a^\star \rVert^2 / p}{\Delta \alpha} \right ) - \ln \left (1 + \frac{1 - \lVert a \rVert^2 / p}{\Delta \alpha} \right )\right ]   \Big \} \\
        & + \sup_{a \in \mathbb{R}^p, (1 - c) \sqrt{p} \le \lVert a \rVert \le   \sqrt{p}} \Big \{ \frac{1}{2} \left [ \ln \left  (1 - \frac{\lVert a \rVert^2}{p} \right) - \ln \left  (1 - \frac{\lVert a^\star \rVert^2}{p} \right) \right ] \Big \} \\
        \le & \sup_{a \in \mathbb{R}^p, (1 - c) \sqrt{p} \le \lVert a \rVert \le   \sqrt{p}} \left \{ \frac{1}{2 \Delta p}  \left [  2 c \sqrt{p} \lVert y^{\top} X  \rVert + 2 p  ( 2 c- c^2) \lVert X^{\top} X \rVert_2 \right ] + \tau(c) \right \} \text{,}
    \end{align*}
where 
$$
\tau(c) \coloneqq \frac{\alpha}{2} \ln \left ( 1 + \frac{2c - c^2}{\Delta \alpha}\right )\text{,}
$$
as a deterministic function,  goes to $0$ as $c \to 0$.
Since with high probability, $\lVert X^{\top} X \rVert_2 = O(1)$ and $ \lVert y^{\top} X \rVert = O(\sqrt{p})$, we have
\begin{equation}
    \sup_{a \in \mathbb{R}^p, (1 - c) \sqrt{p} \le \lVert a \rVert \le   \sqrt{p}} \left ( f_{\text{TAP}}(a) - f_{\text{TAP}}(a^\star)    \right )  \to 0 \nonumber 
\end{equation}
as $c \to 0$, with high probability. Thus, there exists positive constant $c^\star$ such that if $c \le c^\star$, then with high probability,
\begin{equation}
\label{eq:main_theorem_proof_eq2}
   \sup_{a \in \mathbb{R}^p, (1 - c) \sqrt{p} \le \lVert a \rVert \le   \sqrt{p}} \left ( f_{\text{TAP}}(a) - f_{\text{TAP}}(a^\star)    \right )  \le \frac{\eta}{3} \text{.}
\end{equation}
Finally, choosing $c\le c^\star $, Theorem \ref{lower_bound} together with (\ref{eq:main_theorem_proof_eq1}) and (\ref{eq:main_theorem_proof_eq2}) give us
\begin{equation}
    \mathbb{P} \left (  \frac{1}{p} \ln \mathcal{Z}_p \le \sup_{a \in \mathbb{R}^p, \lVert a \rVert \le \sqrt{p}} f_{\text{TAP}}(a) - \frac{\eta}{3}  \right ) \to 0 \text{,} \nonumber 
\end{equation}
which is exactly the lower bound part of Theorem \ref{main_theorem}.
\end{proof}


\begin{appendix}


In Appendix \ref{sec:conc} we collect some standard technical results that are used in our proofs. In the same vein, Appendix \ref{sec:covariance} collects some facts about sample covariance matrices with iid gaussian entries. Finally, Appendix \ref{sec:overlap_conc} establishes Theorem \ref{overlap_concentration}. 

\section{Concentration and convexity inequalities}
\label{sec:conc} 
\begin{lemma}[Concentration of almost Lipschitz functions \cite{barbier2020mutual}]
\label{concentration_of_almost_Lipshitz_functions}
Let $U_i \overset{\text{iid}}{\sim} N(0,1)$. Let $f$ be a Lipschitz function over $G \subset \mathbb{R}^{p}$ with Lipschitz constant $K_{p}$. Assume $0 \in G, f(0)^{2} \leq C^{2}, \mathbb{E}\left[f^{2}\right] \leq C^{2}$ for some $C>0 .$ Then for $r \geq 6\left(C+\sqrt{p K_{p}}\right) \sqrt{\mathbb{P}\left(G^{c}\right)}$ we have
$$
\mathbb{P}\left(\left|f\left(U_{1}, \ldots, U_{p}\right)-\mathbb{E}\left[f\left(U_{1}, \ldots, U_{p}\right)\right]\right| \ge r\right) \leq 2 e^{-\frac{r^{2}}{16 K_p^{2}}}+\mathbb{P}\left(G^{c}\right) \text{.}
$$
\end{lemma}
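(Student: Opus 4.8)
The plan is to reduce the claim to the classical Gaussian concentration inequality for \emph{globally} Lipschitz functions by means of a Lipschitz extension of $f$. First I would replace $f$ by its McShane extension
\[
\tilde f(x) \coloneqq \inf_{y \in G}\big( f(y) + K_p \lVert x - y \rVert \big), \qquad x \in \mathbb{R}^p .
\]
Since $f$ is $K_p$-Lipschitz on $G$ and $0 \in G$, the expression inside the infimum is bounded below (by $f(0) - K_p\lVert x\rVert$), so $\tilde f$ is finite everywhere; being an infimum of $K_p$-Lipschitz functions it is $K_p$-Lipschitz on all of $\mathbb{R}^p$; and a one-line argument using the Lipschitz bound on $G$ gives $\tilde f(x) = f(x)$ for every $x \in G$. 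Finally, taking $y = 0$ in the infimum and using $K_p$-Lipschitzness yields the crude pointwise bound $|\tilde f(x)| \le |f(0)| + K_p \lVert x \rVert$, which will be needed below.

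Next I would apply Gaussian concentration to $\tilde f$: writing $U = (U_1,\dots,U_p)$, the Borell--Tsirelson--Ibragimov--Sudakov inequality (equivalently, the Gaussian log-Sobolev inequality) gives, for all $t>0$,
\[
\mathbb{P}\big( |\tilde f(U) - \mathbb{E}\tilde f(U)| \ge t \big) \le 2\exp\!\Big( -\frac{t^2}{2K_p^2}\Big).
\]
The second ingredient is a comparison of the two means. Since $\tilde f = f$ on $G$ we have $\mathbb{E}\tilde f(U) - \mathbb{E}f(U) = \mathbb{E}[(\tilde f(U) - f(U))\mathbf{1}_{G^c}]$, so by Cauchy--Schwarz and $(a+b)^2 \le 2a^2 + 2b^2$,
\[
\big| \mathbb{E}\tilde f(U) - \mathbb{E}f(U) \big| \le \sqrt{2\,\mathbb{E}[\tilde f(U)^2] + 2\,\mathbb{E}[f(U)^2]}\,\sqrt{\mathbb{P}(G^c)} .
\]
Using the pointwise bound on $\tilde f$ together with $\mathbb{E}\lVert U\rVert^2 = p$, $f(0)^2 \le C^2$, and $\mathbb{E}[f(U)^2] \le C^2$, this is at most $\sqrt{6C^2 + 4pK_p^2}\,\sqrt{\mathbb{P}(G^c)} \le 3\,(C + \sqrt{p\,K_p})\sqrt{\mathbb{P}(G^c)}$ in the regime $K_p = O(1)$ relevant to the applications (otherwise the same bound holds with $\sqrt{p\,K_p}$ replaced by $\sqrt{p}\,K_p$). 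Hence, for $r \ge 6(C + \sqrt{pK_p})\sqrt{\mathbb{P}(G^c)}$ as in the hypothesis, $|\mathbb{E}\tilde f(U) - \mathbb{E}f(U)| \le r/2$.

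To conclude I would split on $G$:
\[
\mathbb{P}\big(|f(U) - \mathbb{E}f(U)| \ge r\big) \le \mathbb{P}\big(\{|f(U) - \mathbb{E}f(U)| \ge r\}\cap G\big) + \mathbb{P}(G^c),
\]
and on $G$ we may replace $f$ by $\tilde f$; the event then forces $|\tilde f(U) - \mathbb{E}\tilde f(U)| \ge r - |\mathbb{E}\tilde f(U) - \mathbb{E}f(U)| \ge r/2$, so the Gaussian bound above with $t = r/2$ gives $2\exp(-r^2/(8K_p^2)) \le 2\exp(-r^2/(16K_p^2))$, which finishes the proof. This last step exploits the slack between $8K_p^2$ and the stated $16K_p^2$, so the crude factor-of-two splitting of $r$ costs nothing. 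The only delicate point is the mean comparison in the third paragraph: $f$ is completely uncontrolled off $G$, and it is precisely the $L^2$ hypothesis $\mathbb{E}[f^2] \le C^2$ together with the fact that $\mathbb{P}(G^c)$ enters with a square root that lets one absorb this into the threshold on $r$; everything else is routine.
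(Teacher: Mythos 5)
The paper does not prove this lemma; it imports it verbatim from \cite{barbier2020mutual}, so there is no internal proof to compare against. Your argument is the standard one (and, as far as I recall, the one used in that reference): McShane--Whitney extension of $f$ off $G$, classical Gaussian concentration for the globally Lipschitz extension $\tilde f$, an $L^2$/Cauchy--Schwarz comparison of $\mathbb{E}\tilde f$ with $\mathbb{E}f$ driven by the hypotheses $f(0)^2\le C^2$, $\mathbb{E}[f^2]\le C^2$, and then a union bound splitting on $G$. Each step checks out: the extension is $K_p$-Lipschitz, agrees with $f$ on $G$, and satisfies $|\tilde f(x)|\le |f(0)|+K_p\|x\|$; the mean gap is controlled by $\sqrt{2\mathbb{E}[\tilde f^2]+2\mathbb{E}[f^2]}\sqrt{\mathbb{P}(G^c)}\le 3(C+\sqrt{p}\,K_p)\sqrt{\mathbb{P}(G^c)}$; and the slack between $8K_p^2$ and $16K_p^2$ absorbs the $r\mapsto r/2$ loss from centering at $\mathbb{E}\tilde f$ rather than $\mathbb{E}f$.

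You are right to flag the $\sqrt{pK_p}$ versus $\sqrt{p}\,K_p$ discrepancy: your computation produces $\sqrt{p}\,K_p$ naturally, and the lemma as typeset reads $\sqrt{pK_p}$. In every application in this paper $K_p\le 1$ (indeed $K_p\to 0$), in which case $\sqrt{p}\,K_p\le\sqrt{pK_p}$ and the threshold in the statement is the more generous one, so nothing is lost; I suspect the statement in the source is simply a typo for $\sqrt{p}\,K_p$. Your proof is complete modulo this immaterial point.
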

\begin{lemma}[The largest and smallest eigenvalues of Wishart matrices \cite{vershynin2010introduction}]
\label{large_deviation_characterization_for_the_largest_eigenvalue_of_Weshart_matrices}
Let $A$ be an $n \times p$ matrix whose entries are independent standard normal random variables. Then for every $t \ge 0$, with probability at least $1-2 \exp \left(-t^{2} / 2\right)$ one has
$$
\sqrt{n}-\sqrt{p} - t \le \sigma_{\min }(A) \le \sigma_{\max }(A) \le \sqrt{n} + \sqrt{p} + t \text{.}
$$
\end{lemma}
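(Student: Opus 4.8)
The statement is a standard one from non-asymptotic random matrix theory, quoted here from \cite{vershynin2010introduction}, so the plan is to recall how such an estimate is proved. The strategy has two ingredients: a Gaussian comparison inequality to pin down the expected extreme singular values, and Gaussian concentration of measure to control their fluctuations. Note first that when $n<p$ the lower bound $\sqrt{n}-\sqrt{p}-t$ is vacuous, since its left-hand side is negative while $\sigma_{\min}(A)\ge 0$; so I may assume $n\ge p$ throughout.

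\emph{Expectations.} Using the variational formulas $\sigma_{\max}(A)=\sup_{u\in S^{n-1},\,v\in S^{p-1}}u^{\top}Av$ and $\sigma_{\min}(A)=\inf_{v\in S^{p-1}}\sup_{u\in S^{n-1}}u^{\top}Av$, where $S^{k-1}$ denotes the Euclidean unit sphere in $\mathbb{R}^{k}$, I would compare the centered Gaussian process $X_{u,v}:=u^{\top}Av$ with $Y_{u,v}:=\langle g,u\rangle+\langle h,v\rangle$, where $g\sim N(0,I_{n})$ and $h\sim N(0,I_{p})$ are independent. A one-line computation gives $\mathbb{E}[(X_{u,v}-X_{u',v'})^{2}]=2-2\langle u,u'\rangle\langle v,v'\rangle$ and $\mathbb{E}[(Y_{u,v}-Y_{u',v'})^{2}]=\lVert u-u'\rVert^{2}+\lVert v-v'\rVert^{2}$, so that $\mathbb{E}[(Y_{u,v}-Y_{u',v'})^{2}]-\mathbb{E}[(X_{u,v}-X_{u',v'})^{2}]=2(1-\langle u,u'\rangle)(1-\langle v,v'\rangle)\ge 0$, with equality whenever $u=u'$. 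By the Sudakov--Fernique inequality this increment domination yields $\mathbb{E}[\sigma_{\max}(A)]=\mathbb{E}[\sup_{u,v}X_{u,v}]\le\mathbb{E}[\sup_{u,v}Y_{u,v}]=\mathbb{E}\lVert g\rVert+\mathbb{E}\lVert h\rVert\le\sqrt{n}+\sqrt{p}$, the last step by Jensen's inequality. For the smallest singular value the same process pair (after adding an independent $N(0,1)$ to $X$ to equalize variances) satisfies the hypotheses of Gordon's min--max comparison inequality, giving $\mathbb{E}[\sigma_{\min}(A)]=\mathbb{E}[\inf_{v}\sup_{u}X_{u,v}]\ge\mathbb{E}[\inf_{v}\sup_{u}Y_{u,v}]=\mathbb{E}\lVert g\rVert-\mathbb{E}\lVert h\rVert$; combined with the standard fact that $\sqrt{k}-\mathbb{E}\lVert N(0,I_{k})\rVert$ is nonincreasing in $k$, this is at least $\sqrt{n}-\sqrt{p}$ when $n\ge p$. (See \cite{vershynin2010introduction} for the details of these comparison estimates.)

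\emph{Concentration.} Regarding $A$ as a standard Gaussian vector in $\mathbb{R}^{np}$, the maps $A\mapsto\sigma_{\max}(A)$ and $A\mapsto\sigma_{\min}(A)$ are $1$-Lipschitz for the Frobenius norm, because $|\sigma_{\max}(A)-\sigma_{\max}(B)|$ and $|\sigma_{\min}(A)-\sigma_{\min}(B)|$ are each bounded by $\lVert A-B\rVert_{2}\le\lVert A-B\rVert_{F}$. Gaussian Lipschitz concentration then gives $\mathbb{P}(\sigma_{\max}(A)\ge\mathbb{E}[\sigma_{\max}(A)]+t)\le e^{-t^{2}/2}$ and $\mathbb{P}(\sigma_{\min}(A)\le\mathbb{E}[\sigma_{\min}(A)]-t)\le e^{-t^{2}/2}$ for all $t\ge 0$. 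Feeding in the expectation bounds from the previous step and taking a union bound, the events $\{\sigma_{\max}(A)\le\sqrt{n}+\sqrt{p}+t\}$ and $\{\sigma_{\min}(A)\ge\sqrt{n}-\sqrt{p}-t\}$ hold simultaneously with probability at least $1-2e^{-t^{2}/2}$; since always $\sigma_{\min}(A)\le\sigma_{\max}(A)$, this is exactly the claimed chain of inequalities.

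\emph{Main obstacle.} The only substantive point is the pair of expectation bounds --- i.e., choosing the comparison process and verifying the covariance/increment conditions needed to invoke Sudakov--Fernique and Gordon's inequality --- the rest being routine Lipschitz concentration; and since the lemma is stated verbatim as in \cite{vershynin2010introduction}, one may instead simply cite that reference.
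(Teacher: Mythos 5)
Your proof is correct and is essentially the standard argument from \cite{vershynin2010introduction}, which is exactly what the paper cites; the paper itself gives no proof of this lemma, so there is nothing in-text to diverge from. The two comparison steps (Sudakov--Fernique for $\mathbb{E}\,\sigma_{\max}$, Gordon for $\mathbb{E}\,\sigma_{\min}$) and the $1$-Lipschitz Gaussian concentration plus union bound are precisely the route taken in that reference.
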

\begin{lemma}[L\'{e}vy's lemma \cite{vershynin2018high}]
\label{levys_lemma}
Let $S^{n-1}(1)$ be the $(n-1)$-sphere of radius 1. Let $f: S^{n-1}(1) \rightarrow \mathbb{R}$ be a Lipschitz function with Lipschitz constant $L$, and let $\sigma$ be a uniform random vector on $S^{n-1}(1)$. Then for any $t > 0$,
$$
\mathbb{P}(|f(\sigma)-\mathbb{E} f(\sigma)| \ge L t) \le \exp \left(\pi-n t^{2} / 4\right).
$$
\end{lemma}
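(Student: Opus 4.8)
The plan is to obtain the stated sub‑Gaussian deviation inequality from the classical concentration of measure on the sphere, following the standard route (as in \cite{vershynin2018high}). \textbf{Step 1 (isoperimetry).} I would first invoke L\'evy's spherical isoperimetric inequality: among all Borel sets $A \subset S^{n-1}(1)$ of a prescribed measure, spherical caps minimize the measure of the enlargement $A_t := \{\sigma \in S^{n-1}(1): \operatorname{dist}(\sigma, A) \le t\}$. Specializing to sets with $\mu(A) \ge \tfrac12$ and estimating the complementary cap volume gives a universal constant $c>0$ with $\mu(A_t) \ge 1 - \exp(-cnt^2)$ for all $t \ge 0$; since the Euclidean distance on $S^{n-1}(1)$ is dominated by the geodesic distance, the same bound holds when $A_t$ is the Euclidean $t$-neighborhood.

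\textbf{Step 2 (concentration around a median).} Let $M$ be a median of $f(\sigma)$ and set $A = \{f \le M\}$, so $\mu(A) \ge \tfrac12$ and hence $\mu(A_t) \ge 1 - \exp(-cnt^2)$. Because $f$ is $L$-Lipschitz for the Euclidean distance, every $\sigma \in A_t$ satisfies $f(\sigma) \le M + Lt$, so $\mathbb{P}(f \ge M + Lt) \le \exp(-cnt^2)$; applying the same argument to $A = \{f \ge M\}$ yields $\mathbb{P}(f \le M - Lt) \le \exp(-cnt^2)$, and combining, $\mathbb{P}(|f - M| \ge Lt) \le 2\exp(-cnt^2)$.

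\textbf{Step 3 (median to mean).} Integrating this tail bound, $|\mathbb{E}f - M| \le \mathbb{E}|f - M| = \int_0^\infty \mathbb{P}(|f-M|>s)\,\mathrm{d}s \le \int_0^\infty 2\exp(-cns^2/L^2)\,\mathrm{d}s = O(L/\sqrt n)$. Inserting $|M - \mathbb{E}f| \le C_0 L/\sqrt n$ into the bound of Step 2 and shrinking $c$, one gets $\mathbb{P}(|f - \mathbb{E}f| \ge Lt) \le 2\exp(-cnt^2/4)$ for $t \gtrsim 1/\sqrt n$, while for $t \lesssim 1/\sqrt n$ the asserted bound $\exp(\pi - nt^2/4)$ exceeds $1$ and is therefore vacuous; choosing the universal constants conservatively (the additive $\pi$ in the exponent absorbs the small-$t$ regime and $\tfrac14$ is a safe value for $c$) produces exactly the stated form.

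I expect the only real difficulty to be bookkeeping: tracking explicit constants through the isoperimetric estimate and the median-to-mean conversion so as to land on precisely $\pi$ and $\tfrac14$ rather than unspecified universal constants. An alternative that sidesteps isoperimetry is to write $\sigma \stackrel{d}{=} g/\lVert g\rVert$ with $g \sim N(0,I_n)$ and apply Gaussian Lipschitz concentration to the map $g \mapsto f(g/\lVert g\rVert)$ on the high-probability event $\{\lVert g\rVert \in [\tfrac12\sqrt n, 2\sqrt n]\}$, on which it is $O(L/\sqrt n)$-Lipschitz, handling the exceptional event by a union bound; this is technically lighter but requires the same constant-chasing to reach the advertised form.
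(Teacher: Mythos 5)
The paper does not actually prove this lemma; it is stated in Appendix \ref{sec:conc} as a black-box technical result cited from Vershynin's book, so there is no in-paper argument to compare against. Your sketch follows the textbook route (spherical isoperimetry $\Rightarrow$ median concentration $\Rightarrow$ median-to-mean shift), and the logic is sound: in Step 1 the direction of the metric comparison is the right one (Euclidean $\le$ geodesic, hence the Euclidean $t$-neighborhood contains the geodesic one and inherits the lower volume bound), in Step 2 you correctly use both halves of the median property, and in Step 3 the $O(L/\sqrt n)$ median-to-mean shift is absorbed into the generous prefactor, with the vacuousness of the bound for $t \lesssim 1/\sqrt n$ noted. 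The only thing I would press you on is that you should not leave the constant-chasing entirely implicit: the exponent $-nt^2/4$ (as opposed to $-(n-1)t^2/2$ or $-(n-2)t^2/2$, which is what the isoperimetric estimate typically yields for the geodesic metric) together with the specific prefactor $e^\pi$ is exactly the place where the median-to-mean shift, the factor of $2$, the loss from $n$ versus $n-1$ or $n-2$, and the factor of $2$ in halving the Gaussian rate must all be accounted for, and one should verify that $e^\pi \approx 23$ is large enough to absorb all of them simultaneously, including for small $n$. Once that bookkeeping is carried out the proof is complete; your alternative route via $\sigma \stackrel{d}{=} g/\lVert g \rVert$ and Gaussian concentration on the good event $\{\lVert g \rVert \asymp \sqrt n\}$ would work equally well and faces the same constant-tracking burden.
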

\begin{lemma}[A bound for differences between derivatives of convex functions \cite{barbier2019optimal}]
    \label{lemma_bound_for_convex_functions}
    Let $G(x)$ and $g(x)$ be convex functions. Let $\delta > 0$ and define $C_{\delta}^{-}(x):=g^{\prime}(x)-g^{\prime}(x-\delta) \ge 0$ and $C_{\delta}^{+}(x):=g^{\prime}(x+\delta)-g^{\prime}(x) \ge 0 .$ Then
$$
\left|G^{\prime}(x)-g^{\prime}(x)\right| \le \delta^{-1} \sum_{u \in\{x-\delta, x, x+\delta\}}|G(u)-g(u)|+C_{\delta}^{+}(x)+C_{\delta}^{-}(x) \text{.}
$$
\end{lemma}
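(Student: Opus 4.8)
The plan is to reduce everything to the elementary fact that, for a convex function $\varphi$ on $\mathbb{R}$ and any $\delta>0$, the derivative at $x$ is sandwiched between the two one-sided secant slopes through $x$:
\[
\frac{\varphi(x)-\varphi(x-\delta)}{\delta}\ \le\ \varphi'(x)\ \le\ \frac{\varphi(x+\delta)-\varphi(x)}{\delta},
\]
which follows from the monotonicity of difference quotients of convex functions (and remains valid if one reads $\varphi'$ as either one-sided derivative should $\varphi$ fail to be differentiable at $x$; I will assume differentiability at the three relevant points for simplicity, since that is all the application needs).

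First I would bound $G'(x)-g'(x)$ from above. Using the right-hand secant inequality for $G$ and writing $G=g+(G-g)$,
\[
G'(x)\ \le\ \frac{G(x+\delta)-G(x)}{\delta}\ \le\ \frac{g(x+\delta)-g(x)}{\delta}+\frac{|G(x+\delta)-g(x+\delta)|+|G(x)-g(x)|}{\delta}.
\]
Convexity of $g$ gives $\tfrac1\delta\big(g(x+\delta)-g(x)\big)\le g'(x+\delta)=g'(x)+C^{+}_{\delta}(x)$, so after subtracting $g'(x)$,
\[
G'(x)-g'(x)\ \le\ C^{+}_{\delta}(x)+\delta^{-1}\big(|G(x+\delta)-g(x+\delta)|+|G(x)-g(x)|\big).
\]
Symmetrically, the left-hand secant inequality for $G$ together with $\tfrac1\delta\big(g(x)-g(x-\delta)\big)\ge g'(x-\delta)=g'(x)-C^{-}_{\delta}(x)$ yields
\[
g'(x)-G'(x)\ \le\ C^{-}_{\delta}(x)+\delta^{-1}\big(|G(x)-g(x)|+|G(x-\delta)-g(x-\delta)|\big).
\]

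Finally I would combine the two displays: $|G'(x)-g'(x)|$ equals the larger of the two left-hand sides, hence is at most the larger of the two right-hand sides, which in turn is at most $C^{+}_{\delta}(x)+C^{-}_{\delta}(x)+\delta^{-1}\big(|G(x-\delta)-g(x-\delta)|+|G(x)-g(x)|+|G(x+\delta)-g(x+\delta)|\big)$, where one uses $|G(x)-g(x)|+\max\{|G(x+\delta)-g(x+\delta)|,|G(x-\delta)-g(x-\delta)|\}\le\sum_{u\in\{x-\delta,x,x+\delta\}}|G(u)-g(u)|$ so that the middle term is not double-counted. This is exactly the claimed bound. I do not anticipate a real obstacle here; the only things demanding care are keeping the orientation of each secant-slope inequality straight (right secant slopes over-estimate the derivative, left secant slopes under-estimate it) and the bookkeeping in the concluding ``$\max\le$ sum'' step. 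If one prefers to avoid differentiability altogether, one replaces $G',g'$ by right derivatives throughout, which exist for convex functions and obey the same secant bounds.
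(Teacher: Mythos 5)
Your argument is correct, and it is essentially the canonical proof of this inequality. The paper does not include a proof of this lemma at all — it is cited directly from \cite{barbier2019optimal} as a known tool — so there is no internal proof to compare against. Your secant-slope sandwich for $G$, combined with the convexity bounds $\delta^{-1}(g(x+\delta)-g(x))\le g'(x)+C_\delta^+(x)$ and $\delta^{-1}(g(x)-g(x-\delta))\ge g'(x)-C_\delta^-(x)$, gives exactly the two one-sided estimates whose maximum is dominated by the stated right-hand side (the nonnegativity of $C_\delta^{\pm}$ lets you simply add both terms rather than take the max, and the two inner absolute-difference pairs together are bounded by the full three-point sum). This matches the standard derivation as it appears in \cite{barbier2019optimal}.
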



\section{Facts about the random matrix \texorpdfstring{$X^{\top} X$}{} }
\label{sec:covariance} 

In this section, we list some well-known facts about the random matrix $X^{\top} X$, which is usually referred to as \textit{sample covariance matrix} in statistics and probability literature. Please refer to \cite{bai2010spectral} for complete statements and proofs. Let $\theta_1 < \theta_2 < \dots < \theta_p$ be the eigenvalues of the matrix $S_p = X^{\top} X \in \mathbb{R}^{p \times p}$. We have that as $p \to \infty$, almost surely,
\begin{equation}
    \frac{1}{p} \sum_{i = 1}^{p} \delta_{\theta_i} \to \mu(x) dx \text{ in distribution,} \nonumber 
\end{equation}
where $\mu(\cdot)$ is the Marchenko-Pastur law. Moreover, if we let $\theta_t$ be the $t$-quantile of $\mu$, then
\begin{equation}
    \theta^i = \theta_{i/p } + o(1) \text{ for } i \in \{1,2,\dots, p\} \text{,}  \nonumber 
\end{equation}
where the $o(1)$ terms above converge to $0$ almost surely, uniformly in $i$.

\section{Proof of overlap concentration results}
\label{sec:overlap_conc} 

\subsection{Proof of Theorem \ref{overlap_concentration}}

First of all, please note that in this subsection we will use $\mathbb{E}$ as a shorthand for $\mathbb{E}_{X,z,Z,\beta_0}$. Recall that $\mathcal{H}_p^{\text{Gauss}}$ is the Hamiltonian of the side information channel, namely
\begin{equation}
    \mathcal{H}_p^{\text{Gauss}} =  - \lambda_0 \epsilon_p \beta_0^{\top} \beta - \sqrt{\lambda_0 \epsilon_p} Z^{\top} \beta + \frac{1}{2} \lambda_0 \epsilon_p \lVert \beta \rVert^2 \text{,} \nonumber 
\end{equation}
which can be seen as only keeping the $\beta$-related terms in $\lVert Y^{\text{Gauss}  }-\sqrt{\lambda_0 \epsilon_p} \beta \rVert^2 / 2$. Let $\lambda_{0,p} = \lambda_0\epsilon_p$. Furthermore, let
\begin{equation}
\label{definition_of_H_prime}
    H' \coloneqq \frac{ \mathrm{d} \mathcal{H}_p^{\text{Gauss}}}{ \mathrm{d} \lambda_{0,p}} =  - \beta_0^{\top} \beta - \frac{Z^{\top} \beta}{2 \sqrt{\lambda_{0,p}}} + \frac{\lVert \beta \rVert^2}{2} \text{.}
\end{equation}
We begin the proof of overlap concentration, i.e. Theorem \ref{overlap_concentration}, by introducing the following lemma, which upper bounds the fluctuations of $R_{1,2}$ by the fluctuations of $\mathcal{L} \coloneqq H' / p$.
\begin{lemma}
\label{control_fluctuations_of_R_12_by_that_of_L}
    Let 
    $$
    \mathcal{L} \coloneqq \frac{H'}{p} = \frac{1}{p} \frac{\mathrm{d} \mathcal{H}_p^{\text{Gauss}}}{ \mathrm{d} \lambda_{0,p}} \text{.}
    $$
    Then
    \begin{equation}
        \mathbb{E} \left \langle (R_{1,2} - \mathbb{E}\langle R_{1,2} \rangle)^2 \right \rangle \le 4 \mathbb{E} \left \langle (\mathcal{L} - \mathbb{E}\langle \mathcal{L} \rangle)^2 \right \rangle \text{.} \nonumber 
    \end{equation}
\end{lemma}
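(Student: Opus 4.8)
The plan is to reduce the asserted inequality to a comparison of second moments, and then to close that comparison via Gaussian integration by parts (in $Z$) together with the Nishimori identities. First I would use the spherical constraint $\lVert\beta\rVert^2 = p$ to write
\[
\mathcal{L} = \frac12 - R_{\beta,0} - \frac{Z^{\top}\beta}{2p\sqrt{\lambda_{0,p}}}, \qquad R_{\beta,0} := \frac{\beta_0^{\top}\beta}{p},
\]
so that $\mathcal{L}$ is affine in $\beta$. A single Gaussian integration by parts in $Z$ — using $\partial_{Z_k}\mathcal{H}_p^{\mathrm{Pert}} = -\sqrt{\lambda_{0,p}}\,\beta_k$, hence $\partial_{Z_k}\langle\beta_k\rangle = \sqrt{\lambda_{0,p}}(\langle\beta_k^2\rangle - \langle\beta_k\rangle^2)$, and $\sum_k(\langle\beta_k^2\rangle - \langle\beta_k\rangle^2) = p - \lVert\langle\beta\rangle\rVert^2$ — gives $\mathbb{E}\langle Z^{\top}\beta\rangle = \sqrt{\lambda_{0,p}}\,p\,(1 - \mathbb{E}\langle R_{1,2}\rangle)$; combined with the Nishimori identity $\mathbb{E}\langle R_{\beta,0}\rangle = \mathbb{E}\langle R_{1,2}\rangle$ this yields $\mathbb{E}\langle\mathcal{L}\rangle = -\tfrac12\mathbb{E}\langle R_{1,2}\rangle$. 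Writing $\bar R := \mathbb{E}\langle R_{1,2}\rangle$, we then have $\mathbb{E}\langle(R_{1,2}-\bar R)^2\rangle = \mathbb{E}\langle R_{1,2}^2\rangle - \bar R^2$ while $4\,\mathbb{E}\langle(\mathcal{L}-\mathbb{E}\langle\mathcal{L}\rangle)^2\rangle = 4\,\mathbb{E}\langle\mathcal{L}^2\rangle - \bar R^2$, so the lemma is equivalent to $\mathbb{E}\langle R_{1,2}^2\rangle \le 4\,\mathbb{E}\langle\mathcal{L}^2\rangle$ (and this already explains the constant $4 = (1/\tfrac12)^2$).

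Next I would expand $\mathbb{E}\langle\mathcal{L}^2\rangle$ from the three-term expression above. The cross term $\mathbb{E}\langle(\beta_0^{\top}\beta)(Z^{\top}\beta)\rangle$ is handled by one integration by parts in $Z$, and $\mathbb{E}\langle(Z^{\top}\beta)^2\rangle$ by two nested ones; at each stage one uses $\lVert\beta\rVert^2 = p$ to collapse sums of the form $\sum_k\langle\beta_k^2\beta_j\rangle = p\langle\beta_j\rangle$. Rewriting every surviving thermal average as an overlap moment — e.g.\ $\lVert\langle\beta\beta^{\top}\rangle\rVert_F^2 = p^2\langle R_{1,2}^2\rangle$ and $\langle\beta\rangle^{\top}\langle\beta\beta^{\top}\rangle\langle\beta\rangle = p^2\langle R_{1,2}R_{1,3}\rangle$ — and then applying the Nishimori identities (in particular $\mathbb{E}\langle R_{\beta,0}^2\rangle = \mathbb{E}\langle R_{1,2}^2\rangle$ and $\mathbb{E}\langle R_{1,2}R_{1,3}\rangle = \mathbb{E}[\langle R_{\beta,0}\rangle^2]$), all the constant and $\bar R$-proportional contributions cancel and one is left with the clean identity
\[
4\,\mathbb{E}\langle\mathcal{L}^2\rangle \;=\; 3\,\mathbb{E}\langle R_{1,2}^2\rangle \;-\; 2\,\mathbb{E}\langle R_{1,2}R_{1,3}\rangle \;+\; \frac{1}{p\,\lambda_{0,p}}.
\]

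Finally, the same Nishimori identities give
\[
\mathbb{E}\langle R_{1,2}^2\rangle - \mathbb{E}\langle R_{1,2}R_{1,3}\rangle \;=\; \frac{1}{p^2}\,\mathbb{E}\Big[\beta_0^{\top}\big(\langle\beta\beta^{\top}\rangle - \langle\beta\rangle\langle\beta\rangle^{\top}\big)\beta_0\Big] \;\ge\; 0,
\]
because the posterior covariance $\langle\beta\beta^{\top}\rangle - \langle\beta\rangle\langle\beta\rangle^{\top}$ is positive semidefinite. Substituting this into the displayed identity yields $4\,\mathbb{E}\langle\mathcal{L}^2\rangle - \mathbb{E}\langle R_{1,2}^2\rangle = 2\big(\mathbb{E}\langle R_{1,2}^2\rangle - \mathbb{E}\langle R_{1,2}R_{1,3}\rangle\big) + (p\lambda_{0,p})^{-1} \ge 0$, which is exactly the desired bound (with slack). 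The one genuinely laborious step is the middle one: carrying out the two nested Gaussian integrations by parts and systematically translating the higher thermal moments back into overlap moments, so that all the lower-order terms cancel exactly. This bookkeeping is routine but delicate, and it is precisely what forces the constant $4$ and makes both sides of the inequality dimensionless; the rest of the argument is short.
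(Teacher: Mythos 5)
Your proof is correct, but it takes a genuinely different route from the paper's. The paper's argument is a Cauchy--Schwarz trick in disguise: after a single Gaussian integration by parts and two applications of the Nishimori identity, it shows that
\begin{equation*}
2\,\mathbb{E}\bigl\langle R_{1,*}(\mathcal{L}-\mathbb{E}\langle\mathcal{L}\rangle)\bigr\rangle
= -\Bigl(\mathbb{E}\langle R_{1,*}^2\rangle-(\mathbb{E}\langle R_{1,*}\rangle)^2\Bigr)
  -\Bigl(\mathbb{E}\langle R_{1,*}^2\rangle-\mathbb{E}\bigl[\langle R_{1,*}\rangle^2\bigr]\Bigr)
\;\le\; -\,\mathbb{E}\bigl\langle(R_{1,2}-\mathbb{E}\langle R_{1,2}\rangle)^2\bigr\rangle \;\le\; 0 ,
\end{equation*}
and then closes the bound by Cauchy--Schwarz, $\mathbb{E}\langle(R_{1,2}-\bar R)^2\rangle \le 2\{\mathbb{E}\langle(R_{1,*}-\mathbb{E}\langle R_{1,*}\rangle)^2\rangle\,\mathbb{E}\langle(\mathcal{L}-\mathbb{E}\langle\mathcal{L}\rangle)^2\rangle\}^{1/2}$, together with $\mathbb{E}\langle(R_{1,*}-\mathbb{E}\langle R_{1,*}\rangle)^2\rangle=\mathbb{E}\langle(R_{1,2}-\bar R)^2\rangle$ via Nishimori; dividing both sides by the square root of the left side yields the factor $4$. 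You instead brute-force the second moment of $\mathcal{L}$: using $\|\beta\|^2=p$, one Gaussian IBP to compute $\mathbb{E}\langle\mathcal{L}\rangle=-\tfrac12\bar R$, and two nested IBPs for $\mathbb{E}\langle(Z^\top\beta)^2\rangle$, you reach the exact identity $4\,\mathbb{E}\langle\mathcal{L}^2\rangle=3\,\mathbb{E}\langle R_{1,2}^2\rangle-2\,\mathbb{E}\langle R_{1,2}R_{1,3}\rangle+(p\lambda_{0,p})^{-1}$, so that $4\,\mathbb{E}\langle(\mathcal{L}-\mathbb{E}\langle\mathcal{L}\rangle)^2\rangle-\mathbb{E}\langle(R_{1,2}-\bar R)^2\rangle=2\bigl(\mathbb{E}\langle R_{1,2}^2\rangle-\mathbb{E}\langle R_{1,2}R_{1,3}\rangle\bigr)+(p\lambda_{0,p})^{-1}\ge 0$, with positivity from the PSD posterior covariance. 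I checked the bookkeeping (including the three-replica term coming from $\langle\beta\rangle^\top\langle\beta\beta_k\rangle$ in the inner IBP and the relabellings $\mathbb{E}\langle R_{1,2}R_{2,3}\rangle=\mathbb{E}\langle R_{1,2}R_{1,3}\rangle=\mathbb{E}[\langle R_{\beta,0}\rangle^2]$); it closes as claimed. The paper's route is shorter and requires only one IBP, so it generalizes more readily when the spherical constraint $\|\beta\|^2=p$ is relaxed; your route is more laborious but produces a quantitative identity that exhibits the nonnegative slack explicitly (and in particular shows the constant $4$ is not tight here).
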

\begin{proof}
    Let $R_{1,*} = \beta_0^{\top} \beta / p$. Using the definition of $H'$ gives 
    \begin{equation}
        \begin{aligned}
            2 \mathbb{E}\left\langle R_{1, *}(\mathcal{L}-\mathbb{E}\langle\mathcal{L}\rangle)\right\rangle = \mathbb{E}\left[\frac{1}{p}\right.&\left.\left\langle R_{1, *}\|\beta\|^{2}\right\rangle-2\left\langle R_{1, *}^{2}\right\rangle - \frac{1}{p \sqrt{\lambda_{0, p}}}\left\langle R_{1, *} Z^{\top} \beta\right\rangle\right] \\
            &-\mathbb{E}\left\langle R_{1, *}\right\rangle \mathbb{E}\left[\frac{1}{p}\left\langle\|\beta \|^{2}\right\rangle-2\left\langle R_{1, *}\right\rangle-\frac{1}{p \sqrt{\lambda_{0, p}}} Z^{\top} \langle \beta \rangle\right] \text{.}
        \end{aligned} \nonumber 
    \end{equation}
    Gaussian integration by parts implies
    \begin{equation}
        \frac{1}{p \sqrt{\lambda_{0, p}}} \mathbb{E}\left\langle R_{1, *} Z^{\top} \beta\right\rangle
        = \frac{1}{p} \mathbb{E}\left\langle R_{1, *}\|\beta\|^{2}\right\rangle-\frac{1}{p} \mathbb{E}\left\langle R_{1, *} \beta^{\top} \langle \beta \rangle \right \rangle
        =\frac{1}{p} \mathbb{E}\left\langle R_{1, *}\|\beta\|^{2}\right\rangle - \mathbb{E}\left [ \left\langle R_{1, *}\right\rangle^{2} \right ] \text{,} \nonumber 
    \end{equation}
    in which $\mathbb{E}\left\langle R_{1, *} \beta^{\top} \langle \beta \rangle \right \rangle = \frac{1}{p} \mathbb{E} \left [ \langle \beta_0^{\top} \beta \beta^{\top} \rangle \langle \beta \rangle \right ] = \frac{1}{p}  \mathbb{E} \left [ \langle \beta_0^{\top} \beta^1 (\beta^1)^{\top} \beta^2 \rangle \right ] = \frac{1}{p}  \mathbb{E} \left [ \langle (\beta^1)^{\top} \beta_0 \beta_0^{\top} \beta^2 \rangle \right ] = p \mathbb{E} \left [ \left\langle R_{1, *}\right\rangle^{2} \right ]$ is due to the Nishimori identity. Again, using Gaussian integration by parts 
    \begin{equation}
        \frac{1}{p\sqrt{\lambda_{0, p}}}\mathbb{E}\langle Z^{\top} \beta\rangle = \frac{1}{p} \mathbb{E}\left\langle\|\beta\|^{2}\right\rangle-\mathbb{E}\left\langle R_{1, *}\right\rangle \text{.} \nonumber 
    \end{equation}
    Putting them together, we have 
    \begin{equation}
        \begin{aligned}
            2 \mathbb{E}\left\langle R_{1, *}(\mathcal{L}-\mathbb{E}\langle\mathcal{L}\rangle)\right\rangle & = \mathbb{E}\langle R_{1,*}\rangle^{2}-2 \mathbb{E}\langle R_{1, *}^{2}\rangle+(\mathbb{E}\langle R_{1, *}\rangle)^{2} \\
            & = -(\mathbb{E}\langle R_{1, *}^{2}\rangle-(\mathbb{E}\langle R_{1, *}\rangle)^{2})-(\mathbb{E}\langle R_{1, *}^{2}\rangle-\mathbb{E}\langle R_{1, *}\rangle^{2}) \\
            & \le - \mathbb{E} \left \langle (R_{1,2} - \mathbb{E}\langle R_{1,2} \rangle)^2 \right \rangle\\
            & \le 0 \text{.}
        \end{aligned} \nonumber 
    \end{equation}
    Finally, using Cauchy-Schwarz inequality, we have
    \begin{equation}
        \begin{aligned}
            \mathbb{E} \left \langle (R_{1,2} - \mathbb{E}\langle R_{1,2} \rangle)^2 \right \rangle &\le 2 | \mathbb{E} \left\langle R_{1, *}(\mathcal{L}-\mathbb{E}\langle\mathcal{L}\rangle)\right\rangle  |\\
            &= 2 | \mathbb{E} \left\langle (R_{1, *} - \mathbb{E} \langle R_{1,*} \rangle ) ( \mathcal{L} - \mathbb{E} \langle \mathcal{L} \rangle) \right \rangle  |\\
            &\le 2 \left \{ \mathbb{E} \left\langle (R_{1, *} - \mathbb{E} \langle R_{1,*} \rangle )^2 \right \rangle \mathbb{E} \left \langle ( \mathcal{L} - \mathbb{E} \langle \mathcal{L} \rangle)^2 \right \rangle \right \}^{1/2} \text{,}
        \end{aligned} \nonumber 
    \end{equation}
    which implies
    \begin{equation}
        \mathbb{E} \left \langle (R_{1,2} - \mathbb{E}\langle R_{1,2} \rangle)^2 \right \rangle = \mathbb{E} \left \langle (R_{1,*} - \mathbb{E}\langle R_{1,*} \rangle)^2 \right \rangle \le 4 \mathbb{E} \left \langle (\mathcal{L} - \mathbb{E}\langle \mathcal{L} \rangle)^2 \right \rangle \text{.}\nonumber 
    \end{equation} 
\end{proof}
Since the above lemma basically says the fluctuations of the overlaps are upper bounded by those of $\mathcal{L}$, it is sufficient to prove that $\mathcal{L}$ will concentrate. Before moving on, let us recall a few notations. As in (\ref{eq:perturbed_posterior}), $\mathcal{H}^{\text{Pert}}$ is the Hamiltonian of the whole perturbed model, i.e. including both the original model and the side information channel, and $F_{p}^{\text{Pert}}$ is the corresponding free energy, as defined in (\ref{eq:perturbed_free_energy}). 
Moreover, let
\begin{equation}
    \mathcal{H}' \coloneqq \frac{ \mathrm{d} \mathcal{H}^{\text{Pert}}}{ \mathrm{d} \lambda_{0,p}} \text{.} \nonumber 
\end{equation}
Please note $\mathcal{H}'$ and $H'$ defined in (\ref{definition_of_H_prime}) are essentially the same since $\mathcal{H} = \mathcal{H}^{\text{Pert}} - \mathcal{H}^{\text{Gauss}}$ does not depend on $\lambda_{0,p}$.
Armed with these notations, we observe, 
\begin{equation}
    \frac{\mathrm{d} F_{p}^{\text{Pert }}}{\mathrm{d} \lambda_{0, p}}= - \frac{1}{p} \left\langle\mathcal{H}^{\prime}\right\rangle, \quad \text { and } \quad \frac{\mathrm{d}^{2} F_{p}^{\text{Pert }}}{\mathrm{d} \lambda_{0, p}^{2}}= \frac{1}{p} \left \{ \left\langle\left(\mathcal{H}^{\prime}-\left\langle\mathcal{H}^{\prime}\right\rangle\right)^{2}\right\rangle-\frac{\langle\beta\rangle^{\top}  Z}{4 \lambda_{0, p}^{3 / 2}} \right \} \text{.} \nonumber 
\end{equation}
Exchanging expectation and derivatives, we obtain
\begin{align*}
   & \frac{\mathrm{d} \mathbb{E} F_{p}^{\text{Pert }}}{\mathrm{d} \lambda_{0, p}}= - \frac{1}{p} \mathbb{E}\left \langle \mathcal{H}^{\prime} \right \rangle =  - \frac{1}{2} \mathbb{E} \left \langle R_{1,2} \right \rangle,  \\
    &\frac{ \mathrm{d}^{2} \mathbb{E} F_{p}^{\text{Pert }}}{\mathrm{d} \lambda_{0, p}^{2}}= \frac{1}{p} \left \{ \mathbb{E}\left\langle\left(\mathcal{H}^{\prime}-\left\langle\mathcal{H}^{\prime}\right\rangle\right)^{2}\right\rangle-\frac{1}{4 \lambda_{0, p}} \mathbb{E}\left\langle\|\beta-\langle\beta\rangle\|^{2}\right\rangle \right \}, \nonumber 
\end{align*}
in which we have utilized both Nishimori identity and the following identity obtained using Gaussian integration by parts
    \begin{equation}
        \frac{1}{p\sqrt{\lambda_{0, p}}}\mathbb{E}\langle Z^{\top} \beta\rangle = \mathbb{E}\left\langle\|\beta\|^{2}\right\rangle-\mathbb{E}\left\langle R_{1, *}\right\rangle \text{.} \nonumber 
    \end{equation}

\begin{lemma}[Fluctuations of $\mathcal{L}$]
\label{fluctuations_of_L}
Let $\lambda_0 \sim \operatorname{Unif}[1/2,1]$. If $\frac{v_p}{p  \epsilon_p} \to 0$ then there exists a constant $C > 0$ such that
\begin{equation}
    \mathbb{E}_{\lambda_0}\mathbb{E} \left \langle (\mathcal{L} - \mathbb{E}\langle \mathcal{L} \rangle)^2 \right \rangle \le \frac{C}{\epsilon_p} (\frac{v_p}{p\epsilon_p} + \frac{1}{p})^{1/3} \text{.} \nonumber 
\end{equation}
\end{lemma}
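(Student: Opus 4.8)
The plan is to split the fluctuation of $\mathcal{L}$ around its double average into a \emph{thermal} and a \emph{disorder} part. Since $\langle \mathcal{L}-\langle\mathcal{L}\rangle\rangle=0$, one has the exact identity $\mathbb{E}\langle(\mathcal{L}-\mathbb{E}\langle\mathcal{L}\rangle)^2\rangle = V_{\mathrm{th}}+V_{\mathrm{dis}}$ with $V_{\mathrm{th}}:=\mathbb{E}\langle(\mathcal{L}-\langle\mathcal{L}\rangle)^2\rangle$ and $V_{\mathrm{dis}}:=\mathbb{E}[(\langle\mathcal{L}\rangle-\mathbb{E}\langle\mathcal{L}\rangle)^2]$, so it is enough to control $\mathbb{E}_{\lambda_0}[V_{\mathrm{th}}]$ and $\mathbb{E}_{\lambda_0}[V_{\mathrm{dis}}]$. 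For the thermal term I would use the second-derivative identity recorded just above the lemma: since $\mathcal{H}'=p\mathcal{L}$ and $\langle\|\beta-\langle\beta\rangle\|^2\rangle\le p$ on $S^{p-1}(\sqrt p)$, it gives $V_{\mathrm{th}}\le \tfrac1p \tfrac{\mathrm{d}^2\mathbb{E}F_p^{\mathrm{Pert}}}{\mathrm{d}\lambda_{0,p}^2}+\tfrac{1}{4p\lambda_{0,p}}$. As $\lambda_0\sim\operatorname{Unif}[\tfrac12,1]$ corresponds to $\lambda_{0,p}=\lambda_0\epsilon_p\in[\tfrac{\epsilon_p}{2},\epsilon_p]$, integrating in $\lambda_{0,p}$ makes the second-derivative term telescope to $\tfrac1{p\epsilon_p}\big(\tfrac{\mathrm{d}\mathbb{E}F_p^{\mathrm{Pert}}}{\mathrm{d}\lambda_{0,p}}\big|_{\epsilon_p}-\tfrac{\mathrm{d}\mathbb{E}F_p^{\mathrm{Pert}}}{\mathrm{d}\lambda_{0,p}}\big|_{\epsilon_p/2}\big)$, which is $O(\tfrac1{p\epsilon_p})$ because $\tfrac{\mathrm{d}\mathbb{E}F_p^{\mathrm{Pert}}}{\mathrm{d}\lambda_{0,p}}=-\tfrac12\mathbb{E}\langle R_{1,2}\rangle\in[-\tfrac12,\tfrac12]$, while $\tfrac1{4p\lambda_{0,p}}$ integrates to $O(\tfrac1{p\epsilon_p})$; hence $\mathbb{E}_{\lambda_0}[V_{\mathrm{th}}]=O(\tfrac1{p\epsilon_p})$, which is dominated by the claimed RHS.

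The disorder term is the main point. Writing $\langle\mathcal{L}\rangle=-\tfrac{\mathrm{d}}{\mathrm{d}\lambda_{0,p}}F_p^{\mathrm{Pert}}$, the quantity $\langle\mathcal{L}\rangle-\mathbb{E}\langle\mathcal{L}\rangle$ is the deviation of $\tfrac{\mathrm{d}}{\mathrm{d}\lambda_{0,p}}F_p^{\mathrm{Pert}}$ from $\tfrac{\mathrm{d}}{\mathrm{d}\lambda_{0,p}}\mathbb{E}F_p^{\mathrm{Pert}}$, so I would invoke Lemma \ref{lemma_bound_for_convex_functions}. Neither function is convex in $\lambda_{0,p}$, but on the event $\{\|Z\|\le 2\sqrt p\}$ and for $\lambda_{0,p}$ in a slightly enlarged window around $[\tfrac{\epsilon_p}{2},\epsilon_p]$ one has $\tfrac{\mathrm{d}^2}{\mathrm{d}\lambda_{0,p}^2}F_p^{\mathrm{Pert}}\ge -\tfrac{|\langle Z^\top\beta\rangle|}{4p\lambda_{0,p}^{3/2}}\ge -C\epsilon_p^{-3/2}$ and $\tfrac{\mathrm{d}^2}{\mathrm{d}\lambda_{0,p}^2}\mathbb{E}F_p^{\mathrm{Pert}}\ge -\tfrac{1}{4\lambda_{0,p}}\ge -C\epsilon_p^{-1}$; adding a common deterministic quadratic $q(\lambda)=\tfrac{C}{2}\epsilon_p^{-3/2}\lambda^2$ convexifies both, leaving $G-g=F_p^{\mathrm{Pert}}-\mathbb{E}F_p^{\mathrm{Pert}}$ and $G'-g'=-(\langle\mathcal{L}\rangle-\mathbb{E}\langle\mathcal{L}\rangle)$. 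Lemma \ref{lemma_bound_for_convex_functions} then bounds $|\langle\mathcal{L}\rangle-\mathbb{E}\langle\mathcal{L}\rangle|$ by $\delta^{-1}\sum_{u}|F_p^{\mathrm{Pert}}(u)-\mathbb{E}F_p^{\mathrm{Pert}}(u)|$ (three values $u\in\{\lambda_{0,p},\lambda_{0,p}\pm\delta\}$) plus $C_\delta^{+}+C_\delta^{-}$; squaring, taking $\mathbb{E}$, and averaging over $\lambda_0$, the first group contributes $O(v_p/(\delta^2 p))$ by the definition of $v_p$, while $C_\delta^{\pm}=g'(\cdot\pm\delta)-g'(\cdot)\ge0$ with $g'$ ranging over an interval of length $O(\epsilon_p^{-1/2})$ (the $q'$ part), so a telescoping estimate gives $\int_{\epsilon_p/2}^{\epsilon_p}(C_\delta^{\pm})^2\,\mathrm{d}\lambda_{0,p}=O(\delta/\epsilon_p)$, i.e.\ $\mathbb{E}_{\lambda_0}[(C_\delta^{\pm})^2]=O(\delta/\epsilon_p^2)$. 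Adding the negligible contribution of $\{\|Z\|>2\sqrt p\}$ (controlled by Cauchy--Schwarz, $\mathbb{E}\langle\mathcal{L}\rangle^4=O(\epsilon_p^{-2})$ and $\mathbb{P}(\|Z\|>2\sqrt p)\le e^{-cp}$) yields $\mathbb{E}_{\lambda_0}[V_{\mathrm{dis}}]\le C\big(\tfrac{v_p}{\delta^2 p}+\tfrac{\delta}{\epsilon_p^2}\big)+o(\cdot)$; choosing $\delta=(v_p\epsilon_p^2/p)^{1/3}$, which lies in $(0,\epsilon_p/4)$ for large $p$ since $v_p/(p\epsilon_p)\to0$, optimizes this to $O\big(\epsilon_p^{-1}(v_p/(p\epsilon_p))^{1/3}\big)$.

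Combining the two estimates and using $\tfrac1{p\epsilon_p}\le\tfrac1{\epsilon_p}(1/p)^{1/3}$ together with $v_p<C_2$ gives the stated bound. I expect the hardest part to be the convexification step: one must verify the quantitative lower bounds on the second derivatives carefully --- the correction size $\epsilon_p^{-3/2}$ (rather than $\epsilon_p^{-1}$) is precisely what produces the $\delta/\epsilon_p^2$ term and hence the $1/3$ exponent --- and then propagate every $\epsilon_p$-dependence cleanly through the $\lambda_0$-integration and the $\delta$-optimization, including the minor point that $F_p^{\mathrm{Pert}}$ is evaluated at $\lambda_{0,p}\pm\delta$ slightly outside $[\tfrac{\epsilon_p}{2},\epsilon_p]$, so the free-energy concentration bound underlying $v_p$ must hold on a marginally larger interval (which it does, by the same argument that establishes $v_p<C_2$).
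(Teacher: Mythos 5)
Your proposal is essentially correct and follows the same architecture as the paper's proof: the thermal/disorder decomposition of $\mathbb{E}\langle(\mathcal{L}-\mathbb{E}\langle\mathcal{L}\rangle)^2\rangle$, bounding the thermal part by telescoping the second-derivative identity over $\lambda_{0,p}\in[\epsilon_p/2,\epsilon_p]$, and bounding the disorder part via Lemma~\ref{lemma_bound_for_convex_functions} after a convexification step, followed by the $\delta$-optimization giving the $1/3$ exponent. The one genuine difference is in the convexifier. The paper works with $\widetilde F(\lambda_{0,p}):=F_p^{\text{Pert}}(\lambda_{0,p})-\sqrt{\lambda_{0,p}/p}\,\lVert Z\rVert$ and $\mathbb{E}\widetilde F$; because $|\langle\beta\rangle^\top Z|\le\sqrt p\,\lVert Z\rVert$, the added term makes $\widetilde F''\ge 0$ deterministically (for every realization of $Z$), so no conditioning on a high-probability event is needed. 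You instead add a deterministic quadratic $q(\lambda)=\tfrac{C}{2}\epsilon_p^{-3/2}\lambda^2$, which convexifies only on $\{\lVert Z\rVert\le 2\sqrt p\}$ and therefore forces you to treat the complementary tail separately (which you do correctly, by Cauchy--Schwarz and a fourth-moment bound on $\langle\mathcal{L}\rangle$). Your version is slightly more cumbersome but produces the same rate: $g'$ sweeps an interval of width $O(\epsilon_p^{-1/2})$, so $\mathbb{E}_{\lambda_0}[(C_\delta^\pm)^2]=O(\delta/\epsilon_p^2)$, and optimizing against $O(v_p/(\delta^2 p))$ with $\delta\asymp(v_p\epsilon_p^2/p)^{1/3}$ recovers $\tfrac{1}{\epsilon_p}(v_p/(p\epsilon_p))^{1/3}$, with the thermal $O(1/(p\epsilon_p))$ dominated by $\tfrac{1}{\epsilon_p}(1/p)^{1/3}$. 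Your observation that the free-energy variance bound must hold on the slightly enlarged window $[\epsilon_p/2-\delta,\epsilon_p+\delta]$ applies to the paper's version too, and is harmless for the same reason you give.
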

\begin{proof}
    To prove this lemma, we consider the following decomposition and split our proof into controlling these two terms respectively,
    \begin{equation}
        \mathbb{E}\left\langle(\mathcal{L}-\mathbb{E}\langle\mathcal{L}\rangle)^{2}\right\rangle=\mathbb{E}\left\langle(\mathcal{L}-\langle\mathcal{L}\rangle)^{2}\right\rangle+\mathbb{E}(\langle\mathcal{L}\rangle-\mathbb{E}\langle\mathcal{L}\rangle)^{2}\text{.} \nonumber 
    \end{equation}
    We start with the first term. Observe that 
    \begin{align*}
       & \int_{\epsilon_p /2}^{\epsilon_p}  \mathbb{E}\left \langle ( \mathcal{H}' - \langle \mathcal{H}' \rangle)^2 \right \rangle \mathrm{d}\lambda_{0,p} \\
        &\le \int_{\epsilon_p /2}^{\epsilon_p} \left ( p \frac{\mathrm{\mathrm{d}}^{2} \mathbb{E} F_{p}^{\text{Pert }}}{\mathrm{d} \lambda_{0, p}^{2}} + \frac{p}{\lambda_{0,p}}     \right )\mathrm{d}\lambda_{0,p} = p \left ( \left. \frac{\mathrm{d} \mathbb{E} F_{p}^{\text{Pert }}}{\mathrm{d} \lambda_{0, p}}\right|_{\lambda_{0, p}=\varepsilon_{p} / 2} ^{\lambda_{0, p}=\varepsilon_{p}} \right ) + p\ln 2 \text{.} \nonumber 
    \end{align*}
    Since
    \begin{equation}
        \left | \frac{\mathrm{d} \mathbb{E} F_{p}^{\text{Pert }}}{\mathrm{d}\lambda_{0, p}} \right | = \frac{1}{p} | \mathbb{E}\left\langle\mathcal{H}^{\prime}\right\rangle | = \left | \frac{1}{2} \mathbb{E} \left \langle R_{1,2} \right \rangle \right | \le \frac{1}{2} \text{,} \nonumber 
    \end{equation}
    For any $\lambda_{0,p}$, we naturally have a trivial bound of $\left.\frac{\mathrm{d} \mathbb{E} F_{p}^{\text{Pert }}}{\mathrm{d} \lambda_{0, p}}\right|_{\lambda_{0, p}=\varepsilon_{p} / 2} ^{\lambda_{0, p}=\varepsilon_{p}}$, i.e.,
    \begin{equation}
        \left.\frac{\mathrm{d} \mathbb{E} F_{p}^{\text{Pert }}}{\mathrm{d} \lambda_{0, p}}\right|_{\lambda_{0, p}=\varepsilon_{p} / 2} ^{\lambda_{0, p}=\varepsilon_{p}} \le 1 \text{.} \nonumber 
    \end{equation}
    Recalling that $\mathcal{L} = H' / p =\mathcal{H}' /p$,
    \begin{equation}
        \mathbb{E}_{\lambda_0}\mathbb{E}\left\langle(\mathcal{L}-\mathbb{E}\langle\mathcal{L}\rangle)^{2}\right\rangle = \frac{1}{p^2} \mathbb{E}_{\lambda_0} \mathbb{E}\left\langle(\mathcal{H'}-\mathbb{E}\langle\mathcal{H'}\rangle)^{2}\right\rangle \le 
        \frac{1 + \ln 2}{p }  \text{.} \nonumber 
    \end{equation}
    Now we proceed to control the second term using convexity arguments. 

    We introduce the following two convex functions of $\lambda_{0,p}$, 
    \begin{equation}
        \widetilde{F}\left(\lambda_{0, p}\right):=F_{p}^{\text{Pert }}\left(\lambda_{0, p}\right) -  \sqrt{\frac{\lambda_{0, p}}{p}} \lVert Z \rVert \text{,} \nonumber 
    \end{equation}
    and
    \begin{equation}
        \mathbb{E} \widetilde{F}\left(\lambda_{0, p}\right):=   \mathbb{E} F_{p}^{\text{Pert }}\left(\lambda_{0, p}\right)- \sqrt{\frac{\lambda_{0, p}}{p}} \mathbb{E}\lVert Z \rVert \text{.} \nonumber 
    \end{equation}
    Letting $A_{p}:=p^{-1/2} \left ( \lVert Z \rVert - \mathbb{E} \lVert Z \rVert \right )$, we have 
    $$
    \quad \widetilde{F}\left(\lambda_{0, p}\right)-\mathbb{E} \widetilde{F}\left(\lambda_{0, p}\right)=\left(F_{p}^{\text{Pert }}\left(\lambda_{0, p}\right)-\mathbb{E} F_{p}^{\text{Pert }}\left(\lambda_{0, p}\right)\right)-\sqrt{\lambda_{0, p}} A_{p}
    \text{,}
    $$
    as well as
    $$
    \widetilde{F}^{\prime}\left(\lambda_{0, p}\right)-\mathbb{E} \widetilde{F}^{\prime}\left(\lambda_{0, p}\right)=\langle\mathcal{L}\rangle-\mathbb{E}\langle\mathcal{L}\rangle-\frac{A_{p}}{2 \sqrt{\lambda_{0, p}}}
    \text{.}
    $$
    Thus Lemma \ref{lemma_bound_for_convex_functions} implies
    $$
    |\langle\mathcal{L}\rangle-\mathbb{E}\langle\mathcal{L}\rangle| 
    \le
    \delta^{-1} \sum_{u \in \mathcal{U}}\left(\left|F_{p}^{\text{Pert }}(u)-\mathbb{E} F_{p}^{\text{Pert }}(u)\right|+\left|A_{p}\right| \sqrt{u}\right)+C_{\delta}^{+}\left(\lambda_{0, p}\right)+C_{\delta}^{-}\left(\lambda_{0, p}\right)+\frac{\left|A_{p}\right|}{2 \sqrt{\lambda_{0, p}}} \text{,}
    $$
    where $\mathcal{U}:=\left\{\lambda_{0, p}-\delta, \lambda_{0, p}, \lambda_{0, p}+\delta\right\}$ and
    $$
    C_{\delta}^{-}\left(\lambda_{0, p}\right):=\mathbb{E} \widetilde{F}^{\prime}\left(\lambda_{0, p}\right)-\mathbb{E} \widetilde{F}^{\prime}\left(\lambda_{0, p}-\delta\right) \ge 0, \quad C_{\delta}^{+}\left(\lambda_{0, p}\right):=\mathbb{E} \widetilde{F}^{\prime}\left(\lambda_{0, p}+\delta\right)-\mathbb{E} \widetilde{F}^{\prime}\left(\lambda_{0, p}\right) \ge 0 \text{.}
    $$
    Note that by using \cite[Theorem 3.1.1]{vershynin2018high}, there exists a positive constant $C_Z$ such that $\mathbb{E}\left [ \left ( \lVert Z \rVert - \mathbb{E} \lVert Z \rVert \right )^2 \right ] \le C_Z$ independently of $p$, which ensures $\mathbb{E} \left ( A_p^2 \right ) \le C_Z/ p = O(1/p)$. Thus, by the definition of $v_p$ in the statement of Theorem \ref{overlap_concentration},
    \begin{equation}
        \begin{aligned}
            \mathbb{E}(\langle\mathcal{L}\rangle-\mathbb{E}\langle\mathcal{L}\rangle)^2 
            \le 9 & \mathbb{E} \Bigg \{   \delta^{-2} \sum_{u \in \mathcal{U}}\left [ \left|F_{p}^{\text{Pert}}(u)-\mathbb{E} F_{p}^{\text {pert }}(u)\right|^2+(\left|A_{p}\right| \sqrt{u})^2 \right ] \\
            & + C_{\delta}^{+}\left(\lambda_{0, p}\right)^2 + C_{\delta}^{-}\left(\lambda_{0, p}\right)^2 +  \left ( \frac{\left|A_{p}\right|}{2 \sqrt{\lambda_{0, p}}} \right )^2 \Bigg \} \\
            \le & 9  \left \{ \frac{3}{p \delta^2}[v_p + (\lambda_{0,p} + \delta) C_Z ] +  C_{\delta}^{+}\left(\lambda_{0, p}\right)^2 + C_{\delta}^{-}\left(\lambda_{0, p}\right)^2 + \frac{C_Z}{4 p \lambda_{0,p}}\right \} \text{.}
        \end{aligned} \nonumber 
    \end{equation}
    Note that 
    \begin{equation}
        | \mathbb{E} \widetilde{F}^{\prime}\left(\lambda_{0, p}\right) | = \left |  \frac{1}{2} \mathbb{E} R_{1,2}    + \frac{\mathbb{E} \lVert Z \rVert }{2\sqrt{\lambda_{0,p} p}} \right | 
        \le \frac{1}{2} \left (1 + \frac{1}{\sqrt{\lambda_{0,p}}} \right ) \text{,} \nonumber 
    \end{equation}
    and thus
    \begin{equation}
        \left|C_{\delta}^{\pm}\left(\lambda_{0, p}\right)\right| \le 1+\frac{1}{\sqrt{\varepsilon_{p} / 2-\delta}} \text{,} \nonumber 
    \end{equation}
    for $\delta < \epsilon_p / 2$, which further gives us
    \begin{equation}
        \begin{array}{r}
        \int_{\epsilon_{p} / 2}^{\epsilon_{p}} d \lambda_{0, p}\left\{C_{\delta}^{+}\left(\lambda_{0, p}\right)^{2}+C_{\delta}^{-}\left(\lambda_{0, p}\right)^{2}\right\} 
        \le
        \left(1+\frac{1}{\sqrt{\epsilon_{p} / 2-\delta}}\right) \int_{\epsilon_{p} / 2}^{\epsilon_{p}} d \lambda_{0, p}\left\{C_{\delta}^{+}\left(\lambda_{0, p}\right)+C_{\delta}^{-}\left(\lambda_{0, p}\right)\right\} \\
        =\left(1+\frac{1}{\sqrt{\epsilon_{p} / 2-\delta}}\right)\left[\left(\mathbb{E} \widetilde{F}\left(\epsilon_{p} / 2+\delta\right)-\mathbb{E} \widetilde{F}\left(\epsilon_{p} / 2-\delta\right)\right)+\left(\mathbb{E} \widetilde{F}\left(\varepsilon_{p}-\delta\right)-\mathbb{E} \widetilde{F}\left(\varepsilon_{p}+\delta\right)\right)\right] \text{.}
        \end{array} \nonumber 
    \end{equation}
    By the mean value theorem,
    \begin{equation}
        |\mathbb{E} \widetilde{F}\left(\epsilon_{p} / 2+\delta\right)-\mathbb{E} \widetilde{F}\left(\epsilon_{p} / 2-\delta\right)| \le 2 \delta \max_{x \in [\epsilon_{p} / 2 - \delta, \epsilon_{p} / 2+\delta]}{\mathbb{E} \widetilde{F}'(x)} \le \delta \left (1 + \frac{1}{\sqrt{\epsilon_p / 2 - \delta}}  \right ). \nonumber 
    \end{equation}
    Similarly,
    \begin{equation}
        |\mathbb{E} \widetilde{F}\left(\epsilon_{p}+\delta\right)-\mathbb{E} \widetilde{F}\left(\epsilon_{p} -\delta\right)| \le 2 \delta \max_{x \in [\epsilon_{p}  - \delta, \epsilon_{p}+\delta]}{\mathbb{E} \widetilde{F}'(x)} \le \delta \left (1 + \frac{1}{\sqrt{\epsilon_p  - \delta}}  \right ) \le \delta \left (1 + \frac{1}{\sqrt{\epsilon_p / 2 - \delta}}  \right ). \nonumber 
    \end{equation}
    Therefore
    \begin{equation}
        \int_{\epsilon_{p} / 2}^{\epsilon_{p}} \mathrm{d} \lambda_{0, p}\left\{C_{\delta}^{+}\left(\lambda_{0, p}\right)^{2}+C_{\delta}^{-}\left(\lambda_{0, p}\right)^{2}\right\} \le 2 \delta \left (1 + \frac{1}{\sqrt{\epsilon_p / 2 - \delta}}  \right )^2 \le 4\delta \frac{\epsilon_p /2 - \delta + 1 }{\epsilon_p / 2 - \delta} \le \frac{8 \delta}{\epsilon_p /2 - \delta} \nonumber 
    \end{equation}
    where we set $\delta = \delta_p $ so that ${\delta_p} / {\epsilon_p} \to 0$ and ${\delta_p} / {\epsilon_p} < 1/4 $ for any $p$.
    Putting them all together, we have
    \begin{equation}
        \begin{aligned}
        \int_{\epsilon_{p} / 2}^{\epsilon_{p}} \mathrm{d} \lambda_{0, p} \mathbb{E}(\langle\mathcal{L}\rangle-\mathbb{E}\langle\mathcal{L}\rangle)^2 
        & \le \frac{27 \epsilon_p}{2p\delta_p^2 }(v_p + 2 \epsilon_p C_Z)  + 9 \int_{\epsilon_p/2}^{\epsilon_p} \frac{C_Z}{4p\lambda_{0,p}} \mathrm{d} \lambda_{0,p} + \frac{288 \delta_p}{\epsilon_p} \\
        & = \frac{27 \epsilon_p}{2p\delta_p^2 }(v_p + 2 \epsilon_p C_Z) + \frac{288 \delta_p}{\epsilon_p} + \frac{9 C_Z \ln 2}{4p}  \text{.}
        \end{aligned} \nonumber 
    \end{equation}

    Finally, we choose $\delta_p = \left \{ \left [ 3\epsilon_p^2 (v_p + 2 \epsilon_p C_Z) \right  ] / (64 p) \right \}^{1/3}$ to optimize the bound, which together with the previous bound on $\mathbb{E}_{\lambda_0}\mathbb{E} \left\langle(\mathcal{L}-\mathbb{E}\langle\mathcal{L}\rangle)^{2}\right\rangle$ would result in
    \begin{equation}
        \begin{aligned}
            \mathbb{E}_{\lambda_0}\mathbb{E} \left \langle (\mathcal{L} - \mathbb{E}\langle \mathcal{L} \rangle)^2 \right \rangle  
            & \le \frac{576}{\epsilon_p} \left ( \frac{3 \epsilon_p^2 ( v_p + 2 \epsilon_p C_Z)}{64p}\right )^{1/3} + \frac{9 C_Z \ln 2}{4p} +  \frac{1 + \ln 2}{p } \\
            & \le C \left (\frac{v_p}{p\epsilon_p} + \frac{1}{p} \right )^{1/3} 
            \text{,}
        \end{aligned} \nonumber 
    \end{equation}
    for some positive constant $C$.
\end{proof}
Combining Lemma \ref{control_fluctuations_of_R_12_by_that_of_L} and Lemma \ref{fluctuations_of_L} gives us the following lemma, which is essentially the first half of Theorem \ref{overlap_concentration}.
\begin{lemma}[Overlap concentration: Fluctuations of $R_{1,2}$]
\label{overlap_concentration_R_12_fluctuations}
    Assuming $\lambda_0 \sim \operatorname{Unif}(\frac{1}{2},1)$, there exist a positive constant $C$ such that
    \begin{equation}
        \mathbb{E}_{\lambda_0} \mathbb{E} \langle (R_{1,2} - \mathbb{E}\langle R_{1,2} \rangle )^2 \rangle \le C \Big(\frac{v_p}{p\epsilon_p} + \frac{1}{p} \Big)^{1/3} \text{,} \nonumber 
    \end{equation}
    where $v_p \coloneqq p \sup_{\lambda_0 \in [\frac{1}{2},1]} \{ \mathbb{E} (F_p^{\text{Pert}}(\lambda_0) -  \mathbb{E}F_p^{\text{Pert}}(\lambda_0))^2  \}$.
\end{lemma}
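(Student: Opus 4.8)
The plan is to obtain the bound by composing the two lemmas that immediately precede it, since the statement is essentially their conjunction together with the a priori control $v_p<C_2$.

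First I would invoke Lemma~\ref{control_fluctuations_of_R_12_by_that_of_L}, which, for each fixed $\lambda_0$, bounds the posterior fluctuation of the overlap by that of the rescaled perturbation derivative $\mathcal{L}=H'/p$:
\[
 \mathbb{E}\big\langle (R_{1,2}-\mathbb{E}\langle R_{1,2}\rangle)^2\big\rangle\ \le\ 4\,\mathbb{E}\big\langle (\mathcal{L}-\mathbb{E}\langle\mathcal{L}\rangle)^2\big\rangle .
\]
This inequality holds pointwise in $\lambda_0$, so integrating both sides against $\lambda_0\sim\operatorname{Unif}[\tfrac12,1]$ yields
\[
 \mathbb{E}_{\lambda_0}\mathbb{E}\big\langle (R_{1,2}-\mathbb{E}\langle R_{1,2}\rangle)^2\big\rangle\ \le\ 4\,\mathbb{E}_{\lambda_0}\mathbb{E}\big\langle (\mathcal{L}-\mathbb{E}\langle\mathcal{L}\rangle)^2\big\rangle .
\]

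Next I would apply Lemma~\ref{fluctuations_of_L} to the right-hand side. Its hypothesis $v_p/(p\epsilon_p)\to 0$ holds here because $v_p<C_2$ uniformly in $p$ (as asserted in Theorem~\ref{overlap_concentration}), while $p\epsilon_p\to+\infty$ by the choice of the perturbation strength. With the optimal bandwidth $\delta_p=\{3\epsilon_p^2(v_p+2\epsilon_pC_Z)/(64p)\}^{1/3}$, the proof of Lemma~\ref{fluctuations_of_L} in fact delivers
\[
 \mathbb{E}_{\lambda_0}\mathbb{E}\big\langle (\mathcal{L}-\mathbb{E}\langle\mathcal{L}\rangle)^2\big\rangle\ \le\ C\Big(\tfrac{v_p}{p\epsilon_p}+\tfrac1p\Big)^{1/3},
\]
because the prefactor $\epsilon_p^{-1}$ appearing there cancels against the $\epsilon_p^{2/3}$ and $\epsilon_p$ produced by $(\epsilon_p^2 v_p/p)^{1/3}$ and $(\epsilon_p^3/p)^{1/3}$ respectively. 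Combining the last two displays and absorbing the factor $4$ into $C$ finishes the proof.

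I do not anticipate a genuine obstacle: all the real work---the Nishimori identities and Gaussian integration by parts behind Lemma~\ref{control_fluctuations_of_R_12_by_that_of_L}, and the convexity estimate of Lemma~\ref{lemma_bound_for_convex_functions} behind Lemma~\ref{fluctuations_of_L}---has already been carried out. The single point requiring a line of care is the hypothesis check for Lemma~\ref{fluctuations_of_L}; if one wished the statement to remain meaningful even when $v_p/(p\epsilon_p)$ fails to vanish, one simply notes that then the right-hand side is bounded away from $0$ while the left-hand side is at most $4$ (since $R_{1,2}\in[-1,1]$), so the inequality holds trivially after enlarging $C$.
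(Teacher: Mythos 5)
Your proof is correct and takes exactly the route the paper intends: the paper's "proof" of this lemma is the single remark that it follows by combining Lemma~\ref{control_fluctuations_of_R_12_by_that_of_L} with Lemma~\ref{fluctuations_of_L}, which is precisely what you do. You have also correctly noticed the one subtlety—that the stated bound in Lemma~\ref{fluctuations_of_L} carries a spurious $\epsilon_p^{-1}$ prefactor, whereas the displayed arithmetic in its proof already absorbs that factor into the $(\epsilon_p^2/p)^{1/3}$ and $(\epsilon_p^3/p)^{1/3}$ terms to yield $C\bigl(\tfrac{v_p}{p\epsilon_p}+\tfrac1p\bigr)^{1/3}$ with no $\epsilon_p^{-1}$—and invoked the stronger, proved form rather than the stated one, which is exactly what is needed for the present statement.
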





\subsection{Concentration of the free energy with respect to the Gaussian quenched variables}

\begin{lemma}
    \label{Concentration of the free energy with respect to the Gaussian quenched variables}
    There exist two positive constants $C_1$ and $C_2$ (depending only on $\Delta, \alpha$) such that for any $r \ge O(e^{- C_1 p^{1/2}})$, we have for any fixed $\beta_0$ and $\lambda_0 \in [1/2,1]$,
    \begin{equation}
        \mathbb{P}(\frac{1}{p}|F^{\text{Pert}}_p - \mathbb{E}[F^{\text{Pert}}_p|\beta_0]  | \ge r | \beta_0)  \le e^{-C_2 r^2 p^{1/2}} \text{.} \nonumber 
    \end{equation}
\end{lemma}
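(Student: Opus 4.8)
The plan is to fix $\beta_0\in S^{p-1}(\sqrt{p})$ and $\lambda_0\in[1/2,1]$, view $F_p^{\text{Pert}}$ as a function of the Gaussian quenched data $(X,\varepsilon,Z)$ (with $y=X\beta_0+\varepsilon$, $\varepsilon\sim N(0,\Delta I_n)$, $Z\sim N(0,I_p)$), and apply the almost-Lipschitz Gaussian concentration bound of Lemma \ref{concentration_of_almost_Lipshitz_functions}. After the rescaling $X=G/\sqrt{n}$ with $G$ having i.i.d.\ $N(0,1)$ entries and $\varepsilon=\sqrt{\Delta}\,U$ with $U\sim N(0,I_n)$, $F_p^{\text{Pert}}$ is a smooth function of the $N:=np+n+p=\Theta(p^2)$ standard Gaussian coordinates $(G,U,Z)$, and the expectation over these coordinates is exactly $\mathbb{E}[F_p^{\text{Pert}}\mid\beta_0]$. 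To invoke Lemma \ref{concentration_of_almost_Lipshitz_functions} I need a convex ``good'' set $\mathcal{G}$ on which this function is $K_p$-Lipschitz with $K_p$ small, a bound on $\mathbb{P}(\mathcal{G}^c\mid\beta_0)$, and the integrability inputs $|F_p^{\text{Pert}}|_{(G,U,Z)=0}|=O(1)$ and $\mathbb{E}[(F_p^{\text{Pert}})^2\mid\beta_0]=O(1)$; the latter two follow from $0\le\mathcal{H}\le\mathcal{H}^{\text{Pert}}+|\mathcal{H}^{\text{Gauss}}|$, the bound $\sup_\beta|\mathcal{H}^{\text{Gauss}}(\beta)|=O(\sqrt{\epsilon_p}\,p+\|Z\|\sqrt{p})$, and Jensen's inequality applied to the prior ($\mathbb{E}_{\pi_0}\beta=0$), together with the finiteness of low-order moments of $\sigma_{\max}(X)$, $\|\varepsilon\|$ and $\|Z\|$.

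The heart of the proof is the choice of $\mathcal{G}$ and the Lipschitz estimate. I would take $\mathcal{G}=\{\sigma_{\max}(X)\le 1+\alpha^{-1/2}+t_p/\sqrt{n}\}\cap\{\|\varepsilon\|\le 2\sqrt{n\Delta}\}\cap\{\|Z\|\le 2\sqrt{p}\}$ with $t_p\asymp p^{1/4}$; by Lemma \ref{large_deviation_characterization_for_the_largest_eigenvalue_of_Weshart_matrices} and standard $\chi^2$ tails, $\mathbb{P}(\mathcal{G}^c\mid\beta_0)\le 2e^{-t_p^2/2}+e^{-cn}\le 3e^{-c'\sqrt{p}}$, while the spectral-norm bound stays $O(1)$; and $\mathcal{G}$ is convex since $\sigma_{\max}(\cdot)$, $\|\cdot\|$ are norms. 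On $\mathcal{G}$, writing $\mathcal{H}^{\text{Pert}}(\beta)=\frac{1}{2\Delta}\|X(\beta_0-\beta)+\varepsilon\|^2+\mathcal{H}^{\text{Gauss}}(\beta)$ and using $\nabla F_p^{\text{Pert}}=-\tfrac1p\langle\nabla\mathcal{H}^{\text{Pert}}\rangle$ with $\|\nabla F_p^{\text{Pert}}\|\le p^{-1}\sup_{\beta\in S^{p-1}(\sqrt{p})}\|\nabla\mathcal{H}^{\text{Pert}}(\beta)\|$, one computes
\[
\|\nabla_X\mathcal{H}^{\text{Pert}}(\beta)\|_F=\tfrac1\Delta\|X(\beta_0-\beta)+\varepsilon\|_2\,\|\beta_0-\beta\|_2\le\tfrac{2\sqrt{p}}{\Delta}\big(2\sqrt{p}\,\sigma_{\max}(X)+\|\varepsilon\|\big)=O(p),
\]
and $\|\nabla_\varepsilon\mathcal{H}^{\text{Pert}}(\beta)\|=\tfrac1\Delta\|X(\beta_0-\beta)+\varepsilon\|=O(\sqrt{p})$, $\|\nabla_Z\mathcal{H}^{\text{Pert}}(\beta)\|=\sqrt{\lambda_0\epsilon_p}\,\|\beta\|=O(\sqrt{p})$ on $\mathcal{G}$. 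Dividing by $p$ and rescaling ($\nabla_G=\tfrac1{\sqrt{n}}\nabla_X$, $\nabla_U=\sqrt{\Delta}\,\nabla_\varepsilon$) gives $\|\nabla_{(G,U,Z)}F_p^{\text{Pert}}\|\le Cp^{-1/2}$ on $\mathcal{G}$ with $C=C(\Delta,\alpha)$, so by convexity $F_p^{\text{Pert}}$ is $K_p$-Lipschitz on $\mathcal{G}$ with $K_p\le Cp^{-1/2}$.

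Putting these into Lemma \ref{concentration_of_almost_Lipshitz_functions} (applied at fixed $\beta_0$ and $\lambda_0$), for every $r\ge 6(C'+\sqrt{NK_p})\sqrt{\mathbb{P}(\mathcal{G}^c\mid\beta_0)}$ one gets
\[
\mathbb{P}\big(|F_p^{\text{Pert}}-\mathbb{E}[F_p^{\text{Pert}}\mid\beta_0]|\ge r\,\big|\,\beta_0\big)\le 2e^{-r^2/(16K_p^2)}+\mathbb{P}(\mathcal{G}^c\mid\beta_0)\le 2e^{-c_1 r^2 p}+3e^{-c'\sqrt{p}}.
\]
Since $N=\Theta(p^2)$, $K_p=O(p^{-1/2})$ and $\mathbb{P}(\mathcal{G}^c\mid\beta_0)=O(e^{-c'\sqrt{p}})$, the admissibility threshold is $6(C'+\sqrt{NK_p})\sqrt{\mathbb{P}(\mathcal{G}^c\mid\beta_0)}=O(p^{3/4}e^{-c'\sqrt{p}/2})=O(e^{-C_1 p^{1/2}})$, matching the stated range of $r$. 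Finally, using $p\ge p^{1/2}$ to write $e^{-c_1 r^2 p}\le e^{-c_1 r^2 p^{1/2}}$, and using that $F_p^{\text{Pert}}$ is bounded up to $o(1)$ with probability $1-e^{-cp}$ (so the only $r$ that matter are $O(1)$ and then $e^{-c'\sqrt{p}}\le e^{-c'' r^2\sqrt{p}}$), one absorbs the two terms into a single $e^{-C_2 r^2 p^{1/2}}$ after shrinking $C_2$, which is the claim.

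The main obstacle is the Lipschitz bound: $\nabla_X\mathcal{H}^{\text{Pert}}$ has Frobenius norm of order $p$ rather than $\sqrt{p}$, so one must carefully use \emph{both} the $1/p$ prefactor of $F_p^{\text{Pert}}=\tfrac1p\ln\mathcal{Z}_p^{\text{Pert}}$ and the $1/\sqrt{n}$ from $X=G/\sqrt{n}$ to reach $K_p=O(p^{-1/2})$, and one must ensure $\mathcal{G}$ is convex so the pointwise gradient bound is a genuine Lipschitz estimate usable in Lemma \ref{concentration_of_almost_Lipshitz_functions}. A secondary point is that the exponent $p^{1/2}$ and the cutoff $r\ge O(e^{-C_1 p^{1/2}})$ are not intrinsic but are forced by the truncation level $t_p\asymp p^{1/4}$, chosen so that $\mathbb{P}(\mathcal{G}^c)$ is exponentially small in $\sqrt{p}$ while the operator-norm bound entering $K_p$ stays $O(1)$; taking $t_p$ larger would improve the exponent at the cost of degrading $K_p$, so the stated form is the natural trade-off.
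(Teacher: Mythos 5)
Your proof is correct and follows the same overall strategy as the paper (reduce to the almost-Lipschitz Gaussian concentration estimate of Lemma \ref{concentration_of_almost_Lipshitz_functions} on a good set, bound $\mathbb{P}(\mathcal{G}^c)$, control the second moment, conclude), but the two arguments differ in a genuine and instructive way. The paper truncates the noise coordinate-wise, working on the $\ell^\infty$ box $\max_i|z_i|\le\sqrt{D_1}$, $\max_i|Z_i|\le\sqrt{D_1}$ with $D_1=p^{\gamma}$; this costs $\lVert z\rVert=O(p^{(1+\gamma)/2})$ in the Lipschitz estimate and yields $K_p=O(p^{-(1-\gamma)/2})$, so the exponent $p^{1-\gamma}$ from the concentration term and the exponent $p^{\gamma}$ from $\mathbb{P}(\mathcal{G}^c)$ must be balanced at $\gamma=1/2$, forcing $p^{1/2}$. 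You instead truncate the Euclidean norms $\lVert\varepsilon\rVert$, $\lVert Z\rVert$ and the operator norm of $X$, and get the Lipschitz constant from a uniform gradient bound on a convex good set. That gives the sharper $K_p=O(p^{-1/2})$ directly, with no degradation coming from the truncation level. One point where your write-up is slightly misleading: you say the threshold $t_p\asymp p^{1/4}$ is a ``natural trade-off'' and that taking $t_p$ larger would degrade $K_p$. It would not: for any $t_p=c\sqrt{p}$ with $c$ small, $t_p/\sqrt{n}$ stays bounded, so $\sigma_{\max}(X)=O(1)$ on $\mathcal{G}$, $K_p$ remains $O(p^{-1/2})$, and $\mathbb{P}(\mathcal{G}^c)=O(e^{-cp})$; this would prove the stronger bound $e^{-Cr^2p}$. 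In other words, in your setup the $p^{1/2}$ exponent is not forced by any trade-off---it is the artifact of your deliberately small choice of $t_p$---whereas in the paper's coordinate-truncated setup the $p^{1/2}$ is genuinely the optimal balance. Either way the lemma as stated is established, and your route buys a cleaner Lipschitz constant and would yield a strictly stronger concentration rate with the obvious modification.
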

\begin{proof}
    Consider
    \begin{equation}
        \mathcal{G} = \{X,z,Z: \max _{\mu}\left|z_{i}\right| \leq \sqrt{D_{1}}\ \cap \ \max _{i}\left|Z_{i}\right| \leq \sqrt{D_{1}}\ \cap \  \text { for all } \beta, \beta_0:\|X(\beta-\beta_0)\|^{2} \leq D_{2} p \} 
        \text{.} \nonumber 
    \end{equation}
    In order to use the known results regarding concentration of almost Lipschitz functions, we first try to control $\mathbb{P}(\mathcal{G}^c)$. Since if $U \sim N(0,1)$ then $\mathbb{P}(|U| \ge \sqrt{A}) \le 4 e^{-A/4}$, we have by union bound
    \begin{equation}
        \mathbb{P}(\max _{\mu}\left|z_{i}\right| \leq \sqrt{D_{1}}\ \cap \ \max _{i}\left|\widetilde{z}_{i}\right| \leq \sqrt{D_{1}}) \le 4 p (1+ \alpha ) e^{-D_1 /4} 
        \text{,} \nonumber 
    \end{equation}
    and
    \begin{equation}
        \mathbb{P}(\text{for all } \beta, \beta_0:\|X(\beta-\beta_0)\|^{2} \le D_{2} p ) = \mathbb{P} (\Lambda_1 (X^{\top} X) \le D_2 / 4)
        \text{.}  \nonumber 
    \end{equation}
    Thus
    \begin{equation}
        \mathbb{P}(\mathcal{G}^c) \le  4 p (1+ \alpha ) e^{-D_1 /4} + \left (  1-  \mathbb{P} \left (\sigma_{\text{max}}(X)^2 \le \frac{D_2}{4} \right ) \right )
        \text{,} \nonumber 
    \end{equation}
    which will be made small with a suitable choices of $D_1$ and $D_2$.
    Now we proceed to show, for any fixed $\beta_0$, free energy of the perturbed model $F^{\text{Pert}}_p(X,z,Z,\beta_0)$ is Lipschitz on $\mathcal{G}$ with respect to $(X, z, Z)$, as a vector in $\mathbb{R}^{np+n+p}$. From now on, we treat $\beta_0$ as fixed and omit it in the arguments of $\mathcal{H}^{\text{Pert}}$ and $F^{\text{Pert}}$. Recall that, as defined in (\ref{eq:perturbed_free_energy}),
    \begin{equation}
        \mathcal{H}^{\text{Pert}}(X, z, Z) = \frac{1}{2\Delta} \lVert y - X\beta \rVert^2 - \lambda_0 \epsilon_p \beta_0^{\top} \beta - \sqrt{\lambda_0 \epsilon_p} Z^{\top} \beta + \frac{1}{2} \lambda \epsilon_p \lVert \beta \rVert^2 \text{.} \nonumber 
    \end{equation}
    Then
    \begin{equation}
        \begin{aligned}
            | \mathcal{H}^{\text{Pert}}(X, z, Z) - \mathcal{H}^{\text{Pert}}(X', z', Z') | \le \frac{1}{2\Delta} \left | \lVert X(\beta_0 - \beta) + z \rVert^2 - \lVert X'(\beta_0 - \beta) + z' \rVert^2 \right | \\
             + \sqrt{\lambda \epsilon_p} \left | Z^{\top} \beta - (Z')^{\top} \beta \right | \text{,}
        \end{aligned} \nonumber 
    \end{equation}
    in which 
    \begin{equation}
        \begin{aligned}
            &\left | \lVert X(\beta_0 - \beta) + z \rVert^2 - \lVert X'(\beta_0 - \beta) + z' \rVert^2 \right | \\
            =& \left | \left (\lVert X(\beta_0 - \beta)\rVert^2 + 2 z^{\top} X(\beta_0 - \beta) + \lVert z \rVert^2 \right )  - \left(\lVert X'(\beta_0 - \beta)\rVert^2 + 2 z'^{\top} X'(\beta_0 - \beta) + \lVert z' \rVert^2 \right )\right | \\
            \le &  | (\beta - \beta_0)^{\top} (X^{\top} X - X'^{\top} X') (\beta - \beta_0)  | + 2 |(z^{\top} X - z'^{\top} X') (\beta - \beta_0)| + |\lVert z \rVert^2 - \lVert z' \rVert^2| \\
            \le & \lVert \beta - \beta_0 \rVert_2^2 \left ( \lVert X^{\top}(X - X') \rVert + \lVert (X - X')^{\top} X' \rVert \right ) + (O(\sqrt{pD_1} ) + O(\sqrt{p D_2})) (\sqrt{p} \lVert X - X' \rVert_F + \lVert z - z' \rVert)\\
            \le &(O(\sqrt{pD_1} ) + O(\sqrt{p D_2})) (\sqrt{p} \lVert X - X' \rVert_F + \lVert z - z' \rVert)
            \text{,}
        \end{aligned} \nonumber
    \end{equation}
    and
    \begin{equation}
        \begin{aligned}
        \left | Z^{\top} \beta - (Z')^{\top} \beta \right | \le \sqrt{\lambda_0 \epsilon_p p}  \lVert Z - Z' \rVert \text{.}
        \end{aligned} \nonumber 
    \end{equation}
    Since $\lambda_0 \in [1/2,1]$ and $\lim_{p \to \infty} \epsilon_p = 0 $, we get, as long as both $D_1$ and $D_2$ are always lower bounded by a positive constant,  
    \begin{equation}
        | \mathcal{H}^{\text{Pert}}(X,z,Z) - \mathcal{H}^{\text{Pert}}(X',z',Z') | \le (O(\sqrt{pD_1} ) + O(\sqrt{p D_2})) (\sqrt{p} \lVert X - X' \rVert_F + \lVert z - z' \rVert + \lVert Z - Z' \rVert) \text{.} \nonumber 
    \end{equation}
    For the difference of free energies, we thus have
    \begin{equation}
        \begin{aligned}
        & F^{\text{Pert}}_p(X,z,Z) - F^{\text{Pert}}_p(X',z',Z')  \\
        &= \frac{1}{p} \ln \left ( \frac{\int \mathrm{d} \beta \pi_{0}(\beta) e^{ - \mathcal{H}^{\text{Pert}}_\lambda(y)}}{\int \mathrm{d} \beta \pi_{0}(\beta) e^{ - \mathcal{H}^{\text{Pert}}_\lambda(y')}} \right )\\
        & \le \frac{1}{p} \ln \left ( \frac{\int \mathrm{d} \beta \pi_{0}(\beta) e^{ - \mathcal{H}^{\text{Pert}}_\lambda(y') + |\mathcal{H}^{\text{Pert}}_\lambda(y) - \mathcal{H}^{\text{Pert}}_\lambda(y')|}}{\int \mathrm{d} \beta \pi_{0}(\beta) e^{ - \mathcal{H}^{\text{Pert}}_\lambda(y')}} \right ) \\
        & \le \frac{1}{p} (O(\sqrt{pD_1} ) + O(\sqrt{p D_2})) (\sqrt{p} \lVert X - X' \rVert_F + \lVert z - z' \rVert + \lVert Z - Z' \rVert) \text{.}
        \end{aligned} \nonumber 
    \end{equation}
    By symmetry, we can further conclude that 
    \begin{equation}
        \begin{aligned}
       & \left | F^{\text{Pert}}_p(X,z,Z) - F^{\text{Pert}}_p(X',z',Z') \right | \\
        & \le  (O(p^{-1/2}\sqrt{D_1} ) + O(p^{-1/2}\sqrt{D_2})) (\sqrt{p} \lVert X - X' \rVert_F + \lVert z - z' \rVert + \lVert Z - Z' \rVert)\\
        & \le (O(p^{-1/2} \sqrt{D_1} ) + O(p^{-1/2} \sqrt{D_2})) \lVert (\sqrt{p}X,z,Z) - (\sqrt{p}X',z',Z') \rVert \text{,}
        \end{aligned} \nonumber 
    \end{equation}
    where with a slight abuse of notation both $(\sqrt{p}X,z,Z)$ and $(\sqrt{p}X',z',Z')$ are treated as vectors in $\mathbb{R}^{np+n+p}$. In order to apply Lemma \ref{concentration_of_almost_Lipshitz_functions}, the last thing we need to show is that $\mathbb{E}_{X,z,Z} \left [ \left (  F_p^{\text{Pert}}\right )^2 \right ] = \mathbb{E} \left [ \left (  F_p^{\text{Pert}}\right )^2 \big | \beta_0 \right ]$ is bounded. In fact, the upper bound constant does not depend on $\beta_0$.
    \begin{equation}
        \begin{aligned}
            \left | F_p^{\text{Pert}} \right | 
            &= \left | \frac{1}{p} \ln \int_{S^{p-1}(\sqrt{p})} e^{-\frac{1}{2\Delta} \lVert y - X\beta \rVert^2 - \lambda \epsilon_p \beta_0^{\top} \beta - \sqrt{\lambda \epsilon_p} Z^{\top} \beta + \frac{1}{2} \lambda \epsilon_p \lVert \beta \rVert^2} \mathrm{d} \pi_0(\beta) \right  | \\
            &\le \left | \frac{1}{p} \ln \int_{S^{p-1}(\sqrt{p})} e^{- \frac{1}{2\Delta} \lVert X(\beta_0 - \beta) + z \rVert^2 - \lambda \epsilon_p \left | \beta_0^{\top} \beta \right | - \sqrt{\lambda \epsilon_p} \left | Z^{\top} \beta \right | - \frac{1}{2} \lambda \epsilon_p \lVert \beta \rVert^2} \mathrm{d} \pi_0(\beta) \right  | \\
            &\le \left | \frac{1}{p} \ln \int_{S^{p-1}(\sqrt{p})} e^{- \frac{1}{\Delta} ( \lVert X(\beta_0 - \beta) \rVert^2  + \lVert z \rVert^2) - p \lambda \epsilon_p - \sqrt{p \lambda \epsilon_p} \lVert Z \rVert  - \frac{1}{2} p\lambda \epsilon_p } \mathrm{d} \pi_0(\beta) \right  | \\
            &\le \frac{1}{p} \left [    \frac{4p}{\Delta} \lVert X^{\top} X\rVert_2 + \frac{1}{\Delta} \lVert z \rVert^2 + \frac{3}{2}p \lambda \epsilon_p + \sqrt{p \lambda \epsilon_p} \lVert Z \rVert   \right ] \text{,}
        \end{aligned} \nonumber 
    \end{equation}
    which leads to
    \begin{equation}
        \begin{aligned}
            \mathbb{E} \left [ \left (F_p^{\text{Pert}}(\lambda)\right )^2 \right ]
            &\le \mathbb{E} \left  \{ \frac{4}{p^2} \left [ \frac{16p^2}{\Delta^2} \sigma_{\text{max}}(X)^4 + \frac{1}{\Delta^2} \lVert z \rVert^4 + \frac{9 \lambda \epsilon_p}{4} p^2 + p \lambda \epsilon_p \lVert Z \rVert^2 \right ]\right \} = O(1) \text{.}
        \end{aligned} \nonumber 
    \end{equation}
    By applying lemma \ref{concentration_of_almost_Lipshitz_functions}  
    and choosing $D_1 = p^{\gamma}, D_2 = O(1), K_p = O(p^{-(1-\gamma)/2})$ (accordingly $\mathbb{P}(\mathcal{G}^c) = O(p e^{- p^{\gamma} / 4})$), for any $r \ge O(e^{- C_1 p^{\gamma}}) \ge O(p^{(5+\gamma)/4}e^{-p^{\gamma}/8})$,
    \begin{equation}
        \mathbb{P}(|F^{\text{Pert}}_p(\lambda) - \mathbb{E}[F^{\text{Pert}}_p(\lambda)|\beta_0]  | \ge r | \beta_0)  \le O(e^{- C r^2 p^{1-\gamma}}) + O(p e^{-p^{\gamma}/4})
        \text{.} \nonumber 
    \end{equation}
Finally, the choice $\gamma = 1/2$ renders the optimal bound
\begin{equation}
    \mathbb{P}(|F^{\text{Pert}}_p(\lambda) - \mathbb{E}[F^{\text{Pert}}_p(\lambda)|\beta_0]  | \ge r | \beta_0)  \le e^{-C_2 r^2 p^{1/2}}
    \text{.} \nonumber 
\end{equation}
\end{proof}
We will see that, with an integral argument, lemma \ref{Concentration of the free energy with respect to the Gaussian quenched variables} further implies the following lemma, which gives a upper bound on the conditional variance of $F^{\text{Pert}}_p(\lambda)$ giving any fixed $\beta_0$.
\begin{lemma}
For any fixed $\beta_0 \in S^{p-1}(\sqrt{p})$,
    \begin{equation}
        \mathbb{E} \left [ \left ( F_p^{\text{Pert}}(\lambda) - \mathbb{E} \left [ F_p^{\text{Pert}}(\lambda) \big | \beta_0 \right ]\right )^2 \bigg | \beta_0 \right ] = O \left ( \frac{1}{p} \right )
        \text{.}  \nonumber 
    \end{equation}
\end{lemma}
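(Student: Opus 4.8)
The plan is to upgrade the concentration estimate behind Lemma~\ref{Concentration of the free energy with respect to the Gaussian quenched variables} to a variance bound by a routine tail--integration argument, after first sharpening the underlying Lipschitz estimate so that its scale is $p^{-1/2}$ rather than $p^{-1/4}$. Fix $\beta_0\in S^{p-1}(\sqrt p)$ and view $F_p^{\text{Pert}}$ as a function of the $N=np+n+p$ i.i.d.\ standard Gaussians obtained from $(X,\varepsilon,Z)$ after rescaling (write $G=\sqrt n\,X$ and $\tilde\varepsilon=\varepsilon/\sqrt\Delta$). Differentiating $F_p^{\text{Pert}}=\frac1p\ln\int e^{-\mathcal H^{\text{Pert}}}\,\mathrm{d}\pi_0$ and using that $\mathcal H^{\text{Pert}}$ carries the data only through $\frac1{2\Delta}\lVert y-X\beta\rVert^2$ and the $Z^{\top}\beta$ term, one gets $\partial F_p^{\text{Pert}}/\partial G_{ij}=-\frac{1}{p\Delta\sqrt n}\langle(y-X\beta)_i(\beta_0-\beta)_j\rangle$, an analogous expression for the $\tilde\varepsilon$-derivatives, and $\partial F_p^{\text{Pert}}/\partial Z_i=\frac{\sqrt{\lambda_0\epsilon_p}}{p}\langle\beta_i\rangle$; summing squares and applying $\langle V\rangle^2\le\langle V^2\rangle$ yields
\begin{equation}
\lVert\nabla F_p^{\text{Pert}}\rVert^2 \le \frac{C}{p^2 n}\big\langle\lVert y-X\beta\rVert^2\,\lVert\beta_0-\beta\rVert^2\big\rangle + \frac{C}{p^2}\big\langle\lVert y-X\beta\rVert^2\big\rangle + \frac{C\epsilon_p}{p}. \nonumber
\end{equation}
On the good event $\mathcal G=\{\sigma_{\max}(X)\le C_1\}\cap\{\lVert\varepsilon\rVert^2\le C_2 p\}\cap\{\lVert Z\rVert^2\le C_3 p\}$, which is a convex set containing $0$ with $\mathbb P(\mathcal G^c)\le e^{-\Omega(p)}$ by Lemma~\ref{large_deviation_characterization_for_the_largest_eigenvalue_of_Weshart_matrices} and standard $\chi^2$ concentration, one has $\lVert\beta_0-\beta\rVert^2\le 4p$ and $\lVert y-X\beta\rVert^2\le 2\sigma_{\max}(X)^2\lVert\beta_0-\beta\rVert^2+2\lVert\varepsilon\rVert^2=O(p)$, hence $\lVert\nabla F_p^{\text{Pert}}\rVert^2=O(1/n)=O(1/p)$ on $\mathcal G$; that is, $F_p^{\text{Pert}}$ is $K_p$-Lipschitz on $\mathcal G$ with $K_p=O(p^{-1/2})$. (Controlling $\lVert\varepsilon\rVert$ and $\lVert Z\rVert$ here, rather than their coordinatewise maxima as in the proof of Lemma~\ref{Concentration of the free energy with respect to the Gaussian quenched variables}, is exactly what produces the $p^{-1/2}$ scale while keeping $\mathbb P(\mathcal G^c)$ exponentially small.)

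Next I would feed $\mathcal G$, $K_p=O(p^{-1/2})$ and $\mathbb P(\mathcal G^c)=e^{-\Omega(p)}$ into Lemma~\ref{concentration_of_almost_Lipshitz_functions}; the a priori bound on $|F_p^{\text{Pert}}|$ from the proof of Lemma~\ref{Concentration of the free energy with respect to the Gaussian quenched variables} is uniform in $\beta_0$, giving $F_p^{\text{Pert}}(0)^2=o(1)$ and $\mathbb E[(F_p^{\text{Pert}})^2\mid\beta_0]=O(1)$, $\mathbb E[(F_p^{\text{Pert}})^4\mid\beta_0]=O(1)$ uniformly in $\beta_0$. The required threshold $6(C+\sqrt{NK_p})\sqrt{\mathbb P(\mathcal G^c)}=p^{O(1)}e^{-\Omega(p)}$ is exponentially small, so there is a constant $c>0$ with
\begin{equation}
\mathbb P\big(\,|D|>r\mid\beta_0\,\big)\le 2e^{-cpr^2}+e^{-cp},\qquad r\ge e^{-cp},\qquad D:=F_p^{\text{Pert}}-\mathbb E[F_p^{\text{Pert}}\mid\beta_0]. \nonumber
\end{equation}
I would then split $\mathbb E[D^2\mid\beta_0]=\mathbb E[D^2\mathbf{1}_{\{|D|\le1\}}\mid\beta_0]+\mathbb E[D^2\mathbf{1}_{\{|D|>1\}}\mid\beta_0]$. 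The first term equals $\int_0^1 2r\,\mathbb P(r<|D|\le1\mid\beta_0)\,\mathrm{d}r\le e^{-2cp}+\int_0^{1}2r\,(2e^{-cpr^2}+e^{-cp})\,\mathrm{d}r=O(1/p)$, and the second is at most $\mathbb E[D^4\mid\beta_0]^{1/2}\,\mathbb P(|D|>1\mid\beta_0)^{1/2}=O(1)\cdot e^{-\Omega(p)}$ by Cauchy--Schwarz. Adding these gives $\mathbb E[D^2\mid\beta_0]=O(1/p)$.

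The main difficulty I anticipate is the behaviour of $F_p^{\text{Pert}}$ off the good event: it is unbounded, and the additive $\mathbb P(\mathcal G^c)$ term in Lemma~\ref{concentration_of_almost_Lipshitz_functions} rules out a naive integration of the tail over all $r$. The truncation at $|D|=1$ above resolves this — the sub-Gaussian tail handles the bulk, while the event $\{|D|>1\}$ is absorbed by a single fixed higher moment of $F_p^{\text{Pert}}$ (finite and uniform in $\beta_0$) against the exponentially small $\mathbb P(\mathcal G^c)$ — so the only genuinely quantitative input is the $O(1/p)$ bound on $\lVert\nabla F_p^{\text{Pert}}\rVert^2$ on $\mathcal G$, which in turn rests on the Jensen step together with $\sigma_{\max}(X)=O(1)$ and $\lVert\varepsilon\rVert^2=O(p)$.
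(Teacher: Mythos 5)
Your proof is correct, and it takes a genuinely different — and in fact sharper — route than the paper's. The paper obtains its Lipschitz constant by constraining the \emph{coordinatewise maxima} $\max_i|z_i|$ and $\max_i|Z_i|$ on the good set; with $D_1 = p^{\gamma}$ this yields $K_p = O(p^{(\gamma-1)/2})$, and the trade-off between $K_p$ and $\mathbb{P}(\mathcal{G}^c) = O(pe^{-p^{\gamma}/4})$ never reaches $K_p = O(p^{-1/2})$ with an exponentially small bad event. Your choice to constrain $\lVert\varepsilon\rVert$ and $\lVert Z\rVert$ directly (together with the gradient computation and the Jensen/Cauchy--Schwarz step inside $\langle\cdot\rangle$) is exactly what delivers $\lVert\nabla F_p^{\text{Pert}}\rVert^2 = O(1/p)$ on a convex good set with $\mathbb{P}(\mathcal{G}^c) \le e^{-\Omega(p)}$, and the truncation at $|D|=1$ with the fourth-moment Cauchy--Schwarz bound correctly disposes of the unbounded tail outside $\mathcal{G}$. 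This matters because the paper's displayed computation of $\mathbb{E}[V^2\mid\beta_0]$ has a slip: the second term should be $\int_{t_0^2}^{\infty}\mathbb{P}(V^2 > s\mid\beta_0)\,\mathrm{d}s$ rather than $\int_{t_0^2}^{\infty}s\,\mathbb{P}(V^2 > s\mid\beta_0)\,\mathrm{d}s$, and with that corrected, the tail bound $e^{-C_3 r^2 p^{1/2}}$ (coming from $K_p = O(p^{-1/4})$) only yields $\mathbb{E}[D^2\mid\beta_0] = O(p^{-1/2})$, not $O(p^{-1})$. Your sharper Lipschitz estimate is therefore not merely a stylistic choice but is needed to make the claimed rate $O(1/p)$ (and hence the uniform bound $v_p < C_2$ in Theorem~\ref{overlap_concentration}) actually hold.
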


\begin{proof}
    By Lemma \ref{Concentration of the free energy with respect to the Gaussian quenched variables}, there exist positive constants $C_1, C_2,$ and $C_3$, depending only on $\Delta$ and $\alpha$, such that for any $r \ge t_0 \coloneqq C_1 e^{- C_2 p^{1/2}}$, we have for any $\beta_0 \in S^{p-1}(\sqrt{p})$,
    \begin{equation}
        \mathbb{P} \left (  \left | F_p^{\text{Pert}}(\lambda) - \mathbb{E}F_p^{\text{Pert}}(\lambda) \right |  > r  \bigg | \beta_0  \right ) \le e^{-C_3 r^2 p^{1/2}} \text{.} \nonumber 
    \end{equation}
    Let $V = \left | F_p^{\text{Pert}}(\lambda) - \mathbb{E}F_p^{\text{Pert}}(\lambda) \right | / p $. 
    \begin{equation}
        \begin{aligned}
            \mathbb{E}\left [ V^2  \big | \beta_0 \right ]
            &= \mathbb{E}\left [ V^2  \mathbb{1}_{V \le t_0}  \big | \beta_0  \right ] + \mathbb{E}\left [ V^2  \mathbb{1}_{V > t_0}  \big | \beta_0  \right ]\\
            &\le t_0^2 \mathbb{P}(V \le t_0  \big | \beta_0 ) + \int_{t_0^2}^{\infty} s \mathbb{P}(V^2 > s  \big | \beta_0  ) \mathrm{d} s \\
            &\le t_0^2 + \int_{t_0^2}^{\infty} s e^{-C_3 s p^{1/2}} \mathrm{d} s\\
            &= t_0^2 + \frac{1}{C_3 p^{1/2}} \left ( t_0^2 + \frac{1}{C_3 p^{1/2}} \right ) e^{-C_3 p^{1/2} t_0^2} \\
            &= O \left ( \frac{1}{p} \right )
            \text{,}
        \end{aligned} \nonumber 
    \end{equation}
    where the last equality is because of $t_0^2 = C_1^2 e^{- 2 C_2 p^{1/2}} = o(p^{-1/2})$.
\end{proof}


\subsection{Concentration of the free energy with respect to the signal}

In this subsection, we state and prove Lemma \ref{Concentration of the free energy with respect to the signal}, which is an upper bound on $\mathbb{E} \left [ F_p^{\text{Pert}}(\lambda) \big | \beta_0 \right]$, or equivalently, we prove that free energy of the perturbed model concentrates with respect to the randomness of $\beta_0$.
\begin{lemma}
\label{Concentration of the free energy with respect to the signal}
    \begin{equation}
        \mathbb{E} \left [ \left ( \mathbb{E} \left [ F_p^{\text{Pert}}(\lambda) \big | \beta_0 \right] - \mathbb{E} F_p^{\text{Pert}}(\lambda) \right )^2 \right ] = O \left ( \frac{1}{p} \right )
        \text{.} \nonumber 
    \end{equation}
\end{lemma}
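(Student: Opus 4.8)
The plan is to show that the function $g(\beta_0) \coloneqq \mathbb{E}\big[F_p^{\text{Pert}}(\lambda) \mid \beta_0\big]$, where the conditional expectation integrates out the disorder $(X,z,Z)$, is Lipschitz on the sphere $S^{p-1}(\sqrt{p})$ with Lipschitz constant $O(1/\sqrt{p})$, uniformly over $\lambda \in [1/2,1]$, and then to read off the $O(1/p)$ bound on $\mathbb{E}[(g(\beta_0)-\mathbb{E}g)^2]$ from L\'evy's lemma (Lemma \ref{levys_lemma}) together with an integration of the resulting subgaussian tail. The key point to keep in mind is that in the well-specified model $y = X\beta_0 + z$, so the Hamiltonian $\mathcal{H}^{\text{Pert}}$ depends on the signal both through the regression term and through the side-channel term $-\lambda \epsilon_p\beta_0^{\top}\beta$.

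First I would fix $(X,z,Z)$ and compare two signals $\beta_0,\beta_0' \in S^{p-1}(\sqrt{p})$. By the elementary inequality $\big|\ln\int e^{-H}\mathrm{d}\mu - \ln\int e^{-H'}\mathrm{d}\mu\big| \le \sup|H-H'|$,
\begin{equation}
    \big|F_p^{\text{Pert}}(\beta_0) - F_p^{\text{Pert}}(\beta_0')\big| \le \frac{1}{p}\sup_{\beta \in S^{p-1}(\sqrt{p})}\big|\mathcal{H}^{\text{Pert}}(\beta;\beta_0) - \mathcal{H}^{\text{Pert}}(\beta;\beta_0')\big|. \nonumber
\end{equation}
In the Hamiltonian difference only two terms survive: the side-channel term contributes $-\lambda \epsilon_p(\beta_0-\beta_0')^{\top}\beta$, of modulus at most $\epsilon_p\sqrt{p}\,\lVert\beta_0-\beta_0'\rVert$; and the regression term $\tfrac{1}{2\Delta}\big(\lVert y-X\beta\rVert^2 - \lVert y'-X\beta\rVert^2\big)$, which, using $y-y' = X(\beta_0-\beta_0')$ and a polarization identity, equals $\tfrac{1}{2\Delta}\big(X(\beta_0+\beta_0'-2\beta)+2z\big)^{\top}X(\beta_0-\beta_0')$. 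Bounding the factors by $\sigma_{\text{max}}(X)$, $\lVert z\rVert$ and $\lVert\beta\rVert = \lVert\beta_0\rVert = \lVert\beta_0'\rVert = \sqrt{p}$, the supremum above is at most $\tfrac{1}{\Delta}\big(2\sqrt{p}\,\sigma_{\text{max}}(X)+\lVert z\rVert\big)\sigma_{\text{max}}(X)\,\lVert\beta_0-\beta_0'\rVert + \epsilon_p\sqrt{p}\,\lVert\beta_0-\beta_0'\rVert$.

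Next I would take the expectation over $(X,z,Z)$. By Lemma \ref{large_deviation_characterization_for_the_largest_eigenvalue_of_Weshart_matrices}, $\sigma_{\text{max}}(X)$ concentrates around $1+\alpha^{-1/2}$ with subgaussian tails, so all its moments are $O(1)$; since $\mathbb{E}\lVert z\rVert^2 = n\Delta = O(p)$, Cauchy--Schwarz gives $\mathbb{E}[\lVert z\rVert\,\sigma_{\text{max}}(X)] = O(\sqrt{p})$, and hence $\mathbb{E}\big[\big(2\sqrt{p}\,\sigma_{\text{max}}(X)+\lVert z\rVert\big)\sigma_{\text{max}}(X)\big] = O(\sqrt{p})$. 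Therefore
\begin{equation}
    |g(\beta_0)-g(\beta_0')| \le \frac{1}{p}\,\mathbb{E}\Big[\sup_{\beta}\big|\mathcal{H}^{\text{Pert}}(\beta;\beta_0)-\mathcal{H}^{\text{Pert}}(\beta;\beta_0')\big|\Big] \le \frac{C}{\sqrt{p}}\,\lVert\beta_0-\beta_0'\rVert, \nonumber
\end{equation}
with $C$ depending only on $\alpha$ and $\Delta$ (the $\epsilon_p$ contribution is $o(1/\sqrt{p})$ and bounded uniformly in $\lambda$). Rescaling to the unit sphere, $\sigma\mapsto g(\sqrt{p}\,\sigma)$ is Lipschitz on $S^{p-1}(1)$ with constant $(C/\sqrt{p})\cdot\sqrt{p}=C$, so L\'evy's lemma gives $\mathbb{P}(|g(\beta_0)-\mathbb{E}g|\ge t)\le \exp(\pi - pt^2/(4C^2))$ for every $t>0$; integrating, $\mathbb{E}[(g(\beta_0)-\mathbb{E}g)^2] = \int_0^\infty 2t\,\mathbb{P}(|g(\beta_0)-\mathbb{E}g|\ge t)\,\mathrm{d}t = O(1/p)$, which is the assertion of Lemma \ref{Concentration of the free energy with respect to the signal}.

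I expect the only delicate point to be the expansion of the Hamiltonian difference: one must carry the quadratic dependence of $y = X\beta_0 + z$ on the signal through without losing powers of $p$, and the moment bounds $\mathbb{E}[\sigma_{\text{max}}(X)^k] = O(1)$ and $\mathbb{E}\lVert z\rVert^2 = O(p)$ must be used so that, after dividing by $p$, the $O(\sqrt{p})$ growth of the supremum matches precisely the $O(1/\sqrt{p})$ Lipschitz rate that L\'evy's lemma needs to output $O(1/p)$ rather than a weaker bound. The remaining steps are routine.
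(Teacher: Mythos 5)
Your proof is correct and follows essentially the same route as the paper: establish that $\beta_0 \mapsto \mathbb{E}\big[F_p^{\text{Pert}}(\lambda)\mid\beta_0\big]$ is $O(1/\sqrt{p})$-Lipschitz on $S^{p-1}(\sqrt{p})$, rescale to the unit sphere, apply L\'evy's lemma, and integrate the subgaussian tail to get the $O(1/p)$ variance bound. The only cosmetic difference is that you bound the free-energy difference via the crude estimate $\bigl|\ln\int e^{-H} - \ln\int e^{-H'}\bigr| \le \sup|H - H'|$, whereas the paper sandwiches it between Gibbs averages via Jensen's inequality; since the Hamiltonian-difference bound is uniform in $\beta$ anyway, the two are interchangeable.
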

\begin{proof}
  By Jensen's inequality, we have
\begin{equation}
    \begin{aligned}
        \frac{1}{p} \langle \mathcal{H}_{\lambda}^{\text{Pert}}(\tilde{\beta}_0) - \mathcal{H}_{\lambda}^{\text{Pert}}(\beta_0)  \rangle_{\tilde{{\beta}}_0} 
        \le \frac{1}{p} (\ln \mathcal{Z}^{\text{Pert}}(\beta_0) - \ln \mathcal{Z}^{\text{Pert}}(\tilde{\beta}_0)) 
        \le \frac{1}{p} \langle \mathcal{H}_{\lambda}^{\text{Pert}}(\tilde{\beta}_0) - \mathcal{H}_{\lambda}^{\text{Pert}}(\beta_0)  \rangle_{{\beta}_0}
        \text{,}
    \end{aligned} \nonumber 
\end{equation}
where $\langle f(\beta) \rangle_{\beta_0}$ refers to the expectation of $f(\beta)$ under the distribution proportional to $ \mathrm{d} \pi(\beta) e^{-\mathcal{H}^{\text{Pert}}(\beta,\beta_0)}$ and similarly for $\langle f(\beta) \rangle_{\tilde{\beta}_0}$. Note that $\mathcal{H}^{\text{Pert}}(\beta,\beta_0)$ is the same as $\mathcal{H}^{\text{Pert}}(\beta)$, only to emphasize the different values of $\beta_0$. Moreover,
\begin{equation}
    \begin{aligned}
       & \left | \mathbb{E}_{X,z,Z} ( \mathcal{H}_{\lambda}^{\text{Pert}}(\beta, \tilde{\beta}_0) - \mathcal{H}_{\lambda}^{\text{Pert}}(\beta, \beta_0) ) \right |  \nonumber \\
        & \le \mathbb{E}_{X,z,Z} \bigg \{  \frac{1}{2\Delta} \left | \lVert X(\beta_0 - \beta) + z \rVert^2 - \lVert X(\tilde{\beta}_0 - \beta) + z \rVert^2 \right |  + \frac{1}{2} \left | \lambda_0 \epsilon_p (\beta_0 - \tilde{\beta}_0)^{\top} \beta   \right | \bigg \} \\
        & \le \mathbb{E}_{X,z,Z} \bigg \{ \frac{1}{2 \Delta} \left (\lVert X(\beta_0 - \tilde{\beta}_0)\rVert^2 + 2 \left | \langle X(\beta_0 - \tilde{\beta}_0), X(\tilde{\beta}_0 - \beta) + z \rangle \right | \right )  + O(\sqrt{p}) \lVert \beta_0 - \tilde{\beta}_0 \rVert \bigg \}\\
        &= O(\sqrt{p}) \lVert \beta_0 - \tilde{\beta}_0 \rVert
        \text{.}
    \end{aligned}
\end{equation}
Upon defining
\begin{equation}
    f(\frac{1}{\sqrt{p}} \beta_0 ) \coloneqq \frac{1}{p}\mathbb{E}_{X,z,Z} \ln \mathcal{Z}^{\text{Pert}}(\beta_0) = \mathbb{E}_{X,z,Z} F_p^{\text{Pert}} \text{,} \nonumber 
\end{equation}
we see that there exists a positive constant $C$ such that
\begin{equation}
    \left | f(\frac{1}{\sqrt{p}} \beta_0 ) - f(\frac{1}{\sqrt{p}} \tilde{\beta}_0 ) \right | \le C \lVert \frac{1}{\sqrt{p}}\beta_0 - \frac{1}{\sqrt{p}}\tilde{\beta}_0 \rVert
    \text{.} \nonumber 
\end{equation}
Finally, L\'{e}vy's lemma, i.e. Lemma \ref{levys_lemma}, implies
\begin{equation}
    \mathbb{P}\left (  \left | \mathbb{E}_{X,z,Z} F_p^{\text{Pert}} - \mathbb{E}_{X,z,Z,\beta_0} F_p^{\text{Pert}} \right | > C t \right ) \le e^{\pi - p t^2 / 4} \text{,} \nonumber 
\end{equation}
for any $t > 0$, which further leads to
\begin{equation*}
    \mathbb{E} \left [ \left ( \mathbb{E} \left [F_p^{\text{Pert}} \big | \beta_0 \right ] - \mathbb{E} F_p^{\text{Pert}} \right )^2 \right ] \le \frac{4 C^2 e^{\pi}}{p} = O \left ( \frac{1}{p} \right )
    \text{.}
\end{equation*}
\end{proof}



\subsection{Completing the proof of Theorem \ref{overlap_concentration}}

\begin{proof}[Proof of Theorem \ref{overlap_concentration}]

By Lemma \ref{Concentration of the free energy with respect to the Gaussian quenched variables} and \ref{Concentration of the free energy with respect to the signal},
\begin{equation*}
    \begin{aligned}
        \operatorname{Var}(F_{p}^{\text {Pert }}(\lambda)) 
        = & \mathbb{E} \left [ \operatorname{Var}( F_{p}^{\text {Pert }}(\lambda) \big | \beta_0 )\right ] + \operatorname{Var} \left ( \mathbb{E}[F_{p}^{\text {Pert }}(\lambda) \big | \beta_0]    \right )\\
        \le & O \left ( \frac{1}{p} \right )+ O \left ( \frac{1}{p} \right ) = O \left ( \frac{1}{p} \right ) \text{.}
    \end{aligned}
\end{equation*}
Recall that
\begin{equation*}
    \frac{\mathrm{d} \mathbb{E} F_{p}^{\text {Pert }}}{\mathrm{d} \lambda_{0, p}}= \frac{1}{p} \mathbb{E}\left\langle\mathcal{H}^{\prime}\right\rangle = \frac{1}{2} \mathbb{E}\left\langle R_{1,2}\right\rangle \text{,}
\end{equation*}
which gives
\begin{equation*}
    \left | \frac{\mathrm{d} \mathbb{E} F_{p}^{\text {Pert }}}{\mathrm{d} \lambda_{0}} \right | = \left | \epsilon_p \frac{\mathrm{d} \mathbb{E} F_{p}^{\text {Pert }}}{\mathrm{d} \lambda_{0, p}} \right | = \left | \frac{\epsilon_p}{p} \mathbb{E}\left\langle\mathcal{H}^{\prime} \right \rangle \right | =  \left | \frac{\epsilon_p}{2} \mathbb{E}\left\langle R_{1,2}\right\rangle \right | \le \frac{ \epsilon_p}{2} \text{.}
\end{equation*}
Finally, noting that when $\lambda_0 = 0$, the perturbed model would be the same as our original linear regression model. Thus,
\begin{equation*}
    \left | \mathbb{E} F_{p}^{\text {Pert }}(\lambda_0)  -  \mathbb{E} F_{p}  \right | =  \left | \mathbb{E} F_{p}^{\text {Pert }}(\lambda_0)  -  \mathbb{E} F_{p}^{\text {Pert }}(0)  \right | \le  \frac{\lambda_0 \epsilon_p}{2}  = O(\epsilon_p),
\end{equation*}
since $\lambda_0 \in [1/2,1 ]$. Together with Lemma \ref{overlap_concentration_R_12_fluctuations}, the proof is therefore complete.
\end{proof}

\end{appendix}
%
%

\begin{acks}[Acknowledgments]
SS thanks Sumit Mukherjee for his encouragement during the completion of this manuscript. 
\end{acks}
\begin{funding}
%
SS was partially supported by a Harvard Dean's Competitive Fund Fellowship. 
\end{funding}



\bibliographystyle{imsart-number} 
\bibliography{references}       

\begin{thebibliography}{32}

\bibitem{adhikari2021dynamical}
\begin{barticle}[author]
\bauthor{\bsnm{Adhikari},~\bfnm{Arka}\binits{A.}},
  \bauthor{\bsnm{Brennecke},~\bfnm{Christian}\binits{C.}},
  \bauthor{\bparticle{von} \bsnm{Soosten},~\bfnm{Per}\binits{P.}} \AND
  \bauthor{\bsnm{Yau},~\bfnm{Horng-Tzer}\binits{H.-T.}}
(\byear{2021}).
\btitle{Dynamical Approach to the TAP Equations for the
  Sherrington--Kirkpatrick Model}.
\bjournal{Journal of Statistical Physics}
\bvolume{183}
\bpages{1--27}.
\end{barticle}
\endbibitem

\bibitem{auffinger2019thouless}
\begin{barticle}[author]
\bauthor{\bsnm{Auffinger},~\bfnm{Antonio}\binits{A.}} \AND
  \bauthor{\bsnm{Jagannath},~\bfnm{Aukosh}\binits{A.}}
(\byear{2019}).
\btitle{Thouless--Anderson--Palmer equations for generic $ p $-spin glasses}.
\bjournal{The Annals of Probability}
\bvolume{47}
\bpages{2230--2256}.
\end{barticle}
\endbibitem

\bibitem{bai2010spectral}
\begin{bbook}[author]
\bauthor{\bsnm{Bai},~\bfnm{Zhidong}\binits{Z.}} \AND
  \bauthor{\bsnm{Silverstein},~\bfnm{Jack~W}\binits{J.~W.}}
(\byear{2010}).
\btitle{Spectral analysis of large dimensional random matrices}
\bvolume{20}.
\bpublisher{Springer}.
\end{bbook}
\endbibitem

\bibitem{barbier2019optimal}
\begin{barticle}[author]
\bauthor{\bsnm{Barbier},~\bfnm{Jean}\binits{J.}},
  \bauthor{\bsnm{Krzakala},~\bfnm{Florent}\binits{F.}},
  \bauthor{\bsnm{Macris},~\bfnm{Nicolas}\binits{N.}},
  \bauthor{\bsnm{Miolane},~\bfnm{L{\'e}o}\binits{L.}} \AND
  \bauthor{\bsnm{Zdeborov{\'a}},~\bfnm{Lenka}\binits{L.}}
(\byear{2019}).
\btitle{Optimal errors and phase transitions in high-dimensional generalized
  linear models}.
\bjournal{Proceedings of the National Academy of Sciences}
\bvolume{116}
\bpages{5451--5460}.
\end{barticle}
\endbibitem

\bibitem{barbier2019adaptive}
\begin{barticle}[author]
\bauthor{\bsnm{Barbier},~\bfnm{Jean}\binits{J.}} \AND
  \bauthor{\bsnm{Macris},~\bfnm{Nicolas}\binits{N.}}
(\byear{2019}).
\btitle{The adaptive interpolation method: a simple scheme to prove replica
  formulas in Bayesian inference}.
\bjournal{Probability theory and related fields}
\bvolume{174}
\bpages{1133--1185}.
\end{barticle}
\endbibitem

\bibitem{barbier2020mutual}
\begin{barticle}[author]
\bauthor{\bsnm{Barbier},~\bfnm{Jean}\binits{J.}},
  \bauthor{\bsnm{Macris},~\bfnm{Nicolas}\binits{N.}},
  \bauthor{\bsnm{Dia},~\bfnm{Mohamad}\binits{M.}} \AND
  \bauthor{\bsnm{Krzakala},~\bfnm{Florent}\binits{F.}}
(\byear{2020}).
\btitle{Mutual information and optimality of approximate message-passing in
  random linear estimation}.
\bjournal{IEEE Transactions on Information Theory}
\bvolume{66}
\bpages{4270--4303}.
\end{barticle}
\endbibitem

\bibitem{barbier2018mutual}
\begin{binproceedings}[author]
\bauthor{\bsnm{Barbier},~\bfnm{Jean}\binits{J.}},
  \bauthor{\bsnm{Macris},~\bfnm{Nicolas}\binits{N.}},
  \bauthor{\bsnm{Maillard},~\bfnm{Antoine}\binits{A.}} \AND
  \bauthor{\bsnm{Krzakala},~\bfnm{Florent}\binits{F.}}
(\byear{2018}).
\btitle{The mutual information in random linear estimation beyond iid
  matrices}.
In \bbooktitle{2018 IEEE International Symposium on Information Theory (ISIT)}
\bpages{1390--1394}.
\bpublisher{IEEE}.
\end{binproceedings}
\endbibitem

\bibitem{barbier2020strong}
\begin{barticle}[author]
\bauthor{\bsnm{Barbier},~\bfnm{Jean}\binits{J.}} \AND
  \bauthor{\bsnm{Panchenko},~\bfnm{Dmitry}\binits{D.}}
(\byear{2020}).
\btitle{Strong replica symmetry in high-dimensional optimal Bayesian
  inference}.
\bjournal{arXiv preprint arXiv:2005.03115}.
\end{barticle}
\endbibitem

\bibitem{belius2019tap}
\begin{barticle}[author]
\bauthor{\bsnm{Belius},~\bfnm{David}\binits{D.}} \AND
  \bauthor{\bsnm{Kistler},~\bfnm{Nicola}\binits{N.}}
(\byear{2019}).
\btitle{The TAP--Plefka Variational Principle for the Spherical SK Model}.
\bjournal{Communications in Mathematical Physics}
\bvolume{367}
\bpages{991--1017}.
\end{barticle}
\endbibitem

\bibitem{blei2017variational}
\begin{barticle}[author]
\bauthor{\bsnm{Blei},~\bfnm{David~M}\binits{D.~M.}},
  \bauthor{\bsnm{Kucukelbir},~\bfnm{Alp}\binits{A.}} \AND
  \bauthor{\bsnm{McAuliffe},~\bfnm{Jon~D}\binits{J.~D.}}
(\byear{2017}).
\btitle{Variational inference: A review for statisticians}.
\bjournal{Journal of the American statistical Association}
\bvolume{112}
\bpages{859--877}.
\end{barticle}
\endbibitem

\bibitem{celentano2021local}
\begin{barticle}[author]
\bauthor{\bsnm{Celentano},~\bfnm{Michael}\binits{M.}},
  \bauthor{\bsnm{Fan},~\bfnm{Zhou}\binits{Z.}} \AND
  \bauthor{\bsnm{Mei},~\bfnm{Song}\binits{S.}}
(\byear{2021}).
\btitle{Local convexity of the TAP free energy and AMP convergence for
  Z2-synchronization}.
\bjournal{arXiv preprint arXiv:2106.11428}.
\end{barticle}
\endbibitem

\bibitem{chatterjee2010spin}
\begin{barticle}[author]
\bauthor{\bsnm{Chatterjee},~\bfnm{Sourav}\binits{S.}}
(\byear{2010}).
\btitle{Spin glasses and Stein’s method}.
\bjournal{Probability theory and related fields}
\bvolume{148}
\bpages{567--600}.
\end{barticle}
\endbibitem

\bibitem{chen2018tap}
\begin{barticle}[author]
\bauthor{\bsnm{Chen},~\bfnm{Wei-Kuo}\binits{W.-K.}} \AND
  \bauthor{\bsnm{Panchenko},~\bfnm{Dmitry}\binits{D.}}
(\byear{2018}).
\btitle{On the TAP free energy in the mixed p-spin models}.
\bjournal{Communications in Mathematical Physics}
\bvolume{362}
\bpages{219--252}.
\end{barticle}
\endbibitem

\bibitem{chen2018generalized}
\begin{barticle}[author]
\bauthor{\bsnm{Chen},~\bfnm{Wei-Kuo}\binits{W.-K.}},
  \bauthor{\bsnm{Panchenko},~\bfnm{Dmitry}\binits{D.}} \AND
  \bauthor{\bsnm{Subag},~\bfnm{Eliran}\binits{E.}}
(\byear{2018}).
\btitle{The generalized TAP free energy}.
\bjournal{arXiv preprint arXiv:1812.05066}.
\end{barticle}
\endbibitem

\bibitem{chen2021generalized}
\begin{barticle}[author]
\bauthor{\bsnm{Chen},~\bfnm{Wei-Kuo}\binits{W.-K.}},
  \bauthor{\bsnm{Panchenko},~\bfnm{Dmitry}\binits{D.}} \AND
  \bauthor{\bsnm{Subag},~\bfnm{Eliran}\binits{E.}}
(\byear{2021}).
\btitle{The generalized TAP free energy II}.
\bjournal{Communications in Mathematical Physics}
\bvolume{381}
\bpages{257--291}.
\end{barticle}
\endbibitem

\bibitem{fan2021tap}
\begin{barticle}[author]
\bauthor{\bsnm{Fan},~\bfnm{Zhou}\binits{Z.}},
  \bauthor{\bsnm{Mei},~\bfnm{Song}\binits{S.}} \AND
  \bauthor{\bsnm{Montanari},~\bfnm{Andrea}\binits{A.}}
(\byear{2021}).
\btitle{TAP free energy, spin glasses and variational inference}.
\bjournal{The Annals of Probability}
\bvolume{49}
\bpages{1--45}.
\end{barticle}
\endbibitem

\bibitem{fan2021replica}
\begin{barticle}[author]
\bauthor{\bsnm{Fan},~\bfnm{Zhou}\binits{Z.}} \AND
  \bauthor{\bsnm{Wu},~\bfnm{Yihong}\binits{Y.}}
(\byear{2021}).
\btitle{The replica-symmetric free energy for Ising spin glasses with
  orthogonally invariant couplings}.
\bjournal{arXiv preprint arXiv:2105.02797}.
\end{barticle}
\endbibitem

\bibitem{ghorbani2019instability}
\begin{binproceedings}[author]
\bauthor{\bsnm{Ghorbani},~\bfnm{Behrooz}\binits{B.}},
  \bauthor{\bsnm{Javadi},~\bfnm{Hamid}\binits{H.}} \AND
  \bauthor{\bsnm{Montanari},~\bfnm{Andrea}\binits{A.}}
(\byear{2019}).
\btitle{An instability in variational inference for topic models}.
In \bbooktitle{International conference on machine learning}
\bpages{2221--2231}.
\bpublisher{PMLR}.
\end{binproceedings}
\endbibitem

\bibitem{guionnet2005fourier}
\begin{barticle}[author]
\bauthor{\bsnm{Guionnet},~\bfnm{Alice}\binits{A.}},
  \bauthor{\bsnm{Ma{\i}},~\bfnm{M}\binits{M.}} \betal{et~al.}
(\byear{2005}).
\btitle{A Fourier view on the R-transform and related asymptotics of spherical
  integrals}.
\bjournal{Journal of functional analysis}
\bvolume{222}
\bpages{435--490}.
\end{barticle}
\endbibitem

\bibitem{horn2012matrix}
\begin{bbook}[author]
\bauthor{\bsnm{Horn},~\bfnm{Roger~A}\binits{R.~A.}} \AND
  \bauthor{\bsnm{Johnson},~\bfnm{Charles~R}\binits{C.~R.}}
(\byear{2012}).
\btitle{Matrix analysis}.
\bpublisher{Cambridge university press}.
\end{bbook}
\endbibitem

\bibitem{jagannath2017approximate}
\begin{barticle}[author]
\bauthor{\bsnm{Jagannath},~\bfnm{Aukosh}\binits{A.}}
(\byear{2017}).
\btitle{Approximate ultrametricity for random measures and applications to spin
  glasses}.
\bjournal{Communications on Pure and Applied Mathematics}
\bvolume{70}
\bpages{611--664}.
\end{barticle}
\endbibitem

\bibitem{krzakala2014variational}
\begin{binproceedings}[author]
\bauthor{\bsnm{Krzakala},~\bfnm{Florent}\binits{F.}},
  \bauthor{\bsnm{Manoel},~\bfnm{Andre}\binits{A.}},
  \bauthor{\bsnm{Tramel},~\bfnm{Eric~W}\binits{E.~W.}} \AND
  \bauthor{\bsnm{Zdeborov{\'a}},~\bfnm{Lenka}\binits{L.}}
(\byear{2014}).
\btitle{Variational free energies for compressed sensing}.
In \bbooktitle{2014 IEEE International Symposium on Information Theory}
\bpages{1499--1503}.
\bpublisher{IEEE}.
\end{binproceedings}
\endbibitem

\bibitem{mezard2009information}
\begin{bbook}[author]
\bauthor{\bsnm{Mezard},~\bfnm{Marc}\binits{M.}} \AND
  \bauthor{\bsnm{Montanari},~\bfnm{Andrea}\binits{A.}}
(\byear{2009}).
\btitle{Information, physics, and computation}.
\bpublisher{Oxford University Press}.
\end{bbook}
\endbibitem

\bibitem{montanari2013statistical}
\begin{barticle}[author]
\bauthor{\bsnm{Montanari},~\bfnm{Andrea}\binits{A.}}
(\byear{2013}).
\btitle{Statistical mechanics and algorithms on sparse and random graphs}.
\bjournal{Lectures on Probability Theory and Statistics. Saint-Flour}.
\end{barticle}
\endbibitem

\bibitem{mukherjee2021variational}
\begin{barticle}[author]
\bauthor{\bsnm{Mukherjee},~\bfnm{Sumit}\binits{S.}} \AND
  \bauthor{\bsnm{Sen},~\bfnm{Subhabrata}\binits{S.}}
(\byear{2021}).
\btitle{Variational Inference in high-dimensional linear regression}.
\bjournal{arXiv preprint arXiv:2104.12232}.
\end{barticle}
\endbibitem

\bibitem{parisi1995mean}
\begin{barticle}[author]
\bauthor{\bsnm{Parisi},~\bfnm{Giorgio}\binits{G.}} \AND
  \bauthor{\bsnm{Potters},~\bfnm{Marc}\binits{M.}}
(\byear{1995}).
\btitle{Mean-field equations for spin models with orthogonal interaction
  matrices}.
\bjournal{Journal of Physics A: Mathematical and General}
\bvolume{28}
\bpages{5267}.
\end{barticle}
\endbibitem

\bibitem{subag2018free}
\begin{barticle}[author]
\bauthor{\bsnm{Subag},~\bfnm{Eliran}\binits{E.}}
(\byear{2018}).
\btitle{Free energy landscapes in spherical spin glasses}.
\bjournal{arXiv preprint arXiv:1804.10576}.
\end{barticle}
\endbibitem

\bibitem{talagrand2010mean}
\begin{bbook}[author]
\bauthor{\bsnm{Talagrand},~\bfnm{Michel}\binits{M.}}
(\byear{2010}).
\btitle{Mean field models for spin glasses: Volume I: Basic examples}
\bvolume{54}.
\bpublisher{Springer Science \& Business Media}.
\end{bbook}
\endbibitem

\bibitem{thouless1977solution}
\begin{barticle}[author]
\bauthor{\bsnm{Thouless},~\bfnm{David~J}\binits{D.~J.}},
  \bauthor{\bsnm{Anderson},~\bfnm{Philip~W}\binits{P.~W.}} \AND
  \bauthor{\bsnm{Palmer},~\bfnm{Robert~G}\binits{R.~G.}}
(\byear{1977}).
\btitle{Solution of'solvable model of a spin glass'}.
\bjournal{Philosophical Magazine}
\bvolume{35}
\bpages{593--601}.
\end{barticle}
\endbibitem

\bibitem{vershynin2010introduction}
\begin{barticle}[author]
\bauthor{\bsnm{Vershynin},~\bfnm{Roman}\binits{R.}}
(\byear{2010}).
\btitle{Introduction to the non-asymptotic analysis of random matrices}.
\bjournal{arXiv preprint arXiv:1011.3027}.
\end{barticle}
\endbibitem

\bibitem{vershynin2018high}
\begin{bbook}[author]
\bauthor{\bsnm{Vershynin},~\bfnm{Roman}\binits{R.}}
(\byear{2018}).
\btitle{High-dimensional probability: An introduction with applications in data
  science}
\bvolume{47}.
\bpublisher{Cambridge university press}.
\end{bbook}
\endbibitem

\bibitem{wainwright2008graphical}
\begin{bbook}[author]
\bauthor{\bsnm{Wainwright},~\bfnm{Martin~J}\binits{M.~J.}} \AND
  \bauthor{\bsnm{Jordan},~\bfnm{Michael~Irwin}\binits{M.~I.}}
(\byear{2008}).
\btitle{Graphical models, exponential families, and variational inference}.
\bpublisher{Now Publishers Inc}.
\end{bbook}
\endbibitem

\end{thebibliography}


\end{document}